\newtheorem{The}{Theorem}[section]
\newtheorem{Lemme}[The]{Lemma}
\newtheorem{Prop}[The]{Proposition}
\newtheorem{Cor}[The]{Corollary}
\theoremstyle{definition}
\theoremstyle{remark}
\newtheorem{Rk}[The]{Remark}
\numberwithin{equation}{section}
\newcommand{\tend}[2]{\displaystyle\mathop{\longrightarrow}_{#1\rightarrow#2}}
\title{
\normalsize
\textbf{{
CRITICAL EXPONENTS FOR A PERCOLATION MODEL ON \\TRANSIENT GRAPHS
}}}
\author{}
\date{}
\newcommand{\E}{\mathbb{E}}
\newcommand{\Q}{\mathbb{Q}}
\newcommand{\R}{\mathbb{R}}
\newcommand{\Z}{\mathbb{Z}}
\newcommand{\G}{\mathcal{G}}
\newcommand{\K}{\mathcal{K}}
\newcommand{\A}{\mathcal{A}}
\renewcommand{\P}{\mathbb{P}}
\newcommand{\eps}{\varepsilon}
\newcommand{\I}{{\cal I}}
\renewcommand{\phi}{\varphi}
\renewcommand{\tilde}{\widetilde}
\renewcommand{\hat}{\widehat}
\renewcommand{\epsilon}{\varepsilon}
\definecolor{Red}{rgb}{1,0,0}
\definecolor{Blue}{rgb}{0,0,1}
\definecolor{Olive}{rgb}{0.41,0.55,0.13}
\definecolor{Yarok}{rgb}{0,0.5,0}
\definecolor{Green}{rgb}{0,1,0}
\definecolor{MGreen}{rgb}{0,0.8,0}
\definecolor{DGreen}{rgb}{0,0.55,0}
\definecolor{Yellow}{rgb}{1,1,0}
\definecolor{Cyan}{rgb}{0,1,1}
\definecolor{Magenta}{rgb}{1,0,1}
\definecolor{Orange}{rgb}{1,.5,0}
\definecolor{Violet}{rgb}{.5,0,.5}
\definecolor{Purple}{rgb}{.75,0,.25}
\definecolor{Brown}{rgb}{.75,.5,.25}
\definecolor{Grey}{rgb}{.7,.7,.7}
\definecolor{Black}{rgb}{0,0,0}
\begin{document}
\thispagestyle{empty}
\maketitle
\vspace{0.1cm}
\begin{center}
\vspace{-1.9cm}
Alexander Drewitz$^1$, Alexis Pr\'evost$^2$ and Pierre-Fran\c cois Rodriguez$^3$

\end{center}
\vspace{0.1cm}
\begin{abstract}
\centering
\begin{minipage}{0.85\textwidth}
We consider the bond percolation problem on a transient weighted graph induced by the excursion sets of the Gaussian free field on the corresponding cable system. Owing to the continuity of this setup and the strong Markov property of the field on the one hand, and the links with potential theory for the associated diffusion on the other, we rigorously determine the behavior of various key quantities related to the (near-)critical regime for this model. In particular, our results apply in case the base graph is the three-dimensional cubic lattice. They unveil the values of the associated critical exponents, which are explicit but not mean-field and consistent with predictions from scaling theory below the upper-critical dimension.
\end{minipage}
\end{abstract}

\vspace{7.5cm}
\begin{flushleft}

\noindent\rule{5cm}{0.4pt} \hfill January 2023 \\
\bigskip
\begin{multicols}{2}

$^1$Universit\"at zu K\"oln\\
Department Mathematik/Informatik \\
Weyertal 86--90 \\
50931 K\"oln, Germany. \\
\url{drewitz@math.uni-koeln.de}\\[2em]

$^2$University of Geneva\\
Section of Mathematics\\
24, rue du G\'enéral Dufour\\
1211 Genève 4, Suisse.
\\\url{alexis.prevost@unige.ch}\\[2em]

\columnbreak
\thispagestyle{empty}
\bigskip
\medskip
\hfill$^3$Imperial College London\\
\hfill Department of Mathematics\\
\hfill 180 Queen’s Gate\\
\hfill London SW7 2AZ, UK.\\
\hfill \url{p.rodriguez@imperial.ac.uk} 
\end{multicols}
\end{flushleft}

\newpage

\section{Introduction}

Critical phenomena represent a fascinating challenge for mathematicians and physicists alike.
An emblematic example is that of second-order phase transitions, especially in models that are both non-planar and remain below a certain upper-critical dimension (above which mean-field behavior is expected). In such ``intermediate'' dimensions, which are physically very relevant, the regime near the transition point remains largely uncharted territory.

The present article rigorously investigates this problem in a benchmark case. Namely, given a weighted graph $\G$, transient for the random walk on $\G$, we study the bond percolation model obtained by considering the clusters of $\G$ induced by the excursion sets of the Gaussian free field $\varphi$ on the continuous graph (or cable system) $\tilde{\G} \supset \G$ associated to $\G$, see~\eqref{eq:introGFF}--\eqref{eq:intro_h_*} below for definitions. On the lattice $\Z^d$, $d \geq 3$, the study of the corresponding discrete problem, i.e.~the percolation of excursion sets of $\varphi\vert_{\mathcal{G}}$, was initiated in \cite{LEBOWITZ1986194} and more recently reinvigorated in~\cite{MR3053773}. The corresponding cable system free field $\varphi$ and its connections with Poissonian ensembles of (continuous) loops and bi-infinite Brownian trajectories on $\tilde{\G}$ have recently been studied in \cite{MR3502602}, \cite{Sz-16}, \cite{LW18}, \cite{DrePreRod2},~\cite{MR4112719} and~\cite{werner2020clusters}. Among these links, those relating $\varphi$ to the model of random interlacements, introduced in~\cite{MR2680403}, stand out. For, as will turn out, the interlacements essentially set a characteristic length scale $\xi$ for the percolation problem we study.

Our main results, Theorems~\ref{T:cap} and \ref{T2} below and their consequences, Corollaries~\ref{T1},~\ref{C:captails} and~\ref{C:scalingrelation}, describe the near-critical regime of the phase transition for the above percolation model by  rigorously deriving various associated critical exponents. These exponents capture the behavior of the system at and near the critical point; see e.g.~Section 1 of \cite{kesten1987} or Sections 9.1--9.2 in \cite{MR1707339} regarding the heuristic picture for independent (Bernoulli) percolation. In essence, our results determine a unique set of exponents, listed in Table~\ref{tb:exponents} on p.\pageref{tb:exponents}, along with a related ``capacity exponent''~$\kappa$, see \eqref{eq:capacityexp}. In special cases, the numerical values of some of these exponents are implicitly contained in \cite{MR4112719} and \cite{MR3502602}. 

The exponents we derive all turn out to be explicit rational functions of two parameters alone: the first one, $\nu$, cf.~\eqref{eq:intro_Green} on p.\pageref{eq:intro_sizeball} and \eqref{eq:introGFF}, describes the decay of the Green's function for the underlying random walk, and thus controls the decay of correlations. The second parameter, $\alpha,$ is geometric and governs the volume-growth of the base graph (see condition \eqref{eq:intro_sizeball} on p.\pageref{eq:intro_sizeball}). In particular, these conditions do not depend on the local structure of the graph, which hints at the conjectured universality of the critical exponents. In the parlance of renormalization group theory \cite{PhysRevB.4.3174, PhysRevB.4.3184}, the set of exponents we infer for each pair $(\nu,\alpha)$ characterizes the ``fixed point'' corresponding to the ``universality class'' of this percolation problem.  Importantly, the resulting values satisfy \textit{all} scaling and hyperscaling relations, which are (heavily) over-determined by our findings, and approach the corresponding mean-field values as $\nu \uparrow 4$ in case the walk is diffusive, i.e.~$\alpha=\nu+2$. We defer a more thorough discussion of these matters to the end of this section (cf.~below \eqref{eq:xiexplained}). The long-range dependence of the model, manifest through $\nu$, presents the advantage of inducing a certain \textit{structure} on the field. This is in contrast to the much studied, but locally more \textit{amorphous} Bernoulli percolation model, for which celebrated results have been derived on two-dimensional lattices, see \cite{SW01} and references therein, or on $\Z^d$ for sufficiently large~$d$ (in the mean-field regime), following \cite{MR1043524}, cf.~\cite{HeHo17} for an extensive account; see also \cite{cerf2015}, \cite{duminilcopin2020upper} for recent progress on~$\Z^3$, and \cite{dewan2021upper} regarding excursion sets of continuous Gaussian fields with rapid correlation decay.

We now describe our results. We consider $\mathcal{G}= (G,\lambda )$ a weighted graph, where $G$ is a countable infinite set, $\lambda_{x,y}\in{[0,\infty)}$, $x,y\in{G},$ are non-negative weights with $\lambda_{x,y} =\lambda_{y,x} \geq 0$, $\lambda_{x,x}=0$, and an edge connects $x$ and $y$ if and only if $\lambda_{x,y}>0$. We assume that $\mathcal{G}$ is connected, locally finite and transient for the random walk on $\mathcal{G}$, which is the continuous-time Markov chain generated by
\begin{equation}
\label{eq:generator}
Lf(x)=\frac1{\lambda_x}\sum_{y \in G}\lambda_{x,y}(f(y)-f(x)),
\end{equation}
for suitable $f:G\to \R$, where $\lambda_x=\sum_{y\in G} \lambda_{x,y}$. We write $\widetilde{\mathcal{G}}$  for the metric graph (or cable system) associated to $\mathcal{G}$, obtained by replacing each edge by a one-dimensional segment of length $1/2\lambda_{x,y}$ and gluing these segments through their endpoints. We denote by $P_x$ the law of the Brownian motion on $\widetilde{\mathcal{G}}$ when starting at $x \in \widetilde{\mathcal{G}}$ and by $X_{\cdot}$ the corresponding canonical process. This diffusion can be defined in terms of its Dirichlet form or directly constructed from a corresponding discrete-time Markov chain by adding independent Brownian excursions on the edges beginning at a vertex; we refer e.g.~to Section 2.1 of~\cite{DrePreRod3} for details regarding the construction of the measure $P_x$. We denote by $g(\cdot,\cdot)$ the Green function associated to this Brownian motion, that is the density of the local times of $X_{\cdot}$ at infinity with respect to the natural Lebesgue measure on $\widetilde{\mathcal{G}}$, which attaches length ${1/2\lambda_{x,y}}$ to every cable. The corresponding Gaussian free field $\varphi=(\varphi_x)_{x\in \widetilde{\mathcal{G}}}$, with canonical law $\P$, is the unique continuous centered Gaussian field with covariance function
\begin{equation}
\label{eq:introGFF}
\E[\varphi_x\varphi_y]=g(x,y),\quad x,y \in \widetilde{\mathcal{G}}.
\end{equation}
In view of \eqref{eq:introGFF}, the behavior of $\varphi$ is deeply linked to that of the underlying diffusion, and our findings greatly benefit from this interplay, as will soon become clear (see for instance Theorem~\ref{T:cap} below).

In order to discuss geometric properties we further endow $G$ with a distance function $d(\cdot,\cdot)$. For many cases of interest, one can afford to simply choose $d= d_{\text{gr}}$, the graph distance on $\mathcal{G}$, i.e.~$d_{\text{gr}}(x,y)=1$ if and only if $\lambda_{x,y}>0$ (extended to a geodesic distance on $G$), but see \eqref{eq:intro_Green} below and the discussion following \eqref{UE}, which may require a different choice. We write $B(x,r)=\{ y \in G : d(x,y) \leq r\}$, $x \in G$, $r>0$, for the discrete balls in the distance $d$ and tacitly assume throughout that the sets $B(x,r)$ have finite cardinality for all $x \in G$ and $r>0$. In the sequel, we define $K\subset \tilde{\mathcal{G}}$ to be bounded if $K\cap G$ is a bounded (or equivalently, finite) set. 

We now consider
\begin{equation}
\label{eq:intro0}
0, \text{ an arbitrary point in } \G,
\end{equation}
and introduce, for $a\in \R$,
\begin{equation}
\label{eq:introKa}
\begin{split}
&\mathcal{K}^a\stackrel{\text{def.}}{=}\tilde{\mathcal{K}}^a\cap G, \text{ where }\\
&\tilde{\mathcal{K}}^a \stackrel{\text{def.}}{=} \text{ the connected component of $0$ in $\{ x \in \widetilde{\mathcal{G}}:  \varphi_x \geq a \}$}
\end{split}
\end{equation}
(with ${\mathcal{K}}^a=\tilde{\mathcal{K}}^a=\emptyset$ if $\varphi_0 <a$) and the percolation function 
\begin{equation}
\label{eq:intro_theta0}
{\theta}_0(a) \stackrel{\text{def.}}{=}\P({\mathcal{K}}^a \text{ is bounded}) \ (= \P(\tilde{\mathcal{K}}^a \text{ is bounded})), \quad a \in \R.
\end{equation}
One can also give an alternative (purely discrete) description of the random set $\mathcal{K}^a$ in \eqref{eq:introKa} without reference to $\tilde{\G}$ as follows. Conditionally on $(\varphi_x)_{x\in G}$, the field $\varphi$ (on $\tilde{\G}$) is obtained by adding independent Brownian bridges on each edge $\{x,y\}$ of length $1/2\lambda_{x,y}$ of a Brownian motion with variance 2 at time $1$, interpolating between the values of $\varphi$ at the endpoints (see e.g.~\cite[(2.5)--(2.8)]{DrePreRod} for the case of the Euclidean lattice; this discussion remains valid in the present setup of transient weighted graphs). In light of this, $\mathcal{K}^a$ can be viewed as the open cluster of $0$ in the following bond percolation model on $\mathcal{G}$: given the discrete Gaussian free field $(\varphi_x)_{x\in G}$, one opens each edge $\{x,y\}$ independently with conditional probability (see e.g.~\cite{MR1912205}, Chap.IV, \S 26, p.67)
\begin{equation}
\label{eq:cond_proba}
1-\exp\big\{-2\lambda_{x,y} (\varphi_x-a)_+(\varphi_y-a)_+ \big\} \qquad \text{(with }z_+ = z\vee0, \, z \in \R).
\end{equation}

In view of \eqref{eq:intro_theta0}, one defines the critical parameter associated to this percolation model as
\begin{equation}
\label{eq:intro_h_*}
a_* = a_* (\G) =\inf\{ a \in \R: \,  {\theta}_0(a) =1 \}.
\end{equation}
The regime $a> a_*$ will be referred to as subcritical and \eqref{eq:intro_h_*} implies that the probability for $\{ \varphi \geq a\}$ to contain an unbounded cluster (anywhere in $\tilde{\mathcal{G}}$) vanishes for such $a$. Correspondingly, this probability is strictly positive when $a< a_*$, which constitutes the supercritical regime. By adapting a soft indirect argument due to \cite{MR914444}, one knows that $a_* \geq 0$ for \textit{any} transient $ {\mathcal{G}}$. We will virtually always (except in \eqref{eq:lawcap_gen}) assume that
\begin{equation}
\label{T1_sign}
 {\theta}_0(a)\big|_{ a=0} = 1.
\end{equation}
As shown in our companion article \cite{DrePreRod3}, see Theorem~1.1,1) and Lemma~3.4,2) therein (see also Remark~\ref{R:cap},\ref{R:cap.sign} below), 
the condition \eqref{T1_sign} is generic in that it is satisfied for a wide range of graphs $\G$. For instance, 
\begin{equation}
\label{T1_signsat}
\text{any vertex-transitive graph $\G$ (with unit weights) satisfies \eqref{T1_sign}},
\end{equation}
see Corollary~1.2 in \cite{DrePreRod3}; for examples of graphs not verifying \eqref{T1_sign}, see Proposition 8.1 in~\cite{Pre1}. In combination with \eqref{T1_theta_0} below, \eqref{T1_sign} essentially settles the continuity question for this phase transition, which includes in particular all graphs in \eqref{T1_signsat}. Our first theorem concerns the observable $\textnormal{cap}(\tilde{\mathcal{K}}^a)$, with $\tilde{\mathcal{K}}^a$ as in \eqref{eq:introKa} and where $\text{cap}(\cdot)$ denotes capacity, see \eqref{eq:cap} below, which plays a prominent role in this context. In only assuming \eqref{T1_sign} (cf.~also \eqref{T1_signsat}), the following result holds under very mild conditions on~$ {\mathcal{G}}$.

\begin{The}
\label{T:cap} 
For all $a \in \R$ and $u \geq 0$,
\begin{equation}
\label{eq:lawcap_gen}
\E\big[e^{-u\textnormal{cap}(\tilde{\mathcal{K}}^a)}1\{ \emptyset \neq  \tilde{\mathcal{K}}^a \textnormal{ bounded} \}\big]= \P\big( \emptyset \neq  \tilde{\mathcal{K}}^{\sqrt{2u+ a^2}} \textnormal{ bounded} \big).\end{equation}
In particular, if \eqref{T1_sign} holds, then
\begin{equation}
\label{eq:lawcap}
\E\big[e^{-u\textnormal{cap}(\tilde{\mathcal{K}}^a)}1\{ \tilde{\mathcal{K}}^a \textnormal{ bounded} \}\big]=\Phi(a)+1-\Phi(\sqrt{2u+ a^2}), \text{ for all } a \in \R, \, u \geq 0,
\end{equation}
where $\Phi(a) =\P(\varphi_0 \leq a)$.
\end{The}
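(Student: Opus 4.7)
The plan is to derive \eqref{eq:lawcap_gen} and then deduce \eqref{eq:lawcap} by decomposing $\{\tilde{\mathcal{K}}^a \textnormal{ bounded}\} = \{\varphi_0 < a\} \sqcup \{\emptyset \ne \tilde{\mathcal{K}}^a \textnormal{ bounded}\}$, using $\textnormal{cap}(\emptyset) = 0$, $\P(\varphi_0 < a) = \Phi(a)$, and observing that \eqref{T1_sign} yields $a_* = 0$, so that $\P(\emptyset \ne \tilde{\mathcal{K}}^b \textnormal{ bounded}) = \P(\varphi_0 \ge b) = 1-\Phi(b)$ for $b = \sqrt{2u+a^2} \ge 0$.

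For \eqref{eq:lawcap_gen}, the plan is to combine the capacity-via-interlacement identity $\P(\mathcal{I}^u \cap K = \emptyset) = e^{-u\,\textnormal{cap}(K)}$ (valid for any compact $K \subset \tilde{\G}$) with Lupu's cable-system isomorphism (\cite{MR3502602}, and its refinements in \cite{MR4112719, DrePreRod2}). First, enlarge the probability space to carry an independent random interlacement $\mathcal{I}^u$ on $\tilde{\G}$ at level $u$, with local-time field $L^u$, so that
\begin{equation*}
\textnormal{LHS of }\eqref{eq:lawcap_gen} = \P\big(\mathcal{I}^u \cap \tilde{\mathcal{K}}^a = \emptyset,\ \emptyset \ne \tilde{\mathcal{K}}^a \textnormal{ bounded}\big).
\end{equation*}
Second, apply Lupu's isomorphism to further couple $(\varphi, L^u)$ with a field $\psi$ on $\tilde{\G}$ satisfying $\psi^2 = \varphi^2 + 2L^u$ pointwise, with sign constant on each connected component of $\{\psi \ne 0\}$, and distributed as a constantly shifted GFF (law equal to that of $\bar\varphi + \sqrt{2u}$ for some $\bar\varphi \stackrel{d}{=} \varphi$). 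Third, identify the displayed event with a level-$\sqrt{2u+a^2}$ cluster event for $\psi$; passing through $\psi \stackrel{d}{=} \bar\varphi + \sqrt{2u}$ and the symmetry $\varphi \stackrel{d}{=} -\varphi$ then rewrites this probability as $\P(\emptyset \ne \tilde{\mathcal{K}}^{\sqrt{2u+a^2}} \textnormal{ bounded})$, which is the RHS.

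The main obstacle is the third step, the event identification. Since on the avoidance event $\{\mathcal{I}^u \cap \tilde{\mathcal{K}}^a = \emptyset\}$ one has $L^u \equiv 0$ on $\tilde{\mathcal{K}}^a$ and hence only $\psi^2 = \varphi^2 \ge a^2$ there, the non-linear shift $a \leadsto \sqrt{2u+a^2}$ does not result from simple pointwise algebra. The correct matching requires the geometric rigidity afforded by the cable-system framework: continuity of $\psi$ confines its sign changes to the zero set, the connected components of $\{\psi \ne 0\}$ coincide with those of $\{\varphi \ne 0\} \cup \mathcal{I}^u$, and the clusters of $\{|\psi| \ge \sqrt{2u+a^2}\}$ are shown, after symmetrization over the sign of $\varphi$, to correspond precisely to the clusters of $\{|\varphi| \ge a\}$ that avoid $\mathcal{I}^u$. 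Performing this identification carefully is the crux of the argument; once it is in place, the conclusion follows by a routine change of variables.
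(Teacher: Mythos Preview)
Your approach via the isomorphism theorem is genuinely different from the paper's. The paper's proof is entirely analytic and self-contained: it develops the differential formula of Lemma~\ref{L:df2} (obtained from Cameron--Martin together with the strong Markov property and the sweeping identity, which is what makes $\textnormal{cap}(\tilde{\mathcal{K}}^a)$ appear), applies it with $F=1\{\textnormal{cap}(\cdot)\in A\}$ to derive the Radon--Nikodym identity of Lemma~\ref{L:capdifflevel}, namely $\frac{{\rm d}\mu_a}{{\rm d}\mu_b}(t)=\exp\{-(a^2-b^2)t/2\}$, and then reads off \eqref{eq:lawcap_gen} by changing level $a\to\sqrt{a^2+2u}$. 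The paper explicitly acknowledges your route as an alternative in Remark~\ref{R:cap},2), pointing to the companion article \cite{DrePreRod3}. What the analytic proof buys is independence from the isomorphism: the differential formulas are derived from first principles and yield \eqref{eq:lawcap_gen} for \emph{any} transient $\G$, with no side condition. What your route buys is a conceptually transparent probabilistic picture once the coupling is granted.

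There is, however, a genuine concern with your proposal for \eqref{eq:lawcap_gen} in full generality. The coupling you invoke --- $\psi^2=\varphi^2+2L^u$ with $\psi$ having the law of a GFF shifted by $\sqrt{2u}$ --- is precisely the isomorphism (Isom) of \cite{Sz-16,DrePreRod3}, and the paper states (Remark~\ref{R:cap},2)) that this isomorphism is shown in \cite{DrePreRod3} to hold under \eqref{T1_sign} and to be \emph{equivalent} to \eqref{eq:lawcap}. Thus, on graphs where \eqref{T1_sign} fails (these exist, cf.~the reference to \cite{Pre1} below \eqref{T1_signsat}), the distributional identity $\psi\stackrel{d}{=}\bar\varphi+\sqrt{2u}$ is not available, and your argument for \eqref{eq:lawcap_gen} does not go through; yet \eqref{eq:lawcap_gen} is asserted unconditionally. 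Under \eqref{T1_sign} your approach is valid in principle, but then you are essentially invoking a statement equivalent to the one you wish to prove. Separately, you concede that the event identification (matching clusters of $\{|\varphi|\geq a\}$ avoiding $\mathcal{I}^u$ with clusters of $\{|\psi|\geq\sqrt{2u+a^2}\}$, after symmetrization) is the crux but leave it as a sketch; this step is nontrivial and is where the real work in the isomorphism-based proof lies.
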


We refer to \eqref{eq:rad_5} below for the density corresponding to the Laplace transform in \eqref{eq:lawcap}. The emergence of the observable $\textnormal{cap}(\tilde{\mathcal{K}}^a)$ is an instance of the aforementioned interplay with potential theory for the underlying diffusion. Theorem~\ref{T:cap} has several important consequences, among which the following two immediate corollaries.
\begin{Cor}\label{T1}
If \eqref{T1_sign} holds, then 
\begin{equation}
\label{T1_theta_0}
 {\theta}_0(a) = 2\Phi(a\wedge 0), \quad a \in \R.
\end{equation}
In particular, $a_* =0$, the function ${\theta}_0$ is continuous on $\R,$ and
\begin{equation}
\label{T1_beta}
\lim_{a\to0^-} \frac{1- {\theta}_0(a)}{|a|}=\sqrt{\frac{2}{\pi g}}, \end{equation}
where $g=g(0,0)$.
\end{Cor}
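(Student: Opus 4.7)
The plan is to read off Corollary \ref{T1} essentially by substituting $u=0$ in the identity \eqref{eq:lawcap} supplied by Theorem \ref{T:cap}. Setting $u=0$ gives
\begin{equation*}
\theta_0(a) = \P(\tilde{\mathcal{K}}^a \text{ bounded}) = \Phi(a) + 1 - \Phi(\sqrt{a^2}) = \Phi(a) + 1 - \Phi(|a|),
\end{equation*}
and this single identity already encodes all four conclusions. For $a \geq 0$ one has $|a|=a$ and the right-hand side collapses to $1$, which matches $2\Phi(a\wedge 0) = 2\Phi(0) = 1$. For $a<0$, using the symmetry $\Phi(-a) = 1 - \Phi(a)$ (which holds because $\varphi_0$ is centered Gaussian with variance $g$, hence so is $\Phi$ about the origin), the right-hand side becomes $\Phi(a) + 1 - (1-\Phi(a)) = 2\Phi(a) = 2\Phi(a\wedge 0)$. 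This establishes \eqref{T1_theta_0}.

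From \eqref{T1_theta_0} the remaining assertions are immediate: since $2\Phi(a) < 1$ for all $a<0$ and $\theta_0(0)=1$, one reads off $a_* = \inf\{a\in\R:\theta_0(a)=1\}=0$. Continuity of $\theta_0$ on $\R$ follows from continuity of the Gaussian CDF $\Phi$ (and of $a\mapsto a\wedge 0$). Finally, for the slope at the critical point from the left, one performs a Taylor expansion of $\Phi$ at $0$: since $\Phi'(0) = (2\pi g)^{-1/2}$, one has $1 - 2\Phi(a) = -2\Phi'(0)\,a + O(a^2)$ as $a\to 0$, so dividing by $|a| = -a$ for $a<0$ yields $2\Phi'(0) = \sqrt{2/(\pi g)}$ in the limit, which is exactly \eqref{T1_beta}.

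The argument is essentially mechanical given Theorem \ref{T:cap}, so there is no real obstacle; the only point worth verifying with care is the step $\theta_0(a) = \Phi(a) + 1 - \Phi(|a|)$, specifically that the expectation on the left of \eqref{eq:lawcap} at $u=0$ really equals $\theta_0(a) = \P(\tilde{\mathcal{K}}^a \text{ bounded})$, which includes the event $\{\tilde{\mathcal{K}}^a = \emptyset\} = \{\varphi_0 < a\}$ contributing $\Phi(a)$. Everything else is symmetry of the Gaussian and elementary calculus.
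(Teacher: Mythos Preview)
Your proof is correct and follows essentially the same approach as the paper: set $u=0$ in \eqref{eq:lawcap}, rewrite $\Phi(a)+1-\Phi(|a|)$ as $2\Phi(a\wedge 0)$, and read off the remaining conclusions. The paper's own proof is two sentences long and relies on the previously stated fact $a_*\geq 0$, whereas you derive $a_*=0$ directly from the formula, which is a slight but harmless variation.
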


In the special case $G=\Z^d$, $d \geq3$ (with unit weights), the formula \eqref{T1_theta_0} was shown in \cite{MR4112719}, albeit by different methods (see also \cite{LW18} for a version of this result on finite graphs). Along with the other findings of~\cite{MR4112719}, cf.~\eqref{eq:psi_0nu=1} and Remark~\ref{R:rad},\ref{R:rad.2} below, these all turn out to be immediate consequences of Theorem~\ref{T:cap}. In view of~\eqref{T1_signsat}, these results are in fact true in far greater generality, and underlying them is the fundamental quantity $\textnormal{cap}(\tilde{\mathcal{K}}^a)$, which is integrable.

The Laplace functional \eqref{eq:lawcap} entails all the information about the capacity of bounded clusters at any height, including at and near the critical point $a_*= 0$, as reflected by: 
\begin{Cor}
\label{C:captails}
If \eqref{T1_sign} holds and $a_N$ 
satisfies $\lim_N N^{1/2}a_N =a_{\infty}\in{[-\infty,\infty]},$  then
\begin{equation}
\label{eq:captails}
    \sqrt{N} \P\big(\mathrm{{\rm cap}}(\tilde{\mathcal{K}}^{a_N})\geq N, \,  \tilde{\mathcal{K}}^{a_N} \textnormal{ bounded} \big)\tend{N}{\infty} \frac{1}{2\pi\sqrt{g}}\int_1^{\infty} t^{-3/2}\exp\Big(-\frac{a_\infty^2t}{2}\Big) \,  {\rm d}t.
\end{equation}
\end{Cor}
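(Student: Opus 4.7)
The plan is to reduce the corollary to a straightforward asymptotic calculation once the Laplace transform \eqref{eq:lawcap} from Theorem~\ref{T:cap} has been inverted. The paper produces, as \eqref{eq:rad_5} below, the density corresponding to that Laplace transform: under \eqref{T1_sign}, the sub-probability law of $\textnormal{cap}(\tilde{\K}^a)$ on $\{\emptyset \neq \tilde{\K}^a \textnormal{ bounded}\}$ has density
\begin{equation*}
f_a(s) \,=\, \frac{1}{2\pi\sqrt{g}}\cdot \frac{e^{-a^2 s/2}}{s\sqrt{s - 1/g}}, \qquad s > 1/g.
\end{equation*}
If one wished to cross-check this, the substitutions $s = r + 1/g$ and then $r = w^2$ reduce the Laplace transform of $f_a$ to the classical integral $\int_0^\infty (w^2 + b^2)^{-1} e^{-cw^2}\, dw = (\pi/(2b))\, e^{b^2 c}\, \textnormal{erfc}(b\sqrt{c})$ with $b = 1/\sqrt{g}$ and $c = u + a^2/2$; after simplification and using $1 - \Phi(y) = \tfrac{1}{2}\textnormal{erfc}(y/\sqrt{2g})$, one recovers the right-hand side of \eqref{eq:lawcap}.

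Granted this density, for $N > 1/g$ I would write
\begin{equation*}
\sqrt{N}\,\P\bigl(\textnormal{cap}(\tilde{\K}^{a_N}) \geq N,\, \tilde{\K}^{a_N}\textnormal{ bounded}\bigr) \,=\, \sqrt{N}\int_N^\infty f_{a_N}(s)\, ds,
\end{equation*}
and perform the change of variables $s = Nt$, which converts the right-hand side into
\begin{equation*}
\frac{1}{2\pi\sqrt{g}}\int_1^\infty \frac{e^{-a_N^2 N t/2}}{t\sqrt{t - 1/(gN)}}\, dt.
\end{equation*}

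The final step is dominated convergence. Pointwise on $t \geq 1$, the integrand converges to $t^{-3/2}\, e^{-a_\infty^2 t/2}$ (interpreted as $0$ when $|a_\infty| = \infty$), since by assumption $a_N^2 N \to a_\infty^2$ and since $1/(gN) \to 0$. For $N \geq 2/g$ and $t \geq 1$ one has $t - 1/(gN) \geq t/2$, and $e^{-a_N^2 N t/2} \leq 1$, so the integrand is uniformly bounded above by $\sqrt{2}\,t^{-3/2}$, which is integrable on $[1,\infty)$. The dominated convergence theorem then yields the claimed limit $\frac{1}{2\pi\sqrt{g}}\int_1^\infty t^{-3/2}\, e^{-a_\infty^2 t/2}\, dt$. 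There is no serious obstacle here once \eqref{eq:rad_5} is available: the only point requiring minor care is producing the dominating function, which is handled by the elementary bound above, and verifying that the degenerate cases $a_\infty = \pm\infty$ are correctly absorbed (they give the limit $0$, matching the right-hand side).
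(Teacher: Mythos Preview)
Your proof is correct and follows essentially the same approach as the paper: invert the Laplace transform \eqref{eq:lawcap} to obtain the density of $\textnormal{cap}(\tilde{\K}^a)$ on $\{\emptyset\neq\tilde{\K}^a\text{ bounded}\}$, then integrate the tail and pass to the limit. The paper leaves the last step as ``readily follows,'' whereas you spell out the change of variables and the dominated-convergence argument; this is fine and adds clarity. One point worth noting: your density $f_a(s)=\frac{1}{2\pi\sqrt{g}}\,\frac{e^{-a^2s/2}}{s\sqrt{s-1/g}}$ is the correct one (it is what actually Laplace-transforms to $1-\Phi(\sqrt{2u+a^2})$ and it reproduces the factor $e^{-a_\infty^2 t/2}$ in \eqref{eq:captails}); the exponent $e^{-a^2t}$ printed in \eqref{eq:rad_5} is a typo for $e^{-a^2t/2}$.
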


Theorem \ref{T:cap} and its corollaries are proved in Section~\ref{S:capandthetageneral} using an approach involving differential formulas developed in Section~\ref{S:diffform}, see in particular~Lemma \ref{L:df2} and Corollary~\ref{C:df}. The derivation of these formulas relies on the strong Markov property for $\varphi$ and a sweeping identity, which makes $\textnormal{cap}(\tilde{\mathcal{K}}^a)$ naturally appear, see for instance \eqref{eq:deriv1.1} or \eqref{eq:Cdf1} and Remark~\ref{R:cap_special}.

The appeal of Theorem~\ref{T:cap} and Corollaries~\ref{T1} and~\ref{C:captails} is in no small part due to the level of generality in which they hold. We  forewarn the perceptive reader not to mistake \eqref{T1_beta} as an indication of perpetual mean-field behavior, cf.~also \eqref{eq:intro_beta} below. Indeed, the results following below will show otherwise. In order to gain further insights, we make an additional assumption on $\mathcal{G}$, namely that
there exist an exponent $\nu>0$ and constants $ c,c' \in (0 ,\infty)$ (possibly depending on $\nu$) such that
\begin{equation}
\label{eq:intro_Green}\tag{$G_{\nu}$}
\begin{split}
&c\leq g(x,x)\leq c' 
\text{ and }  
c d(x,y)^{-\nu}\leq g(x,y)\leq c' d(x,y)^{-\nu} \quad \text{ for all }x \neq y\in{G},
\end{split}
\end{equation}
where $d(\cdot,\cdot)$ refers to the distance introduced below \eqref{eq:introGFF}. The condition \eqref{eq:intro_Green} actually implies \eqref{T1_sign}, as follows by combining Corollary~3.3,1) and Lemma~3.4,2) in \cite{DrePreRod3}.

We will also often require the graph to be $\alpha$-Ahlfors regular, i.e.~there exist a positive exponent $\alpha$ and 
$ c,c' \in (0 ,\infty)$ (possibly depending on $\alpha$) such that the volume growth condition
\begin{equation}
\label{eq:intro_sizeball} \tag{$V_{\alpha}$}
cr^{\alpha}\leq \lambda(B(x,r))\leq c'r^{\alpha}\quad \text{ for all }x\in{G}\text{ and }r\geq1,
\end{equation}
is satisfied (recall that $B(x,r)$ refers to the discrete ball of radius $r$ around $x \in G$, cf.~above \eqref{eq:intro0}). 
Moreover, we will at times rely on two additional technical assumptions (see also Remark~\ref{R:rad},\ref{R:rad.3} regarding a possible weakening), which we gather here for later reference:
\begin{align}
&\label{eq:ellipticity} 
\ \, \lambda_{x,y}/\lambda_x\geq \Cl[c]{c:ellipticity}\text{ for all }x\sim y\in{G} \text{ (i.e. if $\lambda_{x,y}>0$);}\\[0.3em]
&\label{eq:intro_shortgeodesic}
\begin{array}{l}
\text{there exists an infinite geodesic }(0=y_0,y_1,\dots)\text{ for $d_{\text{gr}}$} \\[0.3em]
\text{such that }d_{\text{gr}}(y_k,y_p)\leq \Cl[c]{Cgeo}d(y_k,y_p)\text{ for all }k,p\geq 0.
\end{array}
\end{align}
Condition \eqref{eq:ellipticity} is a standard ellipticity assumption in this context, which together with \eqref{eq:intro_Green} and \eqref{eq:intro_sizeball} forms a natural set of requirements from the perspective of the walk. Indeed, in case $d=d_{\text{gr}}$ the results of \cite{MR1853353} imply that \eqref{eq:intro_Green}, \eqref{eq:intro_sizeball} and \eqref{eq:ellipticity} are equivalent to upper and lower Gaussian (in case $\beta= \alpha-\nu=2$) or sub-Gaussian (in case $\beta= \alpha-\nu>2$; note that $\beta \geq 2$ always holds, cf.~\eqref{eq:condonalphanu} below) estimates on the heat kernel $q_t$ of the walk on $G$ of the form
\begin{equation}
\label{UE}
  ct^{-\frac{\alpha}{\beta}}\exp\Big\{-\Big(\frac{d(x,y)^{\beta}}{c't}\Big)^{\frac{1}{\beta-1}}\Big\} \leq   q_t(x,y)\leq  \tilde{c}t^{-\frac{\alpha}{\beta}}\exp\Big\{-\Big(\frac{d(x,y)^{\beta}}{\tilde{c}'t}\Big)^{\frac{1}{\beta-1}}\Big\}
\end{equation}
for all $x,y \in G$ and $t \geq 1\vee d(x,y)$.
 Condition \eqref{eq:intro_shortgeodesic}, which always holds in case $d=d_{\text{gr}}$ (see \cite{MR864581} regarding the existence of infinite geodesic ``rays'' for $d_{\text{gr}}$) is tailored to certain chaining arguments that will be used to build long connections in $\{ \varphi \geq a\}$. Its necessity is further explained in Remark~\ref{R:LB},\ref{rk:shortgeodesicsisneeded}. In fact, \eqref{eq:ellipticity} can often be weakened and together with \eqref{eq:intro_shortgeodesic}, the two conditions are in a sense complementary, see Remark~\ref{R:rad},\ref{R:rad.3} for more on this.

A canonical example satisfying all of \eqref{eq:intro_Green}, \eqref{eq:intro_sizeball}, \eqref{eq:ellipticity} and \eqref{eq:intro_shortgeodesic} is the Euclidean lattice $G= \Z^d$, $d \geq 3$, with unit weights and for the choices $d(\cdot,\cdot)=d_{\text{gr}}(\cdot,\cdot)$, $\nu=d-2$ and $\alpha=d.$ In particular, the emblematic case $G=\Z^3$ corresponds to $\nu=1$. More generally, this setup allows for disordered (random) uniformly elliptic weights $c \leq 
\lambda_{x,y} \leq c^{-1}$ (in fact, \eqref{eq:intro_Green} and \eqref{eq:ellipticity} alone only require $\lambda_{x,y} \geq c$, and \eqref{eq:intro_sizeball} implies $\lambda_{x,y} \leq c'$; see~Lemma 2.3 in \cite{DrePreRod2}). Our results then hold in a quenched sense, i.e.~for almost all realizations of $\lambda$.

Furthermore, all four conditions hold for instance for the examples discussed in~(1.4) of~\cite{DrePreRod2}, which include Cayley graphs of suitable volume growth, as well as various fractal graphs (possibly sub-diffusive). The flexibility in the choice of the distance $d$ takes into account that the heat flow on $\G$ may well propagate differently in different ``directions'', for instance if $\G$ has a product structure, which typically requires choosing $d\neq d_{\text{gr}}$ for \eqref{eq:intro_Green} to hold; see Proposition 3.5 in \cite{DrePreRod2} for more on this, as well as~\cite{DrePreRod2} and references therein for further examples. An instructive case in point is the graph $G= \text{Sierp}\times \Z$ considered in \cite{MR2891880} (endowed with unit weights), where $\text{Sierp}$ is the graphical Sierpinski gasket, whose projection on $\text{Sierp}$ is sub-diffusive, and which is a canonical example of graph with $\nu<1,$ see \cite{MR2177164, MR1378848}.

Note that, since $\G$ is assumed to be transient, once \eqref{eq:intro_sizeball}, \eqref{eq:intro_Green} and \eqref{eq:ellipticity} are satisfied, combining Theorem 1 and Proposition 3(a) in \cite{MR2076770} (see also (2.10) in \cite{DrePreRod2} for as to why our assumptions entail $\lambda_{x,y} \geq c$ for $x\sim y$, as required in \cite{MR2076770}), and Proposition 6.3 in \cite{MR1853353} one necessarily has, in case~$d=d_{\text{gr}}$,
 \begin{equation}
\label{eq:condonalphanu}
0<\nu\leq\alpha-2 \text{ (and in particular, $\alpha>2$)}.
\end{equation}
Moreover, combining Theorem 2 and Proposition 3(d) in \cite{MR2076770}, together with Theorem~2.1 and (4.2) in \cite{MR1853353}, one knows that for any set values $\alpha$ and $\nu$ satisfying \eqref{eq:condonalphanu}, there exists a graph satisfying \eqref{eq:intro_Green}, \eqref{eq:intro_sizeball} and \eqref{eq:ellipticity} (the latter follows by inspection of  \cite{MR2076770}, see p.13 therein), as well as \eqref{eq:intro_shortgeodesic} since $d=d_{\text{gr}}$ for these graphs. In the sequel, whenever we assume  \eqref{eq:intro_sizeball}, \eqref{eq:intro_Green} to hold simultaneously (for some distance function $d$), we tacitly assume \eqref{eq:condonalphanu} to be true.

Now, assuming only \eqref{eq:intro_Green} to hold (see p.\pageref{eq:intro_Green}), we consider the quantity
\begin{equation}
\label{eq:intro_psi}
\psi(a,r)\stackrel{\text{def.}}{=}\P(r \leq \text{rad}({\mathcal{K}}^a)<\infty), \quad \text{ for }r>0, \, a\in \R
\end{equation}
(cf.\ \eqref{eq:introKa} for the definition of ${\mathcal{K}}^a$), where 
$\text{rad}(A)\stackrel{\text{def.}}{=}\sup\{{d}(0,x): x \in A \}$, for $A \subset {{G}}$ with $0\in A$, as well as the
truncated two-point function
\begin{equation}
\label{eq:eq:deftauhf}
\tau_a^{\text{tr}}(0,x)\stackrel{\text{def.}}{=} \P(x\in {\mathcal{K}}^a, \, {\mathcal{K}}^a \text{ bounded})\ (=\P(x\in \tilde{\mathcal{K}}^a, \, \tilde{\mathcal{K}}^a \text{ bounded})), \text{ $a \in \R$, $x\in G$.}
\end{equation}
The quantity $\P(x \in \tilde{\mathcal{K}}^0)$, $x \in \tilde{\G}$, admits an exact formula, first observed in \cite{MR3502602}, which follows by combining Propositions~5.2 and~2.1 therein. Under \eqref{eq:intro_Green} (and \eqref{eq:intro_sizeball}) it yields that for all $x \in G$,
\begin{equation}
\label{eq:2ptatcriticality}
\tau_0^{\text{tr}}(0,x)=\frac2\pi \arcsin \Big(\textstyle\frac{g(0,x)}{\sqrt{g(0,0)g(x,x)}}\Big) \asymp d(0,x)^{2-\alpha-\eta} \text{ as $d(0,x)\to \infty$}, \text{ with } \eta=\nu-\alpha+2,
\end{equation}
 where $f\asymp g$ means that $cf \leq g \leq c'f$ for some constants $c,c'\in (0,\infty)$ (see the end of this introduction for our convention regarding constants). The arguably cumbersome parametrisation in \eqref{eq:2ptatcriticality} follows standard convention. It is arranged so that $\E[|{\mathcal{K}}^a \cap B_r|] \asymp r^{2-\eta}$, where $B_r=B(0,r)$, whence $\eta$ captures the discrepancy from mean-field behavior, cf.~the discussion below.
 
 With regards to $\psi(0,\cdot)$, by comparison with the capacity functional, i.e.~using Corollary~\ref{C:captails} in case $a_N\equiv 0$, see Remark~\ref{R:rhobounds} below, it is a simple consequence that for all $r \geq1$,
\begin{align}
&\label{eq:psi_0nu<1}
cr^{-\nu/2} \leq \psi(0,r) \leq c' r^{-\nu/2}, \quad \text{ if $\nu <1$,} \\
&\label{eq:psi_0nu=1}
cr^{-\nu/2} \leq \psi(0,r) \leq c' (r/(\log r)^{1\{ \nu=1\}})^{-1/2}, \quad \text{ if $\nu \geq1$};
\end{align}
see also \cite{MR4112719} for \eqref{eq:psi_0nu=1} when $G=\Z^d$, $d \geq3$, derived therein together with bounds on the critical window; see also Remark~\ref{R:rad},\ref{R:rad.2} below regarding improvements on the latter.

Our second main result gives precise estimates on $\psi(a,r)$ (and similarly for $\tau_a^{\text{tr}}(0,x)$), quantitative in $a$ and $r.$

\begin{The}[under \eqref{eq:intro_Green} and \eqref{eq:ellipticity}]
\label{T2} With  
\begin{equation}
\label{eq:defxi}
\xi(a)\stackrel{\textnormal{def.}}{=}|a|^{-\frac2{\nu}} \quad \text{ $($with the convention $\xi(0)=\infty)$},
\end{equation}
the following~hold:
\begin{enumerate}[label=(\roman*)]
\item \label{T2:nu<1}If $\nu < 1$, then for all $a \in \R$ and $r\geq 1$,
\begin{equation}
\label{eq:corrlength1}
\Cl[c]{c1_xi} \psi(0,r) \exp \Big\{-\Cl[c]{C2_xi} \Big(\frac{ r}{ \xi(a)}\Big)^\nu\Big\} \leq \psi(a,r) \leq \psi(0,r)\exp \Big\{-\Cl[c]{c4_xi}  \Big(\frac{ r}{ \xi(a)}\Big)^\nu\Big\}.
\end{equation}
\item \label{T2:nu>1}If $\nu \geq 1$, then for all $a \in \R$ and $r\geq1$, 
\begin{equation}
\label{eq:corrlength2}
  \psi(a,r) \leq\psi(0,r) \times \begin{cases}
  \exp \Big\{-\Cr{c4_xi}  \frac{ (r/\xi(a))}{ \log( r \vee 2)} \Big\}, &  \text{ if $\nu = 1,$} \\[0.8em]
   \exp \big\{-\Cr{c4_xi}  ra^2 \big\}, & \text{ if $\nu > 1.$} 
  \end{cases}
  \end{equation}
Furthermore, if \eqref{eq:intro_sizeball} and \eqref{eq:intro_shortgeodesic} are also satisfied, then for $\nu=1$ and all $|a|\leq c$, 
\begin{equation}
\label{eq:corrlength2lbpsi}
{\psi}(a,r) \geq \Cr{c1_xi}\psi(0,r) \times 
 \exp \Big\{-  \Cr{C2_xi}\frac{(r/ \xi(a))}{ \log ((r/\xi(a))\vee2)} \Big\},  \quad  \text{ if $\textstyle \frac{r}{\xi(a)}\notin{(1,(\log\xi(a))^{\Cl[c]{c:defect3}})}$,}
\end{equation}
with $\Cr{c:defect3}\in (0,1)$. Further, if $ \psi(0,r) \asymp r^{-1/2}$ $($cf.~\eqref{eq:psi_0nu=1}$)$ then \eqref{eq:corrlength2lbpsi} holds for all $r\geq1.$
\end{enumerate}
Moreover, the upper bounds in \eqref{eq:corrlength1}, \eqref{eq:corrlength2} remain valid upon replacing $ \psi(a,r)$ by $\tau_a^{\textnormal{tr}}(0,x)$ everywhere, with $r\stackrel{\text{def.}}{=}d(0,x) \geq 1;$ furthermore, in case \eqref{eq:intro_sizeball} holds and $d=d_{\textnormal{gr}}$, the lower bounds in \eqref{eq:corrlength1}
 remain valid for $|a| \leq c$, as well as
 \eqref{eq:corrlength2lbpsi} for $r\geq \xi(a)(\log\xi(a))^{\Cr{c:defect3}}$.
 \end{The}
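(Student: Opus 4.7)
The proof of Theorem~\ref{T2} splits between upper bounds, which flow directly from Theorem~\ref{T:cap}, and lower bounds, which require a Cameron--Martin shift combined with chaining along the geodesic of \eqref{eq:intro_shortgeodesic}.

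\textbf{Upper bounds.} The identity \eqref{eq:lawcap_gen} reads $\E[e^{-u\textnormal{cap}(\tilde{\mathcal{K}}^a)} 1\{\emptyset \neq \tilde{\mathcal{K}}^a\textnormal{ bounded}\}] = 1 - \Phi(\sqrt{2u+a^2})$; the right-hand side is the Laplace transform of an explicit density in $u$, and inverting (or performing a direct Markov-type optimisation over $u$) yields the quantitative tail
\[
\P\big(\textnormal{cap}(\tilde{\mathcal{K}}^a) \geq N,\ \tilde{\mathcal{K}}^a\textnormal{ bounded}\big) \leq C N^{-1/2}\exp\!\big(-a^2 N/(2g)\big),\qquad N \geq 1,
\]
uniformly in $a\in \R$. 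This is complemented by a deterministic lower bound on $\textnormal{cap}$: under \eqref{eq:intro_Green}, any connected $K \subset \tilde{\mathcal{G}}$ with $0\in K$ and $\textnormal{rad}(K\cap G) \geq r$ contains, by integer-valuedness of $d(0,\cdot)$ along a path in $K$, vertices $y_0 = 0, y_1,\ldots,y_r \in G$ with $d(0,y_k) = k$, hence $d(y_i,y_j) \geq |i-j|$. Testing the variational formula $\textnormal{cap}(\cdot)^{-1} = \inf_\mu \iint g\, d\mu\, d\mu$ against the uniform probability measure on $\{y_0,\ldots,y_r\}$ and summing $g(y_i,y_j) \leq c|i-j|^{-\nu}$ gives $\textnormal{cap}(K) \geq c\, \gamma_\nu(r)$, where $\gamma_\nu(r)$ equals $r^\nu$ if $\nu<1$, $r/\log r$ if $\nu=1$, and $r$ if $\nu>1$. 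Since $\psi(a,r) \leq \P(\textnormal{cap}(\tilde{\mathcal{K}}^a) \geq c\gamma_\nu(r), \tilde{\mathcal{K}}^a\textnormal{ bounded})$ and $\gamma_\nu(r)^{-1/2}$ matches the upper bound on $\psi(0,r)$ in \eqref{eq:psi_0nu<1}--\eqref{eq:psi_0nu=1}, combining the two yields the upper bounds in \eqref{eq:corrlength1}--\eqref{eq:corrlength2}; the case of $\tau_a^{\textnormal{tr}}(0,x)$ is identical with $r = d(0,x)$.

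\textbf{Lower bounds.} The strategy is a Cameron--Martin change of measure that reduces the estimate to its critical counterpart. Pick a deterministic connected $D \subset \tilde{\mathcal{G}}$ with $0 \in D$, $\textnormal{rad}(D) \asymp r$, and $\textnormal{cap}(D) \asymp \gamma_\nu(r)$: a cable-system ball of radius $r$ when $\nu<1$, and a thickened initial segment of length $r$ of the geodesic from \eqref{eq:intro_shortgeodesic} when $\nu=1$. Then $\psi(a,r) \geq \P(D \subset \tilde{\mathcal{K}}^a) = \P(\varphi \geq a\text{ on }D)$. Shifting $\varphi$ by the Cameron--Martin direction $-a h_D$, where $h_D$ is the equilibrium potential of $D$ (equal to $1$ on $D$, of Dirichlet energy $\textnormal{cap}(D)$), the standard Gaussian tail for minima of $\varphi$ on $D$ supplies
\[
\P(\varphi \geq a\text{ on }D) \geq c\,\exp\!\big(-a^2 \textnormal{cap}(D)/2\big)\, \P(\varphi \geq 0\text{ on }D),
\]
the first factor being exactly $\exp(-c'(r/\xi(a))^\nu)$ (and its $\nu=1$ counterpart). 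The residual critical probability $\P(D \subset \tilde{\mathcal{K}}^0)$ is then matched with $c\,\psi(0,r)$ via a local-uniqueness--type argument (in the spirit of Theorem~\ref{Thm:locuniq}): at criticality, the existence of any connection of radius $r$ is, up to constants, comparable to the existence of a ``thick'' such connection anchored at $D$.

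\textbf{Main obstacle.} The delicate case is the $\nu=1$ lower bound \eqref{eq:corrlength2lbpsi}. Two logarithms interact here: the $\log r$ built into $\gamma_1(r) = r/\log r$, and the possible $\sqrt{\log r}$ gap between $\psi(0,r)$ and $r^{-1/2}$ allowed by \eqref{eq:psi_0nu=1}. A Cameron--Martin shift with $D$ taken to be the geodesic segment thickened at transverse scale $\xi(a)$ naturally produces $a^2 \textnormal{cap}(D) \asymp (r/\xi(a))/\log(r/\xi(a))$, exactly matching the claimed rate, but the local-uniqueness step comparing $\P(D \subset \tilde{\mathcal{K}}^0)$ to $\psi(0,r)$ only goes through in the extremal regimes $r/\xi(a) \geq (\log\xi(a))^{\Cr{c:defect3}}$ (or the trivial regime $r \leq \xi(a)$); in the intermediate window one cannot simultaneously absorb both logarithmic discrepancies. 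When $\psi(0,r) \asymp r^{-1/2}$ (the logarithm is absent on the critical side) this obstruction disappears and the bound extends throughout $r \geq 1$. The refinement of the lower bound to $\tau_a^{\textnormal{tr}}$ under \eqref{eq:intro_sizeball} with $d = d_{\textnormal{gr}}$ is handled by positioning $D$ to end at the specific target $x$ rather than a generic point on the sphere of radius $r$, with Ahlfors regularity controlling the geometric calibration.
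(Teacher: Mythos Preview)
Your proposal has two genuine gaps, one on each side.

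\textbf{Upper bounds.} Your capacity-comparison argument gives
\[
\psi(a,r)\leq \P\big(\textnormal{cap}(\tilde{\mathcal{K}}^a)\geq c\gamma_\nu(r),\ \tilde{\mathcal{K}}^a\text{ bounded}\big)\leq C\,\gamma_\nu(r)^{-1/2}e^{-ca^2\gamma_\nu(r)},
\]
and you then identify $\gamma_\nu(r)^{-1/2}$ with the \emph{upper} bound on $\psi(0,r)$. But the theorem demands the prefactor $\psi(0,r)$ itself, not an upper bound for it. For $\nu<1$ the discrepancy is a harmless constant since $\psi(0,r)\asymp r^{-\nu/2}$; for $\nu\geq1$ however one only knows $\psi(0,r)\geq cr^{-\nu/2}$, so your bound is off by a factor $(\log r)^{1/2}$ (when $\nu=1$) or $r^{(\nu-1/\nu)/2}$ (when $\nu>1$), which cannot be absorbed into the exponential near criticality. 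The paper circumvents this by a differential-inequality argument: with $\tau_\kappa(a)=e^{\kappa a^2 f_\nu(r)}\psi(a,r)$, the formula $\frac{d}{da}\psi(a,r)=-a\,\E[\textnormal{cap}(\tilde{\mathcal{K}}^a)1_{A(a,r)}]$ (a specialization of Corollary~\ref{C:df}) together with the deterministic inclusion $A(a,r)\subset\{\textnormal{cap}(\tilde{\mathcal{K}}^a)\geq c f_\nu(r)\}$ yields $\textnormal{sign}(a)\tau_\kappa'(a)\leq 0$ for small $\kappa$, whence $\psi(a,r)\leq\psi(0,r)e^{-\kappa a^2 f_\nu(r)}$ exactly.

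\textbf{Lower bounds.} Your scheme bounds $\psi(a,r)\geq\P(\varphi\geq a\text{ on }D)$ for a \emph{fixed} deterministic $D$ (a ball, or a thickened geodesic), then applies Cameron--Martin to reduce to $\P(\varphi\geq 0\text{ on }D)$, and finally asserts that this last probability is $\asymp\psi(0,r)$ ``via a local-uniqueness-type argument''. This last step fails: $\P(\varphi\geq 0\text{ on }D)$ is the probability that $\varphi$ is nonnegative on \emph{all} of $D$, and for $D=\tilde{B}_r$ (or a long tube) this decays at least stretched-exponentially in $r$, not like $r^{-\nu/2}$. Local uniqueness for interlacements says nothing about the free field filling a prescribed deterministic region. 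For $\nu<1$ the paper instead obtains the lower bound in \eqref{eq:corrlength1} directly from the capacity law via the reverse inclusion $\{\textnormal{cap}(\tilde{\mathcal{K}}^a)\geq cr^\nu\}\subset A(a,r)$ and the explicit density \eqref{eq:rad_5}. For $\nu=1$ the paper's mechanism is quite different from yours and essential: one first \emph{explores} the random set $\tilde{\mathcal{K}}^a_{\xi}$, shows (Lemma~\ref{lem:capacityislarge}) that $\P(\textnormal{cap}(\tilde{\mathcal{K}}^a_\xi)\geq c\xi^\nu)\gtrsim \psi(0,\xi)$, then applies the Markov property and performs the change of measure on $\tilde{\G}\setminus\tilde{\mathcal{K}}^a_\xi$, connecting the explored piece to $\partial B_r$ via the interlacement coupling \eqref{eq:weakisom} and the local-uniqueness bound of Theorem~\ref{Thm:locuniq}. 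The prefactor $\psi(0,r)$ arises from the exploration, not from forcing the field to be positive on a fixed set.
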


The role of $\xi$ above as a natural length scale for the percolation problem \eqref{eq:introKa} confirms a prediction of \cite{PhysRevB.29.387, PhysRevB.27.413}. Indeed, for $\nu \leq 1$, \eqref{eq:corrlength1}, \eqref{eq:corrlength2} and \eqref{eq:corrlength2lbpsi} exhibit $\xi$ as the right correlation length in this model, with exponent $\nu_{c}$ (not to be confused with the parameter $\nu$ from \eqref{eq:intro_Green}, whence the subscript) defined as
\begin{equation} \label{eq:nuDeffirst}
\nu_{c} \stackrel{\text{def.}}{=} - \lim_{a \to 0 } \frac{\log(\xi(a))}{\log(a)}  \ \Big( = \frac{2}{\nu} \Big).
\end{equation}
In fact, \cite{PhysRevB.29.387, PhysRevB.27.413} conjecture that $\nu_c = 2/\nu$ is the correct correlation length exponent for any long-range percolation model with correlation decay exponent $\nu.$ We refer to \eqref{eq:defcorrlength}--\eqref{eq:intro_nu} below and to Corollary~\ref{Corcorrlength} for a more careful 
treatment of the correlation length, as well as to the discussion around \eqref{eq:defRI}--\eqref{eq:xiexplained} and to Theorem~\ref{Thm:locuniq} and Corollary~\ref{C:locuniq} below for further insight into the length scale $\xi=\xi(a)$ introduced in \eqref{eq:defxi}.

Regarding lower bounds for related quantities $\tilde{\psi}$, $\tilde{\tau}_a^{\textnormal{tr}}$, cf.~\eqref{eq:intro_psitilde}, which include the regime $\nu>1$, we refer to the discussion around Theorem~\ref{T:psitilde} at the very end of this introduction and to the recent article~\cite{GRS20} concerning results related to~\eqref{eq:corrlength2} and \eqref{eq:corrlength2lbpsi} for the discrete problem~on~$\Z^3,$ which witness $\xi=\xi(a)$ in yielding the bounds $ c\xi(a)^{-1} \leq -\frac{\log(r)}r\log {\psi}(a,r)\leq c'\xi(a)^{-1}$ valid for all large enough $r\geq R(a)$ but lack any quantitative control on $R(a)$.

The version of \eqref{eq:corrlength2} for $\tau_a^{\text{tr}}$ including the sharp pre-factor as $a \to 0$ will be crucial for our purposes, see Corollary~\ref{C:scalingrelation} below. Upper bounds for $\psi(a,r)$ akin to \eqref{eq:corrlength2}, but without the correct pre-factor $\psi(0,r)$ were derived in \cite{MR4112719}. In essence, all bounds for $\psi$ in case $\nu<1$ in Theorem~\ref{T2} as well as all off-critical \textit{upper} bounds (when $r/\xi(a) \geq c$) can straightorwardly be deduced either by direct comparison between $\text{rad}({\mathcal{K}}^a)$ and $\text{cap}(\tilde{\mathcal{K}}^a)$ in combination with Corollary~\ref{C:captails}, or, in the case of \eqref{eq:corrlength2}, by means of a suitable differential inequality.
 We return to the lower bounds \eqref{eq:corrlength2lbpsi} on $\psi$ and $\tau_{a}^{\textnormal{tr}}$ (as well as \eqref{eq:corrlength1} in case of $\tau_{a}^{\textnormal{tr}}$, for which comparison estimates with the capacity observable already fail) shortly, which illuminate~\eqref{eq:defxi} and rely on different ideas.

We now discuss important consequences of Theorem~\ref{T2} with regards to volume observables. For this purpose, let $|{\mathcal{K}}^a|=|\tilde{\mathcal{K}}^a \cap G|$ denote the volume (cardinality) of ${\mathcal{K}}^a$. The following result, which follows from Theorem~\ref{T2}, relates a quantity $\gamma$ governing the divergence of the expected volume of ${\mathcal{K}}^a$ (when bounded) as $a$ approaches $0$ with the exponents $\nu_c$
from \eqref{eq:nuDeffirst} and $\eta$ introduced in \eqref{eq:2ptatcriticality}. Its meaning in the context of scaling theory is further explained in the discussion at the end of this introduction.

\begin{Cor}[Scaling relation]\label{C:scalingrelation} For $\nu \leq 1$, if \eqref{eq:intro_Green}, \eqref{eq:intro_sizeball}, \eqref{eq:ellipticity} hold and $d=d_{\textnormal{gr}}$, the limit 
\begin{equation}
\label{eq:gamma1}
 \gamma \stackrel{\textnormal{def.}}{=}- \lim_{a \to 0} \frac{\log(\E[|\mathcal K^a|1\{|\mathcal{K}^a|<\infty\}])}{\log |a|} \end{equation}
exists and 
\begin{equation}
\label{eq:scaling_gamma}
\gamma= \nu_{c}(2-\eta) \ \Big(= \frac{2\alpha}\nu-2 \Big).
\end{equation}
For $\nu < 1$ one even has the stronger result $\E[|\K^a|1\{|\K^a|<\infty\}]\asymp |a|^{-\frac{2\alpha}\nu+2}$ as $a \to 0$ (recall below \eqref{eq:2ptatcriticality} for the definition of $\asymp$).
\end{Cor}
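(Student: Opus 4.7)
The strategy is to express
\begin{equation*}
\E\big[|\mathcal{K}^a|1\{|\mathcal{K}^a|<\infty\}\big] = \sum_{x\in G}\tau_a^{\text{tr}}(0,x)
\end{equation*}
by Fubini, and bound the right-hand side using Theorem~\ref{T2} together with the at-criticality asymptotics $\tau_0^{\text{tr}}(0,x)\asymp d(0,x)^{-\nu}$ from~\eqref{eq:2ptatcriticality} and the volume growth~\eqref{eq:intro_sizeball}. Under the present hypotheses one has $\lambda_x\asymp 1$, hence $|B(0,r)|\asymp r^{\alpha}$ for $r\geq 1$, so that by a standard summation-by-parts argument, for any reasonably decaying function $f$, $\sum_{x\in G} f(d(0,x))$ is comparable to $\int_1^{\infty} r^{\alpha-1} f(r)\,\mathrm{d}r$.

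For $\nu<1$, both the upper bound in \eqref{eq:corrlength1} (for $\tau_a^{\text{tr}}$) and its lower-bound counterpart apply, the latter being valid for $|a|\leq c$, which is enough for the limit $a\to 0$. Combining them with \eqref{eq:2ptatcriticality} gives
\begin{equation*}
\tau_a^{\text{tr}}(0,x)\asymp d(0,x)^{-\nu}\exp\!\Big(-c\Big(\frac{d(0,x)}{\xi(a)}\Big)^{\nu}\Big),
\end{equation*}
with possibly different positive constants $c$ in the upper and lower bounds. Summing and substituting $r=s\,\xi(a)$ then yields
\begin{equation*}
\E\big[|\mathcal{K}^a|1\{|\mathcal{K}^a|<\infty\}\big]\asymp \xi(a)^{\alpha-\nu}\int_0^\infty s^{\alpha-1-\nu}\exp(-c\,s^\nu)\,\mathrm{d}s\asymp |a|^{-\frac{2\alpha}{\nu}+2},
\end{equation*}
the integral being finite since $\alpha>\nu$ by \eqref{eq:condonalphanu}. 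This proves the stronger $\asymp$ conclusion, which in particular implies existence of the limit~\eqref{eq:gamma1} and the value~\eqref{eq:scaling_gamma} for $\nu<1$.

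For $\nu=1$, the upper bound of Theorem~\ref{T2}\ref{T2:nu>1} applied to $\tau_a^{\text{tr}}$, combined with $\tau_0^{\text{tr}}(0,x)\asymp d(0,x)^{-1}$, gives
\begin{equation*}
\sum_{x\in G}\tau_a^{\text{tr}}(0,x)\leq C\int_1^{\infty} r^{\alpha-2}\exp\!\Big(-c\frac{r/\xi(a)}{\log(r\vee 2)}\Big)\,\mathrm{d}r,
\end{equation*}
which, upon splitting the integral at $r\asymp \xi(a)\log\xi(a)$, is at most $C'(\xi(a)\log\xi(a))^{\alpha-1}$. For a matching lower bound, I apply the $\tau_a^{\text{tr}}$-version of \eqref{eq:corrlength2lbpsi} (valid for $r\geq \xi(a)(\log\xi(a))^{c_0}$ with some $c_0\in(0,1)$) on the annulus $d(0,x)\in [\xi(a)(\log\xi(a))^{c_0},\,2\xi(a)(\log\xi(a))^{c_0}]$. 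Counting the vertices in this annulus by~\eqref{eq:intro_sizeball} and inserting $\tau_0^{\text{tr}}(0,x)\asymp d(0,x)^{-1}$, this yields
\begin{equation*}
\sum_{x\in G}\tau_a^{\text{tr}}(0,x)\geq c''\,\xi(a)^{\alpha-1}(\log\xi(a))^{c_0(\alpha-1)}\exp\!\Big(-C''\frac{(\log\xi(a))^{c_0}}{\log\log\xi(a)}\Big).
\end{equation*}
Both bounds are of order $\xi(a)^{\alpha-1}$ up to factors sub-polynomial in $\xi(a)$; taking logarithms, dividing by $\log|a|$, and using $\log\xi(a)=(2/\nu)\log(1/|a|)$, these corrections vanish in the limit, yielding $\gamma=2(\alpha-1)=2\alpha/\nu-2$ also at $\nu=1$. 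The main bookkeeping challenge is to verify that the subleading term $(\log\xi(a))^{c_0}/\log\log\xi(a)$, once divided by $\log(1/|a|)$, tends to $0$ as $a \to 0$, which holds since $c_0<1$.
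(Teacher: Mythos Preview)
Your proof is correct and follows essentially the same route as the paper: the paper proves Corollary~\ref{C:scalingrelation} via Proposition~\ref{P:scalinggamma}, which likewise writes $\E[|\mathcal K^a|1\{|\mathcal K^a|<\infty\}]=\sum_x\tau_a^{\text{tr}}(0,x)$, inserts the $\tau_a^{\text{tr}}$-bounds from Theorem~\ref{T2} together with \eqref{eq:2ptatcriticality}, and sums dyadically using \eqref{eq:intro_sizeball}. The only minor difference is that for the $\nu=1$ lower bound the paper integrates over all scales $r\geq c'\xi(\log\xi)^{\Cr{c:defect3}}$ (see \eqref{eq:lowerboundonKa}), whereas you restrict to a single annulus at that scale; both yield corrections subpolynomial in $\xi(a)$ and hence the same value of~$\gamma$.
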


We refer to Proposition~\ref{P:scalinggamma} for the precise bounds on $\E[|\K^a|1\{|\K^a|<\infty\}]$ and to Remark~\ref{rk:1stscalingrelation},\ref{rk:1stscalingrelationvolumerenormalized} for related results regarding a ``renormalized'' volume observable. The ``softer'' conclusions of Corollary~\ref{C:scalingrelation}, which witness the correct scaling factor $\xi$ and integrability in $r/\xi$, will follow from the ``hard'' estimates of Theorem~\ref{T2}. Namely, we use the versions for $\tau_a^{\textnormal{tr}}(a,x)$ of \eqref{eq:corrlength1} in case $\nu<1$ and of \eqref{eq:corrlength2} and \eqref{eq:corrlength2lbpsi} in case $\nu =1$, while following the heuristics behind the scaling equality \eqref{eq:scaling_gamma}, see for instance \cite{MR1707339}, Chap.~9, to deduce Corollary~\ref{C:scalingrelation}. The fact that the exponent $\Cr{c:defect3}$ appearing in \eqref{eq:corrlength2lbpsi} is less than $1$ is absolutely instrumental in obtaining \eqref{eq:scaling_gamma} when $\nu=1$.

We now return to the lower bound(s) in \eqref{eq:corrlength2lbpsi} (and \eqref{eq:corrlength2lb} below) and their proofs, which are instructive. In both cases, we rely on a change-of-measure argument, somewhat similar to the one used in \cite{GRS20}, but quantitative (the arguments in~\cite{GRS20} operate at fixed level $a$ as $r\to \infty$); see also \cite{BDZ95}, \cite{Sz15}, for arguments of this kind in various contexts involving $\varphi$. We modify $\P$ so as to shift a given level $a \in (0, 1]$ to $-a$, which is (slightly) supercritical, in an appropriate region. This effectively translates the problem into building the desired long connection to distance $r$ at the new level $-a$ with sizeable probability. The intuitive renormalization picture is that this ought to happen by stacking boxes of side length roughly equal to $\xi(-a)=\xi(a)$ as given by \eqref{eq:defxi}, which ``start to see a good chunk'' of $\{\varphi \geq - a\}$. 

The approach delineated above yields the bound \eqref{eq:corrlength2lb} below for $\tilde{\psi}$. The bound  \eqref{eq:corrlength2lbpsi} is more subtle and requires amendments to this strategy. In essence (see also Fig.~\ref{F:LB} on p.\ \pageref{F:LB}), we explore a piece of the cluster of $\mathcal{K}^{a}$ inside $B_{\xi(a)}$, then apply the Markov property and perform the change of measure in the complement of the explored region, without getting too close to its boundary. On a suitable event, the explored part (as opposed to the single point $0$) is sufficiently ``visible'' for the gluing constructions performed below (for essentially the same reasons as those explained around \eqref{eq:xiexplained}). The explored part thereby manifests itself precisely as multiplicative ``critical cost'' $\psi(0,r)$ in the lower bound \eqref{eq:corrlength2lbpsi}. However, establishing this rigorously requires some care since the (Dirichlet) boundary condition forced by the exploration acts as a trap, which has the tendency to ``kill'' connections in the system $\mathcal{I}^u$ of ``highways'' used below. An important role in this context will be played by certain ``bridge'' trajectories, which emanate from the explored region and link to the net of highways.

Our approach to building the highways is driven by two key estimates, summarized in \eqref{eq:squares} and \eqref{eq:RSW} below, which can be regarded as partial substitutes for two essential ingredients that are usually available in planar settings at criticality: (i) squares of arbitrary size are crossed with probability $1/2$ (duality symmetry), and (ii) rectangles are crossed with sizeable probability across all scales (a ``Russo-Seymour-Welsh''-type bound); see~e.g.~\cite{MR1707339}, Chap.~11, see also \cite{koehlerschindler2020crossing} for latest developments in this direction. 

Our replacements for (i) and (ii) harvest a powerful and profound link between $\varphi$ and the random interlacement sets $(\mathcal{I}^u)_{u>0}$ on $\tilde{\G}$, see e.g.~Section~2.5 in \cite{DrePreRod3} for their precise definition in the present context. The random sets $\mathcal{I}^u \subset \tilde{\mathcal{G}}$, $u>0,$ can be jointly defined in such a way that $\mathcal{I}^u$ is increasing in $u$ and  its law is characterized by the property that
\begin{equation}
\label{eq:defRI}
\mathbb{P}(\mathcal{I}^u \cap K = \emptyset)=e^{-u\text{cap}(K)}, \text{ for  compact } K \subset \tilde{\mathcal{G}}
\end{equation}
(see the beginning of Section~\ref{S:diffform} below regarding compactness). In fact, $\mathcal{I}^u$ is realized as the trace of a Poisson cloud of bi-infinite transient continuous trajectories with intensity measured by $u$, and thus has only unbounded connected components. We will only use the fact here that whenever \eqref{T1_sign} holds, there exists a coupling $\mathbb{Q}$ of $(\mathcal{I}^u)_{u>0}$ and $\varphi$ such that
\begin{equation}
\label{eq:weakisom}
\text{$\mathbb{Q}$-a.s.,} \quad \mathcal{I}^{\frac{a^2}{2}} \subset \{ \varphi \geq -a \}, \text{ for all } a>0
\end{equation}
(this follows by adapting the result of \cite{MR2892408} to the cable system $\tilde{\mathcal{G}}$ and using continuity as first noted in \cite{MR3502602}).
The inclusion \eqref{eq:weakisom} hints at $\mathcal{I}^{\frac{a^2}{2}}$ typically forming the ``backbone'' of percolating clusters in $\{ \varphi \geq -a \}$, see \cite{Sz-16}. Correspondingly, our key estimates at scale $\xi=\xi(a)$, cf.~\eqref{eq:defxi}, assert that, if \eqref{eq:intro_Green}, \eqref{eq:intro_sizeball} and \eqref{eq:ellipticity} hold, one has (assuming $\nu\leq1$ to avoid unnecessary clutter)
\begin{align}
&\label{eq:squares} 
\inf_{a\in (0,1]} \P({B}_r\stackrel{\geq -a}{ \longleftrightarrow} \partial {B}_{4r})\big\vert_{r=  \xi(a)} \geq c,\\
&\label{eq:RSW} 
 \inf_{a\in (0,1]} \P(\text{LocUniq}(a,r))\big\vert_{r= \xi(a)} \geq c,
\end{align}
and the infima in question converge to $1$ in the limit $\lambda\to \infty$ upon choosing $r= \lambda \xi(a)$ instead, see \eqref{eq:lb2alphalarger2nubis} below; here, roughly speaking, $\text{LocUniq}(a,r)$ can be characterized through its complement, the ``absence of local uniqueness'' event $\text{LocUniq}(a,r)^{\mathsf{c}}$ that there exist two points in $\mathcal{I}^{\frac{a^2}{2}}\cap {B}_{2r}$ which are not connected by a continuous path of $\mathcal{I}^{\frac{a^2}{2}}$ within $ B_{4r}$ (see \eqref{eq:lb1} for the exact definition). 
The bound \eqref{eq:squares} follows readily by combining \eqref{eq:weakisom}, \eqref{eq:defRI} and the two-sided estimate $\text{cap}({B}_r)\asymp r^\nu$ for $r \geq 1$ (see (3.11) in \cite{DrePreRod2}), and $\xi(\cdot)$ given by \eqref{eq:defxi} emerges naturally~as
\begin{equation}
\label{eq:xiexplained}
e^{-u\text{cap}({B}_r)}\vert_{u=\frac{a^2}{2},r=\xi(a)} \stackrel{\eqref{eq:defxi}}{\asymp} 1.
\end{equation}
Our contribution is thus to obtain~\eqref{eq:RSW}, which follows from a sharp bound on  $\P(\text{LocUniq}(a,r)^{\mathsf{c}})$ ``in terms of $a^2r^{\nu}$,'' for $\nu \leq 1$ and more generally if $\alpha > 2\nu$, cf.~\eqref{eq:condonalphanu}, with logarithmic corrections when $\alpha = 2\nu.$ Estimates of this flavor, albeit non-optimal in $r$ and non-quantitative in~$u$, were first derived in \cite{MR2819660}. The precise estimate we obtain, which is of independent interest, is stated in Theorem~\ref{Thm:locuniq} below.

\bigskip

One can then attempt to give a complete overview of the critical exponents associated to the phase transition \eqref{eq:introKa}--\eqref{eq:intro_h_*} on the basis of scaling theory, the corresponding system of equations being now (over-)determined. We refer the reader to Sections 9.1--9.2 of \cite{MR1707339}, or Section 1 in \cite{kesten1987} regarding heuristics. Corollary \ref{T1}, see in particular \eqref{T1_beta}, implies that in very broad generality -- namely assuming \eqref{T1_sign} only (see also~\eqref{T1_signsat}), which guarantees that $a=0(=a_*)$ is critical, cf.~\cite{DrePreRod3} for a thorough investigation into the validity of this assumption -- one has
\begin{equation}
\label{eq:intro_beta}
\beta=1, 
\end{equation}
where $\beta$ is defined via 
\begin{equation} \label{eq:betaDef}
1- {\theta}_0(a-a_*)= 1- {\theta}_0(a) \sim c|a|^\beta, \quad \text{as } a \to 0^-,
\end{equation}
and $\sim$ means that the ratio of both sides tends to $1$ in the given limit (often, one more cautiously expects that $ \frac{\log(1- {\theta}_0(a-a_*))}{\log |a|}\to \beta$, see e.g.~(1.3) and~(1.8) in \cite{kesten1987}).
Under the assumption \eqref{eq:intro_Green}, it further follows from \eqref{eq:corrlength1} and \eqref{eq:psi_0nu=1} that 
\begin{equation}
\label{eq:intro_rho}
\rho=\frac2\nu, \text{ for } \nu \leq 1 \text{ and }\rho\in{\Big[\frac2\nu,2\Big]}\text{ for }\nu>1,
\end{equation}
where $\rho$ is the one-arm exponent at criticality, i.e.,
\begin{equation} \label{eq:rhoDef}
- \frac{\log \psi(0,\cdot)}{\log r}\to \frac1{\rho} \quad \text{as} \quad r \to \infty \quad \text{(with $\psi$ as in \eqref{eq:intro_psi}).}
\end{equation} 

Next, with correlation length exponent $\nu_c$ given by \eqref{eq:nuDeffirst}, see also \eqref{eq:defxi} in Theorem~\ref{T2}, the results of Corollary~\ref{C:scalingrelation} guarantee for $\nu \leq 1$ the existence of the volume exponent $\gamma$ near criticality defined by \eqref{eq:gamma1} (in fact, one would typically consider the limits $a \searrow 0$ and $a \nearrow 0$ separately) and \eqref{eq:scaling_gamma} is an instance of a scaling relation relating the exponents $\gamma$, $\nu_c$ and $\eta$ from \eqref{eq:2ptatcriticality}. Further to \eqref{eq:scaling_gamma}, scaling theory predicts the relations (in case of \eqref{eq:intro_hyperscaling} at least so long as $\alpha$ or $\nu$ remain below a certain upper-critical value)
\begin{align}
&\Delta=\delta \beta, \,  2-\alpha_{c}= \beta(\delta+1)=\gamma+2\beta, \quad \text{ (scaling)} \label{eq:intro_scaling}\\
&\alpha\nu_{c} = 2-\alpha_{c}, \, \alpha \rho=\delta+1, \quad \text{ (hyperscaling)}. \label{eq:intro_hyperscaling}
\end{align}
Here, $\beta$, $\nu_{c}$, $\eta$, $\rho$ and $\gamma$ have been introduced in \eqref{eq:betaDef}, \eqref{eq:nuDef}, \eqref{eq:2ptatcriticality}, \eqref{eq:rhoDef} and in \eqref{eq:gamma1}, respectively (see also \eqref{eq:deltaDef} below regarding $\delta$). We refer the reader to (1.2) and (1.5) of~\cite{kesten1987} concerning the quantities supposedly described by $\alpha_{c}$ and $\Delta$ in the context of Bernoulli percolation (assuming $\Delta_k=\Delta$ for all $k \geq2$ in the notation of \cite{kesten1987}, see also (9.7) in \cite{MR1707339}), and further to Chap.~9 of \cite{MR1707339} for an explanation of the heuristics behind \eqref{eq:intro_scaling} and \eqref{eq:intro_hyperscaling} on $\Z^d,$ $d\geq2.$ These readily generalize to any graph satisfying \eqref{eq:intro_sizeball} except for the informal derivation of the relation $\gamma= \nu_{c}(2-\eta),$ for which some control on the size of the boundary of a ball is needed. The heuristic behind Corollary~\ref{C:scalingrelation}, cf.~the proof of Proposition~\ref{P:scalinggamma}, indicates that this scaling relation should also hold for different percolation models on any graphs satisfying \eqref{eq:intro_sizeball}. 

Assuming all of the relations \eqref{eq:scaling_gamma}, \eqref{eq:intro_scaling} and \eqref{eq:intro_hyperscaling} to hold, the values of any two exponents are typically sufficient in order to determine a unique set of exponents. Feeding e.g.~\eqref{eq:intro_beta} and \eqref{eq:intro_rho} into \eqref{eq:scaling_gamma}, \eqref{eq:intro_scaling}, \eqref{eq:intro_hyperscaling} yields the following:\\
\begin{center}
\captionsetup[table]{position=top}
\begin{tabu}{ |c||c|c|c|c|c|c|c|c||c| } 
 \hline Exponent & $\alpha_{c}$ & $\beta$ & $\gamma$ & $\delta$ & $\Delta$  & $\rho$ & $\nu_{c}$ & $\eta$ & $\kappa$\\
 \hline
 Value & $2-\frac{2\alpha}\nu$ & 1$^{\ast}$  & $\frac{2\alpha}\nu-2$ & $\frac{2\alpha}\nu-1$&$\frac{2\alpha}\nu-1$&$\frac2\nu$&$\frac2\nu$&$\nu-\alpha+2^{\ast}$&$\frac12^{\ast}
 $\\
 \hline
\end{tabu}\\[1em]
{\footnotesize\captionof{table}{critical exponents as a function of the parameters $\nu \, (\leq 1)$ and $\alpha$ in \eqref{eq:intro_Green}, \eqref{eq:intro_sizeball}. Values with an asterisk hold without restriction on $\nu >0$ (and even in much greater generality, cf.~\eqref{T1_signsat}). 
\label{tb:exponents}}}
\end{center}
Some comments are in order. First of all, and crucially so, the values for $\nu_{c}$, $\eta$ and $\gamma$ thereby obtained are consistent with \eqref{eq:intro_nu} (and \eqref{eq:nuDeffirst}), \eqref{eq:2ptatcriticality} and \eqref{eq:scaling_gamma}. It is further remarkable that the exponents in Table~\ref{tb:exponents} are rational functions of $\alpha$ and $\nu$, and, in case the random walk is for instance diffusive -- that is if $\alpha=\nu+2,$ cf.~\eqref{UE} -- which applies e.g.~to $\Z^d,$ $d\geq3$ (with unit weights), all exponents can be expressed as functions of the sole parameter $\nu>0$ that governs the Green's function decay in \eqref{eq:intro_Green}. Moreover, in view of Corollary~\ref{C:captails}, we may add a ``capacity exponent'' $\kappa$ to the list, whence
\begin{equation}
\label{eq:capacityexp}
\P(\mathrm{{\rm cap}}({\mathcal{K}}^{0})\geq N ) \asymp N^{-\kappa}, \quad \text{as }N \to \infty, \quad \text{with }\kappa=1/2,
\end{equation}
as soon as the base graph $\mathcal{G}$ satisfies \eqref{T1_sign} and \eqref{eq:ellipticity}. Indeed, \eqref{eq:capacityexp} is obtained from the corresponding asymptotics for the random variable $\text{cap}(\tilde{\mathcal{K}}^{0})$, implied by \eqref{eq:captails} and valid under \eqref{T1_sign} only, using that $\ \frac{\mathrm{{\rm cap}}({\mathcal{K}}^{0})}{\mathrm{{\rm cap}}(\tilde{\mathcal{K}}^{0})} \in [\Cr{c:ellipticity}, 1]$ (with $\Cr{c:ellipticity}$ as in \eqref{eq:ellipticity}), which follows readily from \eqref{eq:balayage} below with the choices $K= {\mathcal{K}}^{0}$, $K'=\tilde{{\mathcal{K}}}^0$ upon integrating over $x\in\tilde{\G}$.

We now list one more consequence of the above results regarding the volume of~$\mathcal{K}^a$ at the critical point when $\nu\leq1.$
Recall that $|{\mathcal{K}}^a|=|\tilde{\mathcal{K}}^a \cap G|$, cf.~\eqref{eq:introKa}.
\begin{Cor}
\label{Cor:upperboundvolume} $(\nu \leq 1).$
If \eqref{eq:intro_Green}, \eqref{eq:intro_sizeball} and \eqref{eq:ellipticity} hold, there exists $c=c(\alpha, \nu) \in (0,\infty)$ such that, with $\tilde{c}= 1\{\nu=1\}/2$, one has
\begin{equation}
\label{eq:upperboundvolume}
\P(|\K^0|\geq n)\leq cn^{-\frac{\nu}{2\alpha-\nu}}\log(n)^{\tilde{c}}, \text{ for all $n \geq 1$}.
\end{equation}
\end{Cor}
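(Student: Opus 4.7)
The plan is a standard first-moment/radius split followed by an optimization in the truncation scale. Since \eqref{T1_sign} holds under \eqref{eq:intro_Green} (as noted below \eqref{eq:intro_Green}), the cluster $\K^0$ is a.s.\ bounded, so
\begin{equation*}
\P(|\K^0|\geq n)\leq \P(\textnormal{rad}(\K^0)\geq r) + \P\big(|\K^0|\geq n,\textnormal{rad}(\K^0)<r\big) \quad \text{for any } r\geq 1.
\end{equation*}

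The first term is $\psi(0,r)$, which by \eqref{eq:psi_0nu<1}--\eqref{eq:psi_0nu=1} is at most $c'r^{-\nu/2}(\log r)^{\tilde c}$ with $\tilde c=\frac12\mathds{1}\{\nu=1\}$ (using $\nu\leq 1$). For the second term I would use Markov's inequality together with the fact that $\{|\K^0|\geq n,\textnormal{rad}(\K^0)<r\}\subset \{\K^0\textnormal{ bounded}\}$, so
\begin{equation*}
\P\big(|\K^0|\geq n,\textnormal{rad}(\K^0)<r\big)\leq \frac{1}{n}\sum_{x\in B(0,r)}\tau_0^{\textnormal{tr}}(0,x).
\end{equation*}
By \eqref{eq:2ptatcriticality} one has $\tau_0^{\textnormal{tr}}(0,x)\asymp d(0,x)^{2-\alpha-\eta}=d(0,x)^{-\nu}$ as $d(0,x)\to\infty$ (since $\eta=\nu-\alpha+2$), and under \eqref{eq:intro_sizeball} a dyadic decomposition of $B(0,r)$ gives
\begin{equation*}
\sum_{x\in B(0,r)}d(0,x)^{-\nu}\leq C\sum_{k\geq 0: 2^k\leq r} 2^{-k\nu}\,\lambda(B(0,2^k))\leq C' r^{\alpha-\nu},
\end{equation*}
the last bound using $\alpha-\nu\geq 2>0$ from \eqref{eq:condonalphanu}. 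Hence the second term is at most $C'r^{\alpha-\nu}/n$.

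Combining, $\P(|\K^0|\geq n)\leq c'r^{-\nu/2}(\log r)^{\tilde c}+C'r^{\alpha-\nu}/n$, and the two terms are balanced by taking $r=n^{2/(2\alpha-\nu)}$; this yields $r^{-\nu/2}=n^{-\nu/(2\alpha-\nu)}$ and $\log r\asymp \log n$, giving \eqref{eq:upperboundvolume}.

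The estimate is essentially tight in view of Corollary~\ref{C:scalingrelation}, since for $\nu<1$ the bound $\E[|\K^0|\mathds{1}\{|\K^0|<\infty\}]\leq C$ (which would replace the Markov step) actually fails at the critical point $a=0$ because of the divergence in \eqref{eq:gamma1}; it is this failure that forces the truncated first moment via the radius cutoff. The only mild technical point is the logarithmic correction when $\nu=1$, which is transparently tracked through the optimization since $\log r\asymp \log n$ for the chosen~$r$. No step looks genuinely difficult; the main conceptual input is the use of the critical two-point bound \eqref{eq:2ptatcriticality} together with the tightness of the one-arm estimate \eqref{eq:psi_0nu<1}--\eqref{eq:psi_0nu=1}.
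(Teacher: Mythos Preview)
Your proof is correct and follows essentially the same approach as the paper's: both split according to whether $\textnormal{rad}(\K^0)$ exceeds $r=n^{2/(2\alpha-\nu)}$, bound the radius term via \eqref{eq:psi_0nu<1}--\eqref{eq:psi_0nu=1}, and control the remaining term by Markov's inequality together with the two-point bound \eqref{eq:2ptatcriticality} and a dyadic volume estimate from \eqref{eq:intro_sizeball}. The only cosmetic differences are that the paper fixes $r=n^{2/(2\alpha-\nu)}$ from the outset rather than optimizing at the end, and that your annulus sum should really involve $|B(0,2^{k+1})\setminus B(0,2^k)|$ rather than $\lambda(B(0,2^k))$ (though the two are comparable under \eqref{eq:ellipticity} and \eqref{eq:intro_sizeball}).
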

In particular, assuming the existence of a volume exponent $\delta$ at criticality given by 
\begin{equation} \label{eq:deltaDef}
-\frac{\log(\P(|\K^0|\geq n))}{\log n}\rightarrow1/\delta \quad \text{as} \quad n \to \infty,
\end{equation}
we deduce from \eqref{eq:upperboundvolume} that $\delta\leq\frac{2\alpha}{\nu}-1$, for $\nu \leq 1$ (if $\delta$ exists). 
In view of the value for $\delta$ listed in Table~\ref{tb:exponents}, the upper bound in \eqref{eq:upperboundvolume} is thus presumably sharp up to logarithmic corrections. The bound \eqref{eq:upperboundvolume} follows readily by combining \eqref{eq:psi_0nu=1}, \eqref{eq:2ptatcriticality} and a first-moment argument. The short proof, given at the end of Section~\ref{sec:radiusLB}, is an adaptation of the argument giving Prop.~7.1 in~\cite{HS20}. We thank T. Hutchcroft for pointing out this reference to us.

We now discuss extensions of \eqref{eq:corrlength2lbpsi} to the regime $\nu>1$. Rather than working with $\psi$ defined in \eqref{eq:intro_psi} directly (but see Proposition~\ref{prop:lowerbounds}), we consider the function 
\begin{equation}
\label{eq:intro_psitilde}
\tilde{\psi}(a,r)\stackrel{\text{def.}}{=} \P\big(\{{B}_{\xi(a)} \stackrel{\geq a}{\longleftrightarrow} \partial_{\textnormal{in}} {B}_r\} \setminus \{ {B}_{\xi(a)} \stackrel{\geq a}{\longleftrightarrow} \infty\}\big),
\end{equation}
(depending on $\xi$ given by \eqref{eq:defxi}), where $B_r=B(0,r)$ refers to the discrete ball centered at $0$ in the metric $d$, cf.~above \eqref{eq:intro0}, $\partial_{\textnormal{in}} B_r=\{ x\in B_r: \exists\, y \in G\setminus B_r \text{ s.t.~}\lambda_{x,y}\neq 0 \}$ and with hopefully obvious notation $\{A \stackrel{\geq a}{\longleftrightarrow}  B\}$ is the event that $A$ and $B$ are connected by a path of open edges in the description given above \eqref{eq:cond_proba}, or equivalently by a (continuous) path in $\{ \varphi \geq a\}$. 
\begin{The}
\label{T:psitilde}
If \eqref{eq:intro_Green}, \eqref{eq:intro_sizeball}, \eqref{eq:ellipticity} and \eqref{eq:intro_shortgeodesic} hold, one has for all $|a| \leq c$ and $r\geq1,$ 
\begin{equation}
\label{eq:corrlength2lb}
\tilde{\psi}(a,r) \geq  \begin{cases}
 \tilde{\Cr{c1_xi}}\exp \Big\{- \tilde{ \Cr{C2_xi}}\frac{(\frac{r}{ \xi(a)})^{\nu}}{ (\log (\frac{r}{\xi(a)}\vee2))^{1\{\nu=1\}}} \Big\},  & \text{if $\nu\leq1,$}
 \\[0.8em]
  \tilde{\Cr{c1_xi}}\exp \big\{-  \tilde{\Cr{C2_xi}}\frac{r}{\xi(a)} (\log (\frac{r}{\xi(a)} \vee2))^{\Cl[c]{c:lbdefect2}} \big\}, &  \text{if $1< \nu < \frac{\alpha}{2},$}  \\[0.8em]
 \tilde{\Cr{c1_xi}}\exp \big\{-  \tilde{\Cr{C2_xi}}\frac{r}{\xi(a)} (\log (\frac{r}{\xi(a)} \vee2))^{\Cr{c:lbdefect2}} (\log  \xi )^{\Cl[c]{c:lbdefect4}} \big\}, &  \text{if $ \nu = \frac{\alpha}{2},$ $\frac{r}{\xi} \geq c (\log \xi)^{\frac{3}{\nu}}$,}
\end{cases}
\end{equation}
with $\xi(a)$ as in \eqref{eq:defxi} and where $\Cr{c:lbdefect2}= (1-1/\nu)(2\nu+1 +1\{\alpha=2\nu\})$ and $\Cr{c:lbdefect4}=2(1-\frac{1}{\nu})$. If $d=d_{\textnormal{gr}}$, \eqref{eq:corrlength2lb} remains valid when replacing $\tilde{\psi}(a,r)$ by $\tilde{\tau}^{\textnormal{tr}}_a(0,x)$, see \eqref{tautilde}, with $r\stackrel{\text{def.}}{=}d(0,x) \geq 1$.
\end{The}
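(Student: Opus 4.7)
The proof rests on the change-of-measure strategy sketched in the introduction following \eqref{eq:weakisom}--\eqref{eq:xiexplained}. Because the connection in $\tilde{\psi}(a,r)$ starts from the full ball $B_{\xi(a)}$ rather than the single point $0$, the argument should be considerably cleaner than the one behind \eqref{eq:corrlength2lbpsi}: there is no ``seed'' to explore and no induced Dirichlet boundary condition to propagate through. The proof essentially reduces to choosing a suitable tilting set $K=K(a,r)\subset \tilde{\G}$ and applying the Cameron--Martin formula.

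I would choose $K$ according to the regime: for $\nu<1$, $K$ is (essentially) the ball $B_r$, so that $\textnormal{cap}(K)\asymp r^{\nu}$ and $a^{2}\textnormal{cap}(K)\asymp (r/\xi(a))^{\nu}$ matches the exponent in \eqref{eq:corrlength2lb}; for $\nu\geq 1$, $K$ is a chain of $\asymp r/\xi(a)$ balls of radius $\asymp \xi(a)$ placed along the infinite geodesic furnished by \eqref{eq:intro_shortgeodesic}, with capacity $\asymp r\xi^{\nu-1}$ in the bulk regime $1<\nu<\alpha/2$ and with the appropriate logarithmic corrections in the critical cases $\nu=1$ and $\nu=\alpha/2$ (where thin geodesic tubes exhibit anomalous capacity). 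Set $h=2a\,h_K$, with $h_K(\cdot)=P_\cdot(H_K<\infty)$ the equilibrium potential (so $h_K\equiv 1$ on $K$); then $\|h\|_{\mathcal{H}}^{2}=4a^{2}\textnormal{cap}(K)$ is, up to a constant, twice the exponent in \eqref{eq:corrlength2lb}. Denoting by $\P_h$ the Cameron--Martin shifted measure under which $\varphi$ has mean $h$, the standard inequality
\begin{equation*}
\P(A)\;\geq\; e^{-\|h\|^{2}/2}\,\P_h\bigl(A\cap\{Z_h\leq \|h\|^{2}\}\bigr),
\end{equation*}
combined with Chebyshev's inequality (which gives $\P_h(Z_h\leq \|h\|^{2})\geq 1/2$), reduces the problem to showing that the event $A$ of \eqref{eq:intro_psitilde} has $\P_h$-probability bounded below by a positive constant independent of $a$ and $r$.

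Under $\P_h$ the centered field $\phi=\varphi-h$ is a GFF, and on $K$ (where $h\equiv 2a$) the event $\{\varphi\geq a\}$ translates into $\{\phi\geq -a\}$, i.e., the supercritical regime. To build the required connection from $B_{\xi(a)}$ to $\partial_{\textnormal{in}} B_r$ I would proceed as follows: for $\nu<1$, iterate the local uniqueness estimate \eqref{eq:RSW} at dyadic scales between $\xi(a)$ and $r$ inside $K=B_r$, linking the dominant clusters at level $-a$ across successive annuli; for $\nu\geq 1$, apply \eqref{eq:squares}--\eqref{eq:RSW} box by box along the chain, using the strong Markov property of $\varphi$ and the approximate independence of the field across well-separated pieces of $K$. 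The per-scale connectivity is bounded below by a universal constant thanks to \eqref{eq:squares} and \eqref{eq:RSW}, and no additional capacity cost is incurred beyond the $\|h\|^{2}/2$ already absorbed in the Cameron--Martin prefactor. The subtraction of the unbounded-connection contribution (recall $A$ excludes $\{B_{\xi(a)}\leftrightarrow\infty\}$) is handled by a union bound together with the subcritical estimate on $\P(B_{\xi(a)}\leftrightarrow\infty)$ at level $a>0$.

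The main obstacle is the chaining step in the regime $\nu\geq 1$, and specifically the sharp capacity of thin geodesic tubes when $\nu=1$ or $\nu=\alpha/2$: in these critical dimensional regimes $\textnormal{cap}(K)$ picks up logarithmic corrections relative to the naive $\asymp r\xi^{\nu-1}$, and matching these to the exponents in \eqref{eq:corrlength2lb} requires careful control of the boundary effects of $K$ on the random interlacement skeleton provided by \eqref{eq:weakisom}. Specifically, the interlacement density is depleted near $\partial K$, forcing a slight enlargement or a geometrically tailored choice of $K$ if the per-step probability in the chaining is to remain of constant order. The extension from $\tilde{\psi}$ to $\tilde{\tau}^{\textnormal{tr}}_a$ when $d=d_{\textnormal{gr}}$ is a softer matter: at the terminal scale the connection can be localized at a prescribed boundary point $x\in \partial_{\textnormal{in}} B_r$ using FKG applied to $\varphi$ together with one further application of local uniqueness inside a ball of radius $\xi(a)$ around $x$.
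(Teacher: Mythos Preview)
Your overall architecture is right and matches the paper: one performs a Cameron--Martin tilt by $2a\,h_K$ for a suitable sausage $K$ along the geodesic from \eqref{eq:intro_shortgeodesic}, and then uses the coupling \eqref{eq:weakisom} together with the local uniqueness estimate of Theorem~\ref{Thm:locuniq} to show that the connection event has probability bounded below by a constant under the tilted measure. For $\nu<1$ your choice $K=B_r$ (or equivalently a dyadic scheme inside it) works and reproduces the paper's bound, since $\textnormal{cap}(B_r)\asymp r^\nu$ and the local uniqueness failure probabilities at dyadic scales $2^k\xi$ are summable.

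The genuine gap is in the regime $\nu\ge 1$. You take the balls in the chain to have radius $\ell\asymp\xi(a)$, and then assert that the per-box connectivity lower bound from \eqref{eq:squares}--\eqref{eq:RSW} yields $\P_h(A)\ge c$ for the \emph{global} event. This step fails. At scale $\ell=\xi$, Theorem~\ref{Thm:locuniq} only gives $\overline{\P}(\text{LocUniq}_{u,\xi}(x_k)^{\mathsf c})\le c<1$ (since $u\ell^\nu\asymp 1$), so a union bound over the $\asymp r/\xi$ boxes diverges. FKG is not a substitute: the box-wise increasing events that one can define for $\{\varphi\ge -a\}$ do not compose into a single long connection without local uniqueness, and if one settles for a product bound $c^{r/\xi}$ then the change-of-measure inequality you wrote (or the entropy version \eqref{eq:lb12} used in the paper) produces nothing useful, since it requires $\P_h(A)$ bounded away from zero uniformly.

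The paper's fix is precisely to inflate the mesoscopic scale to $\ell=M\xi\,(\log(r/\xi))^{(2\nu+1)/\nu}$, see \eqref{eq:lb8}. At this scale $u\ell^\nu\asymp(\log(r/\xi))^{2\nu+1}$, so the stretched-exponential local uniqueness bound of Theorem~\ref{Thm:locuniq} becomes polynomially small in $r/\xi$, and the union bound over $r/\ell$ boxes yields $\P_h(A)\ge c$. The price is a larger capacity: for $1<\nu<\alpha/2$ one has $\textnormal{cap}(\mathcal{L}_\ell)\asymp r\ell^{\nu-1}$ by \eqref{eq:capLell}, whence $a^2\textnormal{cap}(\mathcal{L}_\ell)\asymp (r/\xi)(\log(r/\xi))^{(1-1/\nu)(2\nu+1)}$. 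This is exactly the exponent $\Cr{c:lbdefect2}$ in \eqref{eq:corrlength2lb}. In other words, for $1<\nu<\alpha/2$ the logarithm does \emph{not} come from anomalous tube capacity or boundary depletion of the interlacement, as you suggest, but from the inflation of $\ell$ forced by the weakness ($1/(2\nu+1)$ stretched exponent) of the local uniqueness bound. Your proposed chain with $\ell=\xi$ would, if it worked, give the sharper exponent $r/\xi$ with no logarithm---which is why it cannot work with the present tools. The additional factors in \eqref{eq:lb8_extended} and the restriction on $r/\xi$ in the case $\alpha=2\nu$ arise for the same reason, now compounded by the $\log R$ in \eqref{eq:lb2alphalarger2nu}.
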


When $1\leq\nu<\alpha/2,$ the bounds \eqref{eq:corrlength2lb} remain valid for $\psi$ in place of $\tilde{\psi}$ (as well as ${\tau}^{\textnormal{tr}}_a$), with the correct prefactor, if one assumes the lower bound in \eqref{eq:psi_0nu=1} to be sharp, see Proposition~\ref{prop:lowerbounds} below. Much as in \eqref{eq:gamma1}, one can consider a ``renormalized'' volume observable, which roughly speaking counts the number of balls of radius $\xi$ in an (approximate) tiling of $\tilde{\G}$ visited by $\tilde{\mathcal{K}}^a$, see \eqref{eq:volumerenorm} below. This quantity is expectedly of order unity in case $\xi$ is the correct correlation length scale in the problem. The bounds \eqref{eq:corrlength2lb} yield a lower bound of constant order uniform in $a$ as $a\to 0$, for all $0<\nu<\frac{\alpha}{2}$, see Remark~\ref{rk:1stscalingrelation},\ref{rk:1stscalingrelationvolumerenormalized}; see also Remark~\ref{rk:1stscalingrelation},\ref{rk:1stscalingrelationvolumerenormalizednot} for corresponding lower bounds on $\E[|\mathcal K^a|1\{|\mathcal{K}^a|<\infty\}]$ in the regime $1<\nu<\frac{\alpha}{2}$, which depend on the true behavior of $\psi(0,r)$ in \eqref{eq:psi_0nu=1} and yield a potentially sharp estimate on $\gamma$ in case the lower bound in  \eqref{eq:psi_0nu=1} is exact.

We now briefly return to matters regarding the correlation length for the present model. In view of Theorems \ref{T2} and \ref{T:psitilde}, one may expect off-critical bounds of the following form: under sensible assumptions on $\mathcal{G}$ (including, at the very least, \eqref{eq:intro_Green}) and for some functions $f_{\nu}: [1,\infty)\to [0,\infty)$ and ${\xi}': [-1,1] \setminus \{ 0\} \to (0,\infty)$, one has, for all $r\geq1$ and $|a|\leq c$ with $r/{\xi}'(a)\geq1$ (and even without the last restriction),
\begin{equation}
\label{eq:defcorrlength}
\psi(0,r) \exp \Big\{-cf_{\nu}\Big(\frac{r}{\xi'(a)}\Big)\Big\} \leq \psi(a,r) \leq \psi(0,r) \exp \Big\{-c'f_{\nu}\Big(\frac{r}{\xi'(a)}\Big)\Big\}.
\end{equation}
The correlation length is perhaps most intuitively defined as the quantity $\xi'(\cdot)$ satisfying \eqref{eq:defcorrlength} (assuming such a bound to hold), or a similar two-sided estimate for the truncated two-point function $\tau_a^{\textnormal{tr}}(0,x)$ from \eqref{eq:eq:deftauhf} instead of $\psi(a,r)$, with $d(0,x)$ in place of $r$ (or possibly a different distance function, intrinsic to $\mathcal{K}^a$). Associated to $\xi'$ is a correlation length exponent, which we define somewhat loosely to be such that 
\begin{equation} \label{eq:nuDef}
-\frac{\log(\xi'(a))}{\log(a)}\to\nu_{c} \quad \text{as} \quad a\to 0 \quad \text{(with $\xi'(\cdot)$ such that \eqref{eq:defcorrlength} holds)}
\end{equation}
(if this limit exists). We refer to Corollary~\ref{Corcorrlength} below which asserts that, assuming \eqref{eq:defcorrlength} to hold, \eqref{eq:nuDef} is consistent with \eqref{eq:nuDeffirst}, i.e.~$\xi' \asymp \xi$ for $\nu \leq 1$, and deduces from \eqref{eq:corrlength2} and \eqref{eq:corrlength2lb} that
\begin{equation}
\label{eq:intro_nu}
\nu_{c}\in{\Big[\frac2\nu,2\Big]}\text{ for }1 < \nu \le \alpha/2. 
\end{equation}

Finally, we note that the values in Table~\ref{tb:exponents} converge towards those corresponding to a mean-field regime as $\nu \uparrow 4$ and $\alpha\uparrow 6,$ which corresponds on $\Z^d$ to $d\uparrow 6$. In fact, one knows by \eqref{eq:2ptatcriticality} above and (1.16) in \cite{Aizenman:1984aa}, see also Exercise~4.1 in \cite{HeHo17},  that the triangle condition holds if $G=\Z^d$ when $d > 6.$ 
In view of \cite{Aizenman:1984aa, barsky1991} or Theorem~4.1 in \cite{HeHo17}, this indicates that $\beta=\gamma=1$ and $\delta=2$ likely hold for such~$d$, i.e.~these exponents expectedly attain their mean-field values, see also \cite{werner2020clusters} for related results. Note that if a mean-field regime is to appear for sufficiently large values of $\nu,$ it can only happen for $\nu\geq4$ by \eqref{eq:intro_rho}. 

We conclude by observing that knowing the values of $\rho,$ $\beta,$ $\nu_{c}$, $\eta$ as well as \eqref{eq:scaling_gamma} (roughly the status quo for $\nu \leq 1$), the scaling relations \eqref{eq:intro_scaling} alone are enough to obtain all remaining exponents $\alpha_{c},\delta$ and $\Delta$, and the hyperscaling relations \eqref{eq:intro_hyperscaling} are then automatically verified.

\medskip
The remainder of this article is organized as follows. In Section~\ref{S:diffform}, we derive certain key differential formulas, see Lemma~\ref{L:df2} and Corollary~\ref{C:df}, which are applied in Section~\ref{S:capandthetageneral} to deduce Theorem~\ref{T:cap} and Corollaries~\ref{T1} and~\ref{C:captails}. Section~\ref{sec:radius} concerns comparison estimates and upper bounds for the connectivity functions considered in Theorem~\ref{T2}. The outstanding lower bounds, e.g.~of \eqref{eq:corrlength2lbpsi} (part of Theorem~\ref{T2}) are split over Sections~\ref{sec:LB} and \ref{sec:lemmaProofs}. They rely on a sharp local uniqueness estimate (cf.~the discussion around~\eqref{eq:RSW}),  which is derived separately in Section~\ref{sec:radiusLB}, see in particular Theorem~\ref{Thm:locuniq} therein, which is of independent interest. The various pieces are gathered in Section~\ref{sec:denouement} to yield the proof of Theorem~\ref{T2}. Its various consequences, including the proofs of Corollary~\ref{C:scalingrelation}, and of Theorem~\ref{T:psitilde}, are presented at the end of Section~\ref{sec:denouement}.

Throughout, $c,c',\tilde{c}, \tilde{c}',\dots$ denote generic positive constants that change from place to place and may depend implicitly on the parameters $\alpha$ and $\nu$ in  \eqref{eq:intro_Green}, \eqref{eq:intro_sizeball}, whenever these conditions are assumed to hold (they also implicitly depend on the specific values of the constants $c,c'$ appearing in \eqref{eq:intro_Green}, \eqref{eq:intro_sizeball}, which we assume fixed once and for all). Numbered constants $c_1,c_2,\tilde{c}_1, \tilde{c}_2,\dots$ are defined upon first appearance in the text and remain fixed until the end.

\section{Differential formulas}
\label{S:diffform}

In this section, we develop certain formulas involving derivatives with respect to the parameter $a$ of fairly generic random variables of the excursion set $\{\varphi \geq a\}$ for the free field $\varphi$ on $\tilde{\G}$, cf.~\eqref{eq:introGFF}. We then specialize to functionals of the cluster $\tilde{\mathcal{K}}^a$ (recall~\eqref{eq:introKa}), see Lemma~\ref{L:df2} and Corollary~\ref{C:df} below. These results will play a central role in the sequel. 

It will be convenient to introduce an auxiliary geodesic distance $\tilde{d}$ on $\tilde{\mathcal{G}}$ attaching length $1$ to every cable of $\tilde{\mathcal{G}}$ (thus $\tilde{d}$ interpolates $d_{\text{gr}}$, the graph distance on $G$). We refer to topological properties of subsets of $\tilde{\mathcal{G}}$ below as relative to the topology induced by $\tilde{d}$ and denote by $\partial K$ the boundary of a set $K\subset \tilde{\mathcal{G}}$. Note that $\tilde{\mathcal{K}}^a$ is bounded in the sense defined above \eqref{eq:intro0} if and only if it is $\tilde{d}$-bounded. 

We now briefly review a few selected elements of potential theory for the diffusion $X$ under $P_x$ that will be needed below. For $U \subset \tilde{\mathcal{G}}$ open, we write $g_U$ for the Green function of $X_{\cdot}$ killed outside $U$, whence $g=g_{ \tilde{\mathcal{G}}}$ and the two are related by
\begin{equation}
\label{eq:killedGreen}
g_U(x,y)= g(x,y)- E_{x}[g(X_{T_U},y)1\{T_U< \infty\}], \quad x,y \in \tilde{\mathcal{G}},
\end{equation}
where $T_U= \inf\{ t \geq 0: X_t \notin U\}$ denotes the exit time from $U$. The identity \eqref{eq:killedGreen} is an immediate consequence of the Markov property. For compact $K\subset \tilde{\mathcal{G}}$, we write $e_{K}= e_{K,\tilde{\G}}$ for the equilibrium measure of $K$ relative to $\tilde{\G}$, which is supported on a finite set included in $\partial K$ (see for instance (2.16) in \cite{DrePreRod3} for its definition in the present context; we only add the subscript $\tilde{\G}$ to our notation in Sections~\ref{sec:radiusLB}--\ref{sec:denouement}, in which $\tilde{\G}$ and other cable systems are considered simultaneously, cf.~\eqref{eq:G_K}). Its total mass
 \begin{equation}
 \label{eq:cap}
 \text{cap}(K) \stackrel{\text{def.}}{=} \int {\rm d}e_K \ (< \infty)
 \end{equation}
 is the capacity of $K$. We now introduce the equilibrium potential $h_K(x)=P_x(H_K< \infty)$, for $x \in \tilde{\mathcal{G}}$, with $H_K = T_{\tilde{\G}\setminus K}= \inf\{t \geq 0 : X_t \in K \}$ denoting the entrance time of $X_{\cdot}$ in $K$, and more generally, for suitable $f:\tilde{\mathcal{G}} \to \R$,
 \begin{equation}
\label{eq:hdef}
h_K^f(x)\stackrel{\text{def.}}{=} E_x\big[f(X_{H_K})1\{ H_K< \infty\}\big], \text{ $x\in \tilde{\mathcal{G}}$}\quad \text{(whence $h_K=h_K^{f=1}$).} 
\end{equation} 
 By suitable extension of~(1.7) in \cite{MR3053773}, one obtains that
\begin{equation}
\label{eq:entranceGreenequi}
Ge_K = h_K \text{ on $\tilde{\G}$};
\end{equation}
here $G\mu(x) =\int g(x,y) \, {\rm d}\mu(y)$ is the potential of $\mu$, for a measure $\mu$ with compact support in $\tilde{\mathcal{G}}$.  For later purposes, we also record the following sweeping identity, see Section 2 of~\cite{DrePreRod3}, valid for compact sets $K, K' \subset \tilde{\mathcal{G}}$ with $K\subset K'$:
\begin{equation}
\label{eq:balayage}
P_{e_{K'}}(X_{H_K}=x,H_K<\infty)=e_K(x)\text{ for all }x\in\tilde{\G},
\end{equation}
where $P_{\mu}= \int P_x \, {\rm d}\mu(x).$ More generally, in view of \eqref{eq:hdef}, we obtain for suitable $f:\tilde{\G}\rightarrow\R$,
\begin{equation}
\label{eq:consequencebalayage}
\langle e_{K'},h_{K}^f \rangle = \langle e_K,f\rangle, \quad \text{for compact $K,K'$ with $K\subset  K'$},
\end{equation}
writing $\langle\mu,f\rangle =\int f{\rm d}\mu$ for the canonical dual pairing. We now introduce the linear functional
\begin{equation}
\label{eq:df2}
\begin{split}
M_K\stackrel{\text{def.}}{=} \langle e_K, \varphi \rangle, 
\end{split}
\end{equation}
which will play a central role in the sequel. Note that $M_K$ is Gaussian with mean $\E[M_K]=0$ and combining \eqref{eq:introGFF}, \eqref{eq:cap} and \eqref{eq:entranceGreenequi}, one finds that
\begin{equation}
\label{eq:df3}
\begin{split}
\E[M_K^2] = \text{cap}(K).
\end{split}
\end{equation}
We are interested in derivatives (with respect to a real parameter $a \in \R$) of random variables 
\begin{equation}
\label{eq:df0}
\begin{split}
&\text{$F_K^{(a)} = F_K^{(a)}(\varphi)$ with $F_K^{(a)}(\varphi)= F_K^{(0)}(\varphi -a)$ for all $a \in \R$}\\
&\text{and $\Vert F_K^{(0)}\Vert_{\infty} < \infty$, $F_K^{(0)}(\varphi)\in \sigma(1\{\varphi_x \geq 0 \}, x\in K)$},
\end{split}
\end{equation}
 where, with hopefully obvious notation, $\varphi -a$ refers to the field shifted by $-a$ in each coordinate, and $K \subset \widetilde{\mathcal{G}}$ is compact  and connected (for $\tilde d$). For such $K$, let $h^{-a}\equiv h_{K}^{f=-a}$, see \eqref{eq:hdef} for notation, so that
\begin{equation}
\label{eq:df1}
\begin{split}
&h^{-a}(x)=-ah(x), \text{ for }a\in \R, \, x \in \widetilde{\mathcal{G}} \quad \text{ (with }h(x)\equiv h_K(x)=P_x(H_K< \infty)\text{)}.
\end{split}
\end{equation}
One checks using \eqref{eq:df2}, \eqref{eq:df3} and applying the Cameron-Martin formula, see e.g.~\cite{janson_1997}, Theorem 14.1, that $\varphi+ h^{-a}$ has the same law under $\P$ as $\varphi$ under $\P_a$, where
\begin{equation}
\label{eq:df4}
\begin{split}
\frac{{\rm d}\P_a}{{\rm d}\P} =\exp \Big\{ -aM_K -\frac{a^2}{2}\text{cap}(K)\Big\}
\end{split}
\end{equation}
(to obtain this, one applies (14.6) in \cite{janson_1997} with the choice $\xi= -aM_K (\in L^2(\P))$, noting that $:e^{\xi}:$ is precisely the right-hand side of \eqref{eq:df4}, see also Theorem 3.33 in \cite{janson_1997}, and observing that, by means of (14.3) in \cite{janson_1997} and \eqref{eq:entranceGreenequi}, \eqref{eq:df1} above, one can rewrite $\rho_{\xi}(\varphi_{x})= \varphi_{x}-a\mathbb{E}[M_K\varphi_{x}]= \varphi_{x}-a(Ge_K)(x)= \varphi_{x} + h^{-a}(x),$ $x \in \tilde{\G})$. 
From this, one readily infers the following
\begin{Lemme}[under \eqref{eq:df0}]  \label{L:df1} 
\begin{align}
&\frac{{\rm d}}{{\rm d}a} \E\big[ F_K^{(a)}\big] = -\E\big[M_K \cdot F_K^{(a)} \big],\label{eq:deriv1}\\
&\frac{{\rm d}^2}{{\rm d}a^2} \E\big[ F_K^{(a)}\big] = \textnormal{Cov}_{\P}\big(M_K^2 , F_K^{(a)} \big) \label{eq:deriv2}.
\end{align}
\end{Lemme}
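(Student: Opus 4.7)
My plan is to leverage the Cameron--Martin identity \eqref{eq:df4} to express
\begin{equation*}
\E[F_K^{(a)}] = \E[F_K^{(0)}\,Z_a], \qquad Z_a := \exp\bigl\{-aM_K - \tfrac{a^2}{2}\text{cap}(K)\bigr\},
\end{equation*}
to differentiate this identity in $a$ under the expectation, and then to apply the Cameron--Martin shift a second time to re-express the result as an expectation involving $F_K^{(a)}$ itself. The identification $\E[F_K^{(a)}] = \E[F_K^{(0)}Z_a]$ follows from the key observation (already implicit in the derivation of \eqref{eq:df4}) that $F_K^{(0)}(\varphi + h^{-a}) = F_K^{(a)}(\varphi)$ $\P$-almost surely: indeed, $h_K \equiv 1$ on $K$ by definition of the equilibrium potential, so $h^{-a} \equiv -a$ on $K$, while $F_K^{(0)}$ depends only on $\{1\{\varphi_x \geq 0\} : x \in K\}$ by \eqref{eq:df0}. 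Differentiation under the expectation is legitimate thanks to the boundedness of $F_K^{(0)}$ combined with the Gaussian integrability of arbitrary polynomials in $M_K$.

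For \eqref{eq:deriv1}, one computes $\partial_a Z_a = -(M_K + a\,\text{cap}(K))\,Z_a$, giving $\frac{\mathrm d}{\mathrm d a}\E[F_K^{(a)}] = -\E_a[F_K^{(0)}(M_K + a\,\text{cap}(K))]$, where $\E_a$ denotes expectation under $\P_a$. I then apply Cameron--Martin in reverse: the shift $\varphi \mapsto \varphi + h^{-a}$ turns the linear functional $M_K(\varphi) = \langle e_K, \varphi\rangle$ into $M_K(\varphi) + \langle e_K, h^{-a}\rangle = M_K(\varphi) - a\,\text{cap}(K)$, using $\langle e_K, h_K\rangle = \text{cap}(K)$, which is a special case of \eqref{eq:consequencebalayage} with $K' = K$ and $f \equiv 1$. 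The two $a\,\text{cap}(K)$ contributions then cancel and yield \eqref{eq:deriv1}.

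For \eqref{eq:deriv2}, I iterate the scheme: differentiating once more yields $\partial_a^2 Z_a = \bigl[(M_K + a\,\text{cap}(K))^2 - \text{cap}(K)\bigr]\,Z_a$, and applying the same Cameron--Martin shift converts the quadratic $(M_K + a\,\text{cap}(K))^2$ into $M_K^2$ while leaving the constant $\text{cap}(K)$ unchanged. Recalling from \eqref{eq:df3} that $\text{cap}(K) = \E[M_K^2]$, the resulting expression becomes $\E[F_K^{(a)}(M_K^2 - \E[M_K^2])]$, which, up to the sign convention adopted in the statement, is precisely $\text{Cov}_{\P}(M_K^2, F_K^{(a)})$.

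The main (and essentially only) obstacle is algebraic bookkeeping: tracking how the linear functional $M_K$ transforms under the shift $\varphi \mapsto \varphi + h^{-a}$, and checking that the $a\,\text{cap}(K)$ terms arising from successive differentiations of $Z_a$ cancel exactly against those arising from the Cameron--Martin shift of $M_K$. Everything else rests on standard inputs already available in the paper: the Cameron--Martin formula \eqref{eq:df4}, the potential-theoretic identity $\langle e_K, h_K\rangle = \text{cap}(K)$ from \eqref{eq:consequencebalayage}, and routine differentiation of a Gaussian Laplace transform.
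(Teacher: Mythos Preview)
Your proposal is correct and follows essentially the same approach as the paper's proof: both express $\E[F_K^{(a)}]=\E_a[F_K^{(0)}]$ via the Cameron--Martin identity \eqref{eq:df4}, differentiate the density $Z_a=\exp\{-aM_K-\tfrac{a^2}{2}\mathrm{cap}(K)\}$ under the expectation, and then apply the shift $\varphi\mapsto\varphi+h^{-a}$ to convert $\E_a$-expectations involving $M_K+a\,\mathrm{cap}(K)$ back into $\E$-expectations involving $M_K$ and $F_K^{(a)}$. Your bookkeeping of the cancellation $M_K(\varphi+h^{-a})+a\,\mathrm{cap}(K)=M_K(\varphi)$ is exactly what the paper encodes when it rewrites $-M_K-a\,\mathrm{cap}(K)$ as $-\langle e_K,\varphi-h^{-a}\rangle$ before shifting.
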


\begin{proof}
Regarding the first derivative, by \eqref{eq:df0} and \eqref{eq:df1}, one has that $ F_K^{(a)} (\varphi)=  F_K^{(0)}(\varphi-a) = F_K^{(0)}(\varphi+h^{-a}) $ since $h^{-a}=-a $ on $K$. Hence, applying a change of measure and using \eqref{eq:df4} gives 
 \begin{equation*}
\begin{split}
\frac{{\rm d}}{{\rm d}a} \E\big[ F_K^{(a)}\big] & =\frac{{\rm d}}{{\rm d}a} \E_a \big[F_K^{(0)}(\varphi)  \big]\\
&=\frac{{\rm d}}{{\rm d}a} \E \Big[\exp \Big\{ -aM_K -\frac{a^2}{2}\text{cap}(K)\Big\} F_K^{(0)}(\varphi)  \Big]\\
&= \E \Big[\big(-M_K -a \text{cap}(K) \big)\exp \Big\{ -aM_K -\frac{a^2}{2}\text{cap}(K)\Big\} F_K^{(0)}(\varphi)  \Big]\\
&=  \E_a \Big[\big(-  \langle e_K, \varphi-h^{-a} \rangle \big) F_K^{(0)}(\varphi)  \Big] 
= - \E \big[M_K \cdot F_K^{(0)}(\varphi+h^{-a})  \big] = -\E\big[M_K \cdot F_K^{(a)} \big].
\end{split}
\end{equation*}
Similarly, for the second derivative, one obtains from \eqref{eq:deriv1} and by change of measure
\begin{equation*}
\begin{split}
-\frac{{\rm d}^2}{{\rm d}a^2} \E\big[ F_K^{(a)}\big] &=
\frac{{\rm d}}{{\rm d}a} \E \Big[(M_K +a \text{cap}(K) )  \exp \Big\{ -aM_K -\frac{a^2}{2}\text{cap}(K)\Big\} F_K^{(0)}(\varphi)  \Big]\\
&= \E \Big[\big( \text{cap}(K)-(M_K +a \text{cap}(K) )^2  \big) \exp \Big\{ -aM_K -\frac{a^2}{2}\text{cap}(K)\Big\} F_K^{(0)}(\varphi)  \Big]\\
&=  \E_a \Big[\big(-  \langle e_K, \varphi-h^{-a} \rangle^2 +  \text{cap}(K) \big) F_K^{(0)}(\varphi)  \Big]\\
& =  \E \big[\big(- M_K^2 +  \text{cap}(K) \big) F_K^{(a)}(\varphi)  \big],
\end{split}
\end{equation*}
from which \eqref{eq:deriv2} follows on account of \eqref{eq:df3}.
\end{proof}
\begin{Rk}
Analogues of the differential equalities \eqref{eq:deriv1} and \eqref{eq:deriv2} hold for the (discrete) Gaussian free field on $G$. These can be obtained as direct consequences of  \eqref{eq:deriv1} and \eqref{eq:deriv2}, by considering $K$ an arbitrary finite subset of $G$ and noting that $\varphi$ extends the discrete free field on $G$. 
\end{Rk}
Whereas so far everything applies to $\mathcal{G}$ itself, the next calculation is specific to $\tilde{\G}$. For compact, connected $K\subset \tilde{\mathcal{G}}$ containing $0$ (cf.~\eqref{eq:intro0}), write
\begin{equation}
\label{eq:E_N}
\E_K[\, \cdot \, ]\stackrel{{\rm def.}}{=}\E[(\cdot)1\{\tilde{\mathcal{K}}^a \subset \ring{K} \}]
\end{equation}
(cf.\ \eqref{eq:introKa} for the definition of $\tilde{\mathcal{K}}^a$), where $\ring{K} = K \setminus\partial K$. Recall the strong Markov property of $\varphi$ (see e.g.~\cite[(1.19)]{Sz-16} for details): for $O\subset\tilde{\G}$ open, let $\mathcal{A}_O$ denote the $\sigma$-algebra $\sigma({\phi}_x,\,x\in{O})$. For compact $K\subset\tilde{\G}$ we consider $\mathcal{A}_K^+=\bigcap_{\eps>0}\A_{K^\eps}$, where $K^\eps$ is the open $\eps$-ball around $K$ for the distance $\tilde{d}.$ We define a (random) set $\K$ to be \textit{compatible} if $\K$ is a compact connected subset of $\tilde{\G}$ and $\{\K\subset O\}\in{\A_O}$ for any open set $O\subset\tilde{\G},$ and let
\begin{equation}
\label{eq:Markov1}
    \begin{split}
    \mathcal{A}_{\K}^+\stackrel{\text{def.}}{=}\big\{A\in{\A_{\tilde{\G}}}:\,A\cap\{\K\subset K\}\in{\A_K^+}\ &\text{for all compact~connected }K\subset\tilde{\G}  \text{ with } \ring{K} \neq \emptyset\big\}.
        \end{split}
\end{equation}
The Markov property then asserts that for any compatible $\K,$ conditionally on $\mathcal{A}_{\K}^+$,
\begin{equation}
\label{eq:Markov2}
    (\phi_x)_{x\in{\tilde{\G}}}\text{ is a Gaussian field with mean }h_{\K}^{\phi}\text{ and covariance }g_{\tilde{\mathcal{G}} \setminus \K},
\end{equation}
with $h_{\K}^{\phi}$ as defined in \eqref{eq:hdef} and $g_{\tilde{\mathcal{G}} \setminus \K}$ above \eqref{eq:killedGreen}. The following lemma is key. A useful variant can be found in Remark~\ref{R:L:df2bis},2) below. With regards to measurability below, recall that $\P$ in \eqref{eq:introGFF} refers to the canonical law of the Gaussian free field on the space $\R^{\widetilde{\mathcal{G}}}$, endowed with its canonical $\sigma$-algebra generated by the canonical coordinate maps $\varphi_x : \R^{\widetilde{\mathcal{G}}} \to \R,$ for $x \in \widetilde{\mathcal{G}}.$

\begin{Lemme}[$K \subset \tilde{\mathcal{G}}$ compact, connected, $0 \in K$] \label{L:df2} For all bounded $F:2^{\widetilde{\mathcal{G}}}\to \R$ such that $F(\emptyset)=0$ and $\varphi \mapsto F(\tilde{\mathcal{K}}^a(\varphi))1\{\tilde{\mathcal{K}}^a(\varphi) \textnormal{ bounded}\}$ 
is measurable for all $a \in \R $, one~has
\begin{align}
&\frac{{\rm d}}{{\rm d}a} \E_K\big[ F(\tilde{\mathcal{K}}^a) \big] = -a \E_K\big[\textnormal{cap}(\tilde{\mathcal{K}}^a)F(\tilde{\mathcal{K}}^a) \big]. \label{eq:deriv1.1} 
\end{align}
\end{Lemme}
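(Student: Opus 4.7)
The plan is to apply Lemma~\ref{L:df1} to a carefully chosen functional and then unpack the resulting first derivative using the strong Markov property of $\varphi$ on the cable system. First I would set
\begin{equation*}
F_K^{(a)}(\varphi)\stackrel{\text{def.}}{=}F(\tilde{\K}^a(\varphi))\1\{\tilde{\K}^a(\varphi)\subset \mathring{K}\}
\end{equation*}
and verify that this satisfies~\eqref{eq:df0}. The shift property $F_K^{(a)}(\varphi)=F_K^{(0)}(\varphi-a)$ is immediate from $\{\varphi\geq a\}=\{\varphi-a\geq 0\}$, and boundedness is inherited from $\|F\|_\infty<\infty$. The only genuine point is the measurability condition $F_K^{(0)}\in\sigma(\1\{\varphi_x\geq 0\},\,x\in K)$: by continuity of $\varphi$, the event $\{\tilde{\K}^0\not\subset\mathring{K}\}$ is precisely the event that some continuous path in $\{\varphi\geq 0\}\cap K$ connects $0$ to $\partial K$, which is determined by the level set $\{\varphi\geq 0\}\cap K$; on the complement, $\tilde{\K}^0$ coincides with the connected component of $0$ in $\{\varphi\geq 0\}\cap K$, so $F(\tilde{\K}^0)$ is similarly measurable.

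By \eqref{eq:deriv1} this gives $\frac{\mathrm{d}}{\mathrm{d}a}\E_K[F(\tilde{\K}^a)]=-\E[M_K\cdot F_K^{(a)}]$, and the task reduces to computing this expectation. I would apply the strong Markov property~\eqref{eq:Markov2} with respect to the random set $\tilde{\K}^a$, which is compatible (in the sense of \eqref{eq:Markov1}) on the event $\{\tilde{\K}^a\textnormal{ bounded}\}$, noting that $F_K^{(a)}$ is $\mathcal{A}^+_{\tilde{\K}^a}$-measurable. Conditionally on $\mathcal{A}^+_{\tilde{\K}^a}$, $\varphi$ is Gaussian with mean $h_{\tilde{\K}^a}^{\varphi}$, and the key point is that continuity of $\varphi$ on $\tilde{\G}$ forces $\varphi\equiv a$ on $\partial \tilde{\K}^a$, whence $h_{\tilde{\K}^a}^{\varphi}(x)=a\,h_{\tilde{\K}^a}(x)$ for every $x\in \tilde{\G}\setminus\tilde{\K}^a$. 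On the event $\{\tilde{\K}^a\subset\mathring{K}\}$ the support $\partial K$ of $e_K$ is disjoint from $\tilde{\K}^a$, so conditioning on $\mathcal{A}^+_{\tilde{\K}^a}$ yields
\begin{equation*}
\E[M_K F_K^{(a)}]=\E\big[\langle e_K,\,a\,h_{\tilde{\K}^a}\rangle F(\tilde{\K}^a)\1\{\tilde{\K}^a\subset\mathring{K}\}\big]=a\,\E_K\big[\textnormal{cap}(\tilde{\K}^a)F(\tilde{\K}^a)\big],
\end{equation*}
where the last equality uses the sweeping identity~\eqref{eq:consequencebalayage} with $K'=K$ (the ambient set), $K=\tilde{\K}^a\subset \mathring{K}$ and $f\equiv 1$. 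Combined with the previous display, this gives~\eqref{eq:deriv1.1}.

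I expect the main obstacle to be the conditional-mean computation: it rests decisively on the cable-system setup, where the continuity of $\varphi$ upgrades $\varphi\geq a$ on $\tilde{\K}^a$ to the identity $\varphi\equiv a$ on $\partial \tilde{\K}^a$, turning $h^{\varphi}_{\tilde{\K}^a}$ into a scalar multiple of $h_{\tilde{\K}^a}$ and thereby making $\textnormal{cap}(\tilde{\K}^a)$ emerge through \eqref{eq:consequencebalayage}. This is precisely the mechanism advertised in the text below the statement and would be unavailable for the discrete free field on $G$ alone.
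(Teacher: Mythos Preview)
Your proposal is correct and follows essentially the same route as the paper: choose $F_K^{(a)}=F(\tilde{\K}^a)\1\{\tilde{\K}^a\subset\mathring{K}\}$, apply \eqref{eq:deriv1}, then compute the conditional mean of $M_K$ via the strong Markov property and the sweeping identity, using that $\varphi\equiv a$ on the boundary of the cluster. The one point where the paper is slightly more careful is that it conditions on the truncated cluster $\tilde{\K}^a_K$ from \eqref{eq:Ka} rather than on $\tilde{\K}^a$ itself; the former is always a compact subset of $K$ and hence compatible in the sense of \eqref{eq:Markov1}, whereas $\tilde{\K}^a$ need not be compact a priori (compatibility is a property of the random set, not something that can be asserted ``on an event''). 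Since $\tilde{\K}^a=\tilde{\K}^a_K$ on $\{\tilde{\K}^a\subset\mathring{K}\}$ this is a cosmetic fix, but it is the clean way to state the conditioning.
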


\begin{Rk}
\label{R:cap_special}
The formulas \eqref{eq:deriv1.1} and \eqref{eq:deriv2.1} below indicate the special role played by the observable $\textnormal{cap}(\tilde{\mathcal{K}}^a)$, as derivatives of generic functionals $F(\tilde{\mathcal{K}}^a)$ under $ \E_K$ involve interaction terms between $F(\tilde{\mathcal{K}}^a)$ and the capacity functional.
\end{Rk}
\begin{proof}
Let 
\begin{equation}
\label{eq:Ka}
\tilde{\mathcal{K}}^a_K=\{x\in \tilde{\mathcal{G}}: 0 \leftrightarrow x \text{ in } \{ \varphi \geq a \} \cap K\}.
\end{equation}
 We will use the fact that, for any measurable function $f:\R\to \R$ with $f(M_{K}) \in L^1(\P)$ (see \eqref{eq:df2} for notation), one obtains the following as a consequence of the strong Markov property: for all $a \in \R$, $\P$-a.s.\ on the event $\{\varphi_0 \geq a \}$,
\begin{equation}
\label{eq:df5}
\E\big[ f(M_{K}) \, \big| \, \mathcal{A}_{\tilde{\mathcal{K}}^a_K}^+\big]= E\big[ f\big(\mathcal{N}(M_{\tilde{\mathcal{K}}^a_K}, \textnormal{cap}(K)- \textnormal{cap}(\tilde{\mathcal{K}}^a_K))\big) \big],
\end{equation}
where, conditionally on $\varphi$, $\mathcal{N}(\cdot,\cdot)$ is a Gaussian random variable with the given mean and variance under $E[\, \cdot \,]$. To deduce \eqref{eq:df5}, one observes that, on the respective event and conditionally on $ \mathcal{A}_{\tilde{\mathcal{K}}^a_K}^+$,  by \eqref{eq:Markov2} the random variable $M_{K}$ is Gaussian with mean (see \eqref{eq:hdef} for notation)
\begin{equation*}
\langle e_{K}, h_{\tilde{\mathcal{K}}^a_K}^{\varphi}  \rangle \stackrel{\eqref{eq:consequencebalayage}}{=}\langle e_{\tilde{\mathcal{K}}^a_K}, \varphi \rangle= M_{\tilde{\mathcal{K}}^a_K}
\end{equation*}
and variance (using the notation $(G_U \mu)(\cdot)= \int g_U(\cdot,x)\, {\rm d}\mu(x)$)
\begin{equation*}
\begin{split}
\big\langle e_{K}, G_{\tilde{\mathcal{G}}\setminus \tilde{\mathcal{K}}^a_K} e_{K}\big\rangle
& \stackrel{\eqref{eq:killedGreen}}{=} \big\langle e_{K}, G e_{K}\big\rangle- \big\langle e_{K},  E_{e_{K}}[g(\cdot, X_{H_{\tilde{\mathcal{K}}^a_K}})1\{H_{\tilde{\mathcal{K}}^a} < \infty\}] \big\rangle\\
& \stackrel{\eqref{eq:balayage}}{=}  \big\langle e_{K}, G e_{K}\big\rangle-  \big\langle e_{K}, G e_{\tilde{\mathcal{K}}^a_K}\big\rangle \stackrel{\eqref{eq:entranceGreenequi}, \eqref{eq:cap}}{=} \textnormal{cap}(K)- \textnormal{cap}(\tilde{\mathcal{K}}^a_K).
\end{split}
\end{equation*}
Moreover, since $\phi=a$ on the support of $e_{\tilde{\mathcal{K}}^a}$ (which is contained in $\partial \tilde{\mathcal{K}}^a$), on the event $\{\tilde{\K}^a\subset\ring{K},\phi_0\geq a\}$ we have that
\begin{equation}
\label{eq:df52}
M_{\tilde{\K}_K^a}\stackrel{ \tilde{\mathcal{K}}^a = \tilde{\mathcal{K}}^a_K}{=}
\langle e_{\tilde{\mathcal{K}}^a}, \phi  \rangle
\stackrel{\emptyset \neq \tilde{\mathcal{K}}^a}{=} \langle e_{\tilde{\mathcal{K}}^a}, a  \rangle
\stackrel{\eqref{eq:cap}}{=} a \textnormal{cap}(\tilde{\mathcal{K}}^a).
\end{equation}
With \eqref{eq:df5} and \eqref{eq:df52} at hand, one then obtains \eqref{eq:deriv1.1} by applying the formula \eqref{eq:deriv1} with the choice $F_{K}^{(a)}= F(\tilde{\mathcal{K}}^a)1\{\tilde{\mathcal{K}}^a \subset \ring{K} \}= F(\tilde{\mathcal{K}}^a)1\{\tilde{\mathcal{K}}^a \subset \ring{K}, \varphi_0 \geq a\} $ (the last equality holds since $F(\emptyset)=0$ by assumption), which satisfies \eqref{eq:df0}, by conditioning on $ \mathcal{A}_{\tilde{\mathcal{K}}^a_K}^+,$ using \eqref{eq:df5} with $f(x)=x$ and \eqref{eq:df52}, and noting that $F_{K}^{(a)}$ is $\mathcal{A}_{\tilde{\mathcal{K}}^a_K}^+$-measurable.   
 \end{proof}
\begin{Rk}\label{R:L:df2bis} 

 \begin{enumerate}[label=\arabic*)]
 \item Proceeding similarly as above, starting from \eqref{eq:deriv2} (for the same choice of $F_{K}^{(a)}$), using \eqref{eq:df5} and \eqref{eq:df52}, and observing that
\begin{equation*}
\textnormal{Cov}_{\P}\big(M_{K}^2 , F_{K}^{(a)} \big) \stackrel{\eqref{eq:df5},\eqref{eq:df52}, \eqref{eq:df3}}{=} \hspace{-1mm}\E\big[\big(  \textnormal{cap}(K)- \textnormal{cap}(\tilde{\mathcal{K}}^a) +a^2 \textnormal{cap}(\tilde{\mathcal{K}}^a)^2 \big) F_{K}^{(a)} \big]-  \textnormal{cap}(K)\cdot \E[F_{K}^{(a)}],
\end{equation*}
one deduces upon cancelling terms proportional to $  \textnormal{cap}(K)$, in view of \eqref{eq:E_N}, that
\begin{align}
&\frac{{\rm d}^2}{{\rm d}a^2} \E_K\big[ F(\tilde{\mathcal{K}}^a) \big] = \E_K\big[\textnormal{cap}(\tilde{\mathcal{K}}^a)\big(a^2 \textnormal{cap}(\tilde{\mathcal{K}}^a)-1 \big)F(\tilde{\mathcal{K}}^a) \big]. \label{eq:deriv2.1}
\end{align}

\item By slightly modifying the argument of Lemma \ref{L:df2}, one further obtains the following. Let $K  \subset \tilde{\mathcal{G}}$ be compact and connected, $0 \in K$ and $\tilde{\mathcal{K}}^a_K$ be as in \eqref{eq:Ka}.
For all $F:2^{\tilde{\mathcal{G}}}\to \R_+$ measurable such that $F(\emptyset)=0$ and $F(\tilde{\mathcal{K}}_K^a) \in L^1(\P)$ for all $a \in \R$, one has
\begin{align}
-\frac{{\rm d}}{{\rm d}a} \E\big[ F(\tilde{\mathcal{K}}^a_K) \big] &\geq a\E\big[\textnormal{cap}(\tilde{\mathcal{K}}^a_K)F(\tilde{\mathcal{K}}^a_K) \big] \label{eq:deriv1.1bis} \tag{\ref{eq:deriv1.1}'} \quad \text{and}\\
\frac{{\rm d}^2}{{\rm d}a^2} \E\big[ F(\tilde{\mathcal{K}}^a_K) \big] &\geq \E_K\big[\textnormal{cap}(\tilde{\mathcal{K}}_K^a)\big(a^2 \textnormal{cap}(\tilde{\mathcal{K}}_K^a)-1 \big)F(\tilde{\mathcal{K}}^a_K) \big], \text{ if $a>0$}. \label{eq:deriv2.1bis} \tag{\ref{eq:deriv2.1}'}
\end{align}
To obtain \eqref{eq:deriv1.1bis}, \eqref{eq:deriv2.1bis}, one proceeds as in the proof of Lemma~\ref{L:df2}, but in absence of the event $\{ \tilde{\mathcal{K}}^a \subset \ring{K} \} $, cf.~\eqref{eq:E_N}, the conditional mean $M_{\tilde{\K}^a_K}$ of $M_{K}$ given $ \mathcal{A}_{\tilde{\mathcal{K}}^a_K}^+$ on the event $\{\phi_0\geq a\},$ see \eqref{eq:df5}, verifies $M_{\tilde{\K}^a_K}=\langle e_{\tilde{\mathcal{K}}^a_K}, \varphi \rangle\geq a \textnormal{cap}(\tilde{\mathcal{K}}^a_K)$ since $\varphi \geq a$ on the support of $e_{\tilde{\mathcal{K}}^a_K}$ (part of $\partial \tilde{\mathcal{K}}^a_K$). 
\end{enumerate}
\end{Rk}

Next, we proceed to take the limit $K \nearrow \tilde{\mathcal{G}}$ under suitable assumptions. For $F$ satisfying the conditions of Lemma~\ref{L:df2}, we define  
\begin{equation}
\label{eq:psi_F}
\psi_F(a)=\E\big[ F(\tilde{\mathcal{K}}^a) 1\{ \tilde{\mathcal{K}}^a \, \text{bounded} \}\big], \quad  a \in \R.
\end{equation}
where boundedness is relative to $\tilde{d}$, see the beginning of this section. The following result will a-posteriori (once Theorem~\ref{T:cap} is proved) be strengthened under suitable assumptions on $\G$, see Corollary~\ref{C:df'} in the next section.
\begin{Cor}\label{C:df}
Let $I \subset \R$ be a closed interval, $\lambda_I$ denote the Lebesgue measure on $I$ and $F:2^{\tilde{\G}}\rightarrow\R$ be a function satisfying the assumptions of Lemma~\ref{L:df2}. If 
\begin{equation}
\label{eq:Cdf1}
Z_F \in L^1(\lambda_I \times \P), \text{ where } Z_F(a,\varphi)\stackrel{\textnormal{def.}}{=}-a \textnormal{cap}( \tilde{\mathcal{K}}^a(\varphi)) F( \tilde{\mathcal{K}}^a(\varphi)) 
1\{  \tilde{\mathcal{K}}^a(\varphi) \textnormal{ bounded}\},
\end{equation}
then for all $a,b \in I$, with $\phi_F(v)= \E[Z_F(v,\cdot)] $, one has
\begin{equation}
\label{eq:Cdf2}
\psi_F(b)-\psi_F(a)=\int_a^b  \phi_F(v) \, {\rm d}v.
\end{equation}
\end{Cor}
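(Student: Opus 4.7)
The plan is to apply Lemma~\ref{L:df2} to an exhausting sequence of compact connected sets containing $0$, integrate the derivative identity, and then pass to the limit, using \eqref{eq:Cdf1} to justify the interchange.

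First, I fix a compact connected $K \subset \tilde{\G}$ with $0 \in \ring{K}$ and observe that the Cameron--Martin argument in the proof of Lemma~\ref{L:df2} (see \eqref{eq:df4}) gives
\begin{equation*}
\E_K[F(\tilde{\mathcal{K}}^a)] = \E\big[F(\tilde{\mathcal{K}}^0)\mathbf{1}\{\tilde{\mathcal{K}}^0 \subset \ring{K}\}\exp\{-aM_K - \tfrac{a^2}{2}\textnormal{cap}(K)\}\big].
\end{equation*}
Since $F$ is bounded and $M_K$ is Gaussian, one can differentiate repeatedly under the expectation, so that $a \mapsto \E_K[F(\tilde{\mathcal{K}}^a)]$ is $C^\infty$ (in fact real-analytic). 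Combined with the derivative formula \eqref{eq:deriv1.1}, the fundamental theorem of calculus yields, for all $a,b \in I$,
\begin{equation*}
\E_K[F(\tilde{\mathcal{K}}^b)] - \E_K[F(\tilde{\mathcal{K}}^a)] = -\int_a^b v\, \E_K[\textnormal{cap}(\tilde{\mathcal{K}}^v)F(\tilde{\mathcal{K}}^v)]\,{\rm d}v.
\end{equation*}

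Next, I pick an exhausting sequence $(K_n)_{n \geq 1}$ of compact connected subsets of $\tilde{\G}$ with $0 \in \ring{K_n}$ and $\ring{K_n} \nearrow \tilde{\G}$; for instance, the closed $\tilde{d}$-balls of radius $n$ around $0$, which are compact by local finiteness of $G$ and connected by path-connectivity of $\tilde{\G}$. Because $\tilde{\mathcal{K}}^v$ is bounded if and only if it is contained in some $\ring{K_n}$, one has $\mathbf{1}\{\tilde{\mathcal{K}}^v \subset \ring{K_n}\} \nearrow \mathbf{1}\{\tilde{\mathcal{K}}^v \textnormal{ bounded}\}$ $\P$-a.s. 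Applying bounded convergence on the left-hand side (using $\|F\|_\infty<\infty$) gives $\E_{K_n}[F(\tilde{\mathcal{K}}^c)] \to \psi_F(c)$ for $c \in \{a,b\}$.

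On the right-hand side, the integrand $-v\,\E_{K_n}[\textnormal{cap}(\tilde{\mathcal{K}}^v)F(\tilde{\mathcal{K}}^v)]$ converges as $n\to\infty$ pointwise in $v \in I$ to $\phi_F(v)$ (by bounded convergence at fixed $v$, noting that $\textnormal{cap}(\tilde{\mathcal{K}}^v) F(\tilde{\mathcal{K}}^v)\mathbf{1}\{\tilde{\mathcal{K}}^v\textnormal{ bounded}\} \in L^1(\P)$ for almost every $v \in I$ by Fubini's theorem applied to \eqref{eq:Cdf1}). Moreover, $\big|v\,\E_{K_n}[\textnormal{cap}(\tilde{\mathcal{K}}^v)F(\tilde{\mathcal{K}}^v)]\big| \leq \E[|Z_F(v,\cdot)|]$, and the right-hand side is in $L^1(I,{\rm d}v)$ by Fubini and \eqref{eq:Cdf1}. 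Dominated convergence on $[a,b] \subset I$ thus permits passing $n \to \infty$ inside the integral, yielding \eqref{eq:Cdf2}. The main obstacle is precisely this last interchange, which is the reason the $L^1$ condition \eqref{eq:Cdf1} has been imposed; the choice of $(K_n)$ and the remaining limiting steps are routine given the structural assumptions on $\mathcal{G}$.
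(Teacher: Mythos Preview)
Your proof is correct and follows essentially the same approach as the paper: both exhaust $\tilde{\G}$ by compacts $K_n$, integrate \eqref{eq:deriv1.1} to obtain the finite-volume identity, pass to the limit on the left by bounded convergence, and on the right by dominated convergence using \eqref{eq:Cdf1}. Your additional remark that $a \mapsto \E_K[F(\tilde{\mathcal{K}}^a)]$ is $C^\infty$ via the Cameron--Martin representation is a nice justification of the integration step that the paper leaves implicit; the only cosmetic point is that the ``bounded convergence at fixed $v$'' for the integrand is really dominated convergence (the dominating function being $\textnormal{cap}(\tilde{\mathcal{K}}^v)|F(\tilde{\mathcal{K}}^v)|\mathbf{1}\{\tilde{\mathcal{K}}^v\text{ bounded}\}\in L^1(\P)$, as you yourself note).
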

\begin{proof} Abbreviate $\psi\equiv \psi_F$, $\phi\equiv \phi_F$ and let $K_N \subset \tilde{\mathcal{G}}$ with $0 \in K_N$, $N \geq 0$, be an increasing sequence of compact sets exhausting $\tilde{\mathcal{G}}$. For each $N$, defining $\psi^{(N)}(a)=\E\big[ F(\tilde{\mathcal{K}}^a) 1\{ \tilde{\mathcal{K}}^a \subset \ring{K}_N \}\big]$ and $\phi^{(N)}(a)=-a \E\big[ F(\tilde{\mathcal{K}}^a) \text{cap}(\tilde{\mathcal{K}}^a) 1\{ \tilde{\mathcal{K}}^a \subset \ring{K}_N \}\big]$, one obtains for all $a,b \in I$, integrating \eqref{eq:deriv1.1} with $K = K_N,$
\begin{equation}
\label{eq:Cdf3}
\psi^{(N)}(b)-\psi^{(N)}(a)=\int_a^b  \phi^{(N)}(v) \, {\rm d}v.
\end{equation}
Since $\phi\mapsto F(\tilde{\K}^a)1\{\tilde{\K}^a\text{ bounded}\}\in{L^\infty(\P)}$ for all $a\in{I},$  in view of \eqref{eq:psi_F} one infers $\psi^{(N)}(a) \stackrel{N}{\longrightarrow}\psi(a)$ for all $a\in I$ by bounded convergence. One then uses that 
\begin{equation*}
\begin{split}
\Big|\int_a^b (\phi- \phi^{(N)})(v) \, {\rm d}v \Big| \leq & \,\E\Big[ \int_a^b {\rm d}v \, |v| F(\tilde{\mathcal{K}}^v) \text{cap}(\tilde{\mathcal{K}}^v) 1\big\{ \tilde{\mathcal{K}}^v \text{ bounded, } \tilde{\mathcal{K}}^v \cap (K_N)^{\mathsf{c}} \neq \emptyset \big\}\Big]\\[0.3em]
&\ \stackrel{N}{\longrightarrow} 0 \text{ (by \eqref{eq:Cdf1} and dom. convergence)}
\end{split}
\end{equation*}
in order to deduce \eqref{eq:Cdf2} from \eqref{eq:Cdf3} by passing to the limit.
\end{proof}
\begin{Rk}
One can formulate analogous conditions for \eqref{eq:deriv2.1} allowing to take the limit $K \nearrow \tilde{\mathcal{G}}$. The resulting formula is more delicate to manipulate, but instructive. Indeed, the minus sign present in \eqref{eq:deriv2.1} (and in the corresponding limiting formula) may cause cancellations; see Remark \ref{R:theta_0},2) in the next paragraph for an example.
\end{Rk}

\section{Cluster capacity and the function \texorpdfstring{${\theta}_0$}{theta0}}
\label{S:capandthetageneral}
As a first application of the above differential formulas, we prove Theorem~\ref{T:cap} and Corollaries~\ref{T1} and~\ref{C:captails}. It is now clear that $F(\tilde{\mathcal{K}}^a)\equiv f(\textnormal{cap}(\tilde{\mathcal{K}}^a))$ for suitable $f: \mathbb{R} \to \mathbb{R}$ looks to be a promising choice since \eqref{eq:deriv1.1} or \eqref{eq:Cdf2} yield an autonomous  system of differential equations  in $(a,\textnormal{cap}(\tilde{\mathcal{K}}^a))$. Moreover, as noted in Remark \ref{R:cap_special}, the utility of formulas such as \eqref{eq:deriv1.1}, \eqref{eq:deriv2.1} or \eqref{eq:Cdf2} for more general functionals $F(\cdot)$ largely rests on having access to information about the capacity functional.

A key ingredient is the following result. We recall that $g=g(0) (=\textnormal{cap}(\{0\})^{-1})$ and denote by $\mu_a$  the law (on $\{0\} \cup (g^{-1}, \infty)$) of the random variable ${\textnormal{cap}(\tilde{\mathcal{K}}^a) 1\{  \emptyset \neq  \tilde{\mathcal{K}}^{a} \textnormal{ bounded}  \}}$ under~$\P$.
\begin{Lemme} 
\label{L:capdifflevel}
For all $a,b \in \R$,
\begin{equation}
\label{eq:capdifflevel}
	\frac{  \mathrm{d}\mu_a}{ \mathrm{d}\mu_b}(t)= \exp\Big\{-\frac{(a^2-b^2)t}{2}\, \Big\}, \quad t\in (g^{-1},\infty). 
\end{equation}
\end{Lemme}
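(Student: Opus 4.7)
The plan is to apply Lemma \ref{L:df2} to the test function $F(A) := e^{-u\,\mathrm{cap}(A)}\, 1\{A \neq \emptyset\}$, at each fixed $u > 0$, in order to derive a transport-type PDE for the Laplace transform $L(a, u) := \int e^{-ut}\, \mathrm{d}\mu_a(t)$, solve it by the method of characteristics, and then invert the Laplace transform to read off \eqref{eq:capdifflevel}. This $F$ is bounded by $1$, vanishes on $\emptyset$, and the measurability requirement of Lemma \ref{L:df2} holds since capacity and the event $\{A \neq \emptyset\}$ are Borel. Fixing a compact connected exhaustion $K_N \uparrow \tilde{\G}$ with $0 \in K_N$ and differentiating under the expectation in $u$ (justified because $\mathrm{cap}(\tilde{\K}^a)\, e^{-u \mathrm{cap}(\tilde{\K}^a)} \leq \mathrm{cap}(K_N) < \infty$ on the relevant event), Lemma \ref{L:df2} applied with $K = K_N$ yields
\[
\partial_a L_N(a, u) \;=\; -a\, \E_{K_N}\!\bigl[\mathrm{cap}(\tilde{\K}^a) F(\tilde{\K}^a)\bigr] \;=\; a\, \partial_u L_N(a, u), \qquad a \in \R,\ u > 0,
\]
where $L_N(a, u) := \E_{K_N}[F(\tilde{\K}^a)]$.

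Since $L_N(a, u)$ is the Laplace transform in $u$ of a finite positive measure supported in the bounded interval $[g^{-1}, \mathrm{cap}(K_N)]$, standard arguments (repeated application of Lemma \ref{L:df2}) show that $L_N$ is jointly $C^1$ on $\R \times (0, \infty)$. For each $c > 0$ the chain rule then gives, for $s$ with $s^2 < 2c$,
\[
\tfrac{\mathrm{d}}{\mathrm{d}s} L_N(s, c - s^2/2) \;=\; \partial_a L_N - s\,\partial_u L_N \;=\; 0,
\]
so $L_N(a, u)$ depends on $(a, u)$ only through $c := u + a^2/2$. Letting $N \to \infty$, bounded convergence gives $L_N(a,u) \uparrow L(a,u)$ pointwise — using $\{\tilde{\K}^a \subset \ring{K}_N\} \uparrow \{\tilde{\K}^a \textnormal{ bounded}\}$ — so the identity $L_N(a_1, u_1) = L_N(a_2, u_2)$ whenever $u_1 + a_1^2/2 = u_2 + a_2^2/2$ passes to $L$; the endpoint $u = 0$ is then recovered by continuity of the Laplace transform.

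Consequently, for any $a, b \in \R$ with (WLOG, by symmetry $a \leftrightarrow b$) $a^2 \geq b^2$ and every $u \geq 0$, one has $u + (a^2 - b^2)/2 \geq 0$, whence
\[
\int e^{-ut}\, \mathrm{d}\mu_a(t) \;=\; L(a, u) \;=\; L\!\bigl(b,\, u + \tfrac{a^2 - b^2}{2}\bigr) \;=\; \int e^{-ut}\, e^{-(a^2-b^2)t/2}\, \mathrm{d}\mu_b(t).
\]
Since $e^{-(a^2-b^2)t/2} \leq 1$, both sides are Laplace transforms of finite positive measures on $[g^{-1}, \infty)$; uniqueness yields $\mathrm{d}\mu_a(t) = e^{-(a^2-b^2)t/2}\, \mathrm{d}\mu_b(t)$, which is \eqref{eq:capdifflevel}. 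The main subtlety I anticipate is the joint regularity needed at finite $N$ to run the characteristic argument, but this is automatic once one observes that the support of $\mathrm{cap}(\tilde{\K}^a)$ is bounded by $\mathrm{cap}(K_N)$ on the event $\{\tilde{\K}^a \subset \ring{K}_N\}$, making $L_N$ smooth and dominated convergence trivially available throughout.
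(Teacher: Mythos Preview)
Your proof is correct and takes a genuinely different route from the paper's. The paper applies Corollary~\ref{C:df} with the indicator $F=1\{\text{cap}(\tilde{\K}^a)\in(t-\varepsilon,t]\}$, obtains the differential inequality $-\frac{d}{da}\log\mu_a((t-\varepsilon,t])\in(a(t-\varepsilon),at]$, integrates in $a$, and then lets $\varepsilon\to0$ at the very end. You instead work with the smooth test function $F=e^{-u\,\text{cap}(\cdot)}$, derive the transport PDE $\partial_a L_N=a\,\partial_u L_N$ at fixed $K_N$, solve it by characteristics, pass to the limit $N\to\infty$, and invert the Laplace transform. In effect you prove the identity \eqref{eq:lawcap_gen} of Theorem~\ref{T:cap} \emph{first} and then read off \eqref{eq:capdifflevel} via Laplace uniqueness, whereas the paper does the reverse: it proves Lemma~\ref{L:capdifflevel} directly at the level of the measures and then deduces \eqref{eq:lawcap_gen} from it. Your route is arguably cleaner conceptually (the characteristic structure $u+a^2/2=\text{const}$ is exactly the content of both statements), and it sidesteps the $\varepsilon\to0$ localization; the paper's route is more elementary in that it never needs the joint $C^1$ regularity of $L_N$ or a two-variable chain rule, and the $L^1$-condition \eqref{eq:Cdf1} is trivially satisfied for bounded $A$. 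The regularity you flag as the ``main subtlety'' is indeed routine here since $\text{cap}(\tilde{\K}^a)\le\text{cap}(K_N)$ on $\{\tilde{\K}^a\subset\ring K_N\}$, so iterated differentiation and dominated convergence go through without issue.
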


\begin{proof} We assume that $a,b \geq 0$. The case $a,b \leq 0$ is treated similarly, and the remaining cases follow by splitting the relevant interval at $0$. Consider
\begin{equation}
\label{eq:Fforcap}
F(\tilde{\mathcal{K}}^a)=1\{ \textnormal{cap}(\tilde{\mathcal{K}}^a) \in A \}, \ a \in \R,
\end{equation}
for $A \subset \R$, bounded, measurable, such that $\P( \textnormal{cap}(\tilde{\mathcal{K}}^a) \in A) >0$ and with $0 \notin A$. The latter implies that $F(\emptyset)=0$. Clearly, the map $\phi\mapsto F(\tilde{\K}^a)1\{\tilde{\K}^a\text{ bounded}\}\in{L^{\infty}(\P)}$ for all $a\in{\R}$ and
$|Z_F| \leq a\sup A$, whence \eqref{eq:Cdf1} is satisfied for any bounded interval $I$. Thus, Corollary \ref{C:df} applies, and \eqref{eq:Cdf2} yields that $\psi_F(a)= \P( \textnormal{cap}(\tilde{\mathcal{K}}^a) \in A, \, \tilde{\mathcal{K}}^a \textnormal{ bounded})$ is differentiable a.e.~in $a\in \R$, with derivative
\begin{equation}
\label{eq:cap1}
\frac{{\rm d}}{{\rm d}a} \P\big(\textnormal{cap}(\tilde{\mathcal{K}}^a) \in A, \, \tilde{\mathcal{K}}^a \textnormal{ bounded}\big)=-a\E\big[\textnormal{cap}(\tilde{\mathcal{K}}^a) 1\{ \textnormal{cap}(\tilde{\mathcal{K}}^a) \in A, \, \tilde{\mathcal{K}}^a \textnormal{ bounded}\}\big].
\end{equation}
Specializing to the case $A=(t-\varepsilon,t]$ for some $t> g^{-1}$ and $\varepsilon < t$, \eqref{eq:cap1} implies that
\begin{equation}
\label{eq:cap2}
-\frac{{\rm d}}{{\rm d} a} \log \mu_a \big((t-\varepsilon, t]\big)=a\E\big[\textnormal{cap}(\tilde{\mathcal{K}}^a) \, \big| \, t-\varepsilon < \textnormal{cap}(\tilde{\mathcal{K}}^a) \leq t, \, \tilde{\mathcal{K}}^a \textnormal{ bounded}\}\big] \in (a(t-\varepsilon),at],
\end{equation}
from which we infer
\begin{equation}
\label{eq:cap3}
 \mu_b \big((t-\varepsilon, t]\big)= \exp \Big( \int_{a}^b \frac{\mathrm{d}\log  \mu_v \big((t-\varepsilon, t]\big)}{\mathrm{d}v}  \  \mathrm{d}v \Big)  \cdot   \mu_a \big((t-\varepsilon, t]\big).
\end{equation}
Substituting the bounds \eqref{eq:cap2} into \eqref{eq:cap3} one obtains that, assuming without loss of generality that $b>a$,
$$
e^{-t \frac{(b^2-a^2)}{2}}\leq \frac{ \frac{1}{\varepsilon } \mu_b \big((t-\varepsilon, t]\big)}{ \frac{1}{\varepsilon } \mu_a \big((t-\varepsilon, t]\big)}\leq e^{-(t-\varepsilon) \frac{(b^2-a^2)}{2}}, \text{ for all } t>g^{-1}, \, \varepsilon < t,
$$
from which \eqref{eq:capdifflevel} follows by letting $\varepsilon \to 0$.
\end{proof}
 
 We now first give the 
 \begin{proof}[Proof of Theorem \ref{T:cap}]
For all $a \in \R$ and $u \geq 0$, changing levels from $a$ to $\sqrt{a^2+2u}$, one obtains that
\begin{equation*}
\begin{split}
\E\big[e^{-u\textnormal{cap}(\tilde{\mathcal{K}}^a)}1\{ \emptyset \neq \tilde{\mathcal{K}}^a \textnormal{ bounded} \}\big] &= \int_{g^{-1}}^{\infty} e^{-ut} \, \text{d}\mu_a(t) \\
&\hspace{-0.5em}\stackrel{\eqref{eq:capdifflevel}}{=}  \int_{g^{-1}}^{\infty} \text{d}\mu_{\sqrt{a^2+2u}}(t) = \P\big( \emptyset \neq \tilde{\mathcal{K}}^{\sqrt{a^2+2u}} \textnormal{ bounded} \big),
\end{split}
\end{equation*}
which entails \eqref{eq:lawcap_gen}. The identity \eqref{eq:lawcap} is then an immediate consequence of \eqref{eq:lawcap_gen} since $\E[e^{-u\textnormal{cap}(\tilde{\mathcal{K}}^a)}1\{ \tilde{\mathcal{K}}^a =\emptyset\}]= \P(\varphi_0 < a) =\Phi(a)$, \eqref{T1_sign} implies that $ \tilde{\mathcal{K}}^{\sqrt{2u+ a^2}}$ is bounded $\P$-a.s.~and $\P(  \tilde{\mathcal{K}}^{\sqrt{2u+ a^2}} \neq \emptyset) = \P(\varphi_0 \geq \sqrt{2u+ a^2}) =1-\Phi(\sqrt{2u+ a^2})$.
 \end{proof}

\begin{proof}[Proof of Corollary~\ref{C:captails}]
One has the identity, valid for all $u \geq 0$, $a\in \R$ (see Lemma 5.2 in \cite{DrePreRod3} for a proof), $\int_0^{\infty} \rho_a(t)e^{-ut} \, {\rm d}t = 1-\Phi(\sqrt{2u+a^2})$, where
 \begin{equation}
  \label{eq:rad_5}
\rho_a(t)=\frac1{2\pi}\frac{1}{t\sqrt{g(t-g^{-1})}}e^{-a^2t/2}1\{ t > g^{-1}\}.
 \end{equation}
 In view of \eqref{eq:lawcap}, one thus obtains from \eqref{eq:rad_5} that for all $a \in \R$,
  \begin{equation}
  \label{eq:rad_6}
\text{$\text{cap}( \tilde{\mathcal{K}}^a)$ has density $\rho_a(\cdot)$ under $\P((\, \cdot \, ),  \text{ $ \emptyset \neq \tilde{\mathcal{K}}^a$ bounded})$.}
 \end{equation}
 The tail estimate \eqref{eq:captails} then readily follows from \eqref{eq:rad_5}.
\end{proof}

\begin{Rk}
\label{R:cap}
 \begin{enumerate}[label=\arabic*)]
\item \label{R:cap.sign}By adapting the argument yielding Theorem~\ref{T:cap} above, one also obtains, without further assumption on $\mathcal{G}$, that for all $a \geq 0$,
\begin{equation}
\label{eq:cap4}
\text{cap}(\tilde{\mathcal{K}}^a)<\infty, \ \mathbb{P}\text{-a.s.},
\end{equation}
as implied by Theorem 3.1 in \cite{DrePreRod3}. In particular, together with  Lemma~3.4,2) of \cite{DrePreRod3}, \eqref{eq:cap4} readily yields that \eqref{T1_sign} holds on any vertex-transitive graph. We now briefly explain how to deduce \eqref{eq:cap4}. Rather than applying \eqref{eq:Cdf2} (which builds on \eqref{eq:deriv1.1}) with $F(\cdot)$ given by \eqref{eq:Fforcap} as in the proof of Theorem~\ref{T:cap}, one uses \eqref{eq:deriv1.1bis} with $F(\cdot)=1\{\text{cap}(\cdot) \in (s,t] \}$ for $g^{-1}\leq s < t< \infty$ (so that $F(\emptyset)=0$), to find instead of Lemma~\ref{L:capdifflevel} that
\begin{equation}
\label{eq:capfin1}
\P( s< \textnormal{cap}(\tilde{\mathcal{K}}_K^b) \leq t)\leq \P( s< \textnormal{cap}(\tilde{\mathcal{K}}_K^a) \leq t) \exp\Big\{-\frac{(b^2-a^2)s}{2}\, \Big\}, \text{ for }a< b,
\end{equation}
with $\tilde{\mathcal{K}}_K^a$ as defined in \eqref{eq:Ka}. Letting first $t \to \infty$, then $K \nearrow \tilde{\mathcal{G}}$ using monotonicity of $\text{cap}(\cdot)$ and finally $s \to \infty$ in \eqref{eq:capfin1} (say with $a=0$) yields \eqref{eq:cap4} for $a > 0$. To treat the case $a=0$, one uses \eqref{eq:capfin1} again with $s= g^{-1}$ and lets $K \nearrow \tilde{\mathcal{G}}$, then $t\to\infty$ and $b \downarrow 0$. The left-hand side of \eqref{eq:capfin1} thereby converges to $\P( \varphi_0 \geq 0)=\frac12$ and the right-hand side to $\P(\textnormal{cap}(\tilde{\mathcal{K}}^0) < \infty)- \P( \varphi_0 < 0)$. The claim \eqref{eq:cap4} for $a=0$ follows. 

\item We refer to our companion article \cite{DrePreRod3}, see in particular Theorem~3.9 therein, for an alternative approach to the above results by entirely different means; namely, exploiting a certain isomorphism theorem, due to \cite{Sz-16}, relating $\varphi$ and random interlacements on $\tilde{\G}$, which is shown in Theorem 1.1,2) of \cite{DrePreRod3} to hold under the sole assumption \eqref{T1_sign}, and turns out to be equivalent to~\eqref{eq:lawcap}.
\item Note that, if \eqref{T1_sign} holds, then by \eqref{eq:lawcap}
\begin{equation}
\label{eq:cap5}
\E\big[e^{-u\textnormal{cap}(\tilde{\mathcal{K}}^a)}\big]=\Phi(a)+1-\Phi(\sqrt{2u+ a^2}), \text{ for all } a \geq 0, \, u \geq 0.
\end{equation}
Assume on the contrary that \eqref{eq:cap5} holds. By \eqref{eq:lawcap_gen} (which always holds), \eqref{eq:cap5} can be equivalently recast as
\begin{equation}
\label{eq:cap6}
\E\big[e^{-u\textnormal{cap}(\tilde{\mathcal{K}}^a)}1\{ \tilde{\mathcal{K}}^a \textnormal{ unbounded} \}\big]= \P\big(  \tilde{\mathcal{K}}^{\sqrt{2u+ a^2}} \textnormal{ unbounded} \big).
\end{equation}
One readily deduces from \eqref{eq:cap6} with $a=0$ and \eqref{eq:cap4} that, if \eqref{T1_sign} does not hold, then $ \tilde{\mathcal{K}}^{\sqrt{2u}}$ is unbounded with positive probability for all $u \geq 0$, thus recovering the dichotomy $a_*\in \{0,\infty \}$ implied by Corollary 3.11 of \cite{DrePreRod3}.
\end{enumerate}
\end{Rk}

We now proceed with the
\begin{proof}[Proof of Corollary \ref{T1}]
Choosing $u = 0$ in \eqref{eq:lawcap} and observing that $\Phi(a)+1-\Phi(|a|)= 2\Phi(a\wedge 0)$, the claim \eqref{T1_theta_0} follows. The remaining conclusions are immediate consequences of \eqref{T1_theta_0} and the fact that $a_* \geq 0$, see above \eqref{T1_sign}.
\end{proof}

As a further consequence of Theorem \ref{T:cap} one obtains the following improvement of Corollary~\ref{C:df} under \eqref{T1_sign}. 

\begin{Cor}[Differential formula]\label{C:df'}
If \eqref{T1_sign} holds and $F$ satisfies the assumptions of Lemma~\ref{L:df2}, then \eqref{eq:Cdf2} holds for all $a,b \in \R$.
\end{Cor}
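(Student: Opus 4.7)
The plan is to reduce everything to Corollary~\ref{C:df}: I would verify that, under~\eqref{T1_sign}, the integrability hypothesis~\eqref{eq:Cdf1} is automatically satisfied on every bounded closed interval $I\subset\R$. Once this is done, taking $I=[a\wedge b, a\vee b]$ immediately yields~\eqref{eq:Cdf2} for arbitrary $a,b\in\R$.

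Since $F$ is bounded by hypothesis, one has the pointwise domination $|Z_F(v,\phi)|\leq \|F\|_{\infty}|v|\,\textnormal{cap}(\tilde{\K}^v(\phi))1\{\tilde{\K}^v(\phi)\textnormal{ bounded}\}$, so by Fubini the required $L^1(I\times\P)$-condition reduces to a uniform (in $v\in I$) control of $v\mapsto |v|\,\E[\textnormal{cap}(\tilde{\K}^v)1\{\tilde{\K}^v\textnormal{ bounded}\}]$. The strategy is to extract this quantity directly from Theorem~\ref{T:cap}. Using \eqref{T1_theta_0} to write $\P(\tilde{\K}^v\textnormal{ bounded})=1$, identity~\eqref{eq:lawcap} rearranges, for $v\neq 0$ and $u>0$, into
\begin{equation*}
\E\Big[\tfrac{1-e^{-u\textnormal{cap}(\tilde{\K}^v)}}{u}1\{\tilde{\K}^v\textnormal{ bounded}\}\Big]=\tfrac{\Phi(\sqrt{v^2+2u})-\Phi(|v|)}{u}.
\end{equation*}
As $u\downarrow 0$, the integrand on the left is nonnegative and increases pointwise to $\textnormal{cap}(\tilde{\K}^v)1\{\tilde{\K}^v\textnormal{ bounded}\}$, so monotone convergence yields $\E[\textnormal{cap}(\tilde{\K}^v)1\{\tilde{\K}^v\textnormal{ bounded}\}]$ in the limit, while the right-hand side is smooth in $u$ for $v\neq 0$ and converges to $\Phi'(|v|)/|v|$. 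Equating produces the explicit formula $\E[\textnormal{cap}(\tilde{\K}^v)1\{\tilde{\K}^v\textnormal{ bounded}\}]=\Phi'(|v|)/|v|$, and hence the uniform bound
\begin{equation*}
|v|\,\E[\textnormal{cap}(\tilde{\K}^v)1\{\tilde{\K}^v\textnormal{ bounded}\}]=\Phi'(|v|)\leq \Phi'(0)=(2\pi g)^{-1/2},\quad v\in\R\setminus\{0\}.
\end{equation*}

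Combining the two displays gives $\int_I\E[|Z_F(v,\cdot)|]\,\mathrm{d}v\leq \|F\|_{\infty}(2\pi g)^{-1/2}|I|<\infty$, which is exactly~\eqref{eq:Cdf1}, so Corollary~\ref{C:df} concludes. The only delicate step is the exchange of $u$-derivative and expectation at $u=0^+$; I would phrase it via the monotone-convergence identity above precisely to avoid having to presuppose $\E[\textnormal{cap}(\tilde{\K}^v)1\{\tilde{\K}^v\textnormal{ bounded}\}]<\infty$ (which is itself an output of the calculation). Beyond this, there is no real obstacle: the content of the corollary amounts to the observation that the factor $|v|$ built into $Z_F$ cancels exactly the $1/|v|$ singularity of $\E[\textnormal{cap}(\tilde{\K}^v)1\{\tilde{\K}^v\textnormal{ bounded}\}]$ at $v=0$, and this cancellation is provided by Theorem~\ref{T:cap} with no further structural assumption on $\G$.
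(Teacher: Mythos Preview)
Your argument is correct and follows essentially the same route as the paper: both derive the key identity $\E[\textnormal{cap}(\tilde{\K}^v)1\{\tilde{\K}^v\textnormal{ bounded}\}]=\Phi'(|v|)/|v|$ by differentiating~\eqref{eq:lawcap} at $u=0$ (your monotone-convergence phrasing is in fact a cleaner justification of this step than the paper's), and then feed the resulting bound on $|v|\,\E[\textnormal{cap}(\tilde{\K}^v)1\{\tilde{\K}^v\textnormal{ bounded}\}]$ into Corollary~\ref{C:df}. One small slip: the claim ``$\P(\tilde{\K}^v\textnormal{ bounded})=1$'' is false for $v<0$ under~\eqref{T1_sign}, but your displayed identity is nonetheless correct, since it is simply~\eqref{eq:lawcap} at $u$ minus~\eqref{eq:lawcap} at $u=0$ (the latter giving $\theta_0(v)=\Phi(v)+1-\Phi(|v|)$, not $1$), so the argument goes through unchanged.
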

\begin{proof}
Taking derivatives in $u$ in \eqref{eq:lawcap} and setting $u=0$, one finds that
\begin{equation}
\label{eq:dens4}
\E[\text{cap}(\tilde{\mathcal{K}}^a)  1\{ \tilde{\mathcal{K}}^a \, \text{bounded} \} ]= \frac1{|a|} f(a), \text{ for all }a \in \R\setminus \{ 0\}.
\end{equation}
where $f(\cdot)=\Phi'(\cdot)$ denotes the density of $\varphi_0$. Hence,
\begin{align*}
\int_{\R}\E[| Z_F(a,\cdot )|] \, {\rm d}a \stackrel{\eqref{eq:Cdf1}}{\leq} \Vert F\Vert_{\infty}  \int_{\R}|a| \, \E\big[ \text{cap}(\tilde{\mathcal{K}}^a)  1\{ \tilde{\mathcal{K}}^a \, \text{bounded} \}\big] \, {\rm d}a 
\stackrel{\eqref{eq:dens4}}{=}  \Vert F\Vert_{\infty}  < \infty,
\end{align*}
i.e., $ Z_F \in L^1(\R \times \P)$ (in spite of the divergence in \eqref{eq:dens4} when $a\to 0$). Thus, condition~\eqref{eq:Cdf1} holds and the claim follows by applying Corollary~\ref{C:df}.
\end{proof}

 \begin{Rk}
 \label{R:theta_0}
 \begin{enumerate}[label=\arabic*)]
 \item One can alternatively deduce Theorem~\ref{T1} as an application of Corollary~\ref{C:df'}. Consider 
 \begin{equation}
\label{eq:dens1}
F( \tilde{\mathcal{K}}^a )= 1\{ \tilde{\mathcal{K}}^a \neq \emptyset\}  \stackrel{\eqref{eq:introKa}}{=}1\{ \varphi_0 \geq a\}
\end{equation}
(in particular $F(\emptyset)=0$), whence 
\begin{equation}
\label{eq:dens2}
{\theta}_0(a)\stackrel{\eqref{eq:intro_theta0}}{=} \P(\tilde{\mathcal{K}}^a \text{ is $(\tilde{d}$-)bounded}, \, \varphi_0 \ge a)+ \P(\varphi_0 < \, a ) \stackrel{\eqref{eq:psi_F}}{=}\psi_F(a)+ \Phi(a).
\end{equation}
By \eqref{eq:Cdf1}, \eqref{eq:dens4} and \eqref{eq:dens1}, one finds that $\E[Z_F(a,\cdot)]= - \frac{a}{|a|} f(a)$, for all $a \neq 0$, which extends to a piecewise continuous function of $a \in \R$. Thus, applying Corollary~\ref{C:df'}, which applies to $F$ in \eqref{eq:dens1}, it follows that for all $a \in \R$,
\begin{equation*}
\begin{split}
{\theta}_0(a)&\, \stackrel{ \eqref{T1_sign}}{=} 1+ {\theta}_0(a)- {\theta}_0(0)\stackrel{ \eqref{eq:dens2}}{=} 1+ \psi_F(a) -\psi_F(0) + \Phi(a)-\Phi(0)\\
&\stackrel{\eqref{eq:Cdf2}}{=} 1+ \int_0^a \big( -\frac{v}{|v|}f(v)\big) \, dv + \Phi(a)-\Phi(0) = 1+ \big(1-\text{sign}(a)\big)\big( \Phi(a)-\Phi(0) \big),
\end{split}
\end{equation*}
which is \eqref{T1_theta_0} ($\Phi(0)=\frac12$). 
\item One could also obtain \eqref{T1_theta_0} with the help of \eqref{eq:deriv2.1} (but using more information, i.e. the second moment $\E[\text{cap}(\tilde{\mathcal{K}}^a)^2]$, for $a>0$). Indeed, one can pass to the limit in \eqref{eq:deriv2.1} with $F$ given by \eqref{eq:dens1}. One then obtains, in view of \eqref{eq:psi_F} and \eqref{eq:dens2}, that for all $a \neq 0$,
\begin{equation}
\label{eq:dens5}
{\theta}_0''(a)=\psi_F''(a)+\Phi''(a)= \E[\text{cap}(\tilde{\mathcal{K}}^a) (a^2 \text{cap}(\tilde{\mathcal{K}}^a)-1 ) 1\{ \tilde{\mathcal{K}}^a \, \text{bounded} \} ]+ \Phi''(a).
\end{equation}
By means of \eqref{eq:lawcap}, one computes, for $a\neq0$, with $g=g(0,0)$,
\begin{align*}
\E[\text{cap}(\tilde{\mathcal{K}}^a)^2  1\{ \tilde{\mathcal{K}}^a \, \text{bounded} \} ]&= \frac{{\rm d}}{{\rm d}u}\Big(- f\big( \sqrt{2u+a^2} \big) \cdot \frac{1}{\sqrt{2u+a^2} } \Big)\Big|_{u=0}
\\&= f(a) \Big(\frac{1}{|a|^3}+\frac1g\cdot \frac1{|a|}\Big).
\end{align*}
From this and \eqref{eq:dens4}, one thus obtains in \eqref{eq:dens5}, noting that $\Phi''(a)=f'(a)=-\frac{a}{g}f(a)$, that
\begin{equation}
\label{eq:dens6}
{\theta}_0''(a)= f(a)  \Big(\frac{a^2}{|a|^3}+\frac1g\cdot |a|-\frac1{|a|}\Big) +  \Phi''(a)=\frac1gf(a)\big(|a|-a\big)
\end{equation}
for all $a \neq 0$, which readily gives \eqref{T1_theta_0}; one notes the perfect cancellation in \eqref{eq:dens6}.
\end{enumerate}
 \end{Rk}

 \section{Connectivity upper bounds}
 \label{sec:radius}

In this short section, we derive the upper bounds \eqref{eq:corrlength1}, \eqref{eq:corrlength2} on the truncated radius and two-point functions $\psi$ and $\tau_{a}^{\textnormal{tr}}$, introduced in \eqref{eq:intro_psi} and \eqref{eq:eq:deftauhf},
and even the full strength of \eqref{eq:corrlength1} in case of $\psi$. This corresponds to a certain choice of $F$ in \eqref{eq:psi_F}, see for instance \eqref{eq:rad_F} below. In one way or another, all the results of this section revolve around the idea of comparing with the cluster capacity observable and thus rely on the information supplied by Theorem~\ref{T:cap}, which was derived in the previous section. We also show how comparison with $\text{cap}( \tilde{\mathcal{K}}^a)$ immediately yields the estimates \eqref{eq:psi_0nu<1}, \eqref{eq:psi_0nu=1} on $\psi$ at criticality, together with bounds on the critical window, see Remarks~\ref{R:rhobounds} and~\ref{R:rad},\ref{R:rad.2} below.

We now introduce suitable balls on $\tilde{\G}$, which will be used throughout the remainder of this article. Recalling the discrete balls $B(x,r)\subset G$ (relative to $d$, cf.~above \eqref{eq:intro0}; note that these are not necessarily connected in nearest-neighbor sense), we define $\tilde{B}(x,r) \subset \tilde{\G}$ for $r\geq0$ and $x \in G$ as consisting of $B(x,r)$ and all the cables joining any pair of neighbors in $B(x,r)$ (i.e.~any $x,y \in B(x,r)$ s.t.~$\lambda_{x,y}>0$). We abbreviate $\tilde{B}_r = \tilde{B}(0,r)$. 
Since $B(x,r)$ is finite by assumption, the sets $B(x,r)$, $\tilde{B}(x,r)$, for $x\in G$, $r\geq0$, are compact in the sense  of Section~\ref{S:diffform} (see the beginning of that section). Moreover, whenever \eqref{eq:intro_Green} and \eqref{eq:ellipticity} hold, one knows by (2.8) of~\cite{DrePreRod2} that $ d(x,y) \leq \Cr{c:ball1} d_{\text{gr}}(x,y) $ for $x,y \in G$ hence $ B_{d_{\text{gr}}}(x,r) \subset B(x,\Cl[c]{c:ball1}r)$ for any $x \in {G}$ and $r > 0$ (here $B_{d_{\text{gr}}}(x,r)$, $x \in G$, $r \geq 0$, refers to the discrete ball with respect to $d_{\text{gr}}$ instead of $d$).  
 
 Throughout the remainder of this section, we assume that \eqref{eq:intro_Green} and \eqref{eq:ellipticity} are in force. Let $f_\nu: \R_+ \to \R_+$ be defined as $f_{\nu}(r)=r^{\nu}$ if $\nu<1$, $f_{\nu}(r)=\frac{r}{\log(r\vee 2)}$ if $\nu=1$ and $f_{\nu}(r)=r$ if $\nu >1$. One has the following inclusions.
 
 \begin{Lemme}[under \eqref{eq:intro_Green} and \eqref{eq:ellipticity}]\label{L:capcompare}
 For all $\nu>0$, there exist $\Cl[c]{c_radLB}, \Cl[c]{c_radUB}\in (0,\infty)$ depending on $\nu$ only such that for all $a \in \R$ and $r \geq1$, with $A(a,r)= \{ r\leq  \text{rad}( {\mathcal{K}}^a)< \infty\}$,
 \begin{align}
&A(a,r)\supset\big\{ \Cr{c_radLB}r^{\nu } \leq \textnormal{cap}(\tilde{\mathcal{K}}^a) < \infty \big\} , \label{eq:rad_1}\\
&A(a,r)\subset\big\{ \Cr{c_radUB}f_{\nu}(r) \leq \textnormal{cap}(\tilde{\mathcal{K}}^a) < \infty \big\} . \label{eq:rad_2}
 \end{align}
 \end{Lemme}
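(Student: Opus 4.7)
The strategy is to derive both inclusions from deterministic capacity bounds on compact connected subsets $K \subset \tilde{\mathcal{G}}$ containing $0$, and then specialize to $K = \tilde{\mathcal{K}}^a$. The upper inclusion \eqref{eq:rad_1} rests on monotonicity of capacity via comparison with a ball, while the lower inclusion \eqref{eq:rad_2} exploits a long nearest-neighbor path within the cluster.

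For \eqref{eq:rad_1}, I would argue by contraposition. If $\text{rad}(\mathcal{K}^a) < r$, then $\mathcal{K}^a \subset B(0, r)$; by \eqref{eq:ellipticity} and the comparison $d \leq c_0\, d_{\text{gr}}$ from (2.8) of \cite{DrePreRod2}, every vertex $w$ neighboring some $u \in B(0, r)$ satisfies $d(0, w) \leq r + c_0$, so the cable $\{u, w\}$ lies entirely in $\tilde{B}(0, r + c_0)$. Hence $\tilde{\mathcal{K}}^a \subset \tilde{B}(0, r + c_0)$, and monotonicity of capacity together with $\text{cap}(\tilde{B}_s) \asymp s^{\nu}$ (cf.~(3.11) in \cite{DrePreRod2}) yields $\text{cap}(\tilde{\mathcal{K}}^a) \leq c_1\, r^{\nu}$ for all $r \geq 1$. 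Choosing $\Cr{c_radLB} > c_1$ and contraposing delivers the radius bound in \eqref{eq:rad_1}. The finiteness $\text{rad}(\mathcal{K}^a) < \infty$ under $\text{cap}(\tilde{\mathcal{K}}^a) < \infty$ will be obtained \emph{a posteriori} from \eqref{eq:rad_2}: were $\tilde{\mathcal{K}}^a$ unbounded, $\mathcal{K}^a$ would be infinite (balls being finite), producing compact connected subsets of $\tilde{\mathcal{K}}^a$ of diameter $\to \infty$ whose capacities diverge by the path-capacity bound below.

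The heart of the matter is \eqref{eq:rad_2}. Assume $r \leq \text{rad}(\mathcal{K}^a) < \infty$. Then $\mathcal{K}^a$ is finite so $\tilde{\mathcal{K}}^a$ has finite capacity, and $\mathcal{K}^a$ contains a nearest-neighbor path $0 = v_0, v_1, \ldots, v_n$ with $d(0, v_n) \geq r$ and $n \geq r/c_0$. I would thin this path to $y_j = v_{k_j}$, where $k_j$ is the first index with $d(0, v_{k_j}) \geq 2 c_0 j$; the step bound $|d(0, v_{k+1}) - d(0, v_k)| \leq d(v_k, v_{k+1}) \leq c_0$ together with the two-sided triangle inequality yields $d(y_j, y_{j'}) \geq c_0 |j - j'|$ for all $j \neq j'$, while the total number of points satisfies $m + 1 \asymp r$. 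Substituting this family in the variational lower bound
\begin{equation*}
\text{cap}(\tilde{\mathcal{K}}^a) \geq \text{cap}(\{y_0, \ldots, y_m\}) \geq \frac{(m+1)^2}{\sum_{j, j' = 0}^{m} g(y_j, y_{j'})},
\end{equation*}
and invoking the Green function bounds from \eqref{eq:intro_Green} ($g(y_j, y_{j'}) \leq c |j - j'|^{-\nu}$ for $j \neq j'$ and $g(y_j, y_j) \leq c'$), one controls the denominator by a constant times $(m+1) \sum_{k=1}^{m} k^{-\nu}$. The three asymptotic regimes $m^{2-\nu}$, $m \log m$, $m$ of this sum (for $\nu < 1$, $\nu = 1$, $\nu > 1$ respectively) match precisely the three cases defining $f_\nu$, and produce $\text{cap}(\tilde{\mathcal{K}}^a) \geq c\, f_\nu(m) \geq c'\, f_\nu(r)$, as required.

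The main obstacle is the capacity lower bound for a long path: given only the existence of a nearest-neighbor path in $\mathcal{K}^a$ reaching distance $r$, one must extract a subsequence whose mutual $d$-distances are \emph{linear} in the index separation, so that \eqref{eq:intro_Green} translates into a tractable harmonic-type sum $\sum k^{-\nu}$. This forces the thinning step described above, and it is the three asymptotic regimes of this sum that ultimately produce the three-part definition of $f_\nu$. The case $\nu = 1$ is especially delicate, since the logarithm in $f_1$ arises from the exact divergence $\sum_{k=1}^{m} k^{-1} \asymp \log m$ and the thinning constants must be tracked carefully to avoid unnecessary logarithmic losses; likewise, in the regime $\nu > 1$ the linear prefactor in $f_\nu(r) = r$ rests on the convergence of $\sum k^{-\nu}$. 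A minor check, needed before invoking \eqref{eq:intro_Green} (which is stated only for distinct vertices), is that the thinned points $y_j$ are pairwise distinct, which follows from $d(y_j, y_{j'}) \geq c_0 > 0$ for $j \neq j'$.
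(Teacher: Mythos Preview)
Your proof is correct and follows the same logical structure as the paper's. For \eqref{eq:rad_1} the arguments are essentially identical (contrapose, embed the cluster in a ball, use $\text{cap}(\tilde{B}_s)\asymp s^{\nu}$), and the equivalence $\text{rad}(\mathcal{K}^a)<\infty \Leftrightarrow \text{cap}(\tilde{\mathcal{K}}^a)<\infty$ is handled in the same way. The only difference is in \eqref{eq:rad_2}: the paper simply invokes the bound $\inf_{A \text{ connected},\, \text{rad}(A)\geq r}\text{cap}(A)\geq c f_{\nu}(r)$ as a black box (Lemma~3.2 in \cite{DrePreRod2}), whereas you reprove this directly by thinning the nearest-neighbor path to a well-separated subsequence and applying the variational bound $\text{cap}(S)\geq |S|^2/\sum_{y,y'\in S} g(y,y')$ together with \eqref{eq:intro_Green}. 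Your argument is self-contained and makes the origin of the three regimes defining $f_{\nu}$ transparent; the paper's version is shorter but relies on the external reference.
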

\begin{proof} Recalling the definition of $\text{rad}(\cdot)$ from below \eqref{eq:intro_psi}, if $ \text{rad}( {\mathcal{K}}^a) < r$, then $\tilde{\mathcal{K}}^a$ is included in $ \{ z \in G: d_{\text{gr}}(z, B(0,r))\leq 1\}$ union with all cables between neighboring pairs of points in this set. Thus, if $ \text{rad}( {\mathcal{K}}^a) < r$, then $\tilde{\mathcal{K}}^a \subset \tilde{B}_{r+ \Cr{c:ball1}}$ (cf.~above \eqref{eq:killedGreen} regarding $\Cr{c:ball1}$), hence by monotonicity of $\text{cap}(\cdot)$,
 \begin{equation}
  \label{eq:rad_3}
  \text{cap}( \tilde{\mathcal{K}}^a) \leq  \text{cap}( \tilde{B}_{(1+ \Cr{c:ball1})r}) \leq \Cr{c_radLB}r^{\nu}, \text{ for all $r \geq 1$,}
  \end{equation}
see for instance (3.11) in \cite{DrePreRod2} and (2.16) in \cite{DrePreRod3} regarding the last inequality, which relies solely on \eqref{eq:intro_Green} and \eqref{eq:ellipticity}. In the opposite direction, when  $ \text{rad}( {\mathcal{K}}^a) \geq r$, one has 
 \begin{equation}
 \label{eq:rad_4}
 \text{cap}( \tilde{\mathcal{K}}^a) \geq  \text{cap}( {\mathcal{K}}^a) \geq \inf_{\substack{A \subset G \text{ connected}\\\text{rad}(A) \geq r}} \text{cap}(A) \geq \Cr{c_radUB}f_{\nu}(r),
 \end{equation}
 see for instance Lemma~3.2 in \cite{DrePreRod2} regarding the last bound. Here, connectedness is meant with respect to $d_{\text{gr}}$, and $\mathcal{K}^a$ is connected by definition, see \eqref{eq:introKa}. Together, \eqref{eq:rad_3} and \eqref{eq:rad_4} also imply that $ \text{rad}( {\mathcal{K}}^a)=\infty$ if and only if $ \text{cap}( \tilde{\mathcal{K}}^a)=\infty$, and \eqref{eq:rad_1}, \eqref{eq:rad_2} follow.
 \end{proof}
 
 \begin{Rk}\label{R:rhobounds} As a first application of~\eqref{eq:rad_1}, \eqref{eq:rad_2} and Corollary~\ref{T1}, we deduce the bounds \eqref{eq:psi_0nu<1} and \eqref{eq:psi_0nu=1}. Using \eqref{eq:captails} with $a_N=0$, one first notes that for all $b >0$, $s \geq1,$
 \begin{equation}
   \label{eq:rad_7}
   \P(bs \leq   \text{cap}(\tilde{\mathcal{K}}^0) < \infty ) \asymp_{b}\int_{b s}^{\infty}t^{-3/2} \, {\rm d}t \asymp_{b} s^{-1/2},
 \end{equation}
 where $f\asymp_{b} g$ means that $cf \leq g \leq c'f$ for some constants $c,c'\in (0,\infty)$ depending only on $b$ and $\nu$. Together with \eqref{eq:rad_1} and \eqref{eq:rad_2}, the asymptotics \eqref{eq:rad_7} give  $\psi (0,r)\asymp r^{-\nu/2}$ when $\nu<1$ (recall the notation from \eqref{eq:intro_psi}), which is \eqref{eq:psi_0nu<1}. Similarly \eqref{eq:rad_1}, \eqref{eq:rad_2} and \eqref{eq:rad_7} yield \eqref{eq:psi_0nu=1} in case $\nu\geq 1$. 
 \end{Rk}
 
 Next, we give the
 \begin{proof}[Proof of \eqref{eq:corrlength1} and \eqref{eq:corrlength2}.]
 For all $a \in \R$, one has, for $b>0$, $r  \geq 1$ and $\nu>0$, using \eqref{L:capdifflevel}, \eqref{eq:rad_5} and \eqref{eq:rad_6},
  \begin{equation}
   \label{eq:rad_9}
   \begin{split}
  &e^{-b a^2 r^{\nu}} \P(b r^{\nu} \leq   \text{cap}(\tilde{\mathcal{K}}^0) < \infty ) \geq \P(b r^{\nu} \leq   \text{cap}(\tilde{\mathcal{K}}^a) < \infty ) \\
  &  \quad \geq e^{- 2 b a^2 r^{\nu}}  \int_{b r^{\nu}}^{2b r^{\nu}}\rho_0(t) \, {\rm d}t \geq c(b,\nu)  e^{- 2 b a^2 r^{\nu}}  \P(b r^{\nu} \leq   \text{cap}(\tilde{\mathcal{K}}^0) < \infty ),
  \end{split}
   \end{equation}
where we also used \eqref{eq:rad_7} in the last step. From \eqref{eq:rad_1},  \eqref{eq:rad_2} and \eqref{eq:rad_9}, together with \eqref{eq:psi_0nu<1} one readily deduces the lower bound in \eqref{eq:corrlength1}, and also the upper bound if one allows for a constant $c(>1)$ in front of $\psi(0,r)$. Such direct comparisons fail to yield the right order for both upper and lower bound when $\nu \geq 1$, see Remark \ref{R:rad},\ref{R:rad.1} below. 

We now give an argument which yields the desired upper bounds in \eqref{eq:corrlength1} and \eqref{eq:corrlength2}. For $\kappa >0$, we consider the function (for arbitrary $\nu > 0$)
\begin{equation}
   \label{eq:rad_10}
\tau_{\kappa}(a)=e^{\kappa a^2 f_{\nu}( r )}\psi(a,r), \text{ for $a \in \R$}
\end{equation}
(which implicitly depends on $r>0$)
with $f_{\nu}$ as defined above \eqref{eq:rad_1}. We will show the following simple result.

\begin{Lemme}
\label{L:rad_UB}
$(\nu > 0)$. There exists $\kappa_1(\nu)> 0$ such that, if $\kappa \in (0, \kappa_1]$, with $ \tau_{\kappa}'= \frac{{\rm d}}{{\rm d}a}  \tau_{\kappa}$,
\begin{equation}
\label{eq:rad_11}
\textnormal{sign}(a) \tau_{\kappa}'(a) \leq 0, \text{ for a.e. }a \in \R.
\end{equation}
(In particular $ \tau_{\kappa}$ is a.e. $C^1$ on $\R$).
\end{Lemme}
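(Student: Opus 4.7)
The idea is to apply the differential formula of Corollary~\ref{C:df'} to a well-chosen functional $F$, then absorb the resulting derivative into the exponential prefactor using the capacity lower bound \eqref{eq:rad_2} of Lemma~\ref{L:capcompare}. Since \eqref{eq:intro_Green} implies \eqref{T1_sign}, Corollary~\ref{C:df'} is available.

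Concretely, fix $r\geq1$ and consider
$$F(\tilde{\mathcal{K}}^a)=1\{\mathrm{rad}(\mathcal{K}^a)\geq r\},$$
which is bounded, measurable, satisfies $F(\emptyset)=0$ (as $r\geq 1$), and depends on $\tilde{\mathcal{K}}^a$ only through $\tilde{\mathcal{K}}^a\cap G$. With the notation of \eqref{eq:psi_F}, one has $\psi_F(a)=\psi(a,r)$ by the definitions \eqref{eq:intro_psi} and \eqref{eq:introKa}. Corollary~\ref{C:df'} then yields that $\psi(\cdot,r)$ is absolutely continuous on $\R$ with
\begin{equation*}
\frac{\rm d}{{\rm d}a}\psi(a,r)=-a\,\E\bigl[\mathrm{cap}(\tilde{\mathcal{K}}^a)\,1\{\mathrm{rad}(\mathcal{K}^a)\geq r,\ \tilde{\mathcal{K}}^a\text{ bounded}\}\bigr],\qquad\text{a.e.\ }a\in\R.
\end{equation*}

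Differentiating $\tau_\kappa(a)=e^{\kappa a^2 f_\nu(r)}\psi(a,r)$ in \eqref{eq:rad_10} and multiplying by $\mathrm{sign}(a)$, this gives, for a.e.\ $a\in\R$,
\begin{equation*}
\mathrm{sign}(a)\,\tau_\kappa'(a)=e^{\kappa a^2 f_\nu(r)}\,|a|\,\Bigl(2\kappa f_\nu(r)\,\psi(a,r)-\E\bigl[\mathrm{cap}(\tilde{\mathcal{K}}^a)\,1\{\mathrm{rad}(\mathcal{K}^a)\geq r,\ \tilde{\mathcal{K}}^a\text{ bounded}\}\bigr]\Bigr).
\end{equation*}
Now invoke the second inclusion \eqref{eq:rad_2} of Lemma~\ref{L:capcompare}: on the event $A(a,r)=\{\mathrm{rad}(\mathcal{K}^a)\geq r,\ \tilde{\mathcal{K}}^a\text{ bounded}\}$ we have $\mathrm{cap}(\tilde{\mathcal{K}}^a)\geq\Cr{c_radUB}f_\nu(r)$, hence
\begin{equation*}
\E\bigl[\mathrm{cap}(\tilde{\mathcal{K}}^a)\,1_{A(a,r)}\bigr]\geq\Cr{c_radUB}f_\nu(r)\,\psi(a,r).
\end{equation*}
Choosing $\kappa_1=\kappa_1(\nu)=\Cr{c_radUB}/2$, the bracketed quantity is non-positive whenever $\kappa\in(0,\kappa_1]$, which yields \eqref{eq:rad_11}.

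\textbf{Main obstacle.} The only non-trivial point is justifying the differentiation identity $\psi'(\cdot,r)=\phi_F(\cdot)$ so that the comparison with $\mathrm{cap}(\tilde{\mathcal{K}}^a)$ via \eqref{eq:rad_2} can be applied pointwise (a.e.); this is precisely what Corollary~\ref{C:df'} provides, once one has checked that $F$ fits the hypotheses of Lemma~\ref{L:df2}. Everything else is bookkeeping: the capacity lower bound of Lemma~\ref{L:capcompare} has exactly the scale $f_\nu(r)$ matching the exponent in \eqref{eq:rad_10}, which is what makes the choice of $\tau_\kappa$ work uniformly in $a$ and $r$.
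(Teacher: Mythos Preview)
Your proposal is correct and follows essentially the same approach as the paper: both apply the differential formula to $F(\tilde{\mathcal{K}}^a)=1\{r\leq\mathrm{rad}(\mathcal{K}^a)<\infty\}$, compute $\tau_\kappa'(a)$, and then use the capacity lower bound \eqref{eq:rad_2} to make the bracket non-positive, arriving at the same threshold $\kappa_1=\Cr{c_radUB}/2$. The only cosmetic difference is that you invoke Corollary~\ref{C:df'} directly (legitimate, since \eqref{eq:intro_Green} implies \eqref{T1_sign}), whereas the paper verifies the integrability condition \eqref{eq:Cdf1} via \eqref{eq:dens4} and passes through a Markov-type rewriting before using \eqref{eq:rad_2}; the substance is identical.
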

By integrating the differential inequality  \eqref{eq:rad_11} between $0$ and $a \in \R,$ one immediately deduces in view of \eqref{eq:rad_10} that $\psi(a,r) \leq \psi(0,r) e^{- \kappa_1 a^2 f_{\nu}( r )}$, from which the upper bounds in \eqref{eq:corrlength1} and \eqref{eq:corrlength2} follow since $a^2r^{\nu}= (r/\xi(a))^{\nu}$. It thus remains to give the
\begin{proof}[Proof of Lemma \ref{L:rad_UB}] 
  We consider 
 \begin{equation}
 \label{eq:rad_F}
 F(\tilde{\mathcal{K}}^a)=1_{A(a,r)}, \quad \text{ recalling that } A(a,r)=\{ r\leq  \text{rad}( {\mathcal{K}}^a)< \infty\}, \text{ for }r>0,\, a \in \R,
 \end{equation}
and study the corresponding observable $ \psi_F(a)=\psi(a,r)$ (see \eqref{eq:intro_psi} and \eqref{eq:psi_F} for notation). 

One first observes, using \eqref{eq:dens4}, that the condition \eqref{eq:Cdf1} is satisfied with $I=\R$ for $F$ given by \eqref{eq:rad_F}. Moreover, since $F$ is bounded and $F(\emptyset)=0$, \eqref{eq:Cdf2} applies and one deduces that for (almost) all $a \in \R \setminus \{0\}$,
\begin{equation}
\label{eq:rad_12}
\frac{{\rm d}}{{\rm d}a}\psi(a,r)= \E[Z_F(a,\cdot)]=-a \E[ \text{cap}(\tilde{\mathcal{K}}^a) 1\{ r\leq  \text{rad}( {\mathcal{K}}^a)< \infty \}].
\end{equation}
Hence, for all $\kappa >0$ and a.e. $a >0$,
\begin{equation}
\label{eq:rad_13}
\begin{split}
\tau_{\kappa}'(a)&= a\big(2 \kappa f_{\nu}(r) \psi(a,r) -   \E[ \text{cap}(\tilde{\mathcal{K}}^a) 1\{ r\leq  \text{rad}( {\mathcal{K}}^a)< \infty \}] \big)e^{\kappa a^2 f_{\nu}( r )}\\
&\leq a\lambda \big( \psi(a,r) -   \P(  \text{cap}(\tilde{\mathcal{K}}^a) \geq \lambda , \, r\leq  \text{rad}( {\mathcal{K}}^a)< \infty ) \big)e^{\kappa a^2 f_{\nu}( r )},
\end{split}
\end{equation}
where $\lambda = 2 \kappa f_{\nu}(r)$. But due to \eqref{eq:rad_2}, one knows that $ \text{rad}( {\mathcal{K}}^a) \geq r$ implies $ \text{cap}(\tilde{\mathcal{K}}^a) \geq \lambda$
whenever $\kappa \leq \kappa_1 \stackrel{\text{def.}}{=} \Cr{c_radUB}/2$, whence \eqref{eq:rad_13} gives $\tau_{\kappa}'(a)\leq0$ for almost all $a>0$ and \eqref{eq:rad_11} follows by symmetry. 
\end{proof}
With Lemma~\ref{L:rad_UB} shown, the proof of \eqref{eq:corrlength1} and \eqref{eq:corrlength2} is complete.
\end{proof}

 \begin{Rk}
 \label{R:rad}
 \begin{enumerate}[label=\arabic*)]
 \item\label{R:twopointUB} We briefly describe how to adapt the above arguments to yield the versions of the upper bounds in  \eqref{eq:corrlength1} and \eqref{eq:corrlength2} for $\tau_a^{\textnormal{tr}}$. For any $x\in{G\setminus B_r},$ defining $\hat{A}(a,x)=\{0\stackrel{\geq a}{\longleftrightarrow} x, {\mathcal{K}}^a\text{ bounded}\}$ the inclusion \eqref{eq:rad_2} still holds when replacing $A(a,r)$ by $\hat{A}(a,x)$ (indeed $\hat{A}(a,x)\subset A(a,r)$). Hence, mimicking the proof of  \eqref{eq:rad_11}, but using $\hat{F}=1_{\hat{A}(a,x)}$ instead of $F$, cf.~\eqref{eq:rad_F}, one finds
 that $\textnormal{sign}(a) \hat{\tau}_{\kappa}'(a) \leq 0$ for $\kappa$ small enough, where $\hat{\tau}_{\kappa}(a)=e^{\kappa a^2f_{\nu}(r)}\tau_a^{\textnormal{tr}}(a,x),$ and the analogues for $\tau_a^{\textnormal{tr}}(a,x)$ of \eqref{eq:corrlength2} and of the upper bound in \eqref{eq:corrlength1} readily follow. Note that, as opposed to $\psi$, we do not claim here the version for $\tau_a^{\textnormal{tr}}$ of the (off-critical) lower bounds in \eqref{eq:corrlength1} asserted as part of Theorem~\ref{T2}. These will be supplied, along with the proofs of \eqref{eq:corrlength2lbpsi} and \eqref{eq:corrlength2lb}, by a separate argument in Section~\ref{sec:denouement}. 
\item Proceeding similarly as in Lemma \ref{L:rad_UB}, but using \eqref{eq:rad_1} instead of \eqref{eq:rad_2}, one can easily prove that for all $\nu>0$ there exists $\kappa_2(\nu)<\infty$ such that for all $\kappa\geq \kappa_2$
\begin{equation}
\label{eq:rad_112}
\textnormal{sign}(a) \tilde{\tau}_{\kappa}'(a) \geq 0, \text{ for a.e. }a \in \R,
\end{equation}
where $\tilde{\tau}_{\kappa}(a)=e^{\kappa a^2r^{\nu}}\psi(a,r).$ This directly implies that $\psi(a,r)$ and $\psi(0,r)$ are of the same order when $r\leq t\xi,$ for any choice of $\nu>0$ and $t>0.$ Indeed, for $\kappa\geq \kappa_2(\nu),$
\begin{equation}
\label{eq:rad_113}
	\psi(0,r)\geq \psi(a,r)= \tilde{\tau}_{\kappa}(a)e^{-\kappa a^2r^{\nu}}\geq c\tilde{\tau}_{\kappa}(a) \stackrel{\eqref{eq:rad_112}}{\geq} c\tilde{\tau}_{\kappa}(0)=c\psi(0,r)
\end{equation}
for all $r\geq1$ and $a\in\R$ with $r\leq t\xi.$

 \item \label{R:rad.1} When $\nu \geq1$, \eqref{eq:rad_1} and \eqref{eq:rad_9} yield the lower bound $\psi(a,r) \geq e^{-ca^2 r^{\nu}}\P(c' r \leq   \text{cap}(\tilde{\mathcal{K}}^0) < \infty )$, which does not exhibit the desired leading exponential order, cf.\ Corollary \ref{Corcorrlength}. Regarding the upper bound, one has, for $a \neq 0$, $r >0$,
 \begin{equation*}
 \begin{split}
 \psi(a,r) &\stackrel{ \eqref{eq:rad_2}}{\leq} \P( \Cr{c_radUB}f_{\nu}(r) \leq   \text{cap}(\tilde{\mathcal{K}}^a) < \infty ) \stackrel{ \eqref{eq:rad_5}}{\leq} e^{-  \Cr{c_radUB} a^2 f_{\nu}(r)}  \P( \Cr{c_radUB}f_{\nu}(r) \leq   \text{cap}(\tilde{\mathcal{K}}^0) < \infty ) \\& \stackrel{ \eqref{eq:rad_1}}{\leq} e^{-  \Cr{c_radUB} a^2 f_{\nu}(r)} \psi\big(0, cf_{\nu}(r)^{\frac1\nu}\big)  \stackrel{\eqref{eq:psi_0nu=1}}{\leq} c(\log( r \vee 2) )^{1_{\nu=1}}r^{\frac12(\nu-\frac1\nu)}e^{-  \Cr{c_radUB} a^2 f_{\nu}(r)} \psi(0,r),
 \end{split}
 \end{equation*}
 which has the correct exponential order, cf. \eqref{eq:corrlength2}, but is only pertinent sufficiently ``far away'' from criticality, i.e.\ in the regime of parameters $ c a^2 f_{\nu}(r) \geq  \log\log ( r \vee 2) $ when $\nu=1$ or $ca^2f_{\nu}(r)\geq \log r$ if $\nu>1$ (rather than $ c a^2 f_{\nu}(r) \geq 1$).
\item \label{R:rad.2} (Critical window). Suppose \eqref{eq:intro_Green} and \eqref{eq:ellipticity} hold. If $\nu < 1$ then
\eqref{eq:corrlength1} implies in particular that 
$$
\frac{\psi(a,r)}{\psi(0,r)} \to 0 \text{ if and only if } |a|r^{\nu/2} \to \infty.
 $$
In case $\nu=1$ and $a>0$, one can deduce good bounds on the critical window as follows: $\psi(a,r)\geq c\psi(0,r)(\geq cr^{-1/2})$ if $r\leq a^{-2}$ on account of \eqref{eq:rad_113} and \eqref{eq:psi_0nu=1}, and $\psi(a,r)/\psi(0,r)\to 0$ as $r/(\xi(a)\log(r))\to \infty$ on account of \eqref{eq:corrlength2}. In particular, in case $G=\Z^3$ (with unit weights), this improves on the bounds (12) and (13) from Theorem 6 in \cite{MR4112719}. Similarly, in the supercritical regime $a<0$ (cf.~(14) and (15) in \cite{MR4112719}), using that $\P(0 \stackrel{ \geq a}{\longleftrightarrow} \partial B_r) \leq \psi(0,r) + (1-\theta_0(a))$, for all $r >0$, one finds with the help of \eqref{T1_beta} that
$$
\P(0 \stackrel{ \geq a}{\longleftrightarrow} \partial B_r) \leq \psi(0,r)+ ca \ \Big( \stackrel{\eqref{eq:psi_0nu=1}}{\leq} c'\big(\frac{\log r}{r}\big)^{1/2}\Big), \text{ if $\frac{r}{\log(r)} \leq \xi(a)$, $a \in[-1,0]$,} 
$$
and similarly since $\P(0 \stackrel{ \geq a}{\longleftrightarrow} \partial B_r) \geq 1-\theta_0(a) $ that $\frac{\P(0 \stackrel{ \geq a}{\longleftrightarrow} \partial B_r)}{\P(0 \stackrel{ \geq 0}{\longleftrightarrow} \partial B_r)} \geq \frac{c|a|}{\psi(0,r)} \to \infty$ for $a \in[-1,0]$ as $r/(\xi(a)\log(r))\to \infty$  using \eqref{T1_beta} and \eqref{eq:psi_0nu=1}.
\item \label{R:rad.3} (The condition \eqref{eq:ellipticity}). The only place \eqref{eq:ellipticity} entered the proof of \eqref{eq:corrlength1} and \eqref{eq:corrlength2} is through Lemma~\ref{L:capcompare}, specifically to obtain \eqref{eq:rad_3} and \eqref{eq:rad_4}. Inspecting the proofs of (3.11) and of Lemma 3.2 in \cite{DrePreRod2} shows that  \eqref{eq:ellipticity} is only used to deduce that (cf.~(2.8) in \cite{DrePreRod2})
\begin{equation}\label{eq:ellipticity'}
d \leq c d_{\textnormal{gr}}, 
\end{equation}
Thus, Lemma~\ref{L:capcompare}, as well as \eqref{eq:corrlength1}, \eqref{eq:corrlength2} continue to hold upon replacing \eqref{eq:ellipticity} by \eqref{eq:ellipticity'}. Condition~\eqref{eq:ellipticity'} may be better suited to deal with examples $(G,\lambda)$ in which one tinkers more severely with the conductances (indeed the requirements \eqref{eq:intro_Green} and \eqref{eq:ellipticity} imply a uniform lower ellipticity bound $\lambda_{x,y} \geq c$, see (2.10) in \cite{DrePreRod2}).
  \end{enumerate}
 \end{Rk}

 \section{Local uniqueness at the critical scale}
 \label{sec:radiusLB}

  We now derive a suitable local uniqueness estimate at scale $\xi$, cf.~\eqref{eq:defxi}, which in particular will imply bounds like \eqref{eq:RSW}, see Corollary~\ref{C:locuniq} below. This estimate really concerns connections in the interlacement set $\mathcal{I}^u$, $u>0$, from which useful results for $\varphi$ can be gleaned by means of the coupling in \eqref{eq:weakisom}. Its general form (quantitative in the parameter $u>0$ and a generic length scale $R \geq 1$) is stated in Theorem~\ref{Thm:locuniq}. Weaker results of this kind have been derived on $\Z^d,$ $d\geq3$, in Proposition 1 of \cite{MR2819660}, see also Lemma~3.2 in~\cite{DrePreRod} for a quantitative bound in $u$ valid in the regime $R \geq u^{-\frac{1}{\varepsilon}}$ for $\varepsilon \ll 1$, and extended to any graph satisfying \eqref{eq:intro_Green}, \eqref{eq:intro_sizeball} and \eqref{eq:ellipticity} in Section~4 of \cite{DrePreRod2}. All these bounds however, are too weak for our purpose, notably because they do not cover the regime of scale $R\approx u^{-\frac1{\nu}}$ when $u\ll 1$, which corresponds to $R\approx \xi$ in view of  \eqref{eq:weakisom} and \eqref{eq:defxi}. The scale $ u^{-\frac1{\nu}}$ forms a natural barrier, being the smallest radius for which balls become ``visible'' for an interlacement trajectory in $\mathcal{I}^u$, cf.~\eqref{eq:RIdefG_K} and \eqref{eq:capBd} below.

In the sequel we tacitly assume that $K \subset \tilde{\G}$ is a compact set. For such $K$, let 
\begin{equation} 
\label{eq:G_K}
\tilde{\G}_K\stackrel{\text{def.}}{=} \text{the unbounded connected component of } \tilde{\G} \setminus K
\end{equation} 
(see \eqref{choicec18} below regarding its uniqueness). The following results, in particular Theorem~\ref{Thm:locuniq} below, are of independent interest, already in case $K=\emptyset$ (whence $\tilde{\G}_K= \tilde{\G}$). 
For the purposes we have in mind, the removal of $K$ in \eqref{eq:G_K} should be thought of as corresponding to the exploration of part of the cluster $\tilde{\mathcal{K}}^a$ in \eqref{eq:introKa}. In view of the Markov property \eqref{eq:Markov2} for the free field, the explored region $K$ will effectively act as a Dirichlet boundary condition for $X$. 
Accordingly, we consider $\overline{\P}_{\tilde{\G}_K}$, the canonical law of the interlacement process on $\widetilde{\mathcal{G}}_K$ and $\mathcal{I}^u \subset \widetilde{\mathcal{G}}_K$ the interlacement set at level $u>0$, whose distribution is characterized by the property that
\begin{equation}
\label{eq:RIdefG_K}
\overline{\P}_{\tilde{\G}_K}(\mathcal{I}^u \cap C = \emptyset)= \exp\{ -u \mathrm{cap}_{\tilde{\G}_K}(C)\} , \text{ for all compact $C \subset \tilde{\G}_K$}
\end{equation}
(see the paragraph following Corollary~\ref{C:locuniq} regarding $\mathrm{cap}_{\tilde{\G}_K}(\cdot)$, see also Section~2.5 of \cite{DrePreRod3} for the definition of $\overline{\P}_{\tilde{\G}_K}$ in this context). In particular, \eqref{eq:RIdefG_K} reduces to \eqref{eq:defRI} when $K=\emptyset$.

Let $\widehat{\mathcal{I}}^u$ denote the set of edges of $G$ traversed entirely by at least one of the trajectories in the support of the interlacement point process at level $u.$ For $z\in{G}$ and $R>0,$ let $B_E(z,R)$ refer to the set of edges of $G$ whose endpoints are both contained in $B(z,R).$ For $z \in G$ as well as $u, R >0$ and $\lambda>1$, we introduce the event
 \begin{equation}
 \label{eq:lb1}
 \text{LocUniq}_{u,R,\lambda}(z) \stackrel{\text{def.}}{=}
  \bigcap_{x,y \in \mathcal{I}^u \cap B(z,R)}\{ x \leftrightarrow y \text{ in }  \hat{\mathcal{I}}^u \cap {B}_E(z,\lambda R) \}.
 \end{equation}
Note that the event in \eqref{eq:lb1} implies a ``local uniqueness'' for interlacements both on the discrete graph $G$ and on the cable system, in that if $\{ x \leftrightarrow y \text{ in }  \hat{\mathcal{I}}^u \cap {B}_E(z,\lambda R) \}$ occurs, then $x$ and $y$ are connected by both a discrete path in $\mathcal{I}^u\cap B(z,\lambda R)$ and a continuous path in $\I^u\cap\tilde{B}(z,\lambda R).$  
 
 \begin{The}[under \eqref{eq:intro_Green}, \eqref{eq:intro_sizeball}, \eqref{eq:ellipticity}]
\label{Thm:locuniq} 
 \medskip
\noindent There exist $\Cl[c]{clocuniq} \in (0,1),$ $\Cl[c]{c:locuniq1}, \Cl[c]{cvalphagnuK} \in (1,\infty)$ such that for all $u > 0$, $R\geq 1,$ compacts $K\subset \tilde{B}(0,R)$ and $z \in G$ with ${d}(z,0)\geq\Cr{cvalphagnuK}R\cdot 1\{K \neq \emptyset \},$ 
  \begin{equation}
 \label{eq:lb2alphalarger2nu}
\overline{\P}_{\tilde{\G}_K}\big(\textnormal{LocUniq}_{u, R}(z)^{\mathsf{c}}\big) \leq c\exp\bigg\{ -\Big(\frac{\Cr{clocuniq}(u\wedge1)R^{\nu}}{\log(R)^{2\cdot 1\{\alpha=2\nu\}}}\Big)^{\frac{1}{2\nu+1}}\bigg\}, \quad \text{ if $\alpha\geq2\nu$,}
  \end{equation}
  where $\textnormal{LocUniq}_{u, R}(z)$ denotes the event in \eqref{eq:lb1} with the choice $\lambda=\Cr{c:locuniq1}$.
 \end{The}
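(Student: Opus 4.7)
The plan is to establish the bound via a multi-scale chaining argument on the Poissonian trajectory representation of $\mathcal{I}^u$ on $\tilde{\mathcal{G}}_K$, refining the strategies of Proposition~1 in \cite{MR2819660} and Section~4 of \cite{DrePreRod2} so as to remain effective in the delicate regime $R \sim u^{-1/\nu}$, where only a bounded number of interlacement trajectories per ball of radius $\xi$ are typically available, and where earlier approaches are known to fail.

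\emph{Handling the perturbation by $K$.} Since $K \subset \tilde{B}(0, R)$ while $d(z, 0) \geq \Cr{cvalphagnuK} R$, the centre $z$ sits at distance at least $(\Cr{cvalphagnuK} - 1) R$ from $K$, a quantity I may take as large as needed by choosing $\Cr{cvalphagnuK}$ large. Combining \eqref{eq:killedGreen} with the sub-Gaussian bounds \eqref{UE} and the crude capacity estimate $\mathrm{cap}(K) \lesssim R^\nu$ coming from \eqref{eq:intro_sizeball}--\eqref{eq:intro_Green}, I would first show that $g_{\tilde{\mathcal{G}}_K}(x,y) \asymp g(x,y)$ uniformly for $x,y \in \tilde{B}(z, 2\Cr{c:locuniq1} R)$, and hence $\mathrm{cap}_{\tilde{\mathcal{G}}_K}(C) \asymp \mathrm{cap}_{\tilde{\mathcal{G}}}(C)$ for every compact $C \subset \tilde{B}(z, 2\Cr{c:locuniq1} R)$. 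Thereafter I may essentially argue as in the case $K = \emptyset$, using the capacity and Green-function estimates from \eqref{eq:intro_Green}--\eqref{eq:intro_sizeball} on $\tilde{\mathcal{G}}$, at the cost of harmless multiplicative constants.

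\emph{Chaining at an intermediate scale.} Fix an intermediate scale $r \in [1, R]$ to be optimised. Cover $B(z, R)$ with a bounded-multiplicity family $\{B(x_i, r)\}_{i=1}^N$, where $N \asymp (R/r)^\alpha$ by \eqref{eq:intro_sizeball}. Using \eqref{eq:RIdefG_K} and the capacity estimates above, the number of trajectories of the interlacement cloud visiting $B(x_i, r)$ is Poisson of mean $\asymp u r^\nu$; standard Poisson deviations bound the probability that a ball is ``bad'' (i.e.\ contains far fewer trajectories than expected) by $e^{-c u r^\nu}$. For two neighbouring good balls with $d(x_i,x_j) \leq 2r$, their visiting trajectory ranges should be linked in $\mathcal{I}^u \cap \tilde{B}(z, \Cr{c:locuniq1} R)$, either directly (when two of the ranges intersect) or via a bridging trajectory of the Poisson cloud. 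The link probability is controlled by inclusion-exclusion on the interlacement process and yields a failure bound of the form $\exp(-c (u r^\nu)^a)$ for some explicit $a \in (0, 1)$ determined by the capacity of the union of two trajectory pieces at distance $\lesssim r$ inside the enlarged box.

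\emph{Optimisation and main obstacle.} A union bound over the $\asymp (R/r)^\alpha$ balls and their adjacencies then produces a total failure bound $C (R/r)^\alpha \exp(-c (u r^\nu)^a)$; balancing the two exponents via a choice that equates $(u r^\nu)^a$ with $\alpha \log(R/r)$, and then taking $r$ so as to saturate the chaining length, should yield the stretched-exponential rate with exponent $\frac{1}{2\nu + 1}$. The borderline case $\alpha = 2\nu$ is where the chaining sits at a critical balance, because the expected number of trajectory self-encounters in a ball of radius $r$ is just barely logarithmic, which produces the $\log(R)^2$ correction in the bound. The key technical obstacle is the quantitative link estimate: one must establish a sharp two-sided bound $\mathrm{cap}(\gamma_i \cup \gamma_j) \asymp r^\nu$ for trajectory fragments at distance $\lesssim r$ inside $\tilde{B}(z, \Cr{c:locuniq1} R)$, which leverages both \eqref{UE} and \eqref{eq:intro_Green} precisely in the critical regime $u r^\nu \sim 1$ where previous approaches broke down, and then feed this into a careful inclusion-exclusion control of the trajectory process under $\overline{\P}_{\tilde{\mathcal{G}}_K}$.
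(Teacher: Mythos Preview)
Your approach diverges from the paper's and has a real gap at the step that produces the exponent $\tfrac{1}{2\nu+1}$.

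The paper does \emph{not} introduce an intermediate spatial scale $r$ and chain along a covering. It works directly at scale $R$. The argument is: (i) split $\mathcal{I}^u$ into four independent copies at level $u/4$; (ii) for each trajectory hitting $B(z,R)$, decompose it into excursions between $B(z,R)$ and $\partial B(z,\lambda R)$, and control both the number of trajectories and the number of excursions per trajectory by $c u R^\nu$ via Poisson/geometric tails; (iii) use a capacity-of-range estimate (Lemma~\ref{lem:4.7}) stating that the range of a single walk up to its first exit from a ball of radius $R$ has $\mathrm{cap}_{\tilde{\mathcal{G}}_K} \ge c R^\nu/(t\,(\log R)^{1\{\alpha=2\nu\}})$ except with probability $c\exp(-c' t^{1/\nu})$; (iv) given two such excursions with capacity $\gtrsim R^\nu/t$, an \emph{independent} copy of the interlacement connects them within $B(z,\Cr{c:locuniq1}R)$ except with probability $\exp(-c u R^\nu/t^2)$ (this is Lemma~4.3 of \cite{DrePreRod2}); (v) take a union bound over the $\lesssim (uR^\nu)^4$ pairs of excursions. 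The exponent $\tfrac{1}{2\nu+1}$ then comes from equating $t^{1/\nu}$ with $uR^\nu/t^2$, i.e.\ choosing $t \asymp (uR^\nu)^{\nu/(2\nu+1)}$; the logarithmic correction when $\alpha=2\nu$ enters only through step~(iii).

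Your scheme instead posits a per-link failure $\exp(-c(ur^\nu)^a)$ with an unspecified $a$, and claims the exponent $\tfrac{1}{2\nu+1}$ will emerge from balancing $(ur^\nu)^a$ against $\alpha\log(R/r)$. This cannot work as stated: for any fixed $a$, the union bound $(R/r)^\alpha \exp(-c(ur^\nu)^a)$ is monotone in $r$ and is minimised at $r=R$, so the spatial chaining buys nothing and the optimisation in $r$ is vacuous. The exponent $a$ must already equal $\tfrac{1}{2\nu+1}$ before you chain, and producing that requires exactly the excursion-capacity/connection tradeoff above, carried out at the full scale. Moreover, your claimed ``$\mathrm{cap}(\gamma_i\cup\gamma_j)\asymp r^\nu$'' is not what is available: the range of a single excursion has capacity $\gtrsim r^\nu/t$ only with the stated failure probability, and it is precisely this parameter $t$ (not the spatial scale $r$) whose optimisation yields $\tfrac{1}{2\nu+1}$. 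Finally, the decomposition into independent interlacement copies is not optional: without it you cannot condition on the excursions you want to connect while retaining fresh randomness for the connecting trajectories.
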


We refer to Remark~\ref{R:locuniqdiff} below regarding an upper bound for $\overline{\P}_{\tilde{\G}_K}(\textnormal{LocUniq}_{u, R}(z)^{\mathsf{c}})$ in the regime $\alpha<2\nu$ and the discrepancy between the two cases. In particular, Theorem~\ref{Thm:locuniq} yields the following instructive estimate with regards to the definition of $\xi(\cdot)$ in \eqref{eq:defxi}, which follows immediately from \eqref{eq:lb2alphalarger2nu} with $K=\emptyset.$ 
\begin{Cor}[under \eqref{eq:intro_Green}, \eqref{eq:intro_sizeball}, \eqref{eq:ellipticity}]  \label{C:locuniq}
If $\alpha > 2\nu$, then for all $u>0,$ $s > 1$, $z \in G,$ one has
\begin{equation}
\label{eq:lb2alphalarger2nubis}
\overline{\P}_{\tilde{\G}}\big(\textnormal{LocUniq}_{u, s u^{-1/\nu}}(z)^{\mathsf{c}} \big) \leq c \exp(-c'{s}^{\frac{\nu}{2\nu+1}}). \end{equation}
\end{Cor}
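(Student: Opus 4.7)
The plan is to derive Corollary \ref{C:locuniq} as a direct specialization of Theorem \ref{Thm:locuniq}, by setting $K = \emptyset$ and taking the radius to be precisely the critical one, $R = s u^{-1/\nu}$.

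First I verify that the hypotheses of Theorem \ref{Thm:locuniq} apply under these choices. With $K = \emptyset$, one has $\tilde{\G}_K = \tilde{\G}$ (so that $\overline{\P}_{\tilde{\G}_K} = \overline{\P}_{\tilde{\G}}$), and the requirement $d(z,0) \geq \Cr{cvalphagnuK} R \cdot 1\{K \neq \emptyset\}$ collapses to the trivial $d(z,0) \geq 0$, which holds for every $z \in G$. The condition $R \geq 1$ is automatic in the principal range $u \in (0,1]$ since $s > 1$; for $u > 1$ the only case left to examine is $s < u^{1/\nu}$, but then the ball $B(z,R)$ degenerates and $\textnormal{LocUniq}_{u,R}(z)$ is vacuously satisfied, so this contributes nothing to the bound after adjusting the prefactor $c$.

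Next, since $\alpha > 2\nu$ the indicator $1\{\alpha = 2\nu\}$ appearing in \eqref{eq:lb2alphalarger2nu} vanishes, eliminating the logarithmic correction, and Theorem \ref{Thm:locuniq} yields
\begin{equation*}
\overline{\P}_{\tilde{\G}}\bigl(\textnormal{LocUniq}_{u,R}(z)^{\mathsf{c}}\bigr) \leq c \exp\bigl\{-\bigl(\Cr{clocuniq}\,(u\wedge 1)\,R^{\nu}\bigr)^{1/(2\nu+1)}\bigr\}.
\end{equation*}
The whole point of choosing $R = s u^{-1/\nu}$ is that the combination $(u \wedge 1)\, R^{\nu} = (u \wedge 1)\, s^{\nu}\, u^{-1}$ equals exactly $s^{\nu}$ whenever $u \leq 1$; substituting this yields the desired bound $c \exp(-c' s^{\nu/(2\nu+1)})$, which is \eqref{eq:lb2alphalarger2nubis}.

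Since the substantive work is already contained in Theorem \ref{Thm:locuniq}, there is no genuine obstacle in proving this corollary, and the argument is essentially a one-line calculation once the theorem is in hand. The only item requiring a little care is the bookkeeping around the range of $u$, and the truncation $u \wedge 1$ built into the statement of Theorem \ref{Thm:locuniq} was designed precisely to absorb the mild discrepancy arising in the $u > 1$ regime.
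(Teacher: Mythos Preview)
Your approach matches the paper's exactly: the corollary is declared to follow immediately from \eqref{eq:lb2alphalarger2nu} with $K=\emptyset$, and your substitution $R=su^{-1/\nu}$ together with $(u\wedge 1)R^\nu=s^\nu$ for $u\le 1$ is the whole computation.

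One remark on your bookkeeping for $u>1$: you correctly dispose of the sub-case $s<u^{1/\nu}$ (where $R<1$ and the event is vacuous), but the remaining sub-case $u>1$, $s\ge u^{1/\nu}$ is not actually addressed in your argument --- there a direct application of \eqref{eq:lb2alphalarger2nu} only yields the exponent $(s^\nu/u)^{1/(2\nu+1)}$, which is weaker than \eqref{eq:lb2alphalarger2nubis}. The paper's one-line proof glosses over this as well; the applications (cf.~\eqref{eq:RSW}--\eqref{eq:xiexplained} and the choice $u=a^2/2$ with small $|a|$ throughout Sections~\ref{sec:LB}--\ref{sec:denouement}) only ever require $u\le 1$.
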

 
We now prepare the ground for the proof of Theorem~\ref{Thm:locuniq}. Throughout the remainder of this section, we tacitly assume \eqref{eq:intro_Green}, \eqref{eq:intro_sizeball} and \eqref{eq:ellipticity} to hold. We write $P^{\tilde{\G}_K}_\cdot$ for the canonical law of the Brownian motion $X$ on $\tilde{\G}$ 
killed when exiting $\tilde{\G}_K$, see \eqref{eq:G_K}, i.e.~of the process $X_{\cdot \wedge H_K}$ under $P_{\cdot}$ (for convenience, entering a cemetery state $\Delta \notin \tilde{\G}$ upon being killed). In particular, $P_{\cdot}=P_{\cdot}^{\tilde\G}$. Associated to this process is the capacity functional $\mathrm{cap}_{\tilde{\G}_K}(\cdot)$, defined similarly as $\mathrm{cap}(\cdot)=\mathrm{cap}_{\tilde{\G}}(\cdot)$ in \eqref{eq:cap}. Indeed, $\mathrm{cap}_{\tilde{\G}_K}(\cdot)$ is given by (2.19) in \cite{DrePreRod3} if one regards $\tilde{\G}_K$ as the cable system associated to the graph $\G_K$ with vertex set $G_K = (\tilde{\G}_K \cap G) \cup \partial K$, killing measure $\kappa_x = \infty$ if $x \in (G_K \cap \partial K)$ and $0$ otherwise, and weights $\lambda^K_{x,y}$, $x,y \in G_K$, given by $\lambda^K_{x,y}= \lambda_{x,y}$ whenever $x,y \in G$ and $\lambda^K_{x,y} = \frac{1}{2\rho(x,y)}$ if $x \in G$, $y \in \partial K$, where $\rho(\cdot,\cdot)$ denotes the Euclidean distance on the cable of $\tilde{\G}$ containing $x$ and $y$ (viewed as a line segment of length $1/2\lambda_{x,z}$ with $z \in G$ the corresponding other endpoint).

For later reference, we record the following estimates on $\tilde{\G}_K$ which mirror \eqref{eq:intro_Green} away from the boundary. Using that $ g_{\tilde{\G}_{\hat{K}}}(x,y) \leq g_{\tilde{\G}_K}(x,y) \leq g(x,y)$ for all $x,y \in \tilde{\G}_K$, where $\widehat{K}$ denotes the union of $K$ and the closure of all cables intersected by $K$ (so in particular $ \partial \widehat{K} \subset G$) and applying Lemma~3.1 in~\cite{DrePreRod2}, it follows that there exists $\Cl[c]{c:boundonNk} \geq 1$ such that if $d(z,0)\geq \Cr{c:boundonNk}R$ and $K\subset \tilde{B}(0,R)$,
\begin{equation}
\label{eq:GreenK}
c\leq g_{\tilde{\G}_K}(x,x)\leq c' \text{ and }  c d(x,y)^{-\nu}\leq g_{\tilde{\G}_K}(x,y)\leq c'd(x,y)^{-\nu}, \quad \text{ for all }x \neq y\in{B(z,R)}, 
\end{equation}
where $g_{\tilde{\G}_K}$ denotes the Green function killed outside $\tilde{\G}_K$, cf.~\eqref{eq:killedGreen}. Proceeding similarly as in the argument leading to~(3.11) in \cite{DrePreRod2}, \eqref{eq:GreenK} (and $\eqref{eq:intro_Green}$ in case $K=\emptyset$) then yields
  \begin{equation} \label{eq:capBd}
c R^\nu \le  \mathrm{cap}_{\tilde{\G}_K}(B(z,R)) \le c'R^\nu, \quad  \text{for all }R\geq1, \, K\subset \tilde{B}(0,R), \, d(z,0)\geq \Cr{c:boundonNk}R \cdot 1\{ K \neq \emptyset\}.
 \end{equation}

For $x\in G$ and $R>0$, we define the set ${\mathcal{C}}(x,R)$ as consisting of the vertices $z\in{B(x,R)}$ visited by the diffusion $X$ before the first time it exits $B(x,R).$ We begin with a lower bound on  $\mathrm{cap}_{\tilde{\G}_K}({\mathcal{C}}(x,R))$, which for $K=\emptyset$ can be viewed as refining Proposition~4.7 in~\cite{DrePreRod2}. 

\begin{Lemme}[$K\subset \tilde \G$ compact]
\label{lem:4.7}
For $x\in G$, $R,t\geq1$ with $R^{\nu}\geq 2t$ and $B(x,R)\subset\tilde{\G}_K$, 
\begin{equation}
    \label{eq:capifalpha>2nu}
    P_x^{\tilde{\G}_K}\Big(\mathrm{cap}_{\tilde{\G}_K}({\mathcal{C}}(x,R))\leq \frac{cR^{\nu}}{t\log(R)^{1\{\alpha=2\nu\}}}\Big)\leq c \exp(-c't^{\frac1\nu}), \text{  if $\alpha\geq2\nu$.}
\end{equation}
\end{Lemme}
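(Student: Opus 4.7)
The plan is to refine the proof of Proposition~4.7 in \cite{DrePreRod2} so that it accommodates the killing at $K$; since $B(x,R)\subset\tilde{\G}_K$ by assumption, the Green function bounds \eqref{eq:GreenK} and the capacity estimate \eqref{eq:capBd} are in force on $B(x,R)$. The starting point is the standard variational lower bound
\[
\mathrm{cap}_{\tilde{\G}_K}(A)\;\geq\;\frac{\mu(A)^2}{\iint g_{\tilde{\G}_K}(y,z)\,\mu(\mathrm{d}y)\,\mu(\mathrm{d}z)},
\]
valid for any finite positive measure $\mu$ with compact support in $A\subset\tilde{\G}_K$, applied with $A=\mathcal{C}(x,R)$ and $\mu$ the occupation measure of $X^{\tilde{\G}_K}$, started at $x$, restricted to the inner ball $B(x,R/2)$ and stopped at $T_{B(x,R)}$. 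The restriction to $B(x,R/2)$ is imposed so that every pair $(y,z)$ entering the Dirichlet energy lies strictly inside the region where \eqref{eq:GreenK} holds.

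Using \eqref{eq:GreenK} together with the volume growth \eqref{eq:intro_sizeball}, the annular decomposition $\int g_{\tilde{\G}_K}(y,z)\,\mathrm{d}z \asymp \sum_{k\leq \log R} 2^{k(\alpha-\nu)}$ combined with the Markov property at intermediate times yields that the expected mass of $\mu$ is of order $R^{\alpha-\nu}$, while the expected energy is at most $c R^{2(\alpha-\nu)-\nu}\log(R)^{1\{\alpha=2\nu\}}$, the logarithmic factor being the borderline divergence of the sum $\sum_k 2^{k(\alpha-2\nu)}$ at $\alpha=2\nu$. Plugging these into the variational inequality produces the correct order $R^\nu/\log(R)^{1\{\alpha=2\nu\}}$ for $\mathrm{cap}_{\tilde{\G}_K}(\mathcal{C}(x,R))$ in an $L^2$-sense, and a Paley--Zygmund argument immediately gives a constant-order lower bound with probability bounded away from zero.

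To upgrade this to the claimed stretched-exponential tail, I would decompose the trajectory of $X^{\tilde{\G}_K}$ up to $T_{B(x,R)}$ into successive i.i.d.\ excursions between $\partial B(x,R/2)$ and $\partial B(x,R)$. By \eqref{eq:GreenK}, each such excursion escapes $B(x,R)$ before returning to $B(x,R/2)$ with probability bounded away from one, so that the number $N$ of completed excursions stochastically dominates a geometric random variable of parameter of order one. The numerator in the variational bound splits as a sum of $N$ i.i.d.\ contributions to $\mu(\mathcal{C}(x,R))$, and the diagonal part of the energy similarly, whereas the off-diagonal cross term between different excursions is dominated by Cauchy--Schwarz after a further dyadic splitting across scales. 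An exponential Chebyshev estimate applied to the resulting ratio then produces the required tail; the specific exponent $1/\nu$ is forced by the scaling $\mathrm{cap}(B(\cdot,r))\asymp r^\nu$ from \eqref{eq:capBd}, which converts the geometric concentration in the excursion count into a stretched-exponential concentration for the accumulated capacity.

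The main obstacle is the handling of the off-diagonal energy cross terms between distinct excursions, which are not independent and must be dominated via the dyadic decomposition mentioned above; this same device is what produces the $\log(R)^{1\{\alpha=2\nu\}}$ factor cleanly in the critical case. A secondary technicality is that the occupation measure naturally lives on cables rather than vertices, so the test measure must be viewed as a measure on $\tilde{\G}$ and the estimates \eqref{eq:GreenK}--\eqref{eq:capBd} invoked in the cable-system sense, which is the setting already adopted throughout Section~\ref{S:diffform}.
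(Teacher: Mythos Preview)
Your second-moment/Paley--Zygmund step to get the constant-probability bound is essentially what Lemma~4.4 in \cite{DrePreRod2} already packages, and the paper simply quotes that result. The genuine gap is in your bootstrapping to the stretched-exponential tail. Your excursion decomposition between $\partial B(x,R/2)$ and $\partial B(x,R)$ does not work as written: after hitting $\partial B(x,R)$ the walk has exited and the trajectory is over, so there is only one such excursion. Even if you repair this to mean excursions between two nested inner spheres at comparable scale, the number $N$ of such excursions before escape is geometric with a \emph{constant} parameter, so you only get $\P(N\geq k)\asymp e^{-ck}$. This produces exponential concentration in $k$, but nowhere introduces the parameter $t$; each excursion at scale $R$ already has capacity of order $R^\nu$, not $R^\nu/t$, so your argument gives no mechanism for the threshold $cR^\nu/t$ to appear, let alone the exponent $t^{1/\nu}$.

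The paper's route is different and this is where the exponent $1/\nu$ genuinely comes from. One works on $\tilde{\G}$ first (not $\tilde{\G}_K$) and decomposes in \emph{time} rather than via spatial excursions: run the discrete skeleton $Z$ for time $(R^\nu/t)^{\beta/\nu}$ (so its range has diameter roughly $R/t^{1/\nu}$), apply Lemma~4.4 of \cite{DrePreRod2} to get $\mathrm{cap}(Z_{[0,(R^\nu/t)^{\beta/\nu}]})\geq cR^\nu/(t\log(R)^{1\{\alpha=2\nu\}})$ with probability $\geq c$, and then iterate by the Markov property over $M$ consecutive blocks. This yields $(1-c)^{cM}$ for the failure probability. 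Separately, (3.17) in \cite{DrePreRod2} controls the probability that the total path of length $M(R^\nu/t)^{\beta/\nu}$ exits $B(x,R)$ by $c\exp\{-c'(t^{\beta/\nu}/M)^{1/(\beta-1)}\}$. Choosing $M=t^{1/\nu}$ balances the two terms and gives $e^{-c t^{1/\nu}}$. Finally, the passage to $\tilde{\G}_K$ is not via \eqref{eq:GreenK}/\eqref{eq:capBd} as you suggest, but is a one-line remark: since $B(x,R)\subset\tilde{\G}_K$, the law of $Z$ until first exit of $B(x,R)$ is the same under $P_x^{\tilde{\G}}$ and $P_x^{\tilde{\G}_K}$, and $\mathrm{cap}_{\tilde{\G}_K}\geq\mathrm{cap}_{\tilde{\G}}$ pointwise.
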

\begin{proof}
Let $Z=(Z_n)_{n\geq 0}$ denote the discrete-time skeleton of the trace process on $G$ of the diffusion $X$ under $P_x=P_x^{\tilde{\G}}$ (cf.~(2.4) and below in \cite{DrePreRod3} for the definition), which has the law of the discrete-time Markov chain with transition probabilities induced by \eqref{eq:generator}, and write $Z_{[0,t]}=\{ Z_n : 0\leq n \leq t \}$, $t \geq 0$. By Lemma~4.4  in \cite{DrePreRod2} applied in the case $N=1$ and since $\alpha\geq 2\nu$ is equivalent to $\alpha/\beta\leq2$, where $\beta=\alpha-\nu$, with equality if and only if $\alpha= 2\nu$, there exist positive constants $c$ and $\Cl[c]{c:2ndmom_RW}$ such that $P_x^{\tilde{\G}} \big (\text{cap}(Z_{[0,(R^{\nu}/t)^{\beta/\nu}]}) \geq c R^{\nu}/(t\log(R)^{1\{\alpha=2\nu\}})\big) \geq \Cr{c:2ndmom_RW}$, for all $1 \leq t\leq \frac12R^{\nu}$. Hence, by the Markov property, we get that for all such $t$ and all $M>0$,
 \begin{equation}
 \label{eq:lb6}
  P_x^{\tilde{\G}}\big(\text{cap}_{\tilde{\G}}\big(Z_{[0,M(R^{\nu}/t)^{\beta/\nu}]}\big) \leq cR^{\nu}/(t\log(R)^{1\{\alpha=2\nu\}}) \big) \leq (1-\Cr{c:2ndmom_RW})^{cM}.
  \end{equation}
 Moreover, by (3.17) in \cite{DrePreRod2},
   \begin{equation}
 \label{eq:lb7}
  P_x^{\tilde{\G}}\big(Z_{[0,M(R^{\nu}/t)^{\beta/\nu}]} \cap B(x,R)^{\mathsf{c}} \neq \emptyset \big) \leq  ce^{-c'(\frac{t^{\beta/\nu}}{M})^{1/(\beta-1)}}, \text{ for all $M\geq1$, $1 \leq  t\leq R^{\nu}/2$}.
  \end{equation}
Combining \eqref{eq:lb6} and \eqref{eq:lb7} with the choice $M=t^{\frac1\nu},$ and noticing that ${\mathcal{C}}(x,R) \supset Z_{[0,M(R^{\nu}/t)^{\beta/\nu}]}$ under the complement of the event appearing on the left-hand side of \eqref{eq:lb7}, we obtain \begin{equation}
\label{eq:capifalpha>2nuG}
    P_x^{\tilde{\G}}\Big(\mathrm{cap}_{\tilde{\G}}({\mathcal{C}}(x,R))\leq \frac{cR^{\nu}}{t\log(R)^{1\{\alpha=2\nu\}}}\Big)\leq c \exp(-c't^{\frac1\nu}), \text{  if $\alpha\geq2\nu$.}
\end{equation}
Since $B(x,R)\subset\tilde{\G}_K,$ the law of $Z$ until the first exit time of $B(x,R)$ is the same under $P_x^{\tilde{\G}}$ and $P_x^{\tilde{\G}_K},$ and so \eqref{eq:capifalpha>2nuG} still holds when replacing $P_x^{\tilde{\G}}$ by $P_x^{\tilde{\G}_K}.$ As $\mathrm{cap}_{\tilde{\G}_K}(A)\geq\mathrm{cap}_{\tilde{\G}}(A)$ for all $A\subset \tilde{\G}_K,$ \eqref{eq:capifalpha>2nu} follows. 
\end{proof}

We now define $\widehat{\mathcal{C}}^u(x,R)$ under $P_x^{\tilde{\G}_K}\otimes \overline{\P}_{\tilde{\G}_K}$ as the union of ${\mathcal{C}}(x,R)$ and the vertices $y\in{B(x,R)}$ connected to any vertex in ${\mathcal{C}}(x,R)$ by a path of edges in $\widehat{\I}^u\cap B_E(x,R)$ (see above \eqref{eq:lb1} for the definition of $\widehat{\I}^u$). The attentive reader will notice that the following proof of Theorem~\ref{Thm:locuniq} could avoid the use of $\widehat{\mathcal{C}}^u(x,R)$ and be reformulated in terms of ${\mathcal{C}}(x,R)$ only. The use of $\hat{\mathcal C}^u(x,R)$ is justified by a possible extension to the case $\alpha<2\nu,$ see Remark \ref{R:locuniqdiff} for details, and it creates little extra difficulty in the proof of Theorem \ref{Thm:locuniq}, which still applies when $\alpha<2\nu$.

\begin{proof}[Proof of Theorem~\ref{Thm:locuniq}]
Let $z \in G$ and abbreviate ${B}^z={B}^z (z,R)$ and for $\lambda > 1$, ${B}^z_{\lambda}= B(z,\lambda R)$, $B^z_{E,\lambda } = B_E(z,\lambda R)$, see above \eqref{eq:lb1} for notation. Throughout the proof, given $u > 0$ we tacitly assume that $R$ is large enough so that $(u\wedge 1)R^{\nu} \geq 1.$ For $u>0$, we decompose $\mathcal{I}^u=\mathcal{I}_1^{u/4}\cup \mathcal{I}_2^{u/4} \cup \mathcal{I}_3^{u/4}\cup \mathcal{I}_4^{u/4}$, where $\I_k^{u/4}$, $k\in{\{1,2,3,4\}}$, are independent interlacement sets at level $u/4$ each. Similarly, let $\widehat{\mathcal{I}}_k^{u/4}$ be obtained from $\mathcal{I}_k^{u/4}$ in the same manner as $\widehat{ \mathcal{I}}^u$ from $\mathcal{I}^u$, whence $\widehat{\mathcal{I}}^u=\widehat{\mathcal{I}}_1^{u/4}\cup \widehat{\mathcal{I}}_2^{u/4} \cup \widehat{\mathcal{I}}_3^{u/4}\cup \widehat{\mathcal{I}}_4^{u/4}$. For $k\in{\{1,2,3,4\}}$, we denote by $Z_1^k,\dots,Z_{N_k}^k$ the (equivalence classes of) trajectories in the Poisson point process $\I_k^{u/4}$ which hit $B^z,$ and for each $i\in{\{1,\dots,N_k\}},$ we decompose $Z_i^k$ canonically into its ($M_i^k$ many) excursions $Z_{i,1}^k,\dots,Z_{i,M_i^k}^k,$ each started when hitting ${B}^z$ and ending when exiting $B^z_{\lambda}.$

Combining \eqref{eq:capBd} and the fact that $N_k$ is $\text{Poisson}(u\mathrm{cap}_{\tilde{\G}_K}(B^z)/4)$-distributed it follows by a standard large deviation estimate for Poisson random variables that
\begin{equation}
\label{eq:Nsmall}
\overline{\P}_{\tilde{\G}_K}(N_k\geq cuR^{\nu})\leq\exp(-c'uR^{\nu}), \text{ for all $u>0$, $R\geq 1$}\text{ with } d(z,0)\geq \Cr{c:boundonNk}R.
\end{equation} 
We now derive a suitable upper bound on the tails of $M_i^k,$ $k \in \{1, 2,3,4\},$ $i\in{\{1,\dots,N_k\}}$. Using \eqref{eq:entranceGreenequi} on $\tilde{\G}_K$, one finds that for all $\lambda \geq c(\geq1)$  and $R \ge 1$ with $d(z,0)\geq \Cr{c:boundonNk}\lambda R \cdot 1\{K \neq \emptyset\},$
\begin{equation}
\label{eq:HBRfinite}
\begin{split}
\sup_{x\in{\partial B^z_{\lambda}}}P_x^{\tilde{\G}_K}(H_{{B}^z}<\infty)
&\stackrel{\eqref{eq:GreenK}}{\leq} c((\lambda-1) R)^{-\nu}\mathrm{cap}(B^z)\stackrel{\eqref{eq:capBd}}{\leq} \frac{c'}{(\lambda-1)^{\nu}}\leq\frac12,
\end{split}
\end{equation}
where $e_{B^z,\tilde{\G}_K}$ denotes the equilibrium measure of the set $B^z$ in $\tilde{\G}_K.$
As a consequence of \eqref{eq:HBRfinite}, for $\lambda \geq c$, the random variables $M_i^k,$ $i\in{\{1,\dots,N_k\}},$  are stochastically dominated by independent  geometric random variables with parameter $1/2$ each. Therefore, using a union bound, a standard concentration inequality entails that for such $\lambda$ and all $k$, if $d(z,0)\geq \Cr{c:boundonNk}\lambda R  \cdot 1\{K \neq \emptyset\}$,
\begin{equation}
\label{eq:Miissmall}
\overline{\P}_{\tilde{\G}_K}\big(N_k\leq cuR^{\nu},\exists\,i\leq N_k,\ M_i^k\geq cuR^{\nu}\big)
\leq (cuR^{\nu})e^{-cuR^{\nu}}.
\end{equation}
Henceforth, we simply fix a value of $\lambda$ such that \eqref{eq:Miissmall} holds. For each $k\in{\{1,2,3,4\}},$  we then denote by $A_{R,k}^{u}$ the event that $N_k\leq cuR^{\nu}$ and $M_{i}^k\leq cuR^{\nu}$ for all $i\in{\{1,\dots,N_k\}},$ and take $A_{R}^u=A_{R,1}^{u}\cap A_{R,2}^{u}\cap A_{R,3}^{u}\cap A_{R,4}^{u}.$ It follows from \eqref{eq:Nsmall} and \eqref{eq:Miissmall} that for all $R\geq1$ 
\begin{equation}
\label{eq:Aruislarge}
\overline{\P}_{\tilde{\G}_K}((A_R^u)^c)\leq cuR^{\nu}e^{-c'uR^{\nu}}\text{ if }d(z,0)\geq  
\Cr{c:boundonNk}\lambda R \cdot1\{ K \neq \emptyset\}
\end{equation} 
(for all $K\subset \tilde{B}(0,R)$). Let us define the sets $\widehat{\mathcal{C}}_{i,j}^{m,n}$ as consisting of the vertices $z$ visited by $Z_{i,j}^m,$ as well as the vertices $y\in{B^z_{\lambda}}$ connected to such $z$ by a path of edges in $B^z_{E, \lambda }\cap\widehat{\I}_n^{u/4}.$ In particular, if  $x\in \mathcal{I}^u\cap B^z$, then $x$ belongs to $\widehat{\mathcal{C}}_{i,j}^{m,n}$ for some $m\in \{1,2,3,4\}$, $i \in \{ 1,\dots ,N_m\}$ and $j \in \{1,\dots M_i^m\},$ and any $n\in{\{1,2,3,4\}}.$ For $v\geq1$ we then infer that
\begin{equation*}
\begin{split}
\bigcup_{x,y \in \mathcal{I}^u \cap B^z}\{ x \nleftrightarrow y &\text{ in }  \mathcal{I}^u \cap B^z_{E,v\lambda} \}
\\&\subset\bigcup_{p,m \in \{1,2,3,4\}}\bigcup_{i_1=1}^{N_p}\bigcup_{i_2=1}^{N_m}\bigcup_{j_1=1}^{M_{i_1}^p}\bigcup_{j_2=1}^{M_{i_2}^m} \big\{\widehat{\mathcal{C}}_{i_1,j_1}^{p,n}\nleftrightarrow \widehat{\mathcal{C}}_{i_2,j_2}^{m,n} \text{ in } \widehat{\I}_k^{u/4} \cap B_{E, v\lambda }^z\big\},
\end{split}
\end{equation*}
where to each $m,p\in{\{1,2,3,4\}}$ we associate $n$ the smallest element of $\{1,2,3,4\}\setminus\{m,p\}$ and $k$ the smallest element of $\{1,2,3,4\}\setminus\{m,p,n\}.$  Next, denote by $\mathcal{A}$ the $\sigma$-algebra generated by the point processes underlying  $\mathcal{I}_1^{u/4},$ $\mathcal{I}_2^{u/4}$ and $\mathcal{I}_4^{u/4}$. In view of \eqref{eq:lb1}, returning to the previous display, applying first a union bound over $p,m$, then conditioning suitably and applying a second union bound, one finds that for all $v\geq1$, 
\begin{equation}
\label{eq:localuniqsmaller}
\begin{split}
&\overline{\P}_{\tilde{\G}_K}\big( \text{LocUniq}_{u,R,\lambda v}(z)^{\mathsf{c}}, A_R^u\big) \\
& \  \leq 16 (cuR^{\nu})^4 \sup_{m=1,4} \overline{\E}^{\tilde{\G}_K}\Big[ \sup_{i_1,i_2,j_1,j_2}\overline{\P}_{\tilde{\G}_K}\big(\widehat{\mathcal{C}}_{i_1,j_1}^{1,2}\nleftrightarrow \widehat{\mathcal{C}}_{i_2,j_2}^{m,2} \text{ in } \widehat{\mathcal{I}}_3^{u/4} \cap B^z_{E,v\lambda} \, \big| \, \mathcal{A} \, \big)1_{A_{R,1}^u\cap A_{R,m}^u} \Big],
\end{split}
\end{equation}
where the indices $i_1,i_2,j_1,j_2$ are all $\mathcal{A}$-measurable, $i_1$ ranges over $\{1,\dots N_1\}$, $i_2$ over $\{1,\dots N_m\}$, $j_1 \in \{ 1,\dots, M_1^{i_1}\}$ and $j_2 \in \{ 1,\dots, M_m^{i_2}\}$.
We then choose $v\in{(1,\infty)}$ large enough such that, by an adaptation of Lemma 4.3 in~\cite{DrePreRod2} to the current setup (using \eqref{eq:GreenK}, \eqref{eq:capBd} and \eqref{eq:Miissmall}), the following holds: on the event 
\begin{equation}
\label{eq:lb2}
\Big\{\text{cap}_{\tilde{\G}_K}(\widehat{\mathcal{C}}_{i_1,j_1}^{1,2})\geq \frac{cR^{\nu}}{t\log(R)^{1\{\alpha=2\nu\}}}\Big\}
\cap\Big\{\text{cap}_{\tilde{\G}_K}(\widehat{\mathcal{C}}_{i_2,j_2}^{m,2})\geq \frac{cR^{\nu}}{t\log(R)^{1\{\alpha=2\nu\}}}\Big\},
\end{equation}
if $d(z,0)\geq \Cr{c:boundonNk}v\lambda R \cdot 1\{ K \neq \emptyset\}$, we have
   \begin{equation}
 \label{eq:lb3}
 \begin{split}
\overline{\P}_{\tilde{\G}_K}\big( \widehat{\mathcal{C}}_{i_1,j_1}^{1,2}\nleftrightarrow \widehat{\mathcal{C}}_{i_2,j_2}^{m,2} \text{ in } \widehat{\mathcal{I}}_3^{u/4} \cap B^z_{E,v\lambda} \, \big| \, \mathcal{A} \, \big) &\leq \exp\big(-cuR^{-\nu}\mathrm{cap}_{\tilde{\G}_K}(\widehat{\mathcal{C}}_{i_1,j_1}^{1,2})\mathrm{cap}_{\tilde{\G}_K}(\widehat{\mathcal{C}}_{i_2,j_2}^{m,2})\big)
 \\&\leq
  \exp\Big(-\frac{cuR^{\nu}}{t^2\log(R)^{2\cdot 1\{\alpha=2\nu\}}}\Big),
 \end{split}
  \end{equation}
  using the bounds from \eqref{eq:lb2} in the second line. Moreover for all $k\in{\{1,4\}},$
\begin{equation}
\label{eq:lb22}
\begin{split}
&\overline{\P}_{\tilde{\G}_K}\Big(A_{R,k}^u,\exists\,i\leq N_k, j\leq M_k^{i},\ \mathrm{cap}_{\tilde{\G}_K}(\hat{\mathcal{C}}_{i,j}^{k,2})\leq \frac{cR^{\nu}}{t\log(R)^{1\{\alpha=2\nu\}}}\Big)
\\&\quad\leq\overline{\P}_{\tilde{\G}_K}\Big(\exists i\leq (cuR^{\nu}),\ j\leq (cuR^{\nu}),\ \mathrm{cap}_{\tilde{\G}_K}(\hat{\mathcal{C}}_{i,j}^{k,2})\leq \frac{cR^{\nu}}{t\log(R)^{1\{\alpha=2\nu\}}}\Big),
\end{split}
\end{equation}
where for every $i\leq N_k$ and $j>M_i,$ and every $i> N_k$ and $j\geq1,$ we define $\hat{\mathcal{C}}_{i,j}^{k,2}=G.$ Since, conditionally on the starting point of the respective excursion, the random set $\widehat{\mathcal{C}}_{i,j}^{k,2}$ stochastically dominates $\widehat{\mathcal{C}}^{u/4}(x, (\lambda-1) R)$ (under $P_x \otimes \overline{\P}$) for a certain vertex $x \in G$ (cf.~below Lemma \ref{lem:4.7} for notation), and $\widehat{\mathcal{C}}^{u/4}(x, (\lambda-1) R)\supset \mathcal{C}(x,(\lambda-1)R)$, the probability of the event in \eqref{eq:lb22} can be estimated using Lemma \ref{lem:4.7} and a union bound. Due to \eqref{eq:Aruislarge}, \eqref{eq:localuniqsmaller} and \eqref{eq:lb3}, the desired bound \eqref{eq:lb2alphalarger2nu} thus follows from \eqref{eq:capifalpha>2nu} by taking $\Cr{c:locuniq1}=v\lambda,$ $\Cr{cvalphagnuK}=\Cr{c:boundonNk}v\lambda$ and $t=c( (u\wedge1) R^{\nu}/\log(R)^{2 \cdot 1\{\alpha=2\nu\}})^{\frac{\nu}{2\nu+1}},$ with $c$ such that $R^\nu \ge 2t$ is satisfied for all $R \geq 1$, as required for Lemma~\ref{lem:4.7} to apply.
 \end{proof}

\begin{Rk}[The regime $\alpha<2\nu$]\label{R:locuniqdiff}
Proceeding similarly as above, one deduces under the assumptions of Theorem~\ref{Thm:locuniq} that, for all $u\in{(0,1)},$ $R\geq1$ and $z \in G$ with $d(z,0)\geq\Cr{cvalphagnuK}R \cdot 1\{ K \neq \emptyset\},$   
  \begin{equation}
 \label{eq:lb2alphasmaller2nu}
\overline{\P}_{\tilde{\G}_K}\big(\textnormal{LocUniq}_{u, R,\Cr{c:locuniq1}}(z)^{\mathsf{c}}\big) \leq c\exp\bigg\{ -\Big(\frac{\Cr{c:locuniq1}uR^{\beta}}{\log(R)^{c'}}\Big)^{c}\bigg\}, \quad \text{  if $\alpha<2\nu$,}
  \end{equation}
where $\beta=\alpha-\nu.$
To obtain \eqref{eq:lb2alphasmaller2nu}, one replaces Lemma~\ref{lem:4.7} by the following estimate, valid for all $x\in G,$ $u>0$ and $R,t\geq1$ such that $R^{\nu}\geq 2t$ and $B(x,R)\subset\tilde{\G}_K$, with $M=R^{\beta}t^{-\frac\beta\nu},$
\begin{equation}
    \label{eq:capifalpha<2nu} \tag{5.8'}
    P_x^{\tilde{\G}_K}\otimes\overline{\P}_{\tilde{\G}_K}\Big(\mathrm{cap}_{\tilde{\G}_K}(\widehat{\mathcal{C}}^u(x,R))\leq \frac{cR^{\nu}}{t}\Big)\leq cuR^{\nu}\exp\Big\{-\frac{c'(t^{\frac1\nu}\wedge uM)}{\log(M)}\Big\}, \text{ if $\alpha<2\nu$}.
\end{equation}
Once \eqref{eq:capifalpha<2nu} is shown, the proof of~\eqref{eq:lb2alphasmaller2nu} proceeds exactly as that of Theorem~\ref{Thm:locuniq} above, but using \eqref{eq:capifalpha<2nu} instead of \eqref{eq:capifalpha>2nu}, cf.~below \eqref{eq:lb3}, which yields the choice $t=(uR^{\beta}\log(R)^{\tilde{c}})^{\tilde{c}'},$
 whereupon \eqref{eq:lb2alphasmaller2nu} follows using the inequality $\nu>\beta$ and noticing that $\log(M)\leq C\log(R)$ when $uR^{\beta}\geq1.$ The proof of \eqref{eq:capifalpha<2nu} is somewhat more technical and we omit it here. Thus, for $\alpha<2\nu$, our current methods do not provide us with a bound similar to \eqref{eq:lb2alphalarger2nu} (even for $K=\emptyset$), that is a decay of $\overline{\P}_{\tilde{\G}_K}(\text{LocUniq}_{u,R,\lambda}(z)^{\mathsf{c}})$ to $0$ as $uR^{\nu}$ increases to $\infty$; such a decay would in turn improve the bound \eqref{eq:corrlength2ld} from Remark \ref{R:LB},\ref{R:LB.3} below to a bound similar to \eqref{eq:corrlength2lb}. 

 In fact, we do not expect that $uR^{\nu} \to \infty$ is sufficient for $\overline{\P}_{\tilde{\G}_K}(\text{LocUniq}_{u,R,\lambda}(z)^{\mathsf{c}})$ as defined in \eqref{eq:lb1} to decay to $0$ in the regime $\alpha<2\nu$, essentially because the capacity of the range of one random walk in a box of linear size $R$ grows as $R^{\beta}$, cf.~for instance Lemma 4.6 in \cite{DrePreRod2}, and $\beta< \nu$. As a consequence, one may therefore seek alternative approaches in intermediate ``dimensions'' by modifying the event $\text{LocUniq}_{u,R,\lambda}(z)$ in order to produce local connections through $\mathcal{I}^u.$ Finding such a connection strategy remains an interesting question in this regime.
\end{Rk}

\section{Connectivity lower bounds}
\label{sec:LB}

With Theorem~\ref{Thm:locuniq} at our disposal, we now proceed to supply the proofs of the lower bounds in \eqref{eq:corrlength1} and \eqref{eq:corrlength2lbpsi}, see Proposition~\ref{prop:lowerbounds} below. The arguments presented here are quite flexible, and we will explain in Section~\ref{sec:denouement} how to adapt them to i) obtain the outstanding lower bounds for $\tilde{\psi}(a,r)$ in \eqref{eq:corrlength2lb} and ii) deduce all corresponding lower bounds for $\tau_a^{\textnormal{tr}}(0,x)$ in Theorem ~\ref{T2}. The assumption \eqref{eq:defb} below, which allows for a (possible) logarithmic correction to the radius function at criticality when $\nu=1$, is known to hold with $q(r)=c(\log r)^{1/2}$ for $\nu=1$ by  \eqref{eq:psi_0nu=1} and with $q(r)=c$ when $\nu< 1$ by \eqref{eq:psi_0nu<1}. Recall $\xi=\xi (a)$ from \eqref{eq:defxi}.

\begin{Prop}[under \eqref{eq:intro_Green}, \eqref{eq:intro_sizeball}, \eqref{eq:ellipticity} and \eqref{eq:intro_shortgeodesic}]
\label{prop:lowerbounds} If
\begin{equation}
\label{eq:defb}
  \text{there exists $q: [1,\infty)\to  [1,\infty)$ s.t. }      \psi(0,r)\leq q(r)r^{-\nu/2}\text{ for all }r\geq2,
\end{equation}
for some $\nu \in (0, \frac\alpha{2})$, then there exist positive constants $\tilde{c},\tilde{c'}$ and $\tilde{c''}$ such that with $\xi=\xi(a)$,
\begin{equation}
\label{eq:boundonpsiwithb}
    \psi\big(a,r)\geq \psi(0,r)\exp\left\{- \tilde{c}q(\xi)- \frac{\tilde{c}(r/\xi)^{\nu \wedge 1}}{\log(r/\xi)^{b}}\right\}\text{ for all }a\in{[-\tilde{c}',\tilde{c}']}\text{ and }r\geq \tilde{c}''\xi,
\end{equation}
where $b=0$ if $\nu< 1$, $b=1$ if $\nu=1$ and $b=- \Cr{c:lbdefect2}$ if $\nu>1$.
\end{Prop}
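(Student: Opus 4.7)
The plan is to follow the strategy outlined in the introduction around \eqref{eq:squares}--\eqref{eq:xiexplained}, combining a Cameron--Martin change-of-measure argument with the interlacement coupling \eqref{eq:weakisom} and the quantitative local uniqueness estimate of Theorem~\ref{Thm:locuniq}. I focus on $a\in(0,\tilde c'\,]$; the case $a\le 0$ is weakly supercritical and is handled by monotonicity and the bounds already available at $a=0$. Write $\xi=\xi(a)=a^{-2/\nu}$ and fix $\xi'=\bar c\,\xi$ for a suitable constant. First I explore $\mathcal K^a$ from $0$ within $\tilde B_{\xi'}$, stopping either upon exhausting the cluster or upon first hitting $\partial\tilde B_{\xi'}$, and let $\mathcal E$ be the resulting random compatible set in the sense of \eqref{eq:Markov1}. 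On the event $E_0=\{0\stackrel{\geq a}{\longleftrightarrow}\partial\tilde B_{\xi'}\}$, which has $\mathbb P$-probability of order $\psi(a,\xi')\asymp\psi(0,\xi')$ by \eqref{eq:rad_113}, $\mathcal E$ contains a continuous path from $0$ to $\partial\tilde B_{\xi'}$ in $\{\varphi\geq a\}$; the hypothesis \eqref{eq:defb} will be used at the end to absorb the polynomial loss $\psi(0,\xi')/\psi(0,r)$ into the multiplicative correction $e^{-\tilde c\,q(\xi)}$.

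\emph{Markov shift and Cameron--Martin.} Conditionally on $\mathcal A^+_{\mathcal E}$, the field $(\varphi_x)_{x\in \tilde{\mathcal G}\setminus \mathcal E}$ is Gaussian with mean $h^\varphi_{\mathcal E}$ and covariance $g_{\tilde{\mathcal G}\setminus\mathcal E}$ by \eqref{eq:Markov2}. I then apply the analogue of the Cameron--Martin identity \eqref{eq:df4} in the graph $\tilde{\mathcal G}_{\mathcal E}$ (cf.~\eqref{eq:G_K}) to shift the field by $+2a\,h_K^{\tilde{\mathcal G}_{\mathcal E}}$ on a well-chosen compact $K\subset\tilde{\mathcal G}_{\mathcal E}$ kept at distance $\gtrsim\xi$ from $\mathcal E$, at Radon--Nikodym cost of order $\exp\{-2a^2\operatorname{cap}_{\tilde{\mathcal G}_{\mathcal E}}(K)\}$ (up to controllable Gaussian fluctuations of the linear functional $M_K$). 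Inside $K$ this lowers the effective threshold from $a$ to $-a$, placing us in the supercritical regime for the remaining connection.

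\emph{Highway connection at level $-a$.} Under the shifted law it suffices to connect a bridge point on $\partial\mathcal E$ to $\partial B_r$ inside $\{\varphi\geq -a\}$, and by \eqref{eq:weakisom} it in turn suffices to do so through $\mathcal I^{a^2/2}$. The connection is realised by chaining order $r/\xi$ (for $\nu\geq 1$), respectively fewer (for $\nu<1$), ``good'' balls of radius $\xi$ along an infinite $d_{\textnormal{gr}}$-geodesic ray from $0$ (this is where \eqref{eq:intro_shortgeodesic} enters), across which local uniqueness of $\mathcal I^{a^2/2}$ holds with uniform positive probability by Theorem~\ref{Thm:locuniq} applied in $\tilde{\mathcal G}_{\mathcal E}$---precisely the reason that theorem was formulated for arbitrary compact $K$. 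The scale $\xi$ matches the relation $u\operatorname{cap}(B_\xi)\asymp 1$ at $u=a^2/2$, so each box is met by $\mathcal I^{a^2/2}$ with constant probability, cf.~\eqref{eq:squares}. A short bridge trajectory links $\partial\mathcal E$ to this highway network. The cumulated capacity budget is $\lesssim r^\nu$ for $\nu<1$ and $\lesssim (r/\xi)\xi^\nu$ for $\nu\geq 1$, with logarithmic savings from Theorem~\ref{Thm:locuniq} in the critical case $\nu=1$, whence the announced exponential cost $\exp\{-\tilde c\,(r/\xi)^{\nu\wedge 1}/\log(r/\xi)^b\}$.

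The main obstacle is this last, highway-building step: the Dirichlet boundary imposed by $\mathcal E$ through the Markov property tends to kill interlacement trajectories near $\partial\mathcal E$, forcing the highway network to be laid at distance $\gtrsim\xi$ from $\mathcal E$ and the short bridge to be supplied separately, using that $\operatorname{cap}(\mathcal E)\gtrsim \xi^\nu$ makes $\mathcal E$ ``visible'' from the unexplored region. Delicately calibrating the logarithmic exponent $b$---notably in the critical case $\nu=1$---requires tracking the slack in Theorem~\ref{Thm:locuniq}, which is what permits the exponent $\Cr{c:defect3}<1$ appearing in \eqref{eq:corrlength2lbpsi}.
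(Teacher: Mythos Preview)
Your overall strategy matches the paper's—explore $\tilde{\mathcal K}^a$ at scale $\xi$, apply the Markov property, perform a Cameron--Martin shift on a sausage along the geodesic of \eqref{eq:intro_shortgeodesic}, then build the long connection through $\mathcal I^{a^2/2}$ using Theorem~\ref{Thm:locuniq}—but several steps do not go through as described.

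The principal gap is the highway scale. With boxes of radius $\xi$ one has $u\,\mathrm{cap}(B_\xi)\asymp 1$ for $u=a^2/2$, so Theorem~\ref{Thm:locuniq} only yields local uniqueness in each box with probability bounded away from zero, \emph{not} close to one; a union bound over the $\sim r/\xi$ boxes then fails, and you cannot conclude that the connection event has probability $\ge c$ under the shifted measure. Yet precisely this lower bound is needed in the denominator of the change-of-measure inequality (cf.~\eqref{eq:lb12}); without it the Radon--Nikodym cost is not $\exp\{-ca^2\mathrm{cap}(K)\}$ as you write. The paper enlarges the box radius to $\ell=M\xi\,(\log(r/\xi))^{(2\nu+1)/\nu}$ (see \eqref{eq:lb8}), so that $u\ell^\nu\asymp(\log(r/\xi))^{2\nu+1}$ and the per-box failure probability from \eqref{eq:lb2alphalarger2nu} becomes $\le(\xi/r)^{cM}$; then the union bound over $r/\ell$ boxes survives and the shifted connection probability is indeed a constant (Lemma~\ref{L:lb13}). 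This enlarged $\ell$ is what produces the extra $(\log(r/\xi))^{\Cr{c:lbdefect2}}$ for $\nu>1$. Relatedly, the logarithmic \emph{gain} at $\nu=1$ does not come from Theorem~\ref{Thm:locuniq} but from the capacity estimate for the sausage, $\mathrm{cap}_{\tilde{\G}_K}(\tilde{\mathcal L}'_\ell)\le cr/\log(r/\ell)$ (Lemma~\ref{L:lb9}); Theorem~\ref{Thm:locuniq} only dictates how small $\ell$ may be taken.

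Two further points. First, your exploration event $E_0$ only forces $\mathrm{cap}(\mathcal E)\gtrsim f_\nu(\xi)$ (the minimal capacity of a connected set of radius $\xi$), which for $\nu\ge 1$ is $o(\xi^\nu)$; then $u\,\mathrm{cap}(\mathcal E)=o(1)$ and the bridge trajectory does not hit $\mathcal E$ with uniformly positive probability, contradicting the visibility you invoke. The paper conditions instead on $\{\mathrm{cap}(\tilde{\mathcal K}^a_\xi)\ge c\xi^\nu\}$, whose probability is controlled by Lemma~\ref{lem:capacityislarge}; the passage from level $-a$ to level $a$ there (via Proposition~\ref{Prop:appendix}) is exactly where the $e^{-\tilde c\,q(\xi)}$ term in \eqref{eq:boundonpsiwithb} originates. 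Second, the event $\{r\le\mathrm{rad}(\mathcal K^a)<\infty\}$ is not monotone in $a$, so the case $a<0$ is not handled by monotonicity; the paper deduces it from $a>0$ via a symmetry argument (Lemmas~4.3 and~6.1 of \cite{DrePreRod3}).
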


Throughout the remainder of this section, we suppose that the assumptions \eqref{eq:intro_Green}, \eqref{eq:intro_sizeball}, \eqref{eq:ellipticity} and \eqref{eq:intro_shortgeodesic} of Proposition~\ref{prop:lowerbounds} are in force. Figure~\ref{F:LB} gives an idea of how the relevant connection event for $\psi\big(a,r)$ will be implemented. The underlying construction will be gradually unveiled over the course of this section and the next.

\begin{figure}[h!]
  \centering 
  \includegraphics[scale=0.7]{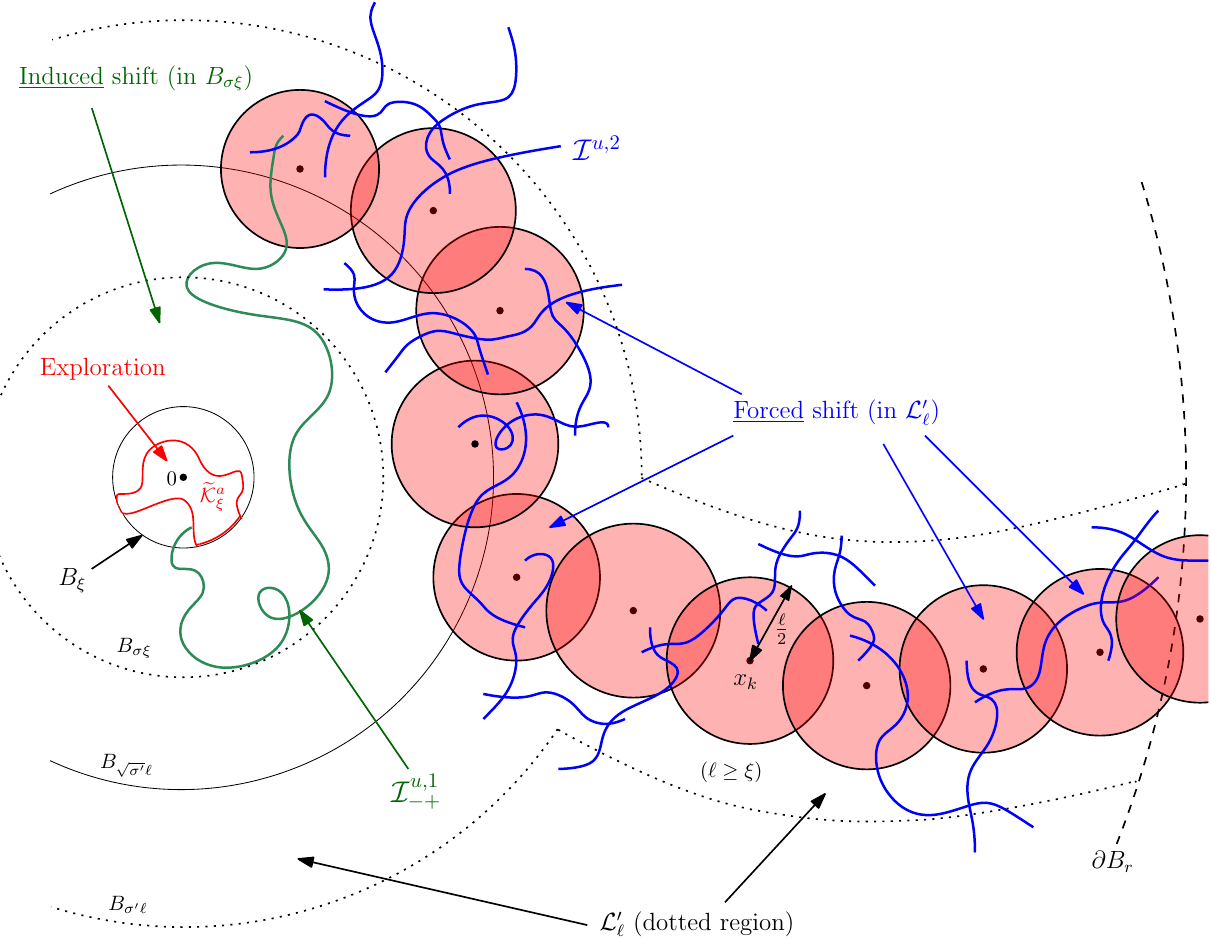}
  \caption{Connecting $0$ to $\partial B_r$ in three steps: 1) exploring the near-critical cluster $\tilde{\mathcal{K}}_{\xi}^a$ at scale $\xi$ (red), with associated cost controlled by~Lemma~\ref{lem:capacityislarge}; 2) connecting $\tilde{\mathcal{K}}_{\xi}^a$ when $\text{cap}(\tilde{\mathcal{K}}_{\xi}^a)$ is large enough to a multiple of that scale via a killed-surviving interlacement trajectory (green), cf.~Lemma~\ref{L:lb13} and its proof in Section~\ref{sec:lemmaProofs}; 3) bridging the remaining distance using optimal local uniqueness by means of Theorem~\ref{Thm:locuniq} (blue). \label{F:LB}}
\end{figure}

Our starting point is an estimate on the capacity of a piece of the cluster $\tilde{\mathcal{K}}^a$ truncated at the critical scale $\xi=\xi(a)$.
For $L\geq1,$ we abbreviate $\tilde{\K}_{L}^a = \tilde{\K}_{\tilde{B}(0,L)}^a $, see \eqref{eq:Ka} for notation. 
\begin{Lemme}[(Near-)critical estimate, $\nu \in (0, \frac\alpha{2})$] 
\label{lem:capacityislarge} If \eqref{eq:defb} holds, one has for all $0< a< c$,
\begin{align}
& \label{eq:capacityislargeneg}
    \P\big(\mathrm{cap}(\tilde{\K}_{\xi}^{-a})\geq \Cl[c]{ccaplarge}\xi^{\nu}\big)\geq \xi^{-\frac\nu 2} q(\xi)^{-1}, \\
&\label{eq:capacityislarge}
   \P\big(\mathrm{cap}(\tilde{\K}_{\xi}^{a})\geq \Cr{ccaplarge}\xi^{\nu}\big) \geq  \P\big(\mathrm{cap}(\tilde{\K}_{\xi}^{-a})\geq \Cr{ccaplarge}\xi^{\nu}\big) \exp\{-c q(\xi) \}.
\end{align}
\end{Lemme}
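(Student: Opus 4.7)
We establish the two bounds separately.

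For~\eqref{eq:capacityislargeneg}, observe first that $\{\varphi\ge 0\}\subset\{\varphi\ge -a\}$ entails $\tilde{\mathcal{K}}_\xi^0\subset\tilde{\mathcal{K}}_\xi^{-a}$, hence $\mathrm{cap}(\tilde{\mathcal{K}}_\xi^{-a})\ge\mathrm{cap}(\tilde{\mathcal{K}}_\xi^0)$ by monotonicity of capacity. It therefore suffices to show $\P(\mathrm{cap}(\tilde{\mathcal{K}}_\xi^0)\ge\Cr{ccaplarge}\xi^\nu)\ge c\,\xi^{-\nu/2}/q(\xi)$. The key observation is that on $\{\mathrm{rad}(\mathcal{K}^0)<\xi\}$ one has $\tilde{\mathcal{K}}^0\subset\tilde{B}_\xi$, so $\tilde{\mathcal{K}}_\xi^0=\tilde{\mathcal{K}}^0$ and the truncated and full capacities coincide. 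Hence
\[
\P(\mathrm{cap}(\tilde{\mathcal{K}}_\xi^0)\ge \Cr{ccaplarge}\xi^\nu)\ge \P(\mathrm{cap}(\tilde{\mathcal{K}}^0)\ge \Cr{ccaplarge}\xi^\nu)-\psi(0,\xi).
\]
By Corollary~\ref{C:captails} (with $a_N=0$ and $N=\Cr{ccaplarge}\xi^\nu$), the first term is asymptotic to $(\pi\sqrt{\Cr{ccaplarge} g})^{-1}\xi^{-\nu/2}$, while the second is at most $q(\xi)\xi^{-\nu/2}$ by assumption~\eqref{eq:defb}. For $\nu<1$ the function $q$ is bounded and choosing $\Cr{ccaplarge}$ small enough makes the first term dominate, yielding the claim.

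For~\eqref{eq:capacityislarge}, we use the Cameron--Martin shift of Lemma~\ref{L:df1} with $K=\tilde{B}_\xi$, exploiting that $h_K\equiv 1$ on $K$. The translation $\varphi\mapsto\varphi-2ah_K$ maps $\{\varphi\ge a\}\cap K$ onto $\{\varphi\ge -a\}\cap K$, producing
\[
\P(\mathrm{cap}(\tilde{\mathcal{K}}_\xi^{a})\ge c\xi^\nu)=\E\bigl[\exp(-2aM_K-2a^2\mathrm{cap}(K))\,\mathbf{1}_{E_{-a}}\bigr],
\]
with $M_K=\langle e_K,\varphi\rangle$ and $E_{-a}=\{\mathrm{cap}(\tilde{\mathcal{K}}_\xi^{-a})\ge c\xi^\nu\}$. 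Since $a^2\mathrm{cap}(\tilde{B}_\xi)\asymp 1$ by the very definition of $\xi$, the deterministic exponential factor is of constant order, and the desired bound reduces to $\E[\exp(-2aM_K)\mid E_{-a}]\ge c\exp(-c'q(\xi))$. Jensen's inequality gives $\E[\exp(-2aM_K)\mid E_{-a}]\ge\exp(-2a\E[M_K\mid E_{-a}])$, and the conditional mean is analysed via the Markov property~\eqref{eq:Markov2} applied to $\tilde{\mathcal{K}}_\xi^{-a}$, together with the identity $M_{\tilde{\mathcal{K}}_\xi^{-a}}=-a\,\mathrm{cap}(\tilde{\mathcal{K}}_\xi^{-a})$ valid when the cluster lies in the interior of $K$ (cf.~\eqref{eq:df52}). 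The factor $\exp(-cq(\xi))$ encodes the worst-case bias coming from configurations where $\tilde{\mathcal{K}}_\xi^{-a}$ touches $\partial\tilde{B}_\xi$, whose probability is in turn controlled by~\eqref{eq:defb}.

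\textbf{Main obstacle.} The $\nu=1$ regime of~\eqref{eq:capacityislargeneg} is the delicate point: there $q(\xi)\asymp\sqrt{\log\xi}\to\infty$, and the error $\psi(0,\xi)\asymp\sqrt{\log\xi}\cdot\xi^{-1/2}$ overwhelms the Corollary~\ref{C:captails} contribution $\Cr{ccaplarge}^{-1/2}\xi^{-1/2}$ regardless of the choice of $\Cr{ccaplarge}$. To close this logarithmic gap we shall exploit the interlacement coupling~\eqref{eq:weakisom} at the matched level $u=a^2/2$, for which $u\,\mathrm{cap}(\tilde{B}_\xi)\asymp 1$: the intersection $\mathcal{I}^u\cap\tilde{B}_\xi$ is then non-empty with positive probability and carries capacity of order $\xi^\nu$, and the sharp local uniqueness estimate of Theorem~\ref{Thm:locuniq} guarantees that its points belong to a single connected component inside $\tilde{B}_{O(\xi)}$, which by~\eqref{eq:weakisom} lies in $\{\varphi\ge -a\}$. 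A truncated two-point estimate connecting $0$ to this component at level $-a$ within $\tilde{B}_\xi$ should then deliver the required probability of order $\xi^{-1/2}/\sqrt{\log\xi}$.
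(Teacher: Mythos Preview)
Your argument for \eqref{eq:capacityislargeneg} works only when $q$ is bounded, as you acknowledge. Your proposed fix---placing the interlacement first, invoking Theorem~\ref{Thm:locuniq} to bundle it into one component, and then connecting $0$ to it via a ``truncated two-point estimate''---is not how the paper proceeds, and it is unclear how you would extract the precise factor $\xi^{-\nu/2}q(\xi)^{-1}$ from such a scheme (you would be connecting $0$ at level $-a$ to a random target). The paper's mechanism is different and avoids local uniqueness entirely: one first \emph{lowers} the capacity threshold at level $0$. Setting $A=\{\mathrm{cap}(\tilde{\K}_{\xi/2}^0)\ge s\,q(\xi)^{-2}\xi^\nu\}$, Corollary~\ref{C:captails} gives a probability of order $s^{-1/2}q(\xi)\xi^{-\nu/2}$, so after subtracting $\psi(0,\xi/2)\le q(\xi)\xi^{-\nu/2}$ one retains $\P(A)\ge q(\xi)\xi^{-\nu/2}$ for small $s$. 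One then applies the isomorphism in the refined form which guarantees that any interlacement trajectory at level $a^2/2$ hitting $\tilde{\K}_{\xi/2}^0$ becomes part of $\tilde{\K}_\xi^{-a}$; on $A$ the hitting probability is $1-e^{-\frac{a^2}{2}\mathrm{cap}(\tilde{\K}_{\xi/2}^0)}\ge c\,q(\xi)^{-2}$, and the piece of that single trajectory inside $\tilde{B}_\xi$ already has capacity $\ge c\xi^\nu$ with uniformly positive probability by Lemma~\ref{lem:4.7}. Multiplying gives $q(\xi)\xi^{-\nu/2}\cdot q(\xi)^{-2}=\xi^{-\nu/2}q(\xi)^{-1}$.

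Your argument for \eqref{eq:capacityislarge} also has a genuine gap. Computing the conditional Laplace transform of $M_K$ exactly (rather than via Jensen) yields
\[
\P\big(\mathrm{cap}(\tilde{\K}_\xi^a)\ge \Cr{ccaplarge}\xi^\nu\big)=\E\big[\exp\big(-2aM_{\tilde{\K}_\xi^{-a}}-2a^2\mathrm{cap}(\tilde{\K}_\xi^{-a})\big)\,\mathbf{1}_{E_{-a}}\big],
\]
and on the interior event $\{\tilde{\K}_\xi^{-a}\subset\mathring{\tilde{B}}_\xi\}$ the exponent is identically $0$, so this contribution equals $\P(E_{-a},\text{interior})$. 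But $\P(E_{-a},\text{interior})\ge\P(E_{-a})-cq(\xi)\xi^{-\nu/2}$ is useless once $q(\xi)^2\to\infty$, since $\P(E_{-a})$ is only known to exceed $\xi^{-\nu/2}/q(\xi)$; the ``worst-case bias'' you mention is not in fact controlled by \eqref{eq:defb} at the required precision. The paper uses instead an enhanced change-of-measure inequality (Proposition~\ref{Prop:appendix}), whose key trick is to exploit $\E[M_K]=0$ and rewrite $\E[M_K\mathbf{1}_{E_{-a}}]=-\E[M_K\mathbf{1}_{E_{-a}^c}]$; bounding the latter via \eqref{eq:dens4} and \eqref{T1_theta_0} gives $|\E[M_K\mathbf{1}_{E_{-a}}]|\le c/a$ unconditionally, which produces an exponential cost $\exp(-c\,a/\P(E_{-a}))=\exp(-cq(\xi))$ in the change of measure.
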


The proof of Lemma~\ref{lem:capacityislarge} is presented in Section \ref{sec:lemmaProofs}. In case $\nu=1$ this lemma applies with $q(\xi)= (\log \xi)^{\frac1{2}}$, thus yielding an effective regime in which the second term in the exponential of \eqref{eq:boundonpsiwithb} dominates when, say, $\frac{r}{\xi} \geq (\log \xi)^{\frac1{2}+\varepsilon}$ for some $\varepsilon > 0$. A weaker version of Lemma~\ref{lem:capacityislarge} can be obtained by simpler means, and yields a bound
with effective regime $\frac{r}{\xi} \geq (\log \xi)^{1+\eps},$ see Remark~\ref{R:nonoptimalL6.2} below. This is insufficient for later purposes, notably that of deducing Corollary~\ref{C:scalingrelation}.

\begin{proof}[Proof of Proposition~\ref{prop:lowerbounds}] We first observe that under \eqref{eq:intro_Green}, \eqref{eq:intro_sizeball}, \eqref{eq:ellipticity}, there exists $\Cl[c]{c:oneend}  \in (1,\infty)$ such that for all $R\geq c$,
\begin{equation}
\label{choicec18}
    \text{there exists a connected component of $B_{\Cr{c:oneend}^2 R}\setminus B_{R}$ which contains $\partial_{\text{in}}B_{\Cr{c:oneend} R}$}
\end{equation}
(indeed this follows from the first conclusion in the proof of Lemma 6.5 in \cite{DrePreRod2}). Henceforth, we tacitly assume that $0< a \leq c$ with $c$ chosen small enough so that \eqref{choicec18} holds whenever $R \geq \xi=\xi(a)$. 

Let $K \subset \tilde{B}_{\xi} (\subset \tilde{\G})$ be a compact set, soon to be chosen as $K= \tilde{\K}_{\xi}^{a}$, cf.~also the red region in Fig.~\ref{F:LB}. For such $K$, by \eqref{choicec18} and in view of \eqref{eq:G_K}, with $\tilde{B}_{R}$ as defined at the beginning of Section \ref{sec:radius}, one has
\begin{equation}
\label{eq:boundariesinG_K}
\tilde{\G} \setminus \tilde{B}_{\Cr{c:oneend} \xi} \subset \tilde{\G}_K. 
\end{equation}
Applying an argument akin to \eqref{eq:HBRfinite}, involving \eqref{eq:entranceGreenequi}, \eqref{eq:intro_Green} and \eqref{eq:capBd} (with $K=\emptyset$), one finds $\sigma \geq \Cr{c:oneend} $ suitably large such that for all compacts $K \subset \tilde{B}_{\xi}$,
\begin{equation}
\label{eq:applicationharnack}
    c\xi^{-\nu}\mathrm{cap}(K)\leq h_K(x) \leq \frac12\text{ for all }x\in{\partial_{\text{in}}{B}_{\sigma \xi}} \quad \text{ (see \eqref{eq:hdef} for notation).}
\end{equation}
Now, recalling that $\tilde{\K}_{\xi}^{a}= \tilde{\K}_{\tilde{B}_\xi}^{a} $, see \eqref{eq:Ka} for notation, 
writing $\P^{\tilde{\G}_K}$ for the canonical law of the Gaussian free field $\phi$ on the cable system $\tilde{\G}_K$ associated to the graph $\G_K,$ (cf.~below \eqref{eq:lb2alphalarger2nubis} for its definition), whence $\P^{\tilde{\G}} = \P$ as given by \eqref{eq:introGFF}, it follows from the Markov property \eqref{eq:Markov2} that 
\begin{equation}
\label{eq:markovKaphi}
    \big(\phi_x-h^{\phi}_{\tilde{\K}_\xi^a}(x)\big)_{x\in{\tilde{\G}_{\tilde{\K}_\xi^a}}}\text{ has the same law under }\P^{\tilde{\G}}(\cdot\,|\, \mathcal{A}^+_{\tilde{\K}_\xi^a})\text{ as }\phi\text{ under } 
    \P^{\tilde{\G}_{\tilde{\K}_\xi^a}}.
\end{equation}
 On the event $\{\mathrm{cap}(\tilde{\K}_\xi^a)\geq \Cr{ccaplarge}\xi^{\nu}\}$, the shift $h^{\phi}_{\tilde{\K}_\xi^a}(x)$, see \eqref{eq:hdef} for notation, can be bounded from below uniformly in $\tilde{B}_{\sigma \xi}$ using \eqref{eq:applicationharnack} as 
\begin{equation}
\label{eq:hkaphilarge}
    \inf_{x\in{\tilde{B}_{\sigma \xi }}}h^{\phi}_{\tilde{\K}_\xi^a}(x)\geq a\inf_{x\in{\tilde{B}_{\sigma \xi }}} P_x(H_{\tilde{\K}_\xi^a}<\infty)\geq a \inf_{x\in{\partial_{\text{in}}{B}_{\sigma \xi }}}P_x(H_{\tilde{\K}_\xi^a}<\infty)\geq ca \stackrel{\text{def.}}{=}2 \rho a.
    \end{equation}
In words, \eqref{eq:hkaphilarge} means that when its capacity is suitably large, the conditioning on $\tilde{\K}_\xi^a$ induces a shift $h^{\phi}_{\tilde{\K}_\xi^a}$ which is ``felt'' everywhere in $\tilde{B}_{\sigma \xi }$, see Fig.~\ref{F:LB}. In view of \eqref{eq:markovKaphi} and \eqref{eq:hkaphilarge}, we thus~obtain
\begin{equation}
\label{eq:psialphas}
    \psi(\rho  a ,r)\geq \E\big[1\big\{\mathrm{cap}(\tilde{\K}_\xi^a)\geq\Cr{ccaplarge}\xi^{\nu}\big\} \P^{\tilde{\G}_{\tilde{\K}_\xi^a}} \big(A(\tilde{\K}_\xi^a,a,r)\big)\big],
\end{equation}
where for $K\subset \tilde{B}_{\xi}$ we set
\begin{equation}
\label{eq:defALKar}
    A(K,a,r)=\left\{\begin{array}{c}\text{$\exists$ a continuous path }\pi\text{ in }\tilde{B}_r\text{ from }K\text{ to }\partial_{\text{in}}B_r
    \\\text{ with }\phi_x\geq -\rho  a \text{ for all }x\in{\pi\cap (\tilde{B}_{\sigma \xi }}\setminus K)
    \\\text{ and }\phi_x\geq \rho  a \text{ for all }x\in{\pi\cap (\tilde{B}_r\setminus\tilde{B}_{\sigma \xi }})\end{array}\right\}.
\end{equation}
In view of \eqref{eq:psialphas}, we aim at finding a suitable lower bound on the probability of $A(K,a,r)$ under $\P^{\tilde{\G}_K}$ for all admissible choices of ${K} \subset  \tilde{B}_{\xi}$ with large enough capacity. The desired result \eqref{eq:boundonpsiwithb} will then quickly follow from this bound and Lemma \ref{lem:capacityislarge}.

Consider a geodesic path $\gamma=(0=y_0,y_1,\dots)$ from \eqref{eq:intro_shortgeodesic} with $d_{\text{gr}}(y_k,y_{p})\leq \Cr{Cgeo}d(y_k,y_p)$ for all $k,p$ and recall from the beginning of Section~\ref{sec:radius} that  $d(x,y)\leq \Cr{c:ball1} d_{\text{gr}}(x,y)$ for all $x,y\in{G}.$ We now introduce a parameter $\sigma'$ used in the definition of $\mathcal{L}_{\ell}'$  in \eqref{eq:sausage} below and a length scale $\ell \geq 1$ which will play an important role in the construction that follows. From now on we assume that (see Theorem~\ref{Thm:locuniq} regarding $\Cr{c:locuniq1}$)
\begin{equation}
\label{eq:condLB}
\sigma' > 1, \ r \geq 100 \Cr{c:ball1} \text{ and }  (4 \Cr{c:ball1})\vee(\sigma \xi ) \leq  \ell \leq r/(10\Cr{c:locuniq1}\vee \sigma').
\end{equation}
In view of \eqref{eq:boundonpsiwithb}, \eqref{eq:defxi} and since $0< a < c \leq 1$, the condition on  $r$ is no loss of generality. For $k$ such that $1\leq k \leq 1+\lceil (2+4r/\ell)\Cr{c:ball1}\Cr{Cgeo}\rceil=:N_{\ell,r}$, fix a point $x_k \in \gamma\cap \big(B_{\text{gr}}(0,(k+1)\ell/(4\Cr{c:ball1}))\setminus B_{\text{gr}}(0,k\ell/(4\Cr{c:ball1}))\big)$ (such $x_k$ necessarily exists by assumption on $\ell$ and since $\gamma$ is a graph distance geodesic) and define, with $x_0=0,$
\begin{equation}
\label{eq:sausage}
\mathcal{L}_{\ell}=\bigcup_{0\leq k\leq N_{\ell,r}} B(x_k,\Cr{c:locuniq1}\ell)\quad \text{ and } \quad\mathcal{L}_{\ell}'= \big(B(x_0,\sigma'\ell)\cup\mathcal{L}_{\ell}\big)\setminus B(x_0,{\sigma \xi }).
\end{equation}
We write $\tilde{\mathcal{L}}_{\ell}$, $\tilde{\mathcal{L}}_{\ell}'$ for the corresponding sets obtained by replacing $B(x_k,\cdot)$ with $\tilde{B}(x_k,\cdot)$ everywhere in \eqref{eq:sausage} (see the beginning of Section~\ref{sec:radius} regarding continuous balls $\tilde{B}(x,r)$). Note that since $\sigma \geq \Cr{c:oneend},$ see above \eqref{eq:applicationharnack}, one has $\tilde{\mathcal{L}}_{\ell}'\subset \tilde{\G}_K$ for any $K\subset\tilde{B}_{\xi}$ by \eqref{eq:boundariesinG_K}.  One easily checks, using that $d_{\text{gr}}(x_k,x_{k+1})\leq \frac{\ell}{2\Cr{c:ball1}}$ for the first inclusion below and \eqref{eq:intro_shortgeodesic} for the second one, that
\begin{equation}
\label{eq:propertyofLl}
B(x_{k+1},\ell/2)\subset B(x_k,\ell)\text{ for all }k\leq N_{\ell,r}-1\text{ and } B(x_{N_{\ell,r}},\ell/2)\subset (G\setminus B(0,r)).
\end{equation}
The length scale $\ell$ in \eqref{eq:sausage} will be carefully chosen below (see \eqref{eq:lb8}). In the sequel, we always  assume that $K \subset \tilde{B}_{\xi}$ is compact but otherwise arbitrary. We now introduce the measure $\P_{a,\ell}^{\tilde{\G}_K}$, defined similarly as $\P_{a'}$ in \eqref{eq:df4}, but when considering $\P^{\tilde{\G}_K}$ instead of $\P=\P^{\tilde{\G}}$ and with the choices $a'=-2\rho a$ and $K= \tilde{\mathcal{L}}_{\ell}'$ in \eqref{eq:df4}. 
Thus, cf.~below \eqref{eq:df1},  
\begin{equation}
\label{eq:LBpsishift}
\text{$(\varphi_x)_{x \in \tilde{\mathcal{L}}'_{\ell}}$ has the same law under ${\P}_{  a,\ell }^{\tilde{\G}_K}$ as $(\varphi_x +2\rho  a )_{x \in \tilde{\mathcal{L}}'_{\ell}}$ under $\P^{\tilde{\G}_K}$.}
\end{equation}
Recall that $\text{cap}_{\tilde{\G}_K}(\cdot)$ denotes the capacity on $\tilde{\G}_K$, see the paragraph following \eqref{eq:lb2alphalarger2nubis} regarding its definition. Since $M_{\tilde{\mathcal{L}}_{\ell}'}$ (see \eqref{eq:df2} for notation) is centered under $\P^{\tilde{\G}_K},$ we have ${\E}_{  a,\ell }^{\tilde{\G}_K}[M_{\tilde{\mathcal{L}}_{\ell}'}] =2\rho  a  \text{cap}_{\tilde{\G}_K}(\tilde{\mathcal{L}}_{\ell}').$ As a consequence, due to \eqref{eq:df4}, we get for all $a>0$ that
\begin{equation}
\label{eq:lb11}
\begin{split}
{\E}_{  a,\ell }^{\tilde{\G}_K}\Big[\log\frac{{\rm d}{\P}_{  a,\ell }^{\tilde{\G}_K}}{{\rm d}\P^{\tilde{\G}_K}}\Big] = 2\rho  a  {\E}_{  a,\ell }^{\tilde{\G}_K}[M_{\tilde{\mathcal{L}}_{\ell}'}]   - 2(\rho  a) ^2 \text{cap}_{\tilde{\G}_K}(\tilde{\mathcal{L}}_{\ell}') =2(\rho  a) ^2\text{cap}_{\tilde{\G}_K}(\tilde{\mathcal{L}}_{\ell}').
\end{split}
\end{equation}
Using \eqref{eq:lb11}, a classical change-of-measure argument -- see for instance below (2.7) in \cite{BDZ95} --  yields that for all $K \subset \tilde{B}_{\xi}$ compact, all $0< a< c$ and $r$, $\ell$, $\sigma'$ such that \eqref{eq:condLB} holds,
\begin{equation}
\label{eq:lb12}
\P^{\tilde{\G}_{K}}\big(A(K,a,r)\big)\geq{\P}_{  a,\ell }^{\tilde{\G}_K}\big(A(K,a,r)\big)\exp\left\{-\frac{2(\rho  a)^2\text{cap}_{\tilde{\G}_K}(\tilde{\mathcal{L}}_{\ell}')+1/e}{{\P}_{  a,\ell }^{\tilde{\G}_K}\big(A(K,a,r)\big)}\right\}.
\end{equation}
It thus remains to find suitable bounds on the various quantities appearing on the right-hand side of \eqref{eq:lb12}. We collect these separately in two lemmas, the proofs of which will be supplied in Section \ref{sec:lemmaProofs}. The first lemma gives an upper bound on the capacity of $\mathcal{L}_{\ell}'$. Care is needed due to the presence of the ``boundary condition'' arising from the removal of $K$ in $\tilde{\G}_K$, see \eqref{eq:G_K}. Let $f(t)=t^{1-\nu}$ if $\nu<1,$ $f(t)=\log(t)$ if $\nu=1$ and $f(t)=1$ if $\nu>1$. 

\begin{Lemme}
\label{L:lb9}
For all $\nu>0$ and $r,\ell,\sigma'$ satisfying \eqref{eq:condLB},
\begin{equation}
\label{eq:lb9}
\sup_K \, \textnormal{cap}_{\tilde{\G}_K}( \tilde{\mathcal{L}}_{\ell}') \leq  \frac{c(\sigma')^{\nu}r\ell^{\nu-1}}{f(r/\ell)},
\end{equation}
where the supremum ranges over all compact sets $K \subset \tilde{B}_{\xi}$.
\end{Lemme}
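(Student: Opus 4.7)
My plan is to reduce the supremum over $K$ to the unkilled capacity on $\tilde{\G}$, then handle the three regimes of $\nu$ separately, with $\nu=1$ being the main obstacle.

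By monotonicity of $\textnormal{cap}_{\tilde{\G}_K}(\cdot)$ in $K$ (enlarging the killing set yields a smaller killed Green function and hence a larger capacity), $\sup_{K\subset\tilde{B}_\xi}\textnormal{cap}_{\tilde{\G}_K}(\tilde{\mathcal{L}}_\ell')\leq \textnormal{cap}_{\tilde{\G}_{\tilde{B}_\xi}}(\tilde{\mathcal{L}}_\ell')$. Writing $A\stackrel{\text{def.}}{=}\tilde{\mathcal{L}}_\ell'$ and $K\stackrel{\text{def.}}{=}\tilde{B}_\xi$ (disjoint since $A$ avoids $\tilde{B}_{\sigma\xi}$), Dirichlet's principle applied to the test function $f=h_{A\cup K}-h_K^{h_{A\cup K}}$ (which equals $1-P_\cdot(H_K<\infty)$ on $A$, hence is $\geq c>0$ there by the Harnack-style bound \eqref{eq:applicationharnack}, and vanishes on $K$), combined with the orthogonality $\mathcal{E}(f,f)=\mathcal{E}(h_{A\cup K},h_{A\cup K})-\mathcal{E}(h_K^{h_{A\cup K}},h_K^{h_{A\cup K}})\leq\textnormal{cap}(A\cup K)\leq \textnormal{cap}(A)+\textnormal{cap}(K)$, yields $\textnormal{cap}_{\tilde{\G}_K}(A)\leq c\,\textnormal{cap}(A)+c\,\textnormal{cap}(K)\leq c\,\textnormal{cap}(A)+c\xi^\nu$ via \eqref{eq:capBd}. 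Since $\xi\leq\ell$, the excess $c\xi^\nu$ is absorbed by the target, so it suffices to bound the unkilled capacity $\textnormal{cap}(\tilde{\mathcal{L}}_\ell')$.

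Next, subadditivity applied to the cover $\tilde{\mathcal{L}}_\ell'\subset\tilde{B}(0,\sigma'\ell)\cup\bigcup_{k=1}^{N_{\ell,r}}\tilde{B}(x_k,\Cr{c:locuniq1}\ell)$ together with \eqref{eq:capBd} settles the non-critical regimes: for $\nu>1$, the $N_{\ell,r}\asymp r/\ell$ far contributions sum to $cr\ell^{\nu-1}$ and the near ball contributes $c(\sigma'\ell)^\nu\leq c(\sigma')^\nu r\ell^{\nu-1}$ under \eqref{eq:condLB}, matching $f(r/\ell)=1$; for $\nu<1$, subadditivity is wasteful, so I use the containment $\bigcup_k\tilde{B}(x_k,\Cr{c:locuniq1}\ell)\subset\tilde{B}(0,cr)$ together with $\textnormal{cap}(\tilde{B}(0,cr))\leq cr^\nu$ to obtain $c(\sigma')^\nu r^\nu$, which rearranges as $c(\sigma')^\nu r\ell^{\nu-1}/(r/\ell)^{1-\nu}$, matching $f(r/\ell)=(r/\ell)^{1-\nu}$.

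The main obstacle is $\nu=1$, where the target $cr/\log(r/\ell)$ demands a logarithmic improvement over the naive $cr$. My plan is to invoke Dirichlet's principle with the test function $f=G\mu/\inf_{\mathcal{L}_\ell}G\mu$, where $\mu=\lambda\vert_{\mathcal{L}_\ell}$ is the reference measure restricted to the sausage. Using \eqref{eq:intro_Green}, \eqref{eq:intro_sizeball} and the (near-)uniform spacing of the $x_k$ ensured by \eqref{eq:intro_shortgeodesic}, a direct calculation yields
\begin{equation*}
G\mu(x)\asymp \ell^{\alpha-1}\sum_{k=0}^{N_{\ell,r}}\frac{1}{d(x,x_k)\vee\ell}\asymp \ell^{\alpha-1}\log(r/\ell)\quad\text{uniformly for }x\in\mathcal{L}_\ell.
\end{equation*}
The potential identity $\mathcal{E}(G\mu,G\mu)=\langle\mu,G\mu\rangle\leq \mu(\mathcal{L}_\ell)\sup_{\mathcal{L}_\ell}G\mu$ then gives $\textnormal{cap}(\mathcal{L}_\ell)\leq\mathcal{E}(f,f)\asymp r\ell^{\alpha-1}/(\ell^{\alpha-1}\log(r/\ell))=cr/\log(r/\ell)$, as required. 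The main technical hurdle is justifying the uniform two-sided asymptotics on $G\mu$ over $\mathcal{L}_\ell$---in particular, that $\sum_k 1/(d(x,x_k)\vee\ell)$ is of order $\log(r/\ell)/\ell$ uniformly, including at the endpoints where only ``one side'' contributes; this relies on the regular spacing ensured by \eqref{eq:intro_shortgeodesic} and the volume control from \eqref{eq:ellipticity}.
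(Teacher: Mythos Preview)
Your proposal is correct but differs from the paper's proof in two places, each instructive. For the reduction from $\textnormal{cap}_{\tilde{\G}_K}$ to $\textnormal{cap}_{\tilde{\G}}$, you use Dirichlet's principle with the test function $h_{A\cup K}-h_K$, which is elementary and self-contained. The paper instead invokes the identity \eqref{eq:numberofbackwardkilled} (Lemma~\ref{le:numberofbackwardkilled}), expressing $\langle e_{\mathcal{L}_\ell',\tilde{\G}_K}-e_{\mathcal{L}_\ell',\tilde{\G}},1\rangle$ in terms of $\textnormal{cap}_{\tilde{\G}}(K)$ via the killed-surviving interlacement intensity; this yields the sharper bound $\textnormal{cap}_{\tilde{\G}_K}(\mathcal{L}_\ell')-\textnormal{cap}_{\tilde{\G}}(\mathcal{L}_\ell')\leq 2\,\textnormal{cap}_{\tilde{\G}}(K)$ but relies on the interlacement machinery. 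For the sausage capacity, the paper treats all $\nu>0$ uniformly via the single bound $\textnormal{cap}(\mathcal{L}_\ell)\leq |\mathcal{L}_\ell|/\inf_{x\in\mathcal{L}_\ell}\sum_{y\in\mathcal{L}_\ell}g(x,y)$ (obtained by summing $Ge_{\mathcal{L}_\ell}=1$ over $\mathcal{L}_\ell$), computing the denominator as $\geq c\ell^{\alpha-\nu}f(r/\ell)$ from the harmonic-series estimate \eqref{eq:defoff}. Your case split is fine, and your $\nu>1$, $\nu<1$ arguments are arguably more direct than the paper's, but your $\nu=1$ route via $\mathcal{E}(G\mu,G\mu)/(\inf G\mu)^2$ is slightly wasteful: it forces you to establish the \emph{two-sided} bound $G\mu\asymp\ell^{\alpha-1}\log(r/\ell)$ (your stated hurdle), whereas the paper's bound $|A|/\inf G\mu$---equivalently, the dual variational formula $\textnormal{cap}(A)\leq \mu(A)/\inf_A G\mu$ for any $\mu$ supported on $A$---requires only the \emph{lower} bound on $G\mu$, which follows immediately from the geodesic spacing.
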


We now bound ${\P}_{  a,\ell }^{\tilde{\G}_K}\big({A}_{L}(K,a,r)\big)$ suitably from below, which involves choosing the scale~$\ell.$ Recall that $\tilde{\mathcal{L}}_{\ell}'$ implicitly depends on the parameter $\sigma' > 1$, see \eqref{eq:sausage}. We refer to Remark~\ref{R:LB},\ref{R:LB.00} with regards to extending the following result to the case $\alpha=2\nu$.

\begin{Lemme}[$\nu \in (0, \frac{\alpha}{2})$] There exist $ \sigma' > 1$ and $M \geq \sigma (>1)$ such that, with $\xi=\xi(a)$ and 
\label{L:lb13}
\begin{equation}
\label{eq:lb8}
\ell= M \xi \Big( \log \frac{r}{\xi} \Big)^{\frac{2\nu +1}{\nu}}, 
\end{equation}
for all compacts $K\subset \tilde{B}_{\xi},$ all $a\in (0,c)$ and $r\geq \Cl[c]{C:finalLB2} \xi$, 
\begin{equation}
\label{eq:lb13}
{\P}_{  a,\ell }^{\tilde{\G}_K}\big(A(K,a,r)\big) \geq \tilde{c}\big(1-\exp(-\tilde{c}'(\rho  a) ^2\mathrm{cap}(K))\big).
\end{equation}
\end{Lemme}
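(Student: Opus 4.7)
The strategy is to use the coupling \eqref{eq:weakisom} (adapted to $\tilde{\G}_K$) in order to turn the event $A(K,a,r)$ into a connection event for the interlacement set $\mathcal{I}^u$ on $\tilde{\G}_K$ at level $u=(\rho a)^2/2 \asymp \xi^{-\nu}$, then construct such a connection in three pieces (see Fig.~\ref{F:LB}): a \emph{highway} along the sausage $\mathcal{L}_{\ell}$ built from local uniqueness at scale $\ell$, a \emph{bridge} from $K$ to the highway through the annulus $\tilde{B}_{\sigma'\ell} \setminus \tilde{B}_{\sigma\xi}$, and finally the terminal ball $B(x_{N_{\ell,r}},\ell/2)$ supplied by~\eqref{eq:propertyofLl}. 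Under $\P_{a,\ell}^{\tilde{\G}_K}$, the shift \eqref{eq:LBpsishift} transforms the requirement $\{\varphi_x\geq\rho a\}$ on the far part $\pi\cap(\tilde{B}_r\setminus\tilde{B}_{\sigma\xi})\subset\tilde{\mathcal{L}}_\ell'$ into $\{\varphi_x\geq -\rho a\}$ under $\P^{\tilde{\G}_K}$, which is supercritical and hence contains $\mathcal{I}^u$ by the coupling. On the inner annulus $\tilde{B}_{\sigma\xi}\setminus K$ the condition $\varphi_x\geq -\rho a$ will be handled jointly with the bridge.

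For the highway, I apply Theorem~\ref{Thm:locuniq} (with the already-fixed $\lambda=\Cr{c:locuniq1}$) to each ball $B(x_k,\ell)$ for which $d(x_k,0)\geq \Cr{cvalphagnuK}\ell$, i.e.\ essentially all $k\geq k_0$ with $k_0$ bounded. With $\ell$ given by \eqref{eq:lb8}, the relevant parameter is $u\ell^\nu \asymp M^\nu (\log(r/\xi))^{2\nu+1}$, so that using the strict inequality $\nu<\alpha/2$ (no logarithmic correction in \eqref{eq:lb2alphalarger2nu}), each failure probability is bounded by $\exp(-cM^{\nu/(2\nu+1)}\log(r/\xi))=(r/\xi)^{-cM^{\nu/(2\nu+1)}}$. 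A union bound over at most $N_{\ell,r}\leq c' r/\ell$ boxes combined with $M$ chosen large enough makes the total failure probability at most $\tfrac14$, say. The inclusion \eqref{eq:propertyofLl} then chains consecutive clusters through overlapping balls, producing a single connected component of $\mathcal{I}^u\cap\tilde{\mathcal{L}}_\ell$ that reaches into $B(x_{N_{\ell,r}},\ell/2)\subset G\setminus B(0,r)$; in particular this component intersects $\partial_{\text{in}}B_r$ and contains $\mathcal{I}^u\cap B(x_{k_0},\ell)$ for the base box.

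The bridge construction supplies the factor $1-\exp(-\tilde c'(\rho a)^2\mathrm{cap}(K))$. The plan is to choose $\sigma'\geq c$ so that $B(x_0,\sigma'\ell)\supset B(x_{k_0},\ell)$, and then to work with interlacement trajectories on the full cable system $\tilde{\G}$ (coupled with the field on $\tilde{\G}$; the coupling restricts to the one on $\tilde{\G}_K$). The Poisson point process of trajectories in $\mathcal{I}^u$ hitting $K$ has intensity $u\,\mathrm{cap}(K)$; conditionally on there being at least one such trajectory, its continuation after the last visit to $K$ is a random walk issued from the normalized equilibrium measure $\overline{e}_K$. Using the Harnack-type capacity estimate \eqref{eq:capBd} together with a hitting estimate analogous to \eqref{eq:HBRfinite} valid on $\tilde{\G}_K$, one shows that this continuation hits $B(x_{k_0},\ell)$ before returning to $K$ with probability bounded below by a positive constant (essentially by balancing $\mathrm{cap}(K)\leq\mathrm{cap}(\tilde B_\xi)\asymp\xi^\nu$ against $\mathrm{cap}(B(x_{k_0},\ell))\asymp\ell^\nu\gg\xi^\nu$). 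The continuation trajectory realizes a continuous piece of $\mathcal{I}^u\subset\{\varphi\geq -\rho a\}$ in $\tilde{B}_{\sigma'\ell}\setminus\tilde{B}_{\sigma\xi}$, which together with the field being $\geq -\rho a$ on $K$ itself (by continuity and the fact that $K$ will ultimately be a subcluster of $\{\varphi\geq a\}$) closes up the first segment of the path.

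The main obstacle is the bridging step, specifically the Dirichlet boundary condition on $\partial K$, which under $\P^{\tilde{\G}_K}$ traps paths near $K$ and, under the naive coupling, forbids interlacement trajectories from hitting $K$ at all. The resolution is to argue directly on the original space $\tilde\G$ with the coupling between $\varphi$ and interlacements used jointly for highway and bridge, and to keep the bridge trajectory away from $K$ after its first exit by means of the hitting-probability bound above; one then checks that the bridge event and the highway event are (nearly) independent because the bridge trajectory can be restricted to lie in $\tilde{B}(x_0,\sigma'\ell)\setminus\tilde{B}(x_0,\sigma\xi)$ with positive probability, while the highway construction uses only information in $\mathcal{I}^u\cap\bigcup_{k\geq k_0}B(x_k,\Cr{c:locuniq1}\ell)$. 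Combining the three events and optimizing constants $\sigma'$ and $M$ yields \eqref{eq:lb13}.
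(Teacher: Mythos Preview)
Your highway construction and choice of $\ell$ are essentially correct and match the paper. The genuine gap is in the bridge step, and it is not a minor one.

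You correctly identify the obstacle: under $\P^{\tilde{\G}_K}$ the interlacement $\mathcal{I}^u$ lives on $\tilde{\G}_K$ and its trajectories never touch $K$. Your proposed fix --- ``argue directly on the original space $\tilde{\G}$ with the coupling between $\varphi$ and interlacements'' --- does not work. The field you have at this point is the one under $\P^{\tilde{\G}_K}$ (it arose from conditioning on $\tilde{\mathcal{K}}_\xi^a$ via \eqref{eq:markovKaphi}), and the isomorphism \eqref{eq:weakisom} applied to $\tilde{\G}_K$ couples \emph{that} field with an interlacement on $\tilde{\G}_K$, not on $\tilde{\G}$. There is no interlacement on $\tilde{\G}$ available in this picture, and no obvious way to manufacture one that sits inside $\{\varphi\geq -\rho a\}$ under $\P^{\tilde{\G}_K}$.

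The paper's resolution stays on $\tilde{\G}_K$ and exploits that trajectories there can be \emph{killed} on $\partial K$. Specifically, one isolates the \emph{backwards-killed} (or killed-surviving) trajectories $\mathcal{I}^u_{-}$, i.e.\ those whose backwards part exits through $\partial K$; in forward time these emanate from $K$ and escape to infinity, hence they automatically cross the annulus around $\tilde{B}_{\sigma\xi}$ and supply the bridge. The key input is Lemma~\ref{le:numberofbackwardkilled}, which shows that the number of killed-surviving trajectories is Poisson with parameter $u\,\mathrm{cap}_{\tilde{\G}}(K)$ --- this is precisely where the factor $1-\exp(-\tilde{c}'(\rho a)^2\mathrm{cap}(K))$ comes from. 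Your argument never produces this Poisson count, and your hitting-probability heuristic (``continuation hits $B(x_{k_0},\ell)$ before returning to $K$'') is both unjustified and attached to the wrong process.

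Two further points: (i) the independence you gesture at is obtained in the paper by splitting $\mathcal{I}^{2u}=\mathcal{I}^{u,1}\cup\mathcal{I}^{u,2}$ into two independent copies, using $\mathcal{I}^{u,1}$ for the bridge and $\mathcal{I}^{u,2}$ for local uniqueness --- without this splitting your ``nearly independent'' claim has no content; (ii) the boxes $B(x_k,\ell)$ with small $k$ do not satisfy the distance requirement of Theorem~\ref{Thm:locuniq}, and dismissing them as ``essentially all $k\geq k_0$ with $k_0$ bounded'' is not enough: you still need to connect the bridge to the first usable box, which the paper does via an auxiliary covering $S$ of an annulus at scale $\ell$.
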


Let us now explain how to conclude assuming Lemmas \ref{L:lb9} and \ref{L:lb13} to hold. Let $\nu \in (0, \frac{\alpha}{2})$. For $0<a < c (\leq \frac12)$, returning to \eqref{eq:psialphas} and applying \eqref{eq:capacityislargeneg}, \eqref{eq:capacityislarge}, one obtains with $\xi= \xi(a)$ that
\begin{equation}
\label{eq:LBfinal}
\psi(\rho  a ,r)\geq \xi^{-\frac\nu 2} q(\xi)^{-1} \exp\{ -cq(\xi) \}  \, \inf_{K} \, \P^{\tilde{\G}_{K}}\big(A(K,a,r)\big),
\end{equation}
where the infimum ranges over all compact subsets $K$ of $\tilde{B}_{\xi}$ satisfying $\mathrm{cap}(K)\geq\Cr{ccaplarge}\xi^{\nu}$. In order to apply \eqref{eq:lb12} and get a lower bound on the quantity $\P^{\tilde{\G}_{K}}(A(K,a,r))$ appearing on the right-hand side of \eqref{eq:LBfinal}, the conditions \eqref{eq:condLB} must be met. Fix $\sigma' > 1$ (and $M$) such that the conclusions of Lemma~\ref{L:lb13} hold and let $\ell$ be given by \eqref{eq:lb8}. 
For $0<a < c (\leq \frac12)$, recalling $\xi= \xi(a)$ from \eqref{eq:defxi}, we may assume that 
$\ell \geq 4 \Cr{c:ball1}$, as required by \eqref{eq:condLB} (note that the condition $\ell \geq \sigma \xi$ is automatically satisfied as $M \geq \sigma$ in \eqref{eq:lb8}). Letting $u(x)= x/(\log x)^{\frac{(2\nu +1)}{\nu}}$ and $\Cl[c]{C:finalLB1}= \Cr{C:finalLB2} \vee \inf \{x > 0 : u(x) \geq M (10 \Cr{c:locuniq1} \vee \sigma') \}$, we then see that $r$ satisfies all conditions in \eqref{eq:condLB} whenever $r \geq \Cr{C:finalLB1} \xi$, and moreover \eqref{eq:lb13} holds. For $K$ with $\mathrm{cap}(K)\geq\Cr{ccaplarge}\xi^{\nu}$, the latter implies that ${\P}_{  a,\ell }^{\tilde{\G}_K}(A(K,a,r)) \geq c$ (recall that $\rho \in (0,1)$ is fixed, see \eqref{eq:hkaphilarge}). 

Thus, going back to \eqref{eq:LBfinal}, applying \eqref{eq:lb12}, which is in force, and substituting the uniform lower bound for ${\P}_{  a,\ell }^{\tilde{\G}_K}(A(K,a,r))$ yields that for all $0< a < c$ and $r \geq \Cr{C:finalLB1} \xi $,
\begin{equation}
\label{eq:LBfinal2}
\psi(\rho  a ,r)\geq \xi^{-\frac\nu 2} q(\xi)^{-1} \exp\Big\{ -cq(\xi) -  \frac{\tilde{c}(r/\xi)^{\nu \wedge 1}}{\log(r/\xi)^{b}}  \Big\};
\end{equation}
in obtaining \eqref{eq:LBfinal2}, we also used \eqref{eq:defxi} and applied the capacity bound \eqref{eq:lb9} with $\ell$ as in \eqref{eq:lb8} to deduce that $\text{cap}_{\tilde{\G}_K}(\tilde{\mathcal{L}}_{\ell}') \leq c r^{\nu}$ when $\nu < 1$, $\text{cap}_{\tilde{\G}_K}(\tilde{\mathcal{L}}_{\ell}') \leq \frac{c r}{\log(r/\xi)}$ when $\nu = 1$ and $\text{cap}_{\tilde{\G}_K}(\tilde{\mathcal{L}}_{\ell}') \leq \frac{c ra^{-2}}{\xi} (\log \frac{r}{\xi})^{\Cr{c:lbdefect2}}$ when $\nu > 1$. Finally, to get \eqref{eq:boundonpsiwithb} for $a>0$ from \eqref{eq:LBfinal2}, one bounds
$$
 \xi^{-\frac\nu 2} q(\xi)^{-1} \stackrel{ \eqref{eq:defb}}{\geq} \psi(0,\xi) q(\xi)^{-2} \stackrel{r \geq \xi}{\geq} \psi(0,r) q(\xi)^{-2},
$$
and notes that the factor $q(\xi)^{-2}$ can be absorbed into $\exp\{ -cq(\xi)\}$ in \eqref{eq:LBfinal2}. The corresponding estimate in \eqref{eq:boundonpsiwithb} for $-c<a<0$ follows by symmetry, using Lemma~4.3 in \cite{DrePreRod3}, which applies in the present setting (this again follows from Theorem 1.1,2 in \cite{DrePreRod3}) and Lemma 6.1 in \cite{DrePreRod3}, recalling that \eqref{eq:intro_Green} implies in particular that \eqref{T1_sign} holds). This completes the proof of Proposition~\ref{prop:lowerbounds}, subject to Lemmas~\ref{lem:capacityislarge},~\ref{L:lb9} and~\ref{L:lb13}, which are proved in the next section.
\end{proof}

\section{Proofs of the three intermediate lemmas}
\label{sec:lemmaProofs}
We now supply the proofs of Lemmas~\ref{lem:capacityislarge}, \ref{L:lb9} and \ref{L:lb13}, which were assumed to hold in the previous section, thereby completing the proof of Proposition~\ref{prop:lowerbounds}. For intuition, we refer the reader to Figure~\ref{F:LB}. We begin with the

\begin{proof}[Proof of Lemma \ref{lem:capacityislarge}]
Let $0< a\leq 1/10$ and $\xi=\xi(a)$. We first show \eqref{eq:capacityislargeneg}. Let $\tilde{\K}= \tilde{\K}_{\xi/2}^0$ and consider the event $A = \{   \mathrm{cap}(\tilde{\K}) \geq s  q(\xi)^{-2} \xi^{\nu} \}$ for $s>0$.
Combining \eqref{eq:defb} and the tail estimate \eqref{eq:captails} from Corollary~\ref{C:captails} in case $a_N=0$, one sees upon choosing $s \in (0,1]$ small enough that
\begin{equation}
\label{eq:lem6.2.1}
\P(A)\geq  \P\big( \mathrm{cap}(\tilde{\K}^0)\geq s  q(\xi)^{-2} \xi^{\nu}  \big)- \P\big(\tilde{\K}^0\not\subset \tilde{B}_{\xi/2}\big) \geq q(\xi) \xi^{-\frac{\nu}{2}}.
\end{equation}
Deducing \eqref{eq:capacityislargeneg} from \eqref{eq:lem6.2.1} involves strengthening the capacity lower bound (in a twice larger box and at level $-a$) to reach order $\xi^\nu$. This will be achieved by forcing an interlacement trajectory onto $\tilde{\K}$ and using a refinement of the isomorphism theorem \eqref{eq:weakisom}, as follows. Applying (Isom) on p.4 of \cite{DrePreRod3}, which is in force under the present assumptions, one infers that
\begin{equation}
\label{eq:lem6.2.2}
\P\big(\mathrm{cap}(\tilde{\K}_{\xi}^{-a})\geq s'\xi^{\nu}\big)\geq \E \big[\, \overline{\P} \big( \text{cap}\big( \mathcal{I}_{\tilde{\K}}^{a^2/2} \cap \tilde{B}_{\xi} \big) \geq s'\xi^{\nu}  \big) 1_A\big];
\end{equation}
here, $\mathcal{I}^{a^2/2}$ refers to the interlacement set at level $u=\frac{a^2}{2}$ (with law $\overline{\P}=\overline{\P}_{\tilde{\G}}$) and $\mathcal{I}_{\tilde{\K}}^{a^2/2}$ to the set obtained as the trace of all the trajectories hitting $\tilde{\K}$ (governed by the independent probability $\P$), run from the time they first visit $\tilde{\K}$ until first exiting $\tilde{B}_{\xi}$. In particular, the event on the right-hand side of \eqref{eq:lem6.2.2} implies that $\mathcal{I}^{a^2/2} \cap \tilde{\K} \neq \emptyset$. 

We now derive a suitable lower bound on $ \overline{\P} ( \text{cap}( \mathcal{I}_{\tilde{\K}}^{a^2/2} \cap \tilde{B}_{\xi} ) \geq s'\xi^{\nu} )$. Conditioning on the number of trajectories visiting $\tilde{\K}$ (with respect to which the event $\{\mathcal{I}^{a^2/2} \cap \tilde{\K} \neq \emptyset\}$ is measurable) as well as their entrance points in $ \tilde{\K} $ and denoting the corresponding $\sigma$-algebra by $\mathcal{F}$, the following holds. On the event $\{\mathcal{I}^{a^2/2} \cap \tilde{\K} \neq \emptyset\}$, writing $x_0 (\in \partial \tilde{\K})$ for the starting point of the trajectory with (say) smallest label visiting $\tilde{\K}$, which is $\mathcal{F}$-measurable, and applying Lemma~\ref{lem:4.7} with $K=\emptyset$, $t=1$ and $R=\xi$, one sees that for $s' =  \Cr{ccaplarge}$ small enough,
$$
 \overline{\P}\big( \text{cap}\big( \mathcal{I}_{\tilde{\K}}^{a^2/2} \cap \tilde{B}_{\xi} ) \geq   \Cr{ccaplarge} \xi^{\nu} \, \big| \, \mathcal{F} \big) \geq  P_{x_0}\big(\mathrm{cap}({\mathcal{C}}(x_0,\xi)) \geq  \Cr{ccaplarge} \xi^{\nu}  \big) \geq c
$$
(on the event $\{\mathcal{I}^{a^2/2} \cap \tilde{\K} \neq \emptyset\}$). Returning to \eqref{eq:lem6.2.2}, it thus follows that
\begin{equation}
\label{eq:lem6.2.3}
\P\big(\mathrm{cap}(\tilde{\K}_{\xi}^{-a})\geq  \Cr{ccaplarge}  \xi^{\nu}\big)\geq c \E \big[\, \overline{\P} \big( \mathcal{I}^{a^2/2} \cap \tilde{\K} \neq \emptyset  \big) 1_A\big]\stackrel{\eqref{eq:defRI}}{=} c \E \big[\, \big(1- e^{-\frac{a^2}{2} \mathrm{cap}(\tilde{\K}) } \big) 1_A\big].
\end{equation}
Now, inserting the lower bound $\mathrm{cap}(\tilde{\K}) \geq s  q(\xi)^{-2} \xi^{\nu}$ valid on the event $A$, recalling that $\xi^{\nu}=a^{-2}$, using that $1-e^{-x} \geq cx$ for $x\in [0,1]$ and \eqref{eq:lem6.2.1}, \eqref{eq:lem6.2.3} is readily seen to imply \eqref{eq:capacityislargeneg}. The bound \eqref{eq:capacityislarge} follows by combining \eqref{eq:capacityislargeneg}, \eqref{eq:defxi}, \eqref{eq:capBd} and Proposition \ref{Prop:appendix} for $K=B_{\xi}.$
\end{proof}

\begin{Rk}\label{R:nonoptimalL6.2} 
Proceeding similarly as in \eqref{eq:lem6.2.1} but at level $a>0$ directly while taking advantage of \eqref{eq:rad_5}, \eqref{eq:rad_6} and \eqref{eq:defb}, one obtains for small enough $s\in{(0,1]}$ that
\begin{equation*}
\begin{split}
     \P(\mathrm{cap}\big(\tilde{\K}_\xi^a)\geq s \xi^\nu q(\xi)^{-2}\big)&\geq \P\big(\mathrm{cap}\big(\tilde{\K}^a)\geq s \xi^\nu q(\xi)^{-2}\big)- \P\big(\tilde{\K}^a\not\subset \tilde{B}_{\xi}\big)
    \geq q(\xi)\xi^{-\nu/2}.
     \end{split}
\end{equation*}
 In comparison with \eqref{eq:capacityislarge} (combined with \eqref{eq:capacityislargeneg}), the present lower bound is easier to prove since it does not require the change-of-measure \eqref{newlb4}, and gives a better estimate but for a weaker event. Crucially, when $\nu=1,$ in which case one can choose $q(\xi) = \sqrt{\log(\xi)}$ due to \eqref{eq:psi_0nu=1}, the above argument only produces $\mathrm{cap}\big(\tilde{\K}_\xi^a)$ of order $\xi (\log \xi)^{-1}$ rather than $\xi.$ This has rather dramatic effects. Indeed, retracing the arguments in the proof of Proposition~\ref{prop:lowerbounds}, one arrives at the bound $\P_{a,\ell}^{\tilde{\G}_K}\big(A(K,a,r)\big) \geq c  (\log \xi)^{-1}$ obtained from \eqref{eq:lb13} (with $K=\tilde{\K}_\xi^a$), which manifests itself unfavorably in the exponential \eqref{eq:lb12}.
\end{Rk}

\medskip
We now turn to the proofs of Lemmas \ref{L:lb9} and \ref{L:lb13} which require some further preparation due mainly to the presence of the Dirichlet boundary condition on $K$ inherent to $\tilde{\G}_K$. In the sequel, let $K \subset \tilde{\G}$ be compact. Recall $\overline{\P}_{\tilde{\G}_K}$, the canonical law of the interlacement process on $\widetilde{\mathcal{G}}_K$ from \eqref{eq:RIdefG_K}. Its intensity measure $\nu^K$ is defined on a space of continuous doubly-infinite trajectories $w^*$ on $\tilde{\G}_K\cup \{ \Delta\},$ where $\Delta$ is a cemetery state. Denoting by $\pi^*$ the canonical projection identifying equivalent trajectories up to time-shift reparametrisations, one may assume that $w^*=\pi^*( w)$ for some doubly-infinite trajectory $w = (w(t))_{t \in{\R}}$ with $w(0)\neq\Delta,$ and both its forwards and backwards parts $ (w(\pm t))_{t \geq 0}$ can either be \textit{killed} -- that is reach $\Delta$ after a finite time, which corresponds to exiting $\tilde{\G}_K$ through $\partial K$ -- or \textit{survive}, i.e.~escape to infinity (in possibly finite time) without reaching $K$. Henceforth, we call a trajectory $w$ (and a fortiori $w^* = \pi^*(w)$) \textit{killed-surviving} if its backwards part is killed ($-$) and its forwards part surviving ($+$). We denote by $W^*_{-+}$ the set of these trajectories. Similarly, $W^*_{--}$ consists of all trajectories whose backwards and forwards parts are both killed. We then define two intensity measures 
\begin{equation}
\label{eq:killedintensities}
 \nu^K_{-+} = 1_{W^*_{-+}}\nu^K, \quad  \nu^K_{-} = (1_{W^*_{-+}} + 1_{W^*_{--}}) \nu^K,
\end{equation}
which induce two processes on $\tilde{\G}_K (\cup \Delta)$ (under the measure $\overline{\P}_{\tilde{\G}_K}$) with respective intensities $u \nu^K_{-+}$ and $u \nu^K_{-}$, for $u>0$. We refer to them as \textit{killed-surviving} and \textit{backwards-killed} interlacement processes, respectively, and write $\mathcal{I}^u_{-+} $ and $\mathcal{I}^u_{-} $ for the corresponding interlacement sets at level $u$ (cf.~also Section 2 in \cite{Pre1} and Corollary 3.4 therein for an alternative description of these processes). These processes will play a central role below.

We begin with the following useful lemma, which in particular determines the total intensity of the killed-surviving process; see the beginning of Section~\ref{S:diffform} for notation.

\begin{Lemme}[$K \subset \tilde{\G}$ compact, $\tilde{\G}_K$ as in~\eqref{eq:G_K}]
\label{le:numberofbackwardkilled}

For all compacts $\mathcal{L}\subset \tilde{\G}_K$ such that every unbounded continuous path on $\tilde{\G}$ starting in $K$ intersects $\mathcal{L}$ (when viewing $\tilde{\G}_K$ as a subset of~$\tilde{\G}$),
\begin{equation}
\label{eq:numberofbackwardkilled}
    \mathrm{cap}_{\tilde{\G}}(K)=\langle e_{\mathcal{L},\tilde{\G}_K}-e_{\mathcal{L},\tilde{\G}},1-h_{K}\rangle = \int \textnormal{d}  \nu^K_{-+} .
\end{equation}
\end{Lemme}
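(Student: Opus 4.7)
The plan is to derive both equalities in \eqref{eq:numberofbackwardkilled} from two ingredients for random interlacements on $\tilde{\G}_K$: the standard decomposition of $\nu^K$ restricted to trajectories hitting $\mathcal{L}$ according to the first entry into $\mathcal{L}$, and the probabilistic identification of $\nu^K_{-+}$ with the intensity of ``last excursions outside $K$'' of the $\tilde{\G}$-interlacement. By the hypothesis on $\mathcal{L}$, any $w^* \in W^*_{-+}$ necessarily meets $\mathcal{L}$: its (time-shifted) realization is an unbounded continuous path in $\tilde{\G}$ starting at a point of $\partial K$, which by assumption intersects $\mathcal{L}$. Hence $W^*_{-+} \subset \{w^*\colon w^*\text{ hits }\mathcal{L}\}$.

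For the middle equality, I would apply the first-entry-to-$\mathcal{L}$ decomposition of $\nu^K|_{\{w^*\text{ hits }\mathcal{L}\}}$ (see Section~2 and Corollary~3.4 in \cite{Pre1}): parametrizing so that the first entry into $\mathcal{L}$ occurs at time $0$, the entry point $x$ has law $e_{\mathcal{L}, \tilde{\G}_K}$, the forward part is a Brownian motion on $\tilde{\G}_K$ from $x$ (killed at $\partial K$), and the backward part, after time-reversal, is a Brownian motion on $\tilde{\G}_K$ from $x$ conditioned not to return to $\mathcal{L}$. Integrating the $W^*_{-+}$-indicator, the total mass factors as
\[
\int \textnormal{d}\nu^K_{-+} = \int_{\partial \mathcal{L}} e_{\mathcal{L}, \tilde{\G}_K}(\textnormal{d}x)\,(1 - h_K(x))\,P^{\tilde{\G}_K}_x[H_{\partial K} < \infty \mid H_\mathcal{L}^+ = \infty ],
\]
where the second factor is the forward-survival probability $1-h_K(x)$ and the third is the backward killing probability under the conditioned law. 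The key identity,
\[
e_{\mathcal{L}, \tilde{\G}_K}(\textnormal{d}x)\cdot P^{\tilde{\G}_K}_x[H_{\partial K} < \infty \mid H_\mathcal{L}^+ = \infty] = (e_{\mathcal{L}, \tilde{\G}_K} - e_{\mathcal{L}, \tilde{\G}})(\textnormal{d}x),
\]
is verified in the discrete picture as $\lambda_x P^{\tilde{\G}}_x[H_{\partial K} < H_\mathcal{L}^+] = e_{\mathcal{L}, \tilde{\G}_K}(x) - e_{\mathcal{L}, \tilde{\G}}(x)$, since splitting $\{H_\mathcal{L}^+ = \infty\}$ in the killed walk on $\tilde{\G}_K$ according to whether the walk survives forever in $\tilde{\G}$ or is first killed at $\partial K$ yields $P^{\tilde{\G}_K}_x[H_\mathcal{L}^+ = \infty] = P^{\tilde{\G}}_x[H_\mathcal{L}^+ = \infty] + P^{\tilde{\G}}_x[H_{\partial K} < H_\mathcal{L}^+]$. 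Substituting then gives $\int \textnormal{d}\nu^K_{-+} = \langle e_{\mathcal{L}, \tilde{\G}_K} - e_{\mathcal{L}, \tilde{\G}}, 1 - h_K\rangle$.

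The rightmost equality $\int \textnormal{d}\nu^K_{-+} = \mathrm{cap}_{\tilde{\G}}(K)$ is obtained via the probabilistic construction of the $\tilde{\G}_K$-interlacement from the $\tilde{\G}$-interlacement (see Corollary~3.4 of \cite{Pre1}): every bi-infinite trajectory of the $\tilde{\G}$-interlacement that hits $K$ produces, via its segment after the last exit from $K$ (time-shifted so this exit time is $0$), exactly one killed-surviving trajectory of the $\tilde{\G}_K$-interlacement, and this correspondence is a measure-preserving bijection between the restriction of the $\tilde{\G}$-interlacement intensity to $\{w^*\text{ hits }K\}$ and $\nu^K_{-+}$. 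Since the former has total mass $\mathrm{cap}_{\tilde{\G}}(K)$ (by the defining property \eqref{eq:defRI} applied on $\tilde{\G}$), the claim follows.

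The main obstacle is to articulate the first-entry decomposition and the last-exit identification carefully in the continuous cable-system setup with Dirichlet killing at $\partial K$; these are by now standard in the interlacement literature but rely on a proper Poissonization, reversibility, and a Doob $h$-transform for the backward piece, for which we defer to the cited references.
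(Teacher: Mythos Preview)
Your derivation of the second equality $\langle e_{\mathcal{L},\tilde{\G}_K}-e_{\mathcal{L},\tilde{\G}},1-h_K\rangle=\int\textnormal{d}\nu^K_{-+}$ is essentially the paper's: both use the first-entry-to-$\mathcal{L}$ decomposition of $\nu^K$ and both verify the identity $\lambda_x^{-1}(e_{\mathcal{L},\tilde{\G}_K}-e_{\mathcal{L},\tilde{\G}})(x)=P_x^{\tilde{\G}_K}(\tilde H_{\mathcal{L}}=\infty,\ Z\text{ is killed})$ by splitting the event $\{\tilde H_{\mathcal{L}}=\infty\}$ for the killed walk according to whether the underlying $\tilde{\G}$-walk survives or first reaches $K$.

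For the equality $\int\textnormal{d}\nu^K_{-+}=\mathrm{cap}_{\tilde{\G}}(K)$ you take a genuinely different route. You appeal to the correspondence between $\tilde{\G}$-interlacement trajectories hitting $K$ and killed-surviving trajectories of the $\tilde{\G}_K$-interlacement via the last-exit-from-$K$ segment, and then read off the total mass $\mathrm{cap}_{\tilde{\G}}(K)$ directly. The paper instead stays within the $\tilde{\G}_K$-process: it uses the time-reversal invariance of $\nu^K$ to get $\nu^K(W^*_{-+})=\nu^K(W^*_{+-})$, expands $\nu^K(W^*_{+-})$ via the same first-entry decomposition (noting that ``surviving backward part not returning to $\mathcal{L}$'' under $P_x^{\tilde{\G}_K}$ coincides with ``not returning to $\mathcal{L}$'' under $P_x^{\tilde{\G}}$, by the separation hypothesis), and obtains $\sum_x e_{\mathcal{L},\tilde{\G}}(x)h_K(x)=P_{e_{\mathcal{L},\tilde{\G}}}(H_K<\infty)=\mathrm{cap}_{\tilde{\G}}(K)$ by the sweeping identity~\eqref{eq:balayage}. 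Your route is conceptually appealing but imports the full coupling between the two interlacement processes as a black box; the paper's route is more self-contained, needing only reversal invariance of $\nu^K$ and a classical potential-theoretic identity, and in particular does not require knowing a priori that the last-exit map is measure-preserving at the level of intensities.
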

\begin{proof}
By Lemma 2.1 in \cite{DrePreRod3}, we can assume without loss of generality that $K,\mathcal{L}\subset G.$ As in the proof of Lemma~\ref{lem:4.7}, let $(Z_n)_{n \geq 0}$ denote the discrete-time skeleton of the trace of the diffusion $X$
on $G \cup \{\Delta \}$ under $P_x^{\tilde{\G}}$ or $P_x^{\tilde{\G}_K},$ for $x\in ( \tilde{\G}_K \cap G)$. We write $H_U(Z)=\inf\{n\geq0:\,Z_n\in{U}\}$ and $\tilde{H}_U(Z)=\inf\{n\geq1:\,\,Z_n\in{U}\}$ for the first hitting and return times of $Z$ in $U$ with the convention $\inf\emptyset=\infty.$ In view of (2.4) in \cite{DrePreRod3}, $Z$ is a Markov chain which jumps from $y$ to $z$ with probability $\lambda_{y,z}/\lambda_y$ under $P_x^{\tilde{\G}},$ and $Z$ under $P_x^{\tilde{\G}_K}$ has the same law as $Z$ killed at time $H_K(Z)$ under $P_x^{\tilde{\G}}.$  Due to (2.6) and (2.16) in \cite{DrePreRod3}, we thus have, for all $x\in{\mathcal{L}}$,
\begin{equation*}
\begin{split}
& \lambda_x^{-1}(e_{\mathcal{L},\tilde{\G}_K}- e_{\mathcal{L},\tilde{\G}})(x)  \\
&\qquad \quad = P_x^{\tilde{\G}_K}(\tilde{H}_{\mathcal{L}}(Z)=\infty)-P_x^{\tilde{\G}}(\tilde{H}_{\mathcal{L}}(Z)=\infty)=P_x^{\tilde{\G}}({H}_{K}(Z)\leq \tilde{H}_{\mathcal{L}}(Z))-P_x^{\tilde{\G}}(\tilde{H}_{\mathcal{L}}(Z)=\infty)
    \\&\qquad \quad =P_x^{\tilde{\G}}({H}_{K}(Z)<\infty,{H}_{K}(Z)\leq \tilde{H}_{\mathcal{L}}(Z)) =P_x^{\tilde{\G}_K}(\tilde{H}_{\mathcal{L}}(Z)=\infty,Z\text{ is killed})
\end{split}
\end{equation*}
where in the third equality we used the fact that $H_K(Z)=\tilde{H}_{\mathcal{L}}(Z)(=\infty)$ when $\tilde{H}_{\mathcal{L}}(Z)=\infty$ since every connected and unbounded paths starting in $K$ hits $\mathcal{L},$  and in the last equality that the event $H_K(Z)<\infty$ under $P_x^{\tilde{\G}}$ corresponds to the event that $Z$ is killed, i.e.~that $H_{\Delta}(Z)<\infty$, under $P_x^{\tilde{\G}_K}$. Hence,
\begin{equation}
\label{eq:numberoftraj}
\begin{split}
    \langle e_{\mathcal{L},\tilde{\G}_K}-e_{\mathcal{L},\tilde{\G}},1-h_{K}\rangle
        &=\sum_{x\in{\mathcal{L}}}\lambda_xP_x^{\tilde{\G}_K}(Z\text{ survives})P_x^{\tilde{\G}_K}(\tilde{H}_{\mathcal{L}}(Z)=\infty,Z\text{ is killed})
    \\&=\sum_{x\in{\mathcal{L}}}e_{\mathcal{L},\tilde{\G}_K}(x)P_x^{\tilde{\G}_K}(Z\text{ survives})P_x^{\tilde{\G}_K}(Z\text{ is killed}\,|\,\tilde{H}_{\mathcal{L}}(Z)=\infty).
\end{split}
\end{equation}
But by definition of the intensity measure $\nu^K$, see for instance (2.9), (2.11) and (3.9) in \cite{Pre1}, the second line of \eqref{eq:numberoftraj} is precisely the measure of the set trajectories in $W_{-+}^*$ hitting $\mathcal{L},$ that is all of $W_{-+}^*$ by assumption on $\mathcal{L}.$ Thus, the second equality in \eqref{eq:numberofbackwardkilled} holds.

Since $\nu^K$ is invariant under time reversal, see for instance Remark 3.3,1) in \cite{Pre1}, \eqref{eq:numberoftraj} equals $\nu^K(W^*_{+-})$, the intensity of trajectories whose backwards parts survive and forwards parts are killed, which is equal to
\begin{equation*}
\begin{split}
    &\sum_{x\in{\mathcal{L}}}e_{\mathcal{L},\tilde{\G}_K}(x)P_x^{\tilde{\G}_K}(Z\text{ is killed})P_x^{\tilde{\G}_K}(Z\text{ survives}\,|\,\tilde{H}_{\mathcal{L}}(Z)=\infty)
    \\&\quad=\sum_{x\in{\mathcal{L}}}\lambda_xP_x^{\tilde{\G}}(H_K<\infty)P_x^{\tilde{\G}}(\tilde{H}_{\mathcal{L}}(Z)=\infty)
    =P_{e_{\mathcal{L},\tilde{\G}}}(H_K<\infty)=\mathrm{cap}_{\tilde{\G}}(K),
\end{split}
\end{equation*}
where we used \eqref{eq:balayage} in the last equality, and we conclude that \eqref{eq:numberofbackwardkilled} holds.
\end{proof}

We now proceed to the

\begin{proof}[Proof of Lemma \ref{L:lb9}]
By \eqref{eq:sausage} and definition of the continuous balls $\tilde{B}(x,r)$ at the beginning of Section~\ref{sec:radius}, one knows that $\text{cap}_{\tilde{\G}_K}(\tilde{\mathcal{L}}_{\ell}')=\text{cap}_{\tilde{\G}_K}({\mathcal{L}}_{\ell}')$.
Combining \eqref{eq:applicationharnack} and \eqref{eq:numberofbackwardkilled} for the choice $\mathcal{L}= \mathcal{L}_{\ell}'$, which satisfies the assumptions of Lemma~\ref{le:numberofbackwardkilled} since every unbounded path from $K \subset {\tilde{B}}_{\xi} $ intersects ${\partial_{\text{in}}{B}_{\sigma \xi}} \subset \mathcal{L}$ in view of \eqref{eq:sausage} and since $\ell\geq \sigma \xi,$ we obtain that
\begin{equation}
\label{eq:comparecap}
\begin{split}
   \textnormal{cap}_{\tilde{\G}_K}( \mathcal{L}_{\ell}')-\textnormal{cap}_{\tilde{\G}}( {\mathcal{L}}_{\ell}')&=\langle e_{\mathcal{L}_{\ell}',\tilde{\G}_K}-e_{\mathcal{L}_{\ell}',\tilde{\G}},1\rangle
    \\&\leq 2\langle e_{\mathcal{L}_{\ell}',\tilde{\G}_K}-e_{\mathcal{L}_{\ell}',\tilde{\G}},1-h_{K}\rangle=2\textnormal{cap}_{\tilde{\G}}(K)
    \leq c\xi^\nu,
\end{split}
\end{equation}
where we used \eqref{eq:capBd} in the last inequality. Moreover it follows from \eqref{eq:capBd} and \eqref{eq:sausage} that
\begin{equation}
\label{compareLlLandLl}
    \textnormal{cap}_{\tilde{\G}}( {\mathcal{L}}_{\ell}')\leq \textnormal{cap}_{\tilde{\G}}( {\mathcal{L}}_{\ell})+c(\sigma')^{\nu}\ell^{\nu}.
\end{equation}
Let us now bound $\textnormal{cap}_{\tilde{\G}}( \mathcal{L}_{\ell}).$ Using \eqref{eq:intro_Green}, it follows that for all $x \in \mathcal{L}_{\ell}$, assuming $x \in B(x_{k_0}, \Cr{c:locuniq1}\ell)$ and letting $I$ consist of all indices $k\geq0$ with $|k-k_0|\geq 2+8\Cr{c:locuniq1}\Cr{c:ball1}\Cr{Cgeo}$ divisible by $\lceil 2+8\Cr{c:locuniq1}\Cr{c:ball1}\Cr{Cgeo}\rceil$ (note that the corresponding balls $B(x_k, \Cr{c:locuniq1}\ell)$, $k \in I$, are disjoint, and that $|I| \geq c r/\ell$), 
$$
\sum_{y \in \mathcal{L}_{\ell}} g(x,y) \geq \sum_{k \in I} \sum_{y \in B(x_k, \Cr{c:locuniq1}\ell)} g(x, y) \geq  \sum_{k \in I}  c\ell^{\alpha}\inf_{y\in{B(x_k,\Cr{c:locuniq1}\ell)}}d(x,y)^{-\nu}.
$$ 
Since $d(x,y)\leq c\ell|k+1-k_0|$ for all $y\in{B(x_k,\Cr{c:locuniq1}\ell)}$ and 
\begin{equation}
\label{eq:defoff}
\sum_{k \in I}  |k+1-k_0|^{-\nu} \geq \sum_{\substack{1\leq k \leq \lceil 2r\Cr{c:ball1}\Cr{Cgeo}/\ell\rceil \\ k=0 \text{ mod }\lceil2+8\Cr{c:locuniq1}\Cr{c:ball1}\Cr{Cgeo}\rceil}} (k+1)^{-\nu} \geq c f(r/\ell),
\end{equation}
we obtain that
\begin{equation*}
\sum_{y \in \mathcal{L}_{\ell}} g(x,y)\geq c\ell^{\alpha-\nu}f(r/\ell), \quad \text{for all } x \in \mathcal{L}_{\ell}.
\end{equation*}
Clearly $|\mathcal{L}_{\ell}| \leq c \ell^{\alpha}\frac{r}{\ell}$ by \eqref{eq:intro_sizeball},  and so using the bound $\textnormal{cap}( \mathcal{L}_{\ell}) \leq |\mathcal{L}_{\ell}| / \inf_{x \in \mathcal{L}_{\ell}} \sum_{y \in \mathcal{L}_{\ell}} g(x,y)$, which follows by summing \eqref{eq:entranceGreenequi} for $K= \mathcal{L}_\ell$ over $\mathcal{L}_{\ell}$ (note to this effect that, $\mathcal{L}_{\ell}$ being a subset of $G$, $e_{\mathcal{L}_{\ell}}$ coincides with the equilibrium measure for the discrete chain generated by \eqref{eq:generator}, see (2.16) in \cite{DrePreRod3}), we obtain
\begin{equation}
\label{eq:capLell}
    \mathrm{cap}_{\tilde{\G}}(\mathcal{L}_{\ell})\leq \frac{cr\ell^{\nu-1}}{f(r/\ell)}\text{ for all }  4 \Cr{c:ball1} \leq  \ell \leq r/10\Cr{c:locuniq1}.
\end{equation}
Noting that $\ell^{\nu}\leq r\ell^{\nu-1}/f(r/\ell)$ since $ \ell\leq \frac{r}{10}$, \eqref{eq:lb9} follows from  \eqref{eq:comparecap}, \eqref{compareLlLandLl} and \eqref{eq:capLell}.
\end{proof}

It remains to give the

\begin{proof}[Proof of Lemma \ref{L:lb13}]
The following considerations hold for any $r, \ell$ (and $0< a< c$) satisfying the conditions appearing in \eqref{eq:condLB}, which we assume to hold in the sequel. The specific choice of $\ell$ in \eqref{L:lb13} will only be made at the very end (see below \eqref{eq:finalboundonG}). Recall that $K \subset \tilde{B}_{\xi}$. Define $\tilde{\mathcal{L}}_{\ell}^K$ to be the union of $\tilde{B}(0,\sigma'\ell)\cap\tilde{\G}_K$ and $\tilde{B}(x_k,\Cr{c:locuniq1}\ell)\cap\tilde{\G}_K,$ $0\leq k\leq N_{\ell,r}$, cf.~\eqref{eq:sausage}. Under $\P_{a,\ell}^{\tilde{\G}_K}$ as defined below \eqref{eq:propertyofLl} and due to \eqref{eq:LBpsishift}, $(\varphi_x)_{x \in \tilde{\mathcal{L}}_\ell^K}$ has the same law as $(\varphi_x + \chi )_{x \in \tilde{\mathcal{L}}_\ell^K}$ under $\P^{\tilde{\G}_K}$, where $ \chi \geq 0$ and $\chi = 2\rho  a$ on $\tilde{\mathcal{L}}_{\ell}^K \setminus \tilde{B}_{\sigma \xi}$, and thus by \eqref{eq:defALKar},
\begin{equation}
\label{eq:lb14}
\P_{a,\ell}^{\tilde{\G}_K}(A(K,a,r)) \geq {\P}^{\tilde{\G}_K}\big(K \leftrightarrow \partial_{\text{in}} {B}_r \text{ in }\{ \varphi \geq -\rho  a \} \cap  \tilde{\mathcal{L}}_{\ell}^K\big)
\end{equation}
where $K \leftrightarrow \partial_{\text{in}} {B}_r$ in $A\subset\tilde{\G}$ means that there exists a continuous path $\pi$ in $A$ from $K$ to $\partial_{\text{in}} {B}_r.$ 

We now further delimit the region in which we will construct the path achieving the connection in \eqref{eq:lb14}.
To this end we first choose $\sigma'  \geq (1+ \Cr{c:ball1}\Cr{Cgeo}\Cr{cvalphagnuK}')^2 $ large enough so that 
\begin{equation}
\label{choicec20}
    \text{there exists a connected component of $B_{(\sigma'-\Cr{c:locuniq1}-1)\ell}\setminus B_{(\Cr{cvalphagnuK}'+1)\ell}$  containing $\partial_{\text{in}}B_{\sqrt{\sigma'}\ell}$},
\end{equation}
where $\Cr{cvalphagnuK}' = \Cr{cvalphagnuK} \vee \Cr{c:boundonNk} \vee (\Cr{c:locuniq1} +\Cr{c:oneend} + 1)$
(see Theorem~\ref{Thm:locuniq} and \eqref{eq:capBd} regarding $\Cr{c:locuniq1}$, $\Cr{cvalphagnuK}$ and $\Cr{c:boundonNk}$), which exists using \eqref{choicec18} with $R=\ell\sqrt{\sigma'}/\Cr{c:oneend}$ and taking $\sigma' > 1$ sufficiently large. The specific choice of $\Cr{cvalphagnuK}' $ and the explicit lower bound on $\sigma'$ will ensure that various sets, e.g.~all the vertices pertaining to the set $S$ which we introduce next, are sufficiently distant from $0$, as required in \eqref{eq:defS} and \eqref{eq:distancerequirementLB} below. With $\sigma'$ fixed, by Lemma 6.1 in \cite{DrePreRod2}, there exists a set $S$ with $|S|\leq c$ for some constant $c$ (independent of $\ell$) such that 
\begin{equation}
\label{eq:defS}
    B_{(\sigma'-\Cr{c:locuniq1}-1)\ell}\setminus B_{(\Cr{cvalphagnuK}'+1)\ell}\subset\bigcup_{z\in{S}}B(z,\ell/4),\ \tilde{B}(z,\Cr{c:locuniq1}\ell)\subset \tilde{\mathcal{L}}_{\ell}^K\text{ and }d(z,0)\geq \Cr{cvalphagnuK}'\ell \text{ for all }z\in{S}
\end{equation}
(this follows by considering $\{z\in{\Lambda(\ell/4)}:\,z\in{B_{(\sigma'-\Cr{c:locuniq1})\ell}\setminus B_{\Cr{cvalphagnuK}'\ell}}\},$ where $\Lambda(\ell/4)$ is the set defined in Lemma 6.1 of \cite{DrePreRod2}). Let $S'=\{z\in{S}:\,B(z,\ell/4)\cap\partial_{\text{in}}B_{\sqrt{\sigma'}\ell}\neq\emptyset\}.$ There exists $z_0\in{S'}$ such that one can find some vertex $y\in{B(z_0,\ell/4)\cap \gamma\cap\partial_{\text{in}}B_{\sqrt{\sigma'}\ell}},$ where $\gamma$ is the geodesic from \eqref{eq:intro_shortgeodesic}. By definition of the vertices $x_k,$ $k\leq N_{\ell,r}$ above \eqref{eq:sausage} in terms of $\gamma$, and since $r\geq \sigma'\ell,$ there exists also some $N_{\ell,r}\leq N_{\ell,r}$ such that $d_{\text{gr}}(y,x_{N_{\ell,r}})\leq \ell/(4\Cr{c:ball1}),$ and therefore $d(z_0,x_{N_{\ell,r}})\leq \ell/2.$ Consequently, by \eqref{choicec20} and \eqref{eq:defS}, for all $z\in{S'}$ there exists a nearest-neighbor path $\pi=(\pi_0,\dots,\pi_p)\subset \bigcup_{w\in{S}}B(w,\ell/4)$ of vertices joining $\pi_0\in{B(z_0,\ell/4)}$ and $\pi_p\in{B(z,\ell/4)},$ and if we fix $z_i\in{S}$ so that $\pi_i\in{B(z_i,\ell/4)}$ for all $i\in{\{1,\dots,p\}}$ (with $z_p=z$) we have
\begin{equation}
\label{eq:connectionviazi}
    B(z_0,\ell/2)\subset B(x_{N_{\ell,r}},\ell)\text{ and }B(z_{i},\ell/2)\subset B(z_{i-1},\ell)\text{ for all }1\leq i\leq p.
\end{equation}

We proceed to define a suitable event implementing the desired connection in \eqref{eq:lb14} and refer to Fig.~\ref{F:LB} for visualization. Let $u=(\rho  a) ^2/4$ and $\mathcal{P}=S\cup \{x_k,k\in{\{N_{\ell,r},\dots,N_{\ell,r}\}}\}.$ By \eqref{eq:weakisom} applied to $\tilde{\G}_K$, there exists a coupling $\mathbb{Q}$ of $\varphi$ under ${\P}^{\tilde{\G}_K}$ with $\mathcal{I}^{2u}$ under $\overline{\P}_{\tilde{\G}_K}$ such that $\{ \varphi \geq -\rho  a \} \supset \mathcal{I}^{2u},$ and $\I^{2u}$ splits into two independent interlacements $\I^{u,1}$ and $\I^{u,2}$ at level $u$ such that $\I^{2u}=\I^{u,1}\cup\I^{u,2}.$ We denote by $\text{LocUniq}_{u,\ell}^{(2)}(x)$ the same event as in \eqref{eq:lb1} with the choice $\lambda = \Cr{c:locuniq1}$, but for the interlacements $\I^{u,2},$ and note that for every $x\in{\mathcal{P}},$ all the edges in $\hat{\I}^{u,2}\cap B_{E_K}(x,\Cr{c:locuniq1}\ell)$ (where $E_K$ is the set of edges associated with $\G_K$, cf.~the paragraph following \eqref{eq:lb2alphalarger2nubis}) have their respective cables included in  $\I^{2u}\cap \tilde{\mathcal{L}}_{\ell}^K$ by definition of $\hat{\I}^{2u},$ $B_{E_K}(x,\Cr{c:locuniq1}\ell),$ see above \eqref{eq:lb1}, $\tilde{\mathcal{L}}_{\ell}^K$ and $S,$ see \eqref{eq:defS}. We then define $\I^{u,1}_{-}$ as the set of vertices hit by any trajectories in the interlacements process associated to $\I^{u,1}$ whose backwards parts are killed on $K$, which has intensity $u\nu^K_-$, see \eqref{eq:killedintensities}, and $\I^{u,1,\ell}_{-}$ as the set of vertices visited by any trajectories in $\I^{u,1}_-$ before their first exit time of $B(0,\sigma'\ell)$. Let us consider the (good) event
\begin{equation}
\label{eq:lb15}
G= \Big\{\I^{u,1,\ell}_{-}\cap\I^{u,2}\cap\bigcup_{z\in{S'}}B(z,\ell/2)\neq\emptyset\Big\}  \cap \bigcap_{x\in{\mathcal{P}}} \big( \big\{ \text{LocUniq}^{(2)}_{u,\ell}(x)\big\} \cap \{\I^{u,2} \cap B(x,\ell/2) \neq \emptyset\} \big).
\end{equation}
By \eqref{eq:lb15}, the definition of the local uniqueness event in \eqref{eq:lb1} and by the construction of $\mathcal{L}_{\ell}^K,$ $S$ as well as $S',$ see in particular \eqref{eq:sausage}, \eqref{eq:propertyofLl}, \eqref{eq:defS} and \eqref{eq:connectionviazi}, the occurrence of $G$ entails that $K$ is connected to $\partial_{\text{in}} {B}_r$ by a continuous path in $\mathcal{I}^{2u}\cap\tilde{\mathcal{L}}_{\ell}^K,$ and hence the event on the right-hand side of \eqref{eq:lb14} occurs under~$\mathbb{Q}$. Defining for all $s\in{[0,1]}$ the event $G_s'=\{\exists\,z\in{S'}:\,\mathrm{cap}_{\tilde{\G}_K}(\I^{u,1,\ell}_{-}\cap B(z,\ell/2))\geq s\ell^{\nu}
\},$ we therefore have
\begin{equation}
\label{eq:lb16}
\P_{a,\ell}^{\tilde{\G}_K}(A(K,a,r)) \stackrel{\eqref{eq:weakisom}}{\geq}\Q(G)\geq \E^{\mathbb{Q}}\big[\mathbb{Q}(G\,|\,\I^{u,1})1_{G_s'}\big].
\end{equation}
We will bound $\Q(G_s')$ for suitable $s$ and $\mathbb{Q}(G\,|\,\I^{u,1})$ on the event $G_s'$ separately from below.

Let us first derive a bound on $\Q(G_s')$. To this end, fixing an arbitrary ordering of $S'$ and whenever $\bigcup_{z\in{S'}}B(z,\ell/4)\cap\I^{u,1,\ell}_{-}$ is not empty, we denote by $Z^{u,1}\in{S'}$ the smallest vertex $z\in{S'}$ such that $B(z,\ell/4)$ is hit by the trajectory in $\I^{u,1,\ell}_{-}$ with smallest label, and by $X^{u,1}$ the first entrance point in $B(Z^{u,1},\ell/4)$ of this trajectory; otherwise, i.e.~if $\bigcup_{z\in{S'}}B(z,\ell/4)\cap\I^{u,1,\ell}_{-}=\emptyset$ we set $X^{u,1}=Z^{u,1}=0.$ Recalling the definition of $\mathcal{C}(x,\ell/4)$ from above Lemma \ref{lem:4.7}, we have that, conditionally on $Z^{u,1}$ and $X^{u,1}$ and on the event that $(Z^{u,1},X^{u,1}) \neq (0,0)$, the set $\I^{u,1,\ell}_{-}\cap B(Z^{u,1},\ell/2)$ stochastically dominates $\mathcal{C}(X^{u,1},\ell/4)$ under $P_{X^{u,1}}^{\tilde{\G}_K}$. Hence, by Lemma \ref{lem:4.7}, which applies due to \eqref{eq:defS}, \eqref{eq:boundariesinG_K} and by choice of $\Cr{cvalphagnuK}'$ below \eqref{choicec20}, together implying that $B(X^{u,1},\ell/4)\subset\tilde{\G}_K$, we obtain for all $s$ small enough that
\begin{equation}
\label{eq:1stboundonG's}
\begin{split}
    \Q(G_s')&\geq \E^{\Q}\big[P_{X^{u,1}}^{\tilde{\G}_K}\big(\mathrm{cap}_{\tilde{\G}_K}({\mathcal{C}}(X^{u,1},\ell/4))\geq s\ell^{\nu}
    \big)
    1\{\exists\,z\in{S'}:\,B(z,\ell/4)\cap\I^{u,1,\ell}_{-}\neq\emptyset\}\big]
    \\&\geq \big(1-c\exp(-c's^{-1/\nu})\big)\Q\big(\exists\,z\in{S'}:\,B(z,\ell/4)\cap\I^{u,1,\ell}_{-}\neq\emptyset\big).
\end{split}
\end{equation}
Let $\I^{u,1}_{-+}\subset \I^{u,1}_{-} (\subset \I^{u,1})$ refer to the killed-surviving interlacement set corresponding to $ \I^{u,1}$, see below \eqref{eq:killedintensities}. By definition, $\I^{u,1}_{-+}$ comprises the range of all trajectories in a Poisson process of intensity $u \nu^K_{-+}$, and if $\I^{u,1}_{-+}\neq \emptyset$, we have that $\I^{u,1,\ell}_{-}\cap \partial_{\text{in}}B_{\sqrt{\sigma'}\ell}\neq\emptyset.$ By definition of $S',$ see below \eqref{eq:defS}, this means in turn that there exists $z\in{S'}$ such that $B(z,\ell/4)\cap \I^{u,1,\ell}_{-}\neq\emptyset.$ Now, Lemma \ref{le:numberofbackwardkilled} implies that the number of trajectories in the process underlying $\I^{u,1}_{-+}$ is a Poisson variable with parameter $u \mathrm{cap}_{\tilde{\G}}(K)$. Thus, returning to \eqref{eq:1stboundonG's}, we infer that for sufficiently small $s_0\in (0,1)$ (henceforth fixed),
\begin{equation}
    \label{eq:G'shaslargeproba}
    \Q(G_{s_0}')\geq\frac12\Q\big(\exists\,z\in{S'}:\,B(z,\ell/4)\cap\I^{u,1,\ell}_{-}\neq\emptyset \big)\geq\frac12\big(1-\exp\big(-((\rho  a)^2/4)\mathrm{cap}_{\tilde{\G}}(K)\big)\big),
\end{equation}
where we used $u=(\rho  a)^2/4$ in the last equality. 

We now bound $\mathbb{Q}(G\,|\,\I^{u,1})$ on the event $G_s',$  cf.~\eqref{eq:lb16}. By \eqref{eq:intro_shortgeodesic}, our choice of $N_{\ell,r},$ see above \eqref{eq:connectionviazi}, and of $\sigma',$ see above \eqref{choicec20}, we have, for all $k\geq N_{\ell,r}$,
\begin{equation}
\label{eq:distancerequirementLB}
    d(x_k,0)\geq\frac{1}{\Cr{Cgeo}}d_{\text{gr}}(x_k,0)\geq\frac{1}{\Cr{Cgeo}}d_{\text{gr}}(x_{N_{\ell,r}},0)\geq \frac{1}{\Cr{Cgeo}\Cr{c:ball1}}d(x_{N_{\ell,r}},0)\geq\frac{\sqrt{\sigma'}-1}{\Cr{Cgeo}\Cr{c:ball1}}\ell\geq\Cr{cvalphagnuK}'\ell \geq\Cr{cvalphagnuK}\ell.
\end{equation}
In view of \eqref{eq:distancerequirementLB}, \eqref{eq:defS} and since $\alpha> 2\nu$ by assumption, Theorem~\ref{Thm:locuniq} (with, say, $u_0=1$) applies with $R=\ell$ and any $z\in{\mathcal{P}}$ (note also that $K \subset \tilde{B}_{\xi} \subset \tilde{B}_{\ell}$ for any $\ell$ satisfying \eqref{eq:condLB}). Thus, using \eqref{eq:RIdefG_K} and the bound $|\mathcal{P}|\leq cr/\ell,$ for any $r$, $\ell$ satisfying \eqref{eq:condLB} we obtain that 
\begin{equation}
\label{eq:finalboundonG}
\mathbb{Q}(G^c\,|\,\I^{u,1})1_{G'_{s_0}}\stackrel{\eqref{eq:lb2alphalarger2nu}, \eqref{eq:capBd}}{\leq} e^{-us_0\ell^{\nu}
}+ \frac{cr}{\ell} \Big(e^{-(\Cr{clocuniq}
u\ell^{\nu})^{\frac1{2\nu+1}} } + e^{-cu\ell^{\nu}}\Big).
\end{equation}
Finally, choosing $\ell$ as in \eqref{eq:lb8}, since $u=ca^2$ and by \eqref{eq:defxi}, we find that for all $M \geq 1$,
\begin{equation*}
    u\ell^{\nu}\geq  c M^{\nu} \Big(\log \frac{r}{\xi}\Big)^{2\nu+1}, \quad \frac{r}{\ell} \leq  \frac{1}{M}\frac{r}{\xi},   \end{equation*}
whence both terms on the right-hand side of \eqref{eq:finalboundonG} tend to $0$ as $M \to \infty$. Hence choosing $M \geq \sigma'$ large enough, we can arrange for $\mathbb{Q}(G^c\,|\,\I^{u,1})1_{G'_{s_0}} \leq \frac12$. Combining this with \eqref{eq:lb16}, \eqref{eq:G'shaslargeproba} and noting that the present choice of $\ell$ implies $r \geq (10\Cr{c:locuniq1}\vee \sigma')\ell $, as required by \eqref{eq:condLB}, under the condition $r \geq \Cr{C:finalLB2} \xi$ and $a<c,$ we obtain \eqref{eq:lb13}.
\end{proof}

\section{Denouement}
\label{sec:denouement}

Combining the upper and lower bounds derived in Sections \ref{sec:radius} and \ref{sec:LB}, respectively, we now complete the proof of Theorem \ref{T2}, and explain in particular how to adapt the arguments from Section \ref{sec:LB} which yield a lower bound for $\psi$, to deduce similar bounds for $\tilde{\psi}$ and, importantly,~$\tau_a^{\textnormal{tr}},$ cf.~\eqref{eq:intro_psitilde} and \eqref{eq:eq:deftauhf} for their respective definitions. The proofs of Theorem~\ref{T:psitilde} as well as those of Corollaries \ref{C:scalingrelation} and \ref{Cor:upperboundvolume} are presented at the end of this section.

\begin{proof}[Proof of Theorem \ref{T2}]
The proofs of \eqref{eq:corrlength1} and \eqref{eq:corrlength2} appear in Section \ref{sec:radius} (following Remark~\ref{R:rhobounds}). As we now explain, the lower bound \eqref{eq:corrlength2lbpsi} follows from Proposition \ref{prop:lowerbounds}. First observe that the condition $\alpha> 2\nu$ (with $\alpha$ from \eqref{eq:intro_sizeball}) appearing in Proposition \ref{prop:lowerbounds} always holds when $\nu=1$ due to \eqref{eq:condonalphanu}. Now, in view of \eqref{eq:psi_0nu=1}, the proof of which is given in Remark \ref{R:rad}.\ref{R:twopointUB}, the condition \eqref{eq:defb} holds with $q(r)= c(\log r)^{1/2}$ when $\nu=1$ and thus the asserted lower bound \eqref{eq:corrlength2lbpsi} follows from \eqref{eq:boundonpsiwithb} when $r\geq \xi(a)(\log\xi(a))^{\Cr{c:defect3}}$ and $|a|< c$, for any choice of $\Cr{c:defect3}\in{(\frac12,1)}$. In the near-critical regime $r \leq \xi$, the lower bound \eqref{eq:corrlength2lbpsi} follows from \eqref{eq:rad_113} with the choice $t=1$ (see also Remark \ref{rk:appendix},\ref{rk:appendix1} for an alternative approach). Finally, in case $ \psi(0,r) \asymp r^{-1/2}$, Proposition \ref{prop:lowerbounds} applies with $q(r)=c$ (cf.~\eqref{eq:defb}) and gives \eqref{eq:corrlength2lbpsi} for all $r\geq c\xi,$ which is complemented in the near-critical regime by means of \eqref{eq:rad_113} with $t=c$ sufficiently large, thus yielding overall that \eqref{eq:corrlength2lbpsi} holds for all $r \geq 1$ and $|a|< c$. 

Note that Proposition \ref{prop:lowerbounds} also provides an alternative proof of the lower bound in \eqref{eq:corrlength1} in the regime $r/\xi \geq c$. This is relevant for pending adaptations of this proof to deduce the corresponding lower bounds for $\tau_a^{\textnormal{tr}},$ for which the easier arguments of Section~\ref{sec:radius} are not available, cf.~Remark~\ref{R:rad}.\ref{R:twopointUB}. We return to duly discuss matters around $\tau_a^{\textnormal{tr}}$ further below.

We now turn to the bounds on the truncated two-point function $\tau_a^{\textnormal{tr}}$ asserted as part of Theorem~\ref{T2}. The (analogues for $\tau_a^{\textnormal{tr}}$ of the) upper bounds in \eqref{eq:corrlength1} and \eqref{eq:corrlength2} are detailed in Remark~\ref{R:rad},\ref{R:twopointUB}. It remains to explain how to adapt the arguments of Sections~\ref{sec:LB} and~\ref{sec:lemmaProofs} to obtain the desired lower bounds on $\tau_a^{\textnormal{tr}}$. We highlight the significant changes.

Assuming \eqref{eq:intro_Green}, \eqref{eq:intro_sizeball}, \eqref{eq:ellipticity} and $d=d_{\text{gr}}$ -- the latter renders \eqref{eq:intro_shortgeodesic} superfluous -- to hold, and for $\nu \in (0, \frac\alpha{2})$, we will argue how to deduce lower bounds similar to the ones in \eqref{eq:boundonpsiwithb} for $\tau_a^{\textnormal{tr}}$ under the assumption~\eqref{eq:defb}, from which the analogues
 of \eqref{eq:corrlength2lbpsi} and of the lower bound in \eqref{eq:corrlength1} for $\tau_a^{\textnormal{tr}}$ will then be deduced. We focus on $a>0$ as the remaining cases follow by symmetry.
 
Defining $\tilde{\K}_\xi^a(x)$ similarly as in \eqref{eq:Ka} to be the connected component of $x$ in $\{\phi\geq a\}\cap\tilde{B}(x,\xi),$  by the FKG-inequality and Lemma \ref{lem:capacityislarge} we get for $0<a<c$ that
\begin{equation}
\label{eq:capacityislargetau}
    \P\big(\mathrm{cap}(\tilde{\K}_\xi^a)\geq \Cr{ccaplarge}\xi^{\nu},\,\mathrm{cap}(\tilde{\K}_\xi^a(x))\geq \Cr{ccaplarge}\xi^{\nu}\big)\geq \xi^{-\nu}q(\xi)^{-2}\exp\left(-cq(\xi)\right).
\end{equation}
Let $r=d(0,x).$ For all compacts $K\subset\tilde{B}_{\xi }$ and $K'\subset\tilde{B}(x,{\xi })$, consider (cf.~\eqref{eq:defALKar})
\begin{equation*}
    A(K,K',a,r)=\left\{\begin{array}{c}\text{there exists a continuous path }\pi\text{ in }\tilde{B}(0,r)\text{ from }K\text{ to }K'
    \\\hspace{-2mm}\text{with }\phi_x\geq -\rho  a \text{ for all }x\in{\pi\cap ((\tilde{B}(0,{\sigma \xi })\setminus K)\cup(\tilde{B}(x,{\sigma \xi })\setminus K'))}
    \\\text{ and }\phi_x\geq \rho  a \text{ for all }x\in{\pi\cap (\tilde{B}(0,r)\setminus(\tilde{B}(0,{\sigma \xi })}\cup\tilde{B}(x,{\sigma \xi })).\end{array}\right\}.
\end{equation*}
One easily verifies that \eqref{eq:psialphas} still holds when adding $\{\mathrm{cap}(\tilde{\K}_\xi^a(x))\geq \Cr{ccaplarge}\xi^{\nu}\}$ in the indicator function, replacing $\psi(\rho  a ,r)$ by $\tau_{\rho a}^{\textnormal{tr}}(0,x),$ $\tilde{\G}_{\tilde{\K}_\xi^a}$ by $\tilde{\G}_{\tilde{\K}_{\xi}^a\cup \tilde{\K}_\xi^a(x)}$ and $A(\tilde{\K}_\xi^a,a,r)$ by $A(\tilde{\K}_\xi^a,\tilde{\K}_\xi^a(x),a,r).$ Next, one repeats the sausage construction around \eqref{eq:condLB}--\eqref{eq:sausage} (in which the path $\pi$ is eventually built) but replacing the geodesic $\gamma=(0=y_0,y_1,\dots)$ originating in \eqref{eq:intro_shortgeodesic} and considered above \eqref{eq:condLB} by a geodesic $\gamma$ joining $0$ and $x$. One then sets $\mathcal{L}_{\ell}''=(\mathcal{L}_{\ell}'\cup B(x,\sigma'\ell))\setminus B(x,\sigma \xi )$.

With this setup, an analogue of \eqref{eq:lb12} for $A(K,K',a,r)$ holds when replacing $\tilde{\mathcal{L}}_{\ell}'$ by $\tilde{\mathcal{L}}_{\ell}''.$ One then proves an analogue of Lemma \ref{L:lb9} for the quantity $\sup_{K,K'} \, \textnormal{cap}_{\tilde{\G}_{K\cup K'}}( \tilde{\mathcal{L}}_{\ell}'')$ with the supremum ranging over compact sets $K\subset\tilde{B}_{\xi }$ and $K'\subset\tilde{B}(x,{\xi })$, which yields the same upper bound as in \eqref{eq:lb9}. The proof is similar and relies on Lemma~\ref{le:numberofbackwardkilled}, applied directly to $K\cup K'$ (instead of $K$). Note to this effect that $\tilde{\mathcal{L}}_{\ell}''$ has the required ``insulation'' property, i.e.~any unbounded path starting in $K\cup K'$ intersects $\tilde{\mathcal{L}}_{\ell}''$.

Next, one shows under the assumptions of Lemma \ref{L:lb13} that
\begin{equation}
\label{eq:lb13tau}
\P_{a,\ell}^{\tilde{\G}_{K\cup K'}}\big(A(K,K',a,r)\big) \geq {c}\big(1-\exp(-{c}'(\rho  a) ^2\mathrm{cap}(K))\big)\big(1-\exp(-{c}'(\rho  a) ^2\mathrm{cap}(K'))\big),
\end{equation}
for all compacts $K\subset \tilde{B}(0,\xi),$ $K'\subset \tilde{B}(x,\xi)$, which replaces \eqref{eq:lb13}. Here the measure $\P_{a,\ell}^{\tilde{\G}_{K\cup K'}}$ refers to the free field on $\tilde{\G}_{K\cup K'}$, shifted by $2\rho a$ in the region $\tilde{\mathcal{L}}_{\ell}''$ and extended harmonically outside (with a Dirichlet boundary condition on $K\cup K'$). We return to the proof of \eqref{eq:lb13tau} shortly. Combining the above results, following the line of argument leading up to \eqref{eq:LBfinal2}, one deduces that if~\eqref{eq:defb} holds, 
\begin{equation}
\label{eq:lowerboundtaua}
    \tau_{\rho a}^{\textnormal{tr}}(0,x)\geq \xi^{-\nu}q(\xi)^{-2}\exp\left\{-\tilde{c}q(\xi)-\frac{\tilde{c}(r/\xi)^{\nu\wedge1}}{\log(r/\xi)^{b}}\right\}\text{ for all }a\in{[-\tilde{c}',\tilde{c}']}\text{ and }r\geq \tilde{c}''\xi,
\end{equation}
the only difference between \eqref{eq:LBfinal2} and \eqref{eq:lowerboundtaua} coming from the discrepancy between the bounds \eqref{eq:capacityislarge} and \eqref{eq:capacityislargetau}.
In view of \eqref{eq:2ptatcriticality}, if $r/\xi$ is large enough then $\xi^{-\nu}\leq \tau_a^{\text{tr}}(0,x),$ and \eqref{eq:lowerboundtaua} yields lower bounds for $\tau_a^{\textnormal{tr}}$ similar to \eqref{eq:boundonpsiwithb}. 

This readily translates into lower bounds for $\tau_a^{\text{tr}}$ akin to \eqref{eq:corrlength1} when $\nu<1$, $a \in (0, \Cl[c]{c:tau11})$ and $r\geq \Cl[c]{c:tau12}\xi(a),$ and \eqref{eq:corrlength2} when $\nu=1$ and to $r\geq \xi(a)(\log\xi(a))^{\Cr{c:defect3}}.$ Moreover, when $\nu<1,$ $a \in (0, \Cr{c:tau11})$ and $r_0 < r< \Cr{c:tau12}\xi(a)$, where $r_0 =\Cr{c:tau12}\Cr{c:tau11}^{-\frac{2}{\nu}} ,$ defining $b\geq a$ such that $r=\Cr{c:tau12}\xi(b)$ one obtains for $r > r_0$ (whence $b< \Cr{c:tau11}$) that
\begin{equation*}
\tau_{a}^{\text{tr}}(0,x)\geq\tau_b^{\text{tr}}(0,x)\geq \tau_0^{\text{tr}}(0,x)\exp(-\Cr{C2_xi}(r/\xi(b))^{\nu})\geq c\tau_0^{\text{tr}}(0,x)\geq  c\tau_0^{\text{tr}}(0,x)\exp(-\Cr{C2_xi}(r/\xi(a))^{\nu}).
\end{equation*}
Finally, the analogue of \eqref{eq:corrlength1} for $\tau_a^{\text{tr}}(0,x)$ when $r\leq r_0$ and $a \in (0, \Cr{c:tau11})$ is trivial.  

Let us now go back and comment on the proof of \eqref{eq:lb13tau}. We proceed similarly as in the proof of Lemma~\ref{L:lb13}, but take $u=(\rho  a) ^2/6$ and split the interlacements $\I^{3u}$ on $\tilde{\G}_{K\cup K'}$ into three independent interlacements $\I^{u,1},$ $\I^{u,2}$ and $\I^{u,3}$ instead. We then define $S_x$ and $S'_x$ similarly as around \eqref{eq:defS} but replacing all the balls centered at $0$ by balls centered at $x,$ and let
\begin{equation*}
    G^{\tau}=G\cap\Big\{\I_{-}^{u,3,\ell}\cap \I^{u,2}\cap\bigcup_{z\in{S'_x}}B(z,\ell/2)\neq\emptyset\Big\},
\end{equation*}
with $G$ given by \eqref{eq:lb15} and where $\I_{-}^{u,3,\ell}$ is the set of of vertices hit before their first exit time of $B(x,\sigma'\ell)$ by any trajectories in the interlacement process associated to $\I^{u,3}$ whose backwards parts are killed on $K'$. Then under a coupling $\Q$ operating on the cable system $\tilde{\G}_{K\cup K'}$, the event $G^{\tau}$ implies that $K\leftrightarrow K'$ in $\{\phi\geq-\rho  a \}\cap(\tilde{\mathcal{L}}_{\ell}\cup \tilde{B}(0,\sigma'\ell)\cup \tilde{B}(0,\sigma'\ell))\cap\tilde{\G}_{K\cup K'},$ and so its probability is a lower bound for $\P_{a,\ell}^{\tilde{\G}_{K\cup K'}}\big(A(K,K',a,r)\big).$ To bound the probability of $G^{\tau},$ we then proceed as in \eqref{eq:lb16}--\eqref{eq:finalboundonG}, but now conditioning on $\I^{u,1}$ and $\I^{u,3},$ and adding the event $G'_{s_0}(x)=\{\exists\,z\in{S'_x}:\,\mathrm{cap}(\I_{-}^{u,3,\ell}\cap B(z,\ell/2))\geq s_0\ell^{\nu}\}.$ The probability of the event $G_{s_0}'(x)$ is bounded from below as in \eqref{eq:G'shaslargeproba} by a constant times the probability that $\I_{-}^{u,3,\ell}\cap\bigcup_{z\in{S'_x}}B(z,\ell/4)\neq\emptyset.$

It remains to argue that this term and the one corresponding to $G'_{s_0}$ produce the two factors present in \eqref{eq:lb13tau}. Since $d(x,y)\leq d(x,K)$ for all $y\in{\bigcup_{z\in{S'_x}}B(z,\ell/4)},$ $\I_{-}^{u,3,\ell}\cap\bigcup_{z\in{S'_x}}B(z,\ell/4)\neq\emptyset$ as soon as $\I^{u,3}$ contains a trajectory whose backwards part is killed on $K'$ but forwards part is not killed on $K'.$ In order to determine the average number of such trajectories by means of~\eqref{eq:numberofbackwardkilled}, we introduce long one-dimensional chains on the boundary edges of $K$. These chains absorb the effect of the Dirichlet boundary condition on $K$ and present the advantage of giving rise to an augmented graph which is completely massless (and to which Lemma \ref{le:numberofbackwardkilled} applies).

Specifically, we define a discrete set $K_{\infty}$ as in Remark 2.1 in \cite{Pre1}, see also Remark 2.2 in~\cite{DrePreRod3}, which contains for each cable touching $K$ an infinite sequence of vertices converging to $K.$  Adding $K_{\infty}$ to the vertex set $\G_K$ defines a new graph $ \mathcal{G}' \stackrel{\text{def.}}{=} \G_{K}^{K_{\infty}},$ see Lemma 2.1 in \cite{DrePreRod3}, so one can view $\G_K$ as a subset of $\G' $ and the killing measure vanishes on $\G' .$  Since $\G'_{K'}=\G_{K\cup K'}^{K_{\infty}}$ (see below \eqref{eq:lb2alphalarger2nubis} for notation) one can then see $\G_{K\cup K'}$ as a subset of $\G_{K'}',$ and the trajectories on $\G_{K'}'$ can only be killed on $K'$ (and not $K$). The number of trajectories in $\I^{u,3}$ whose backwards part are killed on $K'$ and whose forwards parts are not is then equal to the number of killed-surviving trajectories for the interlacement on $\G_{K'}'$, which by \eqref{eq:killedintensities} and Lemma \ref{le:numberofbackwardkilled} is a Poisson random variable with parameter $u\mathrm{cap}_{\tilde{\G}'}(K')=u\mathrm{cap}_{\tilde{\G}_K}(K'),$ see (2.16) in \cite{DrePreRod3}. Using the inequality $\mathrm{cap}_{\tilde{\G}_K}(K')\geq \mathrm{cap}_{\tilde{\G}}(K'),$ we obtain that 
\begin{equation*}
    \Q(G_{s_0}'(x))\geq c\big(1-\exp(-u\mathrm{cap}_{\tilde{\G}}(K')\big).
\end{equation*}
A similar result holds for the event $G_{s_0}',$ but replacing $K'$ by $K,$
and since $G_{s_0}'$ and $G_{s_0}'(x)$ are independent \eqref{eq:lb13tau} follows. 
\end{proof}

We now first prove~Theorem~\ref{T:psitilde}, which also directly relies on the findings of Sections~\ref{sec:radiusLB}--\ref{sec:lemmaProofs}.

\begin{proof}[Proof of Theorem~\ref{T:psitilde}]
Assume that $\nu\in{(0,\alpha/2]}$  and recall the definition of $\mathcal{L}_{\ell}$ from \eqref{eq:sausage}. A change-of-measure similar to \eqref{eq:lb12} but directly for the unconditioned Gaussian free field on $\tilde{\G}$ gives, for all $0< a< c$ and $r,\ell$ satisfying \eqref{eq:condLB} (with $\sigma'=1$),
\begin{equation}
\label{eq:lb12tilde}
\tilde{\psi}(a,r)=\P^{\tilde{\G}}\big(\tilde{A}(a,r)\big)\geq\P_{a,\ell}^{\tilde{\G}}\big(\tilde{A}(a,r)\big)\exp\left\{-\frac{2a^2\text{cap}_{\tilde{\G}}(\mathcal{L}_{\ell})+1/e}{\P_{a,\ell}^{\tilde{\G}}\big(\tilde{A}(a,r)\big)}\right\},
\end{equation}
where $\tilde{A}(a,r)=\{{B}_{\xi(a)} \stackrel{\geq a}{\longleftrightarrow} \partial_{\textnormal{in}} {B}_r\}$ and $\P_{a,\ell}^{\tilde{\G}}$ now corresponds to the measure from \eqref{eq:df4} with $K\stackrel{\text{def.}}{=}\tilde{\mathcal{L}}_{\ell}$ and $-2a$ in place of $a$. Thus, in particular, $(\varphi_x)_{x \in \tilde{\mathcal{L}}_{\ell}}$ under $\P_{a,\ell}^{\tilde{\G}}$ has the same law as $(\varphi_x+ 2a)_{x \in \tilde{\mathcal{L}}_{\ell}}$ under $\P^{\tilde{\G}}$. One then estimates $\text{cap}_{\tilde{\G}}(\mathcal{L}_{\ell})$ and $\P_{a,\ell}^{\tilde{\G}}\big(\tilde{A}(a,r)\big)$ separately. 
The relevant upper bound for $\mathrm{cap}_{\tilde{\G}}(\mathcal{L}_{\ell})$ (which replaces Lemma~\ref{L:lb9}) has been derived in \eqref{eq:capLell}. The analogue of Lemma~\ref{L:lb13} in the present context is the claim that, with 
\begin{equation}
\label{eq:lb8_extended}
\ell= M \xi \Big( \log \frac{r}{\xi} \Big)^{\frac{2\nu +1}{\nu}} \cdot \Big[( \log \xi)^2 \Big( \log \frac{r}{\xi} \Big) \Big]^{\frac{1}{\nu}\cdot 1\{\alpha=2\nu\}},
\end{equation}
which extends the definition \eqref{eq:lb8}, for all $0<a< c$ and $r/\xi \geq c  (\log\xi)^{\frac{3}{\nu}\cdot 1\{\alpha=2\nu\}}$,
\begin{equation}
    \label{eq:lb13tilde}
\P_{a,\ell}^{\tilde{\G}}\big(\tilde{A}(a,r)\big) \geq c.
\end{equation}
Assuming \eqref{eq:lb13tilde} to be true, \eqref{eq:corrlength2lb} follows in a regime $0<a<\Cl[c]{c:tilde11}$ and $r \geq \Cl[c]{c:tilde12} \xi  (\log\xi)^{\frac{3}{\nu}\cdot 1\{\alpha=2\nu\}} $ using \eqref{eq:lb12tilde}, the upper bound on $\text{cap}_{\tilde{\G}}(\mathcal{L}_{\ell})$ (with $\ell$ as above) and \eqref{eq:lb13tilde} much in the same way as \eqref{eq:boundonpsiwithb} is derived below Lemma~\ref{L:lb13}, with small modifications in case $\alpha= 2\nu$. In case $\alpha> 2\nu$, the regime where \eqref{eq:corrlength2lb} holds extends to all $r \geq 1$ by reducing the region of $a$ to the interval $0< a < {\Cr{c:tilde11}}{\Cr{c:tilde12}^{-\frac\nu2}} $. For such $a$, and $\xi< r < \Cr{c:tilde12} \xi $ (the case $r\leq \xi$ is trivial, cf.~\eqref{eq:intro_psitilde}), one picks $b> a$ such that $r= \Cr{c:tilde12} \xi(b) $ (note that $b =( \Cr{c:tilde12}r^{-1})^{\frac{\nu}{2}} \leq \Cr{c:tilde12}^{\frac{\nu}{2}} a < \Cr{c:tilde11}$) whence
\begin{equation*}
\tilde{\psi}(a,r)\geq\tilde{\psi}(b,r)\geq \tilde{c}\exp\big(-\Cr{C2_xi}(r/\xi(b))\log ((r/\xi(b))\vee2)^{\Cr{c:lbdefect2}}\big)\geq  \tilde{c}',
\end{equation*}
as desired. Finally the case $a<0$ in \eqref{eq:corrlength2lb} follows by symmetry, as explained at the end of the proof of Proposition \ref{prop:lowerbounds}.

It remains to explain how to adapt the proof of Lemma \ref{L:lb13} to obtain \eqref{eq:lb13tilde}. We replace the event $G$ from \eqref{eq:lb15} by 
\begin{equation*}
\tilde{G}= \{ {B}_{\xi(a)} \cap \mathcal{I}^u \neq \emptyset\} \cap \bigcap_{k=0}^{N_{\ell,r}} \big( \big\{ \text{LocUniq}_{u,\ell}(x_k)\big\} \cap \{ \mathcal{I}^u \cap B(x_k,\ell/2) \neq \emptyset\} \big)\text{ for }u=a^2/2.
\end{equation*}
Then under the coupling $\Q$ from \eqref{eq:weakisom}, the event $\tilde{G}$ implies that $B_{\xi(a)}$ is connected to $\partial_{\text{in}} B_r \text{ in }\K^{-a} \cap  {\mathcal{L}}_{\ell},$ and so its probability is smaller than $\P_{a,\ell}^{\tilde{\G}}\big(\tilde{A}(a,r)\big).$ We thus only need to show that $\Q(\tilde{G})\geq c$ in the relevant regime of $r/\xi$. When $\alpha < 2 \nu$, the bound $\Q(\tilde{G})\geq c$ follows for $r/\xi \geq c$ by combining \eqref{eq:defRI} and Theorem~\ref{Thm:locuniq}, which yield a similar bound on $\Q(\tilde{G}^c)$ as the upper bound in \eqref{eq:finalboundonG}, and noting that $u \text{cap}({B}_{\xi(a)}) \geq c$ by \eqref{eq:defxi} and \eqref{eq:capBd}. When $\alpha = 2\nu$, the analogue of the first bound appearing below \eqref{eq:finalboundonG} (which eventually guarantees that $\Q(\tilde{G}^c) \leq c <1$) becomes $u\ell^{\nu}(\log \ell)^{-2}\geq  c\frac{M^{\nu}}{(\log M)^2} (\log \frac{r}{\xi})^{2\nu+1}$ due to the presence of the logarithm in \eqref{eq:lb2alphalarger2nu}. The additional factor present in \eqref{eq:lb8} in case $\alpha=2\nu$ compensates the term $(\log \ell)^{-2}$. Then, \eqref{eq:lb13tilde} follows upon observing that the condition $r \geq c\ell $ with $\ell$ as in \eqref{eq:lb8_extended} is met when $\alpha=2\nu$ and $r \geq c \xi  (\log\xi)^{\frac{3}{\nu}}$.

Finally, one can obtain lower bounds similar to \eqref{eq:corrlength2lb} but for
\begin{equation}
\label{tautilde}
\tilde{\tau}_a^{\textnormal{tr}}(0,x)\stackrel{\text{def.}}{=} \P\big(\{{B}(0,{\xi(a)}) \stackrel{\geq a}{\longleftrightarrow}  {B}(x,\xi(a))\} \setminus \{ {B}(0,{\xi(a)}) \stackrel{\geq a}{\longleftrightarrow} \infty\}\big).
\end{equation}
by changing the event $\tilde{A}(a,r)$ appearing in \eqref{eq:lb12tilde} into $\{{B} (0,\xi(a)) \stackrel{\geq a}{\longleftrightarrow} B(x,\xi(a))\}$ and adapting the above arguments. We omit further details.
\end{proof}

\begin{Rk} \label{R:LB} 
 \begin{enumerate}[label=\arabic*)]
\item\label{R:LB.00}  In case $\alpha=2\nu$, the attentive reader will have noticed that the condition $r\geq c \xi(\log\xi)^{\frac{3}{\nu}\cdot 1\{\alpha=2\nu\}}$ appearing above \eqref{eq:lb13tilde} (as well as the corresponding one in \eqref{eq:corrlength2lb}) can in fact be replaced by the requirement that $r/\xi \geq c (\log\xi)^{\frac{2}{\nu}} (\log \log \xi)^{c}$, for large $c$. Along similar lines, the conclusions of Lemma~\ref{L:lb13} can also be extended to include the case $\alpha=2\nu$ with the choice of $\ell$ from \eqref{eq:lb8_extended}, thus yielding \eqref{eq:lb13} (and \eqref{eq:lb13tau}) for the same regime of $r/\xi$. However, the (best-possible, i.e.~with $q(r)=c$) resulting estimate in Proposition~\ref{prop:lowerbounds} is implied by \eqref{eq:corrlength2lb}. This is ultimately due to the additional presence of the logarithm in Theorem~\ref{Thm:locuniq} when $\alpha=2\nu$.
 
\item \label{R:LB.3} When $\alpha<2\nu,$ one could proceed as in the above proof of Theorem~\ref{T:psitilde} by taking $\ell=a^{-\frac{2}{\beta}}\big(\log a^{\frac{2}{\beta}}r\big)^c |\log a|^{c'}$, for suitable choice of $c,c'$ (cf.~\eqref{eq:lb8}).
In view of \eqref{eq:lb2alphasmaller2nu}, one can then show that the conclusions of Lemma \ref{L:lb13} still hold for this choice of $\ell,$ which leads to (note that Lemma~\ref{L:lb9} holds for any value of $\nu >0$)
\begin{equation}
\label{eq:corrlength2ld}
\tilde{\psi}(a,r) \geq  \exp\big\{ -  \Cr{C2_xi}a^{\frac{2(\beta-\nu+1)}\beta}r\log (a^{2/\beta}r)^{c} |\log a|^{c'} \big\}, \end{equation}
if $a>0$ and $ra^{2/\beta}/|\log a|^{c''}$ is large enough. Proceeding similarly as in \eqref{eq:intro_nu}, the best result one could hope to obtain is thus $\nu_{c}\in{[\frac{2(\beta-\nu+1)}\beta,2]}$ when $\alpha<2\nu$. This lower bound would however only be of interest  when $\beta-\nu+1>0,$ that is $\alpha>2\nu-1,$ and $\alpha<2\nu$ (which e.g.~is never the case of $\Z^d,$ $d\geq3$; it would require $d\in (4,5)$).
\item \label{rk:shortgeodesicsisneeded}
Let us explain why one needs to assume that \eqref{eq:intro_shortgeodesic} holds when $\nu\geq1$ to obtain lower bounds as in \eqref{eq:corrlength2lb}.
Suppose that \eqref{eq:intro_Green} and \eqref{eq:intro_sizeball}  hold for some distance $d$ and some $\nu<1$ and $\alpha>2,$ and let $d_b=d^b$ for some $b\leq1.$ It is easy to check that $d_b$ is a distance, and that $(G_{\nu_b})$ and $(V_{\alpha_b})$ hold for $d_b$ with $\nu_b=\nu/b$ and $\alpha_b=\alpha/b$. Let us define $\tilde{\psi}_b$ as in  \eqref{eq:intro_psitilde}, but for the distance $d_b.$ It then follows from \eqref{eq:corrlength1} for the distance $d$ and a union bound that (note that $\xi(1)=1$) 
\begin{equation*}
\tilde{\psi}_b(1,r) = \tilde{\psi}(1,r^{1/b}) \leq c \exp \big(-c'r^{\nu_b}\big), \quad r \geq 1.
\end{equation*}
In particular, \eqref{eq:corrlength2lb} cannot hold for the distance $d_b$ when $b=\nu,$ that is $\nu_b=1$ nor when $b<\nu,$ that is $\nu_b>1$. The only hypothesis which is not verified for the distance $d_b$ is \eqref{eq:intro_shortgeodesic}. Note that \eqref{eq:corrlength1} is however equivalent for the distance $d$ and the distance $d_b$ when $b>\nu,$ that is $\nu_b<1.$
\end{enumerate}
\end{Rk}

With the full strength of Theorems \ref{T2} and \ref{T:psitilde} at our disposal, we proceed to show their corollaries. We begin by comparing the results of Theorem \ref{T2} with the expected two-sided estimate \eqref{eq:defcorrlength}, thereby relating $\xi$ from \eqref{eq:defxi} to the quantity $\xi'$ defining \eqref{eq:defcorrlength}. By \eqref{eq:corrlength1}, if \eqref{eq:intro_Green} holds for some $\nu < 1$ then so does \eqref{eq:defcorrlength} with $f_{\nu}(t)\asymp t^{\nu}$ and ${\xi}'\asymp \xi$, with $\xi$ as in \eqref{eq:defxi}. Although \eqref{eq:defcorrlength} is not fully determined for larger values of $\nu$ by Theorem~\ref{T2}, the upper bounds on $\psi$ from \eqref{eq:corrlength2} and the lower bounds on $\tilde{\psi}$ from \eqref{eq:corrlength2lb} are already sufficient to obtain rigorous information about $\xi'$, in the following sense.
\begin{Cor}
\label{Corcorrlength} If \eqref{eq:intro_Green}, \eqref{eq:intro_sizeball}, \eqref{eq:ellipticity}, \eqref{eq:intro_shortgeodesic} and \eqref{eq:defcorrlength} hold, then for all $ t \geq c'$ and $ |a|\leq c$, 
\begin{align}
&f_{\nu}(t)\asymp \frac{t}{\log(t)}\text{  and  }\xi'(a)\asymp \xi(a), \text{ if $\nu=1$}, \label{eq:corrlengthnu=1}
\\
&f_{\nu}(t)\asymp t\text{  and  } 
\frac{c \xi(a)}{(\log \xi(a))^{\Cr{c:lbdefect2}+ \Cr{c:lbdefect4}}} \leq\xi'(a)\leq c'a^{-2}, \text{ if $1< \nu \leq \alpha/2$}. \label{eq:corrlengthnu>1}
\end{align}
with the convention $\Cr{c:lbdefect4}=0$ if $\nu < \alpha/2$ $($cf.~\eqref{eq:corrlength2lb}$)$.
\end{Cor}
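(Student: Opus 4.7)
The plan is to deduce the asserted asymptotics on $f_\nu$ and $\xi'$ by combining the hypothesized two-sided bound \eqref{eq:defcorrlength} with the explicit (near-)two-sided bounds on $\psi$ and $\tilde{\psi}$ provided by Theorem~\ref{T2} and Theorem~\ref{T:psitilde}. Writing $h(a,r) \stackrel{\textrm{def.}}{=} \log(\psi(0,r)/\psi(a,r))$ and rewriting \eqref{eq:defcorrlength} equivalently as $c'f_\nu(r/\xi'(a)) \leq h(a,r) \leq c f_\nu(r/\xi'(a))$, the strategy is to sandwich $h(a,r)$ between explicit functions and then read off information about $f_\nu$ and $\xi'$.

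First, the upper bound \eqref{eq:corrlength2} on $\psi$ provides a lower bound on $h$: namely $h(a,r)\geq c(r/\xi(a))/\log(r\vee 2)$ for $\nu=1$, and $h(a,r)\geq c ra^{2}=cr/\xi(a)^{\nu}$ for $\nu>1$. Plugged into \eqref{eq:defcorrlength} at $r=\xi'(a)$ this immediately yields the upper bound $\xi'(a)\leq c'a^{-2}$ in the case $\nu>1$ (since $f_\nu(1)$ is a finite $a$-independent constant); more generally, taking $r=t\xi'(a)$ for variable $t\geq 1$ forces $f_\nu(t)\geq ct$ once $\xi'(a)$ is comparable to $a^{-2}$ up to lower-order factors, giving the lower half of $f_\nu(t)\asymp t$.

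Second, for $\nu=1$ the lower bound \eqref{eq:corrlength2lbpsi} on $\psi$ delivers the matching upper bound $h(a,r)\leq c''(r/\xi(a))/\log((r/\xi(a))\vee 2)+O(1)$ in the regime $r/\xi(a)>(\log\xi(a))^{\Cr{c:defect3}}$. Inserting $r=t\xi'(a)$ with $t$ large enough for this regime to be met in both inequalities yields
\[ \frac{ct(\xi'(a)/\xi(a))}{\log(t\xi'(a))}\leq f_\nu(t)\leq \frac{c''t(\xi'(a)/\xi(a))}{\log((t\xi'(a)/\xi(a))\vee 2)}+O(1). \]
Since $f_\nu$ is a function of $t$ alone, letting $a\to 0$ at $t$ fixed forces the ratio $\xi'(a)/\xi(a)$ into a bounded range of positive constants; this simultaneously produces $\xi'(a)\asymp\xi(a)$ and, reading off the remaining dependence in $t$, $f_\nu(t)\asymp t/\log t$, which is \eqref{eq:corrlengthnu=1}.

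Third, for $1<\nu\leq\alpha/2$ the missing upper bound on $h$ comes from the lower bound \eqref{eq:corrlength2lb} of Theorem~\ref{T:psitilde} for $\tilde\psi$; under the implicit identification of the correlation length for $\psi$ and $\tilde\psi$ made explicit by the paragraph following \eqref{eq:defcorrlength}, this translates into $h(a,r)\leq c(r/\xi(a))(\log((r/\xi(a))\vee 2))^{\Cr{c:lbdefect2}}(\log\xi(a))^{\Cr{c:lbdefect4}}$ (with $\Cr{c:lbdefect4}=0$ if $\nu<\alpha/2$). Combining with the lower bound on $h$ from Step~1 sandwiches the exponential decay rate $c/\xi'(a)$ between $c\xi(a)^{-\nu}$ and $c'(\log\xi(a))^{\Cr{c:lbdefect2}+\Cr{c:lbdefect4}}/\xi(a)$, yielding both $f_\nu(t)\asymp t$ and the claimed two-sided window $c\xi(a)/(\log\xi(a))^{\Cr{c:lbdefect2}+\Cr{c:lbdefect4}}\leq\xi'(a)\leq c'a^{-2}$, which is \eqref{eq:corrlengthnu>1}.

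The main obstacle is precisely Step~3 in the regime $1<\nu\leq\alpha/2$: the Corollary's hypothesis is phrased for $\psi$ but the sharp lower bound available is for $\tilde\psi$, and bridging the two requires either invoking the (assumed) common correlation length for both quantities or an ad hoc comparison (e.g.~via Proposition~\ref{prop:lowerbounds}, which however introduces the factor $q(\xi)$ inherited from \eqref{eq:psi_0nu=1}). Either way, the two logarithmic defects $(\log(r/\xi))^{\Cr{c:lbdefect2}}$ (from the imperfect local uniqueness at scale $\xi$ used in Theorem~\ref{T:psitilde}) and $(\log\xi)^{\Cr{c:lbdefect4}}$ (from the boundary case $\nu=\alpha/2$) propagate unavoidably to the lower bound on $\xi'(a)$, explaining why \eqref{eq:corrlengthnu>1} admits the wide range $\xi/(\log\xi)^{\Cr{c:lbdefect2}+\Cr{c:lbdefect4}}\leq\xi'\leq a^{-2}$ rather than the clean $\xi'\asymp\xi$ available when $\nu\leq 1$.
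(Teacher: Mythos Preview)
Your overall strategy—sandwiching $h(a,r)=\log(\psi(0,r)/\psi(a,r))$ between the explicit bounds from Theorems~\ref{T2} and~\ref{T:psitilde} and reading off $f_\nu$ and $\xi'$—is the same as the paper's. However, there are genuine gaps in the execution, particularly in Step~3.

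\textbf{The bridge from $\tilde\psi$ to $\psi$ does not work as you describe.} Neither of your proposed resolutions is valid. The hypothesis \eqref{eq:defcorrlength} concerns $\psi$ only; nothing in the paper identifies the correlation lengths for $\psi$ and $\tilde\psi$. Invoking Proposition~\ref{prop:lowerbounds} is worse: for $\nu>1$ the best available $q$ in \eqref{eq:defb} (from \eqref{eq:psi_0nu=1}) is $q(r)=cr^{(\nu-1)/2}$, so the factor $e^{-\tilde c q(\xi)}$ in \eqref{eq:boundonpsiwithb} is $\exp(-c\xi^{(\nu-1)/2})$, which is polynomially small in $a$ and destroys the bound. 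The paper instead uses a direct FKG comparison, namely $\psi(a,r)\geq \P(\tilde B_{\xi}\subset\tilde{\mathcal{K}}^a)\,\tilde\psi(a,r)\geq (\Cr{c:crude})^{ca^{-2\alpha/\nu}}\tilde\psi(a,r)$, and then chooses $r=ca^{-2(\alpha+1)/\nu}$, polynomially larger than $\xi$, so that the FKG penalty $a^{-2\alpha/\nu}/r$ becomes of order $1/\xi$. The price is that $\log(r/\xi)$ becomes comparable to $\log\xi$, which is precisely why the exponent in the lower bound of \eqref{eq:corrlengthnu>1} is $\Cr{c:lbdefect2}+\Cr{c:lbdefect4}$ rather than just $\Cr{c:lbdefect4}$.

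\textbf{The upper bound $f_\nu(t)\leq ct$ for $\nu>1$ is not established by your argument.} Your Step~3 would give $f_\nu(t)\leq ct(\xi'(a)/\xi(a))(\log\text{stuff})$, but since $\xi'(a)/\xi(a)$ may be as large as $a^{2(1-\nu)/\nu}\to\infty$, this is useless. The paper obtains $f_\nu(t)\leq ct$ independently: fixing $a=1$ and using FKG along a geodesic of length $\leq\Cr{Cgeo}r$ (from \eqref{eq:intro_shortgeodesic}) gives $\psi(1,r)\geq e^{-cr}$ directly, and plugging this into the upper half of \eqref{eq:defcorrlength} at $a=1$ yields $f_\nu(r/\xi'(1))\leq c'r$, hence $f_\nu(t)\leq c''t$.

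\textbf{For $\nu=1$, your simultaneous limit argument is delicate.} The paper's approach is cleaner: first fix $a=c$ and combine \eqref{eq:corrlength2} with \eqref{eq:corrlength2lbpsi} to pin down $f_{\nu=1}(t)\asymp t/\log t$; then, with $f_\nu$ known, feed \eqref{eq:corrlength2} back into \eqref{eq:defcorrlength} and let $r\to\infty$ to get $\xi'(a)\leq ca^{-2}$, while \eqref{eq:corrlength2lbpsi} gives $\xi'(a)\geq c'a^{-2}$ directly. Decoupling the determination of $f_\nu$ (at fixed $a$) from that of $\xi'$ (varying $a$) avoids the circularity in your display.
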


\begin{proof}[Proof of Corollary \ref{Corcorrlength}]
If $\nu=1$ and \eqref{eq:defcorrlength} is verified, then fixing $a=c,$ it is clear from \eqref{eq:corrlength2} and \eqref{eq:corrlength2lbpsi} that $f_{\nu=1}(r)\asymp r/\log(r)$ for $r\geq c'.$ Now (still assuming \eqref{eq:defcorrlength}) \eqref{eq:corrlength2} implies that 
\begin{equation*}
\xi'(a)\leq ca^{-2}\frac{\log(r)}{\log(r)-\log(\xi'(a))}, \quad |a| >0, \, r \geq 2.
\end{equation*}
Taking the limit as $r\rightarrow\infty,$ we obtain $\xi'(a)\leq ca^{-2}.$ Finally, \eqref{eq:corrlength2lbpsi} directly yields $\xi'(a)\geq c'a^{-2}.$

Let us now assume that $\nu\in{(1,\alpha/2]}$ and \eqref{eq:defcorrlength} is verified. On account of \eqref{eq:intro_shortgeodesic}, one can lower bound $\psi(1,r)$ by the probability that the cable corresponding to each edge along a path from $0$ to $\partial_{\text{in}} B(0,r)$ of length at most $\Cr{Cgeo}r$ for the graph distance is entirely included in $\{ \varphi \geq a\},$ and so by the FKG-inequality (and using e.g.~\eqref{eq:cond_proba}), it follows that $\psi(1,r)\geq \exp(-cr)$. This implies directly that $f_{\nu}(r)\leq c'r.$ Moreover, \eqref{eq:corrlength2} with $a=1$ implies that $f_{\nu}(r)\geq cr$ and $\xi(a)\leq c'a^{-2}.$ For the reverse inequality, combining the FKG-inequality, \eqref{eq:intro_sizeball}, \eqref{eq:intro_Green} (to obtain that $c \leq \E[\varphi_x^2] \leq c'$ for all $x \in G$) and \eqref{eq:cond_proba}, we have the (crude) bound
\begin{equation}
\label{eq:corFKG}
\psi(a,r)\geq \P(\tilde{B}_{\xi(a)}\subset \tilde{\mathcal{K}}^a)\tilde{\psi}(a,r)\geq (\Cl[c]{c:crude})^{ca^{-2\alpha/\nu}} \tilde{\psi}(a,r), 
\end{equation}
valid for all $0\leq |a|\leq1.$ Therefore by \eqref{eq:corrlength2lb} and \eqref{eq:corFKG} we have, for all $0< |a|\leq c$ and $r \geq \xi(a) (\log \xi(a))^{3/\nu}$ (and even all $r \geq 1$ when $\nu < \alpha/2$),
\begin{equation}
\label{eq:needlabelfornextremark}
\xi'(a)\geq \big[c\xi^{-1}(a)(\log r/\xi(a))^{\Cr{c:lbdefect2}} (\log \xi(a))^{\Cr{c:lbdefect4}}+c' \log(1/\Cr{c:crude}) a^{-2\alpha/\nu}r^{-1}\big]^{-1},
\end{equation}
from which $\xi'(a) \geq c\xi(a)/ |\log a|^{\Cr{c:lbdefect2}+\Cr{c:lbdefect4}}$ follows upon choosing $r=ca^{-2(\alpha+1)/\nu},$ which satisfies $r \geq \xi(a) (\log \xi(a))^{3/\nu}$ as required in order for \eqref{eq:corrlength2lb} to apply.
\end{proof}

\begin{Rk}\label{R:radiusasymp}
As implicit in the previous proof, some care is needed for large values of $a$, i.e.~when $a$ reaches the size of typical fluctuations for the local observable $\varphi_{\cdot}$, and this may affect the behavior of $\xi'(a)$ in \eqref{eq:defcorrlength} in this regime. Indeed, refining slightly the lower bound above \eqref{eq:needlabelfornextremark} by using the Gaussian tail estimate $P(X \geq x) \geq cx^{-1}e^{-x^2/2}$ valid for $x \geq 2$, where $X$ is a standard Gaussian variable, one obtains that  $\psi(a,r) \geq (ca^{-1}e^{-c'a^2})^r $ whenever $a> 2\sqrt{\bar g }$ and for all $r \geq 1$, where $\bar g = \sup_{x\in G} g(x,x)$, which is finite under \eqref{eq:intro_Green}. Together with the upper bound from \eqref{eq:corrlength2} this yields in case $\nu >1$ that
 \begin{equation}
 \label{eq:squareasymp}
 -r^{-1} \log \psi(a,r) \asymp a^2  \text{ as $r \to \infty$, whenever $a> 2\sqrt{ \bar g }$ !}
 \end{equation}
 On the basis of Table~\ref{tb:exponents}, see p.~\pageref{tb:exponents}, and the conjectured mean-field behavior for large values of $\nu$, one may expect these asymptotics to fail when $a\ll 1$, and \eqref{eq:squareasymp} represents an obstacle in obtaining any improvement.
\end{Rk}

Next, we turn to Corollary~\ref{C:scalingrelation}, which will quickly follow from the following result, the proof of which relies on the bounds for $\tau_a^{\text{tr}}$ derived in Theorem~\ref{T2}. 

\begin{Prop}[$\nu \leq 1$] \label{P:scalinggamma}
\noindent \begin{enumerate}[label=(\roman*)]
\item If  \eqref{eq:intro_Green}, \eqref{eq:intro_sizeball} and \eqref{eq:ellipticity} hold, then with $\tilde{c}\,= (\alpha-\nu)1\{\nu=1\}$,
\begin{equation}
\label{eq:boundvolumeoffUB}
\E[|\K^a|1\{|\K^a|<\infty\}]\leq c|a|^{- \frac{2\alpha}\nu+2}\log(|a|^{-1}\vee2)^{\tilde{c}}, \text{  for all $|a| \leq c$.}
\end{equation}
\item If in addition $d=d_{\textnormal{gr}}$, then with $\tilde{c}'= \Cr{c:defect3} \cdot 1\{\nu=1\} \,(\leq 1/2)$,
\begin{equation}
\label{eq:boundvolumeoffLB}
\E[|\K^a|1\{|\K^a|<\infty\}]\geq c'|a|^{-\frac{2\alpha}\nu+2}e^{-(\log( |a|^{-1}))^{\tilde{c}'}},
 \text{  for all $ |a| \leq c$.}
\end{equation}
(with the convention that the right-hand side is $+\infty$ when $a=0$).
\end{enumerate}
\end{Prop}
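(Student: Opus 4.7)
The plan is to start from the identity $\E[|\K^a|\,1\{|\K^a|<\infty\}] = \sum_{x \in G} \tau_a^{\textnormal{tr}}(0,x)$, which follows from \eqref{eq:eq:deftauhf}, and then estimate the sum by combining the bounds on $\tau_a^{\textnormal{tr}}$ supplied by Theorem~\ref{T2} with the critical asymptotics $\tau_0^{\textnormal{tr}}(0,x) \asymp d(0,x)^{-\nu}$ obtained from \eqref{eq:2ptatcriticality} (note that $\eta = \nu - \alpha + 2$ makes $2 - \alpha - \eta = -\nu$), and with the Ahlfors-regular volume estimates $|\{x : R \leq d(0,x) < 2R\}| \asymp R^\alpha$ implied by \eqref{eq:intro_sizeball}.

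\textbf{Upper bound (i).} Using the two-point function versions of the upper bounds in \eqref{eq:corrlength1}--\eqref{eq:corrlength2}, as derived in Remark~\ref{R:rad},\ref{R:twopointUB} with prefactor $\tau_0^{\textnormal{tr}}$, I would write $\tau_a^{\textnormal{tr}}(0,x) \leq c\, d(0,x)^{-\nu} \exp(-c'\, h_\nu(d(0,x), \xi))$, where $h_\nu(r,\xi) = (r/\xi)^\nu$ if $\nu < 1$ and $h_\nu(r,\xi) = (r/\xi)/\log(r \vee 2)$ if $\nu = 1$. For $\nu < 1$, a dyadic split at $r = \xi$ together with geometric summation yields a total of order $\xi^{\alpha-\nu} = |a|^{-2\alpha/\nu + 2}$, matching \eqref{eq:boundvolumeoffUB} with $\tilde{c} = 0$. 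For $\nu = 1$, the exponential only becomes effective beyond $r \sim \xi \log \xi$; splitting at $R = c\xi \log \xi$ yields $\sum_{d(0,x) \leq R} d(0,x)^{-1} \asymp R^{\alpha-1}$, which produces the logarithmic correction $(\log|a|^{-1})^{\alpha-1}$ and matches $\tilde{c} = \alpha - 1$.

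\textbf{Lower bound (ii).} The assumption $d = d_{\textnormal{gr}}$ makes the lower bounds for $\tau_a^{\textnormal{tr}}$ from Theorem~\ref{T2} available in their stated ranges. For $\nu < 1$ and $|a| \leq c$, the bound $\tau_a^{\textnormal{tr}}(0,x) \geq c\, d(0,x)^{-\nu} \exp(-c'(d(0,x)/\xi)^\nu)$ holds for all $r \geq 1$, and restricting the sum to $d(0,x) \leq \xi$ immediately gives $\xi^{\alpha-\nu}$, matching the upper bound and yielding the sharp $\asymp$ statement in Corollary~\ref{C:scalingrelation}. For $\nu = 1$, the two-point function version of \eqref{eq:corrlength2lbpsi} only applies for $r \geq \xi(\log \xi)^{\Cr{c:defect3}}$, so I would restrict the sum to a single dyadic shell $R \leq d(0,x) < 2R$ with $R = 2\xi(\log \xi)^{\Cr{c:defect3}}$. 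On this shell, Ahlfors regularity produces $\asymp R^\alpha$ points, each contributing at least $c\, R^{-1} \exp(-c'(R/\xi)/\log(R/\xi)) = c\, R^{-1}\exp(-c''(\log\xi)^{\Cr{c:defect3}}/\log\log\xi)$, and summing gives at least
$$c\, R^{\alpha-1} \exp\!\big(-c''(\log\xi)^{\Cr{c:defect3}}/\log\log\xi\big) \geq c'\, \xi^{\alpha-1} \exp\!\big(-(\log|a|^{-1})^{\Cr{c:defect3}}\big),$$
which is \eqref{eq:boundvolumeoffLB}.

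\textbf{Main obstacle.} The main subtlety is the $\nu = 1$ lower bound: the quantitative lower estimate for $\tau_a^{\textnormal{tr}}$ is unavailable in the near-critical window $r < \xi(\log\xi)^{\Cr{c:defect3}}$, so one cannot access the full critical volume $\xi^{\alpha-1}$ directly. The observation that saves the day is that harvesting just the threshold annulus at $R \asymp \xi(\log\xi)^{\Cr{c:defect3}}$ is already sufficient, since the logarithmic dilation $(\log\xi)^{\Cr{c:defect3}(\alpha-1)}$ coming from $R^{\alpha-1}$ is absorbed by the exponential factor $\exp(-c(\log\xi)^{\Cr{c:defect3}}/\log\log\xi)$, which is itself weaker than the targeted $\exp(-(\log|a|^{-1})^{\Cr{c:defect3}})$; this last step ultimately relies on the strict inequality $\Cr{c:defect3} < 1$ baked into Theorem~\ref{T2}.
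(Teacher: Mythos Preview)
Your proof is correct and follows essentially the same approach as the paper: write $\E[|\K^a|1\{|\K^a|<\infty\}]=\sum_x \tau_a^{\textnormal{tr}}(0,x)$, combine the off-critical bounds for $\tau_a^{\textnormal{tr}}$ from Theorem~\ref{T2} with the critical asymptotics \eqref{eq:2ptatcriticality} and Ahlfors regularity \eqref{eq:intro_sizeball}, and carry out a dyadic (equivalently, integral) decomposition---the paper's substitution $u=2^t/(\xi\log(\xi\vee2)^{1\{\nu=1\}})$ is just a packaged version of your split at $R\asymp\xi\log\xi$, and for the $\nu=1$ lower bound the paper likewise restricts to the range $r\gtrsim\xi(\log\xi)^{\Cr{c:defect3}}$. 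One small correction to your closing remark: the strict inequality $\Cr{c:defect3}<1$ is not needed for \eqref{eq:boundvolumeoffLB} itself (your own computation shows the exponential cost at the threshold shell is $\exp(-c(\log\xi)^{\Cr{c:defect3}}/\log\log\xi)$, which is already dominated by $\exp(-(\log|a|^{-1})^{\Cr{c:defect3}})$ for small $|a|$ regardless of the size of $\Cr{c:defect3}$); rather, $\Cr{c:defect3}<1$ is what is needed \emph{downstream} in Corollary~\ref{C:scalingrelation}, to ensure that $(\log|a|^{-1})^{\Cr{c:defect3}}=o(\log|a|^{-1})$ so that the limit defining $\gamma$ exists.
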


In particular, if $ \psi(0,r) \asymp r^{-\nu/2}$, then \eqref{eq:boundvolumeoffLB} holds with $\tilde{c}'=0$, cf.~below \eqref{eq:corrlength2lbpsi}.

\begin{proof}
Let $f(r,a)=\exp(-\Cr{c4_xi}(r/\xi(a))^{\nu}/\log(r\vee2)^{1\{\nu=1\}})$ if $\nu\leq1.$ It follows from the versions of \eqref{eq:corrlength1} and \eqref{eq:corrlength2} for $\tau_a^{\text{tr}}$, \eqref{eq:2ptatcriticality} and \eqref{eq:intro_Green} that for all $a\in\R,$
\begin{align*}
\E[|\K^a|1\{|\K^a|<\infty\}]&=\sum_{x\in{G}}\tau_a^{\textnormal{tr}}(0,x)\leq c + c'\sum_{x \neq 0}d(0,x)^{-\nu}f(d(0,x),a).
\end{align*}
Therefore, using \eqref{eq:intro_sizeball} we obtain that
\begin{equation}
\label{eq:upperboundonKa}
\begin{split}
\E[|\K^a|1\{|\K^a|<\infty\}]&\leq c+\sum_{n=2}^{\infty}c 2^{-n\nu}f(2^n,a)|B(0,2^{n+1})\setminus B(0,2^{n})|
\\&\leq c+\sum_{n=2}^{\infty}c2^{n(\alpha-\nu)}f(2^n,a)\leq c+\int_0^{\infty}c2^{t(\alpha-\nu)}f(2^t,a)\, \mathrm{ d}t
\end{split}
\end{equation}
where we used that $2^{n(\alpha-\nu)}f(2^n,a)\leq 2^{(t+1)(\alpha-\nu)}f(2^t,a)$ for all $t\in{(n-1,n]}$ in the last inequality. Substituting $u=2^t/(\xi \log(\xi \vee 2)^{1\{\nu=1\}})$ with $\xi= \xi(a)$, the last integral in \eqref{eq:upperboundonKa} is bounded from above by
\begin{equation}
\label{eq:upperboundonKa2}
c\big(\xi(a)\log(\xi(a)\vee2)^{1\{\nu=1\}}\big)^{\alpha-\nu}\int_0^{\infty}u^{\alpha-\nu-1}\exp\left\{-\Cr{c4_xi}u^{\nu}\Big(\frac{\log(\xi(a)\vee2)}{\log(cu\xi(a)^2\vee2)}\Big)^{1\{\nu=1\}}\right\}\, \mathrm{ d}u.
\end{equation}
Since $\log(\xi(a)\vee2)/\log(cu\xi(a)^2\vee2)\geq c'/\log(c''u\vee2)$ and $\alpha-\nu-1\geq 1$ by \eqref{eq:condonalphanu}, the integral in \eqref{eq:upperboundonKa2} is upper-bounded by a finite constant uniformly in $a$ for all $0< |a| \leq 1$. Combining this with \eqref{eq:upperboundonKa} and \eqref{eq:defxi}, \eqref{eq:boundvolumeoffUB} follows.

To deduce \eqref{eq:boundvolumeoffLB}, one proceeds similarly as in \eqref{eq:upperboundonKa}, using instead the lower bound from \eqref{eq:corrlength1} (for $\tau_a^{\text{tr}}$) when $\nu< 1$ and \eqref{eq:corrlength2lbpsi} when $\nu =1$, along with \eqref{eq:2ptatcriticality} and \eqref{eq:intro_Green}, to find that
\begin{equation}
\label{eq:lowerboundonKa}
\begin{split}
\E[|\K^a|1\{|\K^a|<\infty\}]& \geq  c\int_{0}^{\infty}2^{t(\alpha-\nu)}\tilde{f}(2^t,a) 1\{ 2^t > c'\xi(1 \vee (\log \xi)^{\tilde{c}'}) \}\, \mathrm{ d}t, \text{ for all $0< |a| \leq c$},
\end{split}
\end{equation}
where $\tilde{f}(r,a)=\exp(-\Cr{C2_xi}(r/\xi(a))^{\nu}/\log((r/\xi(a))\vee2)^{1\{\nu=1\}})$ and $\tilde{c}'= \Cr{c:defect3} \cdot 1\{\nu=1\}$. For $a=0$ \eqref{eq:lowerboundonKa} holds without indicator function. Since $\tilde{f}(r,0)=1$ and $\alpha > \nu $, \eqref{eq:lowerboundonKa} immediately yields $\E[|\K^0|1\{|\K^0|<\infty\}]=\infty$. For $0< |a| \leq c$, substituting $u= 2^t/\xi(a)$ in \eqref{eq:lowerboundonKa} and recalling \eqref{eq:defxi}, \eqref{eq:boundvolumeoffLB} readily follows.
\end{proof}

\begin{proof}[Proof of Corollary~\ref{C:scalingrelation}]
The assertion \eqref{eq:gamma1} follows immediately from \eqref{eq:boundvolumeoffUB} and \eqref{eq:boundvolumeoffLB}.
\end{proof}

\begin{Rk}\label{rk:1stscalingrelation}
 \begin{enumerate}[label=\arabic*)]
  \item \label{rk:1stscalingrelationvolumerenormalized} Further to $|\mathcal{K}^a|$, one can consider a coarse-grained (renormalized) volume observable $|\mathcal{K}^a|_{\textnormal{ren}}$, which is instructive. Assume that \eqref{eq:intro_Green}, \eqref{eq:intro_sizeball}, \eqref{eq:ellipticity} and $d=d_{\text{gr}}$ hold, and let $\mathbb{L}_a \ni 0$ be a lattice in $G$ at scale $\xi=\xi(a)$ for $a \neq 0$. That is,  $\bigcup_{x \in \mathbb{L}_a} B(x,\xi)= G$ and $d(x,y) \geq c\xi$ for any pair of points $x \neq y \in \mathbb{L}_a$ (see Lemma 6.1 in \cite{DrePreRod2} regarding their existence). Then let
\begin{equation}
\label{eq:volumerenorm}
|\mathcal{K}^a|_{\textnormal{ren}} \stackrel{\text{def.}}{=} \sum_{x \in \mathbb{L}_a} 1\{ B(0,\xi) \stackrel{\geq a}{\longleftrightarrow}  {B}(x,\xi)\}, \quad a \neq 0. 
\end{equation}
In view of~\eqref{tautilde}, one has that $\E[|\K^a|_{\textnormal{ren}}1\{|\K^a|_{\textnormal{ren}}<\infty\}]=\sum_{x\in{ \mathbb{L}_a}}\tilde{\tau}_a^{\textnormal{tr}}(0,x)$, and following the arguments leading to the lower bound~\eqref{eq:boundvolumeoffLB} but using \eqref{eq:corrlength2lb} in its form for $\tilde{\tau}_a^{\textnormal{tr}}$ rather than \eqref{eq:corrlength2lbpsi}, one finds when $0< \nu<\frac{\alpha}{2}$ that 
\begin{equation} 
\label{eq:LBvolumerenorm}
\E[|\K^a|_{\textnormal{ren}}1\{|\K^a|_{\textnormal{ren}}<\infty\}] \geq c \int_1^\infty  v^{\alpha-1} e^{-c'v}\, {\rm d}v , \text{ for $0< |a| \leq c$}.
\end{equation}
(note that the integral is roughly $\Gamma(\alpha)$, where $\Gamma(\cdot)$ denotes the Euler-Gamma function).
Comparing with the derivation of \eqref{eq:boundvolumeoffLB}, the constant order lower bound (uniform in $a$!) in \eqref{eq:LBvolumerenorm} may a-priori suffer from the absence of a correct pre-factor corresponding to ``$\tilde{\tau}_0^{\textnormal{tr}}(0,x)$'' in \eqref{eq:corrlength2lb}. But on account of~\eqref{eq:lb12tilde}, such a pre-factor is expectedly of order unity uniformly in $x$, essentially because
$$
\liminf_{a \downarrow 0} \inf_{x \in G} \P\big(B(0,\xi(a)) \stackrel{\geq -a}{\longleftrightarrow}  {B}(x,\xi(a))\big) \geq c
$$
by a similar argument as in \eqref{eq:lb13tilde}. Overall, \eqref{eq:LBvolumerenorm} is thus plausibly sharp, and it intuitively signals that the length scale needed to correctly measure $|\K^a|$ on the event $\{|\K^a|<\infty\}$ extends to (a few units of) scale $\xi$, but not beyond. This is further confirmation of $\xi(a)$ in \eqref{eq:defxi} as a correct lower bound for the correlation length in this problem. We refer to Remark \ref{rk:appendix},\ref{rk:appendix2} for similar considerations in case $\alpha=2\nu.$

\item\label{rk:1stscalingrelationvolumerenormalizednot} Assume \eqref{eq:intro_Green}, \eqref{eq:intro_sizeball}, \eqref{eq:ellipticity} to hold for some $\nu>1.$ Then proceeding similarly as in the proof of Proposition \ref{P:scalinggamma}(i) one can use the version of \eqref{eq:corrlength2} for $\tau_a^{\text{tr}}(0,x)$ to prove that 
\begin{equation*}
\E[|\K^a|1\{|\K^a|<\infty\}]\leq c|a|^{- 2(\alpha-\nu)}, \text{  for all $|a| \leq c$ and $\nu>1$.}
\end{equation*}
This yields the upper bound $\overline{\gamma}\leq 2(\alpha-\nu),$ where $\overline\gamma$ (resp.~$\underline\gamma$) is the limsup (resp.~liminf) of the right-hand side of \eqref{eq:gamma1} as $a \searrow 0$ or $a \nearrow 0.$

Now assume additionally that $d=d_{\text{gr}}$ and $\nu\in{(1,\alpha/2)}.$ One can adapt the proof of Proposition \ref{prop:lowerbounds} as in the proof of Theorem \ref{T2} to obtain that, under \eqref{eq:defb}, with $r=d(0,x)$,
\begin{equation*}
\tau_a^{\text{tr}}(0,x)\geq \frac{cr^{-\nu}}{q(\xi)^2}\exp\left\{-\tilde{c}(r/\xi)\log(r/\xi(a))^{\Cr{c:lbdefect2}}-c'q(\xi)\right\}\text{ for all }a\in{[-\tilde{c}',\tilde{c}']}\text{ and }r\geq \tilde{c}''\xi(a).
\end{equation*}
Proceeding similarly as in the proof of Proposition \ref{P:scalinggamma}(ii), one then readily deduces that
\begin{equation*}
\E[|\K^a|1\{|\K^a|<\infty\}]\geq c|a|^{-\frac{2\alpha}{\nu}+2}\exp(-c'q(\xi)), \text{  for all $|a| \leq c$ and $\nu\in{(1,\alpha/2)}$.}
\end{equation*}
In particular, if \eqref{eq:defb} hold for some function $q$ verifying $q(r)=o(\log(r)),$ $r\rightarrow{\infty},$ this yields the lower bound $\underline{\gamma}\geq \frac{2\alpha}{\nu}-2$.
\end{enumerate}
\end{Rk}

Finally, we present the 
\begin{proof}[Proof of Corollary \ref{Cor:upperboundvolume}]
We first note that
\begin{equation}
\label{upperboundvolumeeq1}
\begin{split}
\P(|\K^0|\geq n)&\leq \P(\text{rad}(\K^0)\geq n^{\frac2{2\alpha-\nu}})+\P(|\K^0\cap B(0,n^{\frac2{2\alpha-\nu}})|\geq n)
\\&\leq cn^{-\frac\nu{2\alpha-\nu}}\log(n)^{\frac{1\{\nu=1\}}2}+\frac{1}{n}\E[|\K^0\cap B(0,n^{\frac2{2\alpha-\nu}})|],
\end{split}
\end{equation}
where in the second line we used \eqref{eq:psi_0nu=1}, as well as Markov's inequality. Moreover, by \eqref{eq:2ptatcriticality}, \eqref{eq:intro_Green} and \eqref{eq:intro_sizeball}, we have (recall that $B_r=B(0,r)$),
\begin{align*}
&\E[|\K^0\cap B(0,n^{\frac2{2\alpha-\nu}})|]\leq \sum_{k=0}^{\lfloor n^{2/(2\alpha-\nu)}\rfloor}ck^{-\nu}|B_k \setminus B_{k-1}| \\
&\qquad  \leq c\sum_{k=1}^{\frac2{2\alpha-\nu}\log_2(n)}\sum_{p=2^{k-1}}^{2^k}p^{-\nu} |B_p \setminus B_{p-1}|
\leq c\sum_{k=1}^{\frac2{2\alpha-\nu}\log_2(n)}2^{-(k-1)\nu}2^{\alpha k} \leq cn^{\frac{2(\alpha-\nu)}{2\alpha-\nu}}.
\end{align*}
Combining this with \eqref{upperboundvolumeeq1}, we obtain \eqref{eq:upperboundvolume}.
\end{proof}

\appendix

\section{Appendix: An enhanced change-of-measure formula}
\label{sec:appendix}

An essential tool in obtaining lower bound on various probabilities for the Gaussian free field has been a certain change-of-measure formula, see  \cite{BDZ95}, \cite{Sz15}, \cite{GRS20} or \eqref{eq:lb12} for instance. In this appendix, we present another version of this formula when studying events only depending on the cluster $\tilde{\K}^a$ from \eqref{eq:introKa}, which is useful in the proof of Lemma \ref{lem:capacityislarge}. Recall the definition of $\tilde{\K}^a_K$ for a compact $K$ from \eqref{eq:Ka}, and the definition $g=g(0,0)$ from below \eqref{T1_beta}.

\begin{Prop}[under \eqref{T1_sign}]
\label{Prop:appendix}
Let $K\subset\tilde{\G}$ be a compact set containing $0$ and $B\subset 2^{K}$ be such that the events $\{\tilde{\K}^a_K(\phi)\in{B}\}$, $a\in\R$, are measurable. Then for all $a\in{\R}$ and~$b>0,$
\begin{equation}
\label{newlb4}
\P\big(\tilde{\K}^{a+b}_K\in{B}\big)\geq \P\big(\tilde{\K}^a_K\in{B}\big)\exp\left\{-\frac{b^2}{2}\mathrm{cap}(K)-\frac{2b\big(1+(a\wedge0)^2\mathrm{cap}(K)\big)}{\sqrt{2\pi g}\P\big(\tilde{\K}^a_K\in{B}\big)}\right\}.
\end{equation}
\end{Prop}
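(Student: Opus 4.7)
My plan is to combine a Cameron--Martin change of measure with Jensen's inequality. The key observation is that $\{\tilde{\K}^{a+b}_K \in B\}$ depends on $\varphi$ only through $\varphi|_K$, on which the harmonic shift $h^{-b}$ from \eqref{eq:df1} is constant equal to $-b$. Combining this with the Cameron--Martin identity~\eqref{eq:df4} produces the exact formula
\begin{equation}
\label{eq:AppCM}
\P(\tilde{\K}^{a+b}_K \in B) = \E\big[1\{\tilde{\K}^a_K \in B\}\, e^{-bM_K - b^2\textnormal{cap}(K)/2}\big],
\end{equation}
since $\tilde{\K}^{a+b}_K(\varphi) = \tilde{\K}^{a}_K(\varphi - b) = \tilde{\K}^{a}_K(\varphi+h^{-b})$. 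Abbreviating $A_B = \{\tilde{\K}^a_K \in B\}$, a conditional Jensen inequality applied to \eqref{eq:AppCM} then gives
\[
\P(\tilde{\K}^{a+b}_K \in B) \geq \P(A_B)\exp\bigg\{-\frac{b\,\E[M_K 1_{A_B}]}{\P(A_B)} - \frac{b^2}{2}\textnormal{cap}(K)\bigg\}.
\]
The proof is thereby reduced to the key moment estimate
\begin{equation}
\label{eq:AppMoment}
\E[M_K 1_{A_B}] \leq \frac{2\big(1 + (a\wedge 0)^2\, \textnormal{cap}(K)\big)}{\sqrt{2\pi g}},
\end{equation}
which inserted above yields exactly the factor $2b$ appearing in the exponent of~\eqref{newlb4}.

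To establish \eqref{eq:AppMoment}, I would split $A_B$ according to whether $\varphi_0 < a$ (in which case $\tilde{\K}^a_K = \emptyset$ and $A_B$ reduces to the deterministic event $\{\emptyset \in B\}$) or $\varphi_0 \geq a$. On the first region, using $\E[M_K \mid \varphi_0] = \varphi_0/g$ (which follows from $(Ge_K)(0) = h_K(0) = 1$ since $0\in K$), the Gaussian computation $\E[M_K 1\{\varphi_0<a\}] = -\frac{1}{\sqrt{2\pi g}}e^{-a^2/(2g)}$ contributes at most $1/\sqrt{2\pi g}$ in absolute value. On the second region, $\tilde{\K}^a_K$ is compatible in the sense of \eqref{eq:Markov1}, so combining the Markov property \eqref{eq:Markov2} with the sweeping identity \eqref{eq:consequencebalayage}, exactly as in the derivation of~\eqref{eq:df5}, yields
\[
\E[M_K 1_{A_B \cap \{\varphi_0 \geq a\}}] = \E[M_{\tilde{\K}^a_K} 1_{A_B \cap \{\varphi_0 \geq a\}}] \leq \E[(M_{\tilde{\K}^a_K})_+ 1_{\varphi_0 \geq a}].
\]
Since $\varphi \geq a$ on the support of $e_{\tilde{\K}^a_K}$, one has the almost sure bound $M_{\tilde{\K}^a_K} \geq a\,\textnormal{cap}(\tilde{\K}^a_K)$. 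For $a \geq 0$ this forces $(M_{\tilde{\K}^a_K})_+ = M_{\tilde{\K}^a_K}$, and the Markov-based identity $\E[M_{\tilde{\K}^a_K} 1_{\varphi_0 \geq a}] = \E[M_K 1_{\varphi_0 \geq a}] = \tfrac{1}{\sqrt{2\pi g}}e^{-a^2/(2g)}$ closes \eqref{eq:AppMoment} with room to spare.

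The main obstacle will be the regime $a<0$, where $(M_{\tilde{\K}^a_K})_-$ can be as large as $|a|\,\textnormal{cap}(\tilde{\K}^a_K)$, so that a pointwise bound only yields a factor linear in $|a|$ rather than the quadratic factor $(a\wedge 0)^2\textnormal{cap}(K)$ required in~\eqref{eq:AppMoment}. To extract the sharper quadratic control, I would rely more carefully on the conditional Gaussian law~\eqref{eq:df5} of $M_K$ given $\mathcal{A}^+_{\tilde{\K}^a_K}$, combining the variance bound $\textnormal{cap}(K) - \textnormal{cap}(\tilde{\K}^a_K)$ with a second-moment estimate $\E[(M_{\tilde{\K}^a_K})_-^2\, 1_{\varphi_0 \geq a}] \leq a^2\,\E[\textnormal{cap}(\tilde{\K}^a_K)^2\, 1_{\varphi_0 \geq a}]$ and a Cauchy--Schwarz-type argument. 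This is precisely the step where the factor $(a\wedge 0)^2\textnormal{cap}(K)$ emerges in \eqref{eq:AppMoment}, and hence the corresponding term in~\eqref{newlb4}.
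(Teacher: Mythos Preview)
Your overall framework — the Cameron--Martin identity \eqref{eq:AppCM}, Jensen's inequality, and the reduction to bounding $\E[M_K 1_{A_B}]$ via the Markov property and \eqref{eq:df5} — is correct and coincides with the paper's approach. Your treatment of the case $a\geq 0$ is essentially the same as the paper's.

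The genuine gap is in the case $a<0$. Your proposed route through the second-moment bound $\E[(M_{\tilde{\K}^a_K})_-^2\,1\{\varphi_0\geq a\}]\leq a^2\,\E[\textnormal{cap}(\tilde{\K}^a_K)^2\,1\{\varphi_0\geq a\}]$ combined with Cauchy--Schwarz does \emph{not} produce the quadratic factor $a^2\,\textnormal{cap}(K)$: at best it yields $|a|\sqrt{\E[\textnormal{cap}(\tilde{\K}^a_K)^2]}$, and without further input this is only $|a|\,\textnormal{cap}(K)$, i.e.~linear in $|a|$. Cauchy--Schwarz is in fact counterproductive here; the bound one needs is the straightforward \emph{first}-moment estimate
\[
\E\big[(M_{\tilde{\K}^a_K})_-\,1\{\varphi_0\geq a\}\big]\leq |a|\,\E\big[\textnormal{cap}(\tilde{\K}^a_K)\,1\{\varphi_0\geq a\}\big],
\]
together with a sharp bound on the right-hand side. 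The paper obtains the latter by splitting according to whether $\tilde{\K}^a$ is bounded and invoking the exact identities \eqref{eq:dens4} and \eqref{T1_theta_0}, both consequences of Theorem~\ref{T:cap}. This is precisely where the hypothesis \eqref{T1_sign} enters --- and it is a warning sign that your argument never uses it. Concretely, the paper gets
\[
\E\big[\textnormal{cap}(\tilde{\K}^a_K)\,1\{\varphi_0\geq a\}\big]\leq \frac{f(a)}{|a|}+\textnormal{cap}(K)\big(1-2\Phi(a)\big)\leq \frac{1}{\sqrt{2\pi g}}\Big(\frac{1}{|a|}+2|a|\,\textnormal{cap}(K)\Big),
\]
and multiplying by $|a|$ yields exactly the constant-plus-$a^2\textnormal{cap}(K)$ structure of \eqref{eq:AppMoment}. (As a minor stylistic point, for $a<0$ the paper first passes to the complement via $\E[M_K 1_{A(a)}]=-\E[M_K 1_{A(a)^c}]$, but your direct decomposition through $(M_{\tilde{\K}^a_K})_+=M_{\tilde{\K}^a_K}+(M_{\tilde{\K}^a_K})_-$ works equally well once \eqref{eq:dens4} is in hand.)
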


\begin{proof}
Let $A(h)=\{\tilde{\K}_K^h\in{B}\}$ for all $h\in\R$ and $\P_{-b}$ be the measure defined in \eqref{eq:df4}. By Jensen's inequality, one has
\begin{equation}
\label{newlb1}
\begin{split}
\log\left(\frac{\P(A(a+b))}{\P_{-b}(A(a+b))}\right)&=\log\left(\E_{-b}\left[\left(\frac{{\rm d}{\P}_{-b}}{{\rm d}\P}\right)^{-1}\frac{1\{A(a+b)\}}{\P_{-b}(A(a+b))}\right]\right)
\\&\geq -\E_{-b}\left[\log\left(\frac{{\rm d}{\P}_{-b}}{{\rm d}\P}\right)\frac{1\{A(a+b)\}}{\P_{-b}(A(a+b))}\right].
\end{split}
\end{equation}
Now since $\phi+h^{b}$ has the same law under $\P$ as $\phi$ under $\P_{-b},$ one has $\P_{-b}(A(a+b))=\P({A}(a))$ and hence, in view of \eqref{eq:df4},
\begin{equation}
\label{newlb2}
\begin{split}
\E_{-b}\left[\log\left(\frac{{\rm d}{\P}_{-b}}{{\rm d}\P}\right)1\{A(a+b)\}\right]&=b\E_{-b}\left[M_{K}1\{A(a+b)\}\right]-\frac{b^2}{2}\text{cap}(K)\P_{-b}(A(a+b))
\\&=b\E\left[M_{K}1\{{A}(a)\}\right]+\frac{b^2}{2}\text{cap}(K)\P({A}(a)).
\end{split}
\end{equation}
We will now rewrite the expectation in the second line of \eqref{newlb2}, and we start with the case $a<0.$ Using $\E[M_{K}]=0$ (cf.\ \eqref{eq:df2}), we infer that
\[
  \E\left[M_{K}1\{{A}(a)\}\right]
=-\E\left[M_{K}1\{{A}(a)^c,\phi_0\geq a\}\right]-\E\left[M_{K}1\{{A}(a)^c,\phi_0< a\}\right].
\]
We now introduce a conditioning on $\mathcal{A}^+_{\tilde{\K}^{a}_K}$ in the first expectation on the right-hand side and notice that $A(a)^c\cap\{\phi_0\geq a\}\in{\mathcal{A}^+_{\tilde{\K}^{a}_K}}.$ Analogously, within the second expectation we condition on $\mathcal{A}^+_{\{0\}}$ and observe that $A(a)^c\cap\{\phi_0< a\}\in{\mathcal{A}^+_{\{0\}}}.$ As a consequence,  by \eqref{eq:df5} and the equality $\E[M_K\,|\,\mathcal{A}^+_{\{0\}}]=M_0,$ we deduce using \eqref{eq:Markov2} that
\begin{equation}
\label{appendix:1}
\begin{split}
    \E\left[M_{K}1\{{A}(a)\}\right]=-\E\big[M_{\tilde{\K}^{a}_{K}}1\{{A}(a)^c,\phi_0\geq a\}\big]-\E\big[M_{0}1\{{A}(a)^c,\phi_0< a\}\big].
\end{split}
\end{equation}
Since $\phi\geq a$ on the support of  $e_{\tilde{\K}_K^a},$ we further infer that on the event $\{\phi_0\geq a\}$, by \eqref{eq:cap} and \eqref{eq:df2} (cf.~the derivation of \eqref{eq:deriv1.1bis} for a similar argument),
\begin{equation}
\label{appendix:2}
    \E\big[M_{\tilde{\K}^{a}_{K}}1\{{A}(a)^c,\phi_0\geq a\}\big]\geq a\E\big[\mathrm{cap}(\tilde{\K}^{a}_{K})1\{A(a)^c,\phi_0\geq a\}\big]\geq a\E\big[\mathrm{cap}(\tilde{\K}^{a}_{K})1\{\phi_0\geq a\}\big],
\end{equation}
where we took advantage of the assumption $a< 0$ to deduce the last inequality.
It finally follows from \eqref{eq:dens4} and \eqref{T1_theta_0} that
\begin{equation}
\label{appendix:3}
\begin{split}
    \E\big[\mathrm{cap}(\tilde{\K}^{a}_{K})1\{\phi_0\geq a\}\big]&\leq \E\big[\mathrm{cap}(\tilde{\K}^{a})1\{\tilde{\K}^a\text{ is bounded},\phi_0\geq a\}) \big]+\mathrm{cap}(K)\P(\tilde{\K}^{a}\text{ is unbounded})
    \\&= \frac{f(a)}{-a}+(1-2\Phi(a))\mathrm{cap}(K)\leq -\frac{1}{\sqrt{2\pi g}}\Big(\frac{1}{a}+2a\mathrm{cap}(K)\Big).
\end{split}
\end{equation}
Combining \eqref{newlb1} with \eqref{appendix:3} and the inequality $\E[M_01\{\phi_0<a\}]\geq -(2\pi g)^{-1/2}$ (which follows by simple integration since $M_0=\phi_0/g$), we obtain \eqref{newlb4} when $a<0.$ Suppose now that $a\geq 0.$ Repeating the arguments leading to \eqref{appendix:1}, since $M_{\tilde{\K}^{a}_{K}}\geq0$ on the event $\{\phi_0\geq a\},$ we have that
\begin{equation}
\label{appendix:4}
\begin{split}
    \E\left[M_{K}1\{{A}(a)\}\right]=\E\big[M_{\tilde{\K}^{a}_{K}}1\{{A}(a)\}\big]&\leq \E\big[M_{\tilde{\K}^{a}_{K}}1\{\phi_0\geq a\}\big]+\E\big[M_01\{\phi_0< a,A(a)\}\big]
    \\&\leq \E[M_K1\{\phi_0\geq a\}]=\E[M_01\{\phi_0\geq a\}]\leq\frac{1}{\sqrt{2\pi g}},
\end{split}
\end{equation}
where, in deducing the inequality in the second line of \eqref{appendix:4}, we used the fact that $\E[M_01\{\phi_0< a,A(a)\}]= \E[M_01\{\phi_0< a\}]< 0$ in case $A(a) \supset \{\tilde{\K}^a_K=\emptyset \}$, and $\E[M_01\{\phi_0< a,A(a)\}]=0$ otherwise. Thus, \eqref{newlb4} follows from \eqref{newlb1}, \eqref{newlb2} and \eqref{appendix:4} for $a>0.$
\end{proof}

\begin{Rk}
\label{rk:appendix}
\begin{enumerate}[label=\arabic*)]
\item \label{rk:appendix1} The identity \eqref{newlb4} typically improves the usual change-of-measure formula, see for instance below (2.7) in \cite{BDZ95}, cf.~also \eqref{eq:lb12tilde} above, when $b$ and $\P(\tilde{\K}_K^a\in{B})$ are both small. For example, under \eqref{eq:intro_Green} and \eqref{eq:ellipticity}, assuming that $a=0,$ $K=\tilde{B}_r,$ and considering the event $B=\{r\leq\text{rad}(\K^b)\},$ we infer using \eqref{newlb4}, \eqref{eq:psi_0nu<1} and \eqref{eq:capBd} that under \eqref{eq:intro_Green}, for all $b >0$,
\begin{equation*}
\begin{split}
    \psi(b,r)\geq\psi(0,r)\exp\Big(-cb^2r^{\nu}-\frac{c'b}{\psi(0,r)}\Big)&\geq\psi(0,r)\exp(-cb^2r^{\nu}-c'br^{\nu/2})
    \\&\geq c\psi(0,r)\exp(-c'b^2r^{\nu}).
    \end{split}
\end{equation*}
This is a simple proof via the change-of-measure formula of the lower bound in \eqref{eq:corrlength1} when $\nu<1,$ and  of \eqref{eq:rad_113} when $r\leq\xi(b).$ 

\item \label{rk:appendix2}Another application of the identity \eqref{newlb4} is a bound akin to \eqref{eq:corrlength2lb} when $\alpha=2\nu,$ but for $\tilde{\tau}_a^{\text{tr}}$ instead, see \eqref{tautilde}, which is weaker but valid in the whole regime $r\geq\xi.$ Assume \eqref{eq:intro_Green} and \eqref{eq:ellipticity} hold  and fix some $a>0$ and $x\in{\G}$ with $r=d(0,x)\geq 10\xi(a).$ Proceeding similarly as in the proof of \eqref{eq:lb13tau}, we split $\I^{3u}$ into three independent interlacements $\I^{u,1},$ $\I^{u,2}$ and $\I^{u,3},$ with $u=a^2/6,$ and denote by $G'_s(0)$ the event that the set of vertices $\I^{u,1,r}_-$ hit by any trajectories of $\I^{u,1}$ between their first hitting time of $B(0,\xi(a))$ and their subsequent first exit time from $B(0,r)$ has capacity at least $sr^\nu/\log(r).$ We call $G'_s(x)$ the same event but for the the set of vertices $\I^{u,3,r}_-$ hit by any trajectories of $\I^{u,3}$ between their first hitting time of $B(x,\xi(a))$ and their suceeding first exit time from $B(0,r).$ Under the coupling \eqref{eq:weakisom} one knows that ${B}(0,{\xi(a)})$ is connected to ${B}(x,\xi(a))$ in $\{\phi\geq -a\}\cap B(0,tr)$ whenever the intersection of the events  $G'_s(0),$ $G'_s(x)$ and  $\{\I^{u,1,r}_-\leftrightarrow\I^{u,3,r}_-$ in $\I^{u,2}\cap B(0,tr)\}$ occurs. Using the bound $u\mathrm{cap}(B_{\xi(a)})\geq c,$ see \eqref{eq:defxi} and \eqref{eq:capBd} with $K=\emptyset,$ \eqref{eq:defRI} and Lemma~\ref{lem:4.7}, one readily sees that there exists $s>0$ such that the probabilities of $G'_s(0)$ and $G'_s(x)$ are of constant order. Taking $t>0$ large enough, it then follows from Lemma 4.3 in \cite{DrePreRod2} that
\begin{equation*}
\begin{split}
\P\big({B}(0,{\xi(a)}) \leftrightarrow  {B}(x,\xi(a))\text{ in }\{\phi\geq -a\}\cap B(0,tr)\big)&\geq c\big(1-\exp(-c'a^2r^{-\nu}(sr^{\nu}/\log r)^2)\big)
\\&\geq c\left(1\wedge\frac{a^2r^{\nu}}{\log (r)^{2}}\right).
\end{split}
\end{equation*}
Hence, for all $a\in{(0,1)}$ and $x\in{G}$ with $r=d(0,x)\geq 10\xi(a),$ it follows from \eqref{eq:capBd} and \eqref{newlb4} with $K=B(0,tr),$ as well as the inequality $a\leq 1\wedge (a^2r^{\nu}\log(r)^{-2})$ that
\begin{equation}
\label{tautildealpha=2nu}
\tilde{\tau}_a(0,x)\geq c\left(1\wedge\frac{a^2r^{\nu}}{\log(r)^{2}}\right)\exp(-ca^2r^{\nu}).
\end{equation}
The same bound for $a<0$ also holds by symmetry. The bound \eqref{tautildealpha=2nu} is worse than \eqref{eq:corrlength2lb} as $r\rightarrow\infty,$ but has the advantage to be of logarithmic order when $r=\xi.$ Recalling the definition of $|\K^a|_{\textnormal{ren}}$ from \eqref{eq:volumerenorm}, similarly as in \eqref{eq:LBvolumerenorm} it then follows from \eqref{tautildealpha=2nu} and the inequality $a^2r^{\nu}/\log(r)^2\geq 1/\log(\xi)^2$ for all $r\geq \xi,$ that when $\alpha=2\nu$, 
\begin{equation*} 
\E[|\K^a|_{\textnormal{ren}}1\{|\K^a|_{\textnormal{ren}}<\infty\}] \geq \frac{c}{\log(\xi)^2} \int_1^\infty  v^{\alpha-1} e^{-c'v^{\nu}}\, {\rm d}v , \text{ for $0< |a| \leq c$},
\end{equation*}
assuming \eqref{eq:intro_Green}, \eqref{eq:intro_sizeball} and \eqref{eq:ellipticity} hold. This further confirms that $\xi(a)$ in \eqref{eq:defxi} is a lower bound for the correlation length when $\alpha=2\nu$ (for instance when $\G=\Z^4$), up to logarithmic corrections, as indicated in \eqref{eq:corrlengthnu>1}. A similar approach when $\alpha< 2\nu$ recovers \eqref{eq:LBvolumerenorm} without the assumptions $d=d_{\text{gr}}$ or \eqref{eq:intro_shortgeodesic}. 
\end{enumerate}
\end{Rk}

\bigskip

\textit{Acknowledgments.} AD has been supported by  Deutsche Forschungsgemeinschaft (DFG) grant DR 1096/1-1, 
    AP  by the Engineering and Physical Sciences Research Council (EPSRC) grant EP/R022615/1 and Isaac Newton Trust (INT) grant G101121.

\bigskip    

\bibliography{bibliographie}

\begin{thebibliography}{10}

\bibitem{Aizenman:1984aa}
M.~Aizenman and C.~M. Newman.
\newblock Tree graph inequalities and critical behavior in percolation models.
\newblock {\em J. Stat. Phys.}, 36(1):107--143, 1984.

\bibitem{MR2076770}
M.~T. Barlow.
\newblock Which values of the volume growth and escape time exponent are
  possible for a graph?
\newblock {\em Rev. Mat. Iberoamericana}, 20(1):1--31, 2004.

\bibitem{MR2177164}
M.~T. Barlow, T.~Coulhon, and T.~Kumagai.
\newblock Characterization of sub-{G}aussian heat kernel estimates on strongly
  recurrent graphs.
\newblock {\em Comm. Pure Appl. Math.}, 58(12):1642--1677, 2005.

\bibitem{barsky1991}
D.~J. Barsky and M.~Aizenman.
\newblock Percolation critical exponents under the triangle condition.
\newblock {\em Ann. Probab.}, 19(4):1520--1536, 10 1991.

\bibitem{BDZ95}
E.~Bolthausen, J.-D. Deuschel, and O.~Zeitouni.
\newblock Entropic repulsion of the lattice free field.
\newblock {\em Comm. Math. Phys.}, 170(2):417--443, 1995.

\bibitem{MR1912205}
A.~N. Borodin and P.~Salminen.
\newblock {\em Handbook of {B}rownian motion---facts and formulae}.
\newblock Probability and its Applications. Birkh\"auser Verlag, Basel, second
  edition, 2002.

\bibitem{MR914444}
J.~Bricmont, J.~L. Lebowitz, and C.~Maes.
\newblock Percolation in strongly correlated systems: the massless {G}aussian
  field.
\newblock {\em J. Stat. Phys.}, 48(5-6):1249--1268, 1987.

\bibitem{cerf2015}
R.~Cerf.
\newblock A lower bound on the two-arms exponent for critical percolation on
  the lattice.
\newblock {\em Ann. Probab.}, 43(5):2458--2480, 2015.

\bibitem{dewan2021upper}
V.~Dewan and S.~Muirhead.
\newblock Upper bounds on the one-arm exponent for dependent percolation
  models.
\newblock {\em Preprint, available at arXiv:2102.12123}, 2021.

\bibitem{MR4112719}
J.~Ding and M.~Wirth.
\newblock Percolation for level-sets of {G}aussian free fields on metric
  graphs.
\newblock {\em Ann. Probab.}, 48(3):1411--1435, 2020.

\bibitem{DrePreRod2}
A.~Drewitz, A.~Pr\'{e}vost, and P.-F. Rodriguez.
\newblock Geometry of {G}aussian free field sign clusters and random
  interlacements.
\newblock {\em Preprint, available at arXiv:1811.05970}, 2018.

\bibitem{DrePreRod}
A.~Drewitz, A.~Pr\'{e}vost, and P.-F. Rodriguez.
\newblock The sign clusters of the massless {G}aussian free field percolate on
  {$\Bbb{Z}^d, d \geqslant 3$} (and more).
\newblock {\em Comm. Math. Phys.}, 362(2):513--546, 2018.

\bibitem{DrePreRod3}
A.~Drewitz, A.~Pr\'{e}vost, and P.-F. Rodriguez.
\newblock Cluster capacity functionals and isomorphism theorems for {G}aussian
  free fields.
\newblock {\em Probab. Theory Related Fields}, 183(1-2):255--313, 2022.

\bibitem{duminilcopin2020upper}
H.~Duminil-Copin, G.~Kozma, and V.~Tassion.
\newblock {\em Upper Bounds on the Percolation Correlation Length}, pages
  347--369.
\newblock Springer International Publishing, Cham, 2021.

\bibitem{GRS20}
S.~Goswami, P.-F. Rodriguez, and F.~Severo.
\newblock {On the radius of Gaussian free field excursion clusters}.
\newblock {\em Ann. Probab.}, 50(5):1675 -- 1724, 2022.

\bibitem{MR1853353}
A.~Grigor'yan and A.~Telcs.
\newblock Sub-{G}aussian estimates of heat kernels on infinite graphs.
\newblock {\em Duke Math. J.}, 109(3):451--510, 2001.

\bibitem{MR1707339}
G.~Grimmett.
\newblock {\em Percolation}, volume 321 of {\em Grundlehren der Mathematischen
  Wissenschaften [Fundamental Principles of Mathematical Sciences]}.
\newblock Springer-Verlag, Berlin, second edition, 1999.

\bibitem{MR1043524}
T.~Hara and G.~Slade.
\newblock Mean-field critical behaviour for percolation in high dimensions.
\newblock {\em Comm. Math. Phys.}, 128:333--391, 1990.

\bibitem{HeHo17}
M.~Heydenreich and R.~{van der Hofstad}.
\newblock {\em Progress in high-dimensional percolation and random graphs}.
\newblock CRM Short Courses. Springer, Germany, Dec. 2017.

\bibitem{HS20}
T.~Hutchcroft and P.~Sousi.
\newblock Logarithmic corrections to scaling in the four-dimensional uniform
  spanning tree.
\newblock {\em Preprint, available at arXiv:2010.15830}, 2020.

\bibitem{janson_1997}
S.~Janson.
\newblock {\em Gaussian Hilbert Spaces}.
\newblock Cambridge Tracts in Mathematics. Cambridge University Press, 1997.

\bibitem{MR1378848}
O.~D. Jones.
\newblock Transition probabilities for the simple random walk on the
  {S}ierpi\'nski graph.
\newblock {\em Stochastic Process. Appl.}, 61(1):45--69, 1996.

\bibitem{kesten1987}
H.~Kesten.
\newblock Scaling relations for $2$d-percolation.
\newblock {\em Comm. Math. Phys.}, 109(1):109--156, 1987.

\bibitem{koehlerschindler2020crossing}
L.~K\"ohler-Schindler and V.~Tassion.
\newblock Crossing probabilities for planar percolation.
\newblock {\em Preprint, available at arXiv:2011.04618}, 2020.

\bibitem{LEBOWITZ1986194}
J.~L. Lebowitz and H.~Saleur.
\newblock Percolation in strongly correlated systems.
\newblock {\em Physica A: Statistical Mechanics and its Applications},
  138(1):194 -- 205, 1986.

\bibitem{MR3502602}
T.~Lupu.
\newblock From loop clusters and random interlacements to the free field.
\newblock {\em Ann. Probab.}, 44(3):2117--2146, 2016.

\bibitem{LW18}
T.~Lupu and W.~Werner.
\newblock The random pseudo-metric on a graph defined via the zero-set of the
  {G}aussian free field on its metric graph.
\newblock {\em Probab. Theory Related Fields}, 171(3):775--818, 2018.

\bibitem{Pre1}
A.~Pr{\'e}vost.
\newblock Percolation for the {G}aussian free field on the cable system:
  counterexamples.
\newblock {\em Preprint, available at arXiv:2102.07763}, 2021.

\bibitem{MR2819660}
B.~R\'ath and A.~Sapozhnikov.
\newblock On the transience of random interlacements.
\newblock {\em Electron. Commun. Probab.}, 16:379--391, 2011.

\bibitem{MR3053773}
P.-F. Rodriguez and A.-S. Sznitman.
\newblock Phase transition and level-set percolation for the {G}aussian free
  field.
\newblock {\em Comm. Math. Phys.}, 320(2):571--601, 2013.

\bibitem{SW01}
S.~Smirnov and W.~Werner.
\newblock Critical exponents for two-dimensional percolation.
\newblock {\em Math. Res. Lett.}, 8:729--744, 2001.

\bibitem{MR2680403}
A.-S. Sznitman.
\newblock Vacant set of random interlacements and percolation.
\newblock {\em Ann. Math. (2)}, 171(3):2039--2087, 2010.

\bibitem{MR2891880}
A.-S. Sznitman.
\newblock Decoupling inequalities and interlacement percolation on
  {$G\times\Bbb Z$}.
\newblock {\em Invent. Math.}, 187(3):645--706, 2012.

\bibitem{MR2892408}
A.-S. Sznitman.
\newblock An isomorphism theorem for random interlacements.
\newblock {\em Electron. Commun. Probab.}, 17:no. 9, 9, 2012.

\bibitem{Sz15}
A.-S. Sznitman.
\newblock Disconnection and level-set percolation for the {G}aussian free
  field.
\newblock {\em J. Math. Soc. Japan}, 67(4):1801--1843, 2015.

\bibitem{Sz-16}
A.-S. Sznitman.
\newblock Coupling and an application to level-set percolation of the
  {G}aussian free field.
\newblock {\em Electron. J. Probab.}, 21:Paper No. 35, 26, 2016.

\bibitem{MR864581}
M.~E. Watkins.
\newblock Infinite paths that contain only shortest paths.
\newblock {\em J. Combin. Theory Ser. B}, 41(3):341--355, 1986.

\bibitem{PhysRevB.29.387}
A.~Weinrib.
\newblock Long-range correlated percolation.
\newblock {\em Phys. Rev. B}, 29:387--395, Jan 1984.

\bibitem{PhysRevB.27.413}
A.~Weinrib and B.~I. Halperin.
\newblock Critical phenomena in systems with long-range-correlated quenched
  disorder.
\newblock {\em Phys. Rev. B}, 27:413--427, Jan 1983.

\bibitem{werner2020clusters}
W.~Werner.
\newblock On clusters of {B}rownian loops in {$d$} dimensions.
\newblock In {\em In and out of equilibrium 3. {C}elebrating {V}ladas
  {S}idoravicius}, volume~77 of {\em Progr. Probab.}, pages 797--817.
  Birkh\"{a}user/Springer, Cham, 2021.

\bibitem{PhysRevB.4.3174}
K.~G. Wilson.
\newblock Renormalization group and critical phenomena {I}. {R}enormalization
  group and the {K}adanoff scaling picture.
\newblock {\em Phys. Rev. B}, 4:3174--3183, 1971.

\bibitem{PhysRevB.4.3184}
K.~G. Wilson.
\newblock Renormalization group and critical phenomena {I}{I}. {P}hase-space
  cell analysis of critical behavior.
\newblock {\em Phys. Rev. B}, 4:3184--3205, 1971.

\end{thebibliography}
\bibliographystyle{abbrv}

\end{document}